\newcommand{\N}{\mathbb{N}}
\newcommand{\calT}{\mathcal{T}}
\newcommand{\T}{\calT}
\newcommand{\M}{\mathcal{M}}
\newcommand{\sM}{\mathcal{M}}
\newcommand{\R}{\mathbb{R}}
\newcommand{\Z}{\mathbb{Z}}
\renewcommand{\H}{\mathbb{H}}
\newcommand{\eps}{\epsilon}
\newcommand{\wep}{Weil-Petersson}
\newcommand{\sbs}{\subset}
\newcommand{\sT}{\mathcal{T}}
\newcommand{\dvol}{d\mathit{vol}}
\newcommand{\ve}{\boldsymbol}
\newcommand{\eg}{\textit{e.g.\@ }}
\newcommand{\ie}{\textit{i.e.\@ }}
\def\area{\mathop{\rm Area}}
\def\arcsinh{\mathop{\rm arcsinh}}
\def\Vol{\mathop{\rm Vol}}
\def\Supp{\mathop{\rm Supp}}
\def\inj{\mathop{\rm inj}}
\def\Prob{\mathop{\rm Prob}\nolimits_{\rm WP}^g}
\def\Mod{\mathop{\rm Mod}}
\def\Sym{\mathop{\rm Sym}}
\newcommand{\lmdx}{\lambda_1(X)}
\DeclareMathOperator{\Diff}{Diff}
\DeclareMathOperator{\WP}{WP}
\DeclareMathOperator{\RHS}{RHS}
\DeclareMathOperator{\LHS}{LHS}
\DeclareMathOperator{\diam}{diam}
\theoremstyle{plain}
\newtheorem{theorem}{Theorem}
\newtheorem{proposition}[theorem]{Proposition}
\newtheorem{lemma}[theorem]{Lemma}
\newtheorem{question}[theorem]{Question}
\newtheorem{remark}[theorem]{Remark}
\newcommand{\be}{\begin{equation}}
\newcommand{\ene}{\end{equation}}
\newcommand{\br}{\begin{remark}}
\newcommand{\er}{\end{remark}}
\newcommand{\bl}{\begin{lemma}}
\newcommand{\el}{\end{lemma}}
\newcommand{\bcor}{\begin{cor}}
\newcommand{\ecor}{\end{cor}}
\newcommand{\bpro}{\begin{pro}}
\newcommand{\epro}{\end{pro}}
\newcommand{\ben}{\begin{enumerate}}
\newcommand{\een}{\end{enumerate}}
\newcommand{\bp}{\begin{proof}}
\newcommand{\ep}{\end{proof}}
\newcommand{\bpo}{\begin{pro}}
\newcommand{\epo}{\end{pro}}
\newcommand{\beq}{\begin{equation*}}
\newcommand{\eeq}{\end{equation*}}
\newcommand{\bear}{\begin{eqnarray}}
\newcommand{\eear}{\end{eqnarray}}
\newcommand{\beqar}{\begin{eqnarray*}}
\newcommand{\eeqar}{\end{eqnarray*}}
\newcommand{\bt}{\begin{theorem}}
\newcommand{\et}{\end{theorem}}
\newcommand{\bex}{\begin{excer}}
\newcommand{\eex}{\end{excer}}
\theoremstyle{definition}
\theoremstyle{remark}
\newtheorem*{rem*}{Remark}
\newtheorem*{def*}{Definition}
\newtheorem*{con*}{Construction}
\newtheorem*{thm*}{\bf Theorem}
\newtheorem*{definition*}{Definition}
\newtheorem*{assum*}{Assumption $(\star)$}
\newtheorem*{obs*}{Observation}
\subjclass[2020]{32G15, 58C40}
\begin{document}

\title[Large first eigenvalues for Random surfaces]{Random hyperbolic surfaces of large genus have first eigenvalues greater than $\frac{3}{16}-\epsilon$}
\author{Yunhui Wu and Yuhao Xue}

\address{Department of Mathematics and Yau Mathematical Sciences Center, Tsinghua University, Beijing, China}
\email[(Y.~W.)]{yunhui\_wu@tsinghua.edu.cn}
\email[(Y.~X.)]{xueyh18@mails.tsinghua.edu.cn}

\date{}
\maketitle

\begin{abstract}
Let $\sM_g$ be the moduli space of hyperbolic surfaces of genus $g$ endowed with the Weil-Petersson metric. In this paper, we show that for any $\eps>0$, as genus $g$ goes to infinity, a generic surface $X\in \M_g$ satisfies that the first eigenvalue $\lambda_1(X)>\frac{3}{16}-\epsilon$. As an application, we also show that a generic surface $X\in \M_g$ satisfies that the diameter $\diam(X)<(4+\epsilon)\ln(g)$ for large genus.
\end{abstract}

\section{Introduction}

Let $X_g$ be a hyperbolic surface of genus $g\geq 2$ and $\lambda_1(X_g)$ be the first eigenvalue of the Laplacian operator on $X_g$. For large genus, it is known (\eg see  \cite{Hub74} or \cite{Cheng75}) that  $$\limsup \limits_{g\to \infty} \lambda_1(X_g)\leq \frac{1}{4}$$ for any sequence of hyperbolic surfaces $\{X_g\}_{g\geq 2}$. The motivation of this work is whether random objects have large, or even optimal, first eigenvalues. Brooks and Makover in \cite{BM04} showed that there exists a uniform positive lower bound for the first eigenvalues of random surfaces in their discrete model by gluing ideal hyperbolic triangles. In this work we view the first eigenvalue as a random variable with respect to the probability measure $\Prob$ on moduli pace $\M_g$ of Riemann surfaces of genus $g$ given by the Weil-Petersson metric, which was initiated by Mirzakhani in \cite{Mirz13}. Based on her celebrated thesis works \cite{Mirz07,Mirz07-int}, Mirzakhani proved the following result via the Cheeger inequality by showing \cite[Theorem 4.8]{Mirz13} that the Cheeger constant of a generic surface $X\in \M_g$ is greater than or equal to $\frac{\ln(2)}{2\pi+\ln(2)}$ as $g$ tends to infinity. More precisely, she showed that
\[\lim \limits_{g\to \infty}\Prob\left(X\in \M_g; \ \lmdx\geq \frac{1}{4} \left( \frac{\ln(2)}{2\pi+\ln(2)}\right)^2\sim 0.00247\right)=1.\]

The main result of this paper is the following.
\bt \label{main}
For any $\eps>0$, we have
\[\lim \limits_{g\to \infty}\Prob\left(X\in \M_g; \ \lmdx>\frac{3}{16}-\eps \right)=1.\]
\et

\noindent It is \emph{unknown} whether Theorem \ref{main} also holds with $\frac{3}{16}$ replaced by $\frac{1}{4}$ (\eg see \cite[Problem 10.4]{Wright-tour}). We remark here that it is very recently proved by Hide and Magee \cite{HM21} that there exists a sequence of hyperbolic surfaces $\{X_{g_n}\}$ with genus $g_n$ going to infinity such that $\lambda_1(X_{g_n})$ tends to $\frac{1}{4}$ (\eg see \cite{BBD88}, \cite[Conjecture 1.2]{Mondal15},  \cite[Conjecture 5]{WX18} and \cite[Problem 10.3]{Wright-tour}).

\begin{rem*}
 Mysteriously,
\begin{enumerate}
\item the number $\frac{3}{16}$ is the lower bound in a celebrated theorem of Selberg \cite{Sel65} saying that congruence covers of the moduli surface $\H/\rm{SL(2,\Z)}$ have first eigenvalues $\geq \frac{3}{16}$. Meanwhile in \cite{Sel65} the \emph{Selberg eigenvalue conjecture} was also proposed, which asserts that the lower bound $\frac{3}{16}$ can be improved to be $\frac{1}{4}$. Kim and Sarnak \cite{KS03} proved the congruence covers of $\H/\rm{SL(2,\Z)}$ have first eigenvalues $\geq \frac{975}{4096}$. One may also see \eg \cite{GJ78, Iwa89, LRS95, Sar95-survey, Iwa96, KS02} for intermediate results.\\

\item The number $\frac{3}{16}$ also appears in a recent work \cite{MNP20} of Magee-Naud-Puder showing that for any closed hyperbolic surface $X$, then as the covering degree tends to infinity, it holds asymptotically almost surely that a generic covering surface $X_\phi$ of $X$ satisfies that for any $\eps>0$,
\[\rm{spec(\Delta_{X_{\phi}})}\cap [0,\frac{3}{16}-\eps]=\rm{spec(\Delta_{X})}\cap [0,\frac{3}{16}-\eps]\]
where $\rm{spec(\Delta_{\bullet})}$ is the spectrum of the Laplacian operator of $\bullet$. They also \emph{conjecture} in \cite{MNP20} that the number $\frac{3}{16}$ can be replaced by $\frac{1}{4}$. In particular, for the case that $S=\mathcal{B}_2$ is the Bolza surface which is known \cite{Jenni84} that the first eigenvalue $\lambda_1(\mathcal{B}_2)\sim 3.838>\frac{3}{16}$, the result of Magee-Naud-Puder above implies that as the covering degree tends to infinity, it holds asymptotically almost surely that a generic covering surface $S'$ of $\mathcal{B}_2$ satisfies that the first eigenvalues $\lambda_1(S')> \frac{3}{16}-\epsilon$ for any $\epsilon>0$. 
\end{enumerate}
\end{rem*}

\begin{rem*}
Our proof of Theorem \ref{main} in this paper is completely different from the proof of \cite[Theorem 4.8]{Mirz13} that one may also see \cite[Page 1142]{Mirz10} of Mirzakhani's 2010 ICM report for similar results. We will use Selberg's trace formula as a tool, and then find an effective way to compute the integral of Selberg's trace formula over moduli space $\sM_g$ endowed with the \wep \ measure for large genus. 
\end{rem*}

\subsection*{Recent related works.} Recently there are several important developments on related works. For example: Theorem \ref{main} is independently proved by Lipnowski and Wright in \cite{LW21} by an alternative method; Hide in \cite{Hide21} extends Theorem \ref{main} to surfaces with punctures. Hide and Magee in \cite{HM21} show that $\rm{spec(\Delta_{X_{\phi}})}\cap [0,\frac{1}{4}-\eps]=\rm{spec(\Delta_{X})}\cap [0,\frac{1}{4}-\eps]$ when $X$ is a hyperbolic surface with punctures, which together with \cite{BBD88} shows $\lim \limits_{g\to \infty}\sup\limits_{X\in \sM_g}\lambda_1(X)=\frac{1}{4}$, which in particular shows the existence of a sequence of hyperbolic surfaces $\{X_{g}\}$ with genus $g$ going to infinity such that $\lambda_1(X_{g})$ tends to $\frac{1}{4}$.\\

Our method also yields the following estimate on the density of eigenvalues below $\frac{1}{4}$ of random surfaces for large genus, which is a also a generalization of Theorem \ref{main}.
\begin{theorem}\label{main-2}
Let $X\in \sM_g$ be a hyperbolic surface of genus $g$ and denote
\[0=\lambda_0(X)<\lambda_1(X)\leq\lambda_2(X)\cdots \leq\lambda_{s(X)}(X)\leq \frac{1}{4}\]
the collection of eigenvalues of $X$ at most $\frac{1}{4}$ counted with multiplicity. For any $\sigma \in (\frac{1}{2},1)$, set
\[N_\sigma(X)=\#\{1\leq i \leq s(X); \ \lambda_i(X)<\sigma(1-\sigma)\}.\]
Then for any $\eps>0$, we have
\[\lim \limits_{g\to \infty} \Prob\left( X\in \M_g; \ N_\sigma(X) \leq g^{3-4\sigma+\epsilon}\right)=1.\] 
\end{theorem}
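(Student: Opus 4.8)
The plan is to bound the expectation $\E\left[N_\sigma(X)\right]$ and then conclude by Markov's inequality: since $N_\sigma\geq 0$,
\[
\Prob\left(X\in\M_g;\ N_\sigma(X)>g^{3-4\sigma+\eps}\right)\ \leq\ g^{-(3-4\sigma+\eps)}\,\E\left[N_\sigma\right],
\]
so it suffices to prove $\E\left[N_\sigma\right]\leq g^{3-4\sigma+\eps/2}$ for all large $g$. Here $\sigma=\tfrac34$ (i.e. $\sigma(1-\sigma)=\tfrac3{16}$) is the borderline case: for $\sigma>\tfrac34$ the exponent $3-4\sigma$ is negative, so $g^{3-4\sigma+\eps}<1$ eventually and the bound forces $N_\sigma(X)=0$, which (letting $\sigma\downarrow\tfrac34$) recovers Theorem~\ref{main}; the new content is the quantitative statement for $\sigma\le\tfrac34$.

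To bound $N_\sigma(X)$ pointwise I would feed Selberg's trace formula an admissible test pair $(h,g)$, $g=\widehat h$, chosen --- writing $\lambda_j=\tfrac14+r_j^2$ and $\alpha=\sigma-\tfrac12\in(0,\tfrac12)$ --- so that: (i) $h(r)\geq 0$ for $r\in\R\cup i[-\tfrac12,\tfrac12]$; (ii) $h(it)\geq 1$ for $t\in[\alpha,\tfrac12]$; (iii) $h$ is as concentrated near $r=0$ on the real axis as possible while staying large on the segment $i[\alpha,\tfrac12]$, positivity in (i) being built into the construction. Since an eigenvalue $\lambda_j<\sigma(1-\sigma)$ has $r_j=it_j$ with $t_j\in(\alpha,\tfrac12)$, conditions (i)--(ii) and positivity of every spectral term give
\[
N_\sigma(X)\ \leq\ \sum_j h(r_j(X))\ =\ \frac{\area(X)}{4\pi}\int_{\R}h(r)\,r\tanh(\pi r)\,dr\ +\ \sum_{\gamma}\frac{\ell_{\gamma_0}}{2\sinh(\ell_\gamma/2)}\,g(\ell_\gamma),
\]
the last sum over closed geodesics $\gamma$ with underlying primitive $\gamma_0$. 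The first (identity) term is deterministic; as $\area(X)=4\pi(g-1)$, estimating $\int_{\R}h(r)\,r\tanh(\pi r)\,dr$ in terms of the profile of $h$ shows it is $\leq g^{3-4\sigma+o(1)}$ once $h$ is correctly calibrated, and this calibration is exactly what pins down the relevant length scale, roughly $\ell\lesssim(4+o(1))\ln g$, matching the $4\ln g$ in the diameter bound.

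The heart of the matter is the geometric term, which I would average over $\M_g$ via Mirzakhani's integration formula \cite{Mirz07,Mirz07-int}. Sorting geodesics by topological type, the $\E$ of the simple non-separating primitive contribution equals $\frac{1}{2V_g}\int_0^\infty \frac{\ell\,g(\ell)}{2\sinh(\ell/2)}\,\ell\,V_{g-1,2}(\ell,\ell)\,d\ell$, while simple separating geodesics (suppressed by $\sum_{g_1+g_2=g}V_{g_1,1}V_{g_2,1}/V_g=O(1/g)$), proper powers (extra decay of $1/\sinh(k\ell/2)$), and non-simple geodesics (smaller volume polynomials) give lower-order amounts to be bounded crudely. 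For the main term one inserts Mirzakhani's volume asymptotics with an \emph{effective} error, $V_{g-1,2}(\ell,\ell)/V_g=\tfrac{4\sinh^2(\ell/2)}{\ell^2}(1+\mathrm{error})$ uniformly for $\ell$ up to $\approx 4\ln g$; the $\sinh^2(\ell/2)$ then cancels the $1/\sinh(\ell/2)$ from the trace formula, leaving $\int_0^\infty g(\ell)\sinh(\ell/2)\,d\ell$ plus the error, and both must be kept $\leq g^{3-4\sigma+o(1)}$. I expect this to be the main obstacle: the identity term forces the length support of the test function out to $\ell\approx 4\ln g$, where the naive size of the geometric term is of order $e^{\ell/2}\asymp g^{2}$, overshooting the target $g^{3-4\sigma}$ by a power of $g$ after the normalization of $h$; recovering that power requires both (a) an essentially optimal choice of $h$ so that the weighted length integral exhibits the right cancellation rather than picking up the full $\sinh$-weight near $\ell\approx 4\ln g$, and (b) sharp explicit upper bounds on the Weil--Petersson volume polynomials $V_{g-1,n}(\ell)$ for $\ell$ of size $\ln g$, fine enough to control the error across that whole range. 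Carrying out this effective evaluation of the $\M_g$-integral of the geometric side of Selberg's trace formula is the core of the argument; granting it, $\E\left[N_\sigma\right]\leq g^{3-4\sigma+\eps/2}$ and hence Theorem~\ref{main-2} follow.
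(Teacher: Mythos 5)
Your overall architecture --- insert a test function of support length $T\approx 4\ln g$ into Selberg's trace formula, sort closed geodesics by topological type, average over $\M_g$ via Mirzakhani's integration formula, and finish by Markov's inequality on an expectation --- matches the paper, and your observation that the identity term forces $T\approx 4\ln g$ is correct. But the proposal as written has two genuine gaps.

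First, the pointwise bound $N_\sigma(X)\le\sum_j h(r_j(X))$ keeps the $j=0$ term $h(\textbf{i}/2)$ on the right, and for the standard construction (the MNP family used in the paper, or anything comparable with $\widehat h=g$ supported in $[-T,T]$ and $h(\textbf{i}\alpha)\gtrsim 1$) one has $h(\textbf{i}/2)\asymp e^{(1/2-\alpha)T}\approx g^{4-4\sigma}$ --- a full power of $g$ above $g^{3-4\sigma}$. Hence even the best possible estimate for $\E[\sum_j h(r_j)]$ cannot beat $g^{4-4\sigma}$. You notice an overshoot by a power of $g$ and hope that ``an essentially optimal choice of $h$'' will make $\int g(\ell)\sinh(\ell/2)\,d\ell$ small; but that integral is $h(\textbf{i}/2)$ up to an $O(1)$ error and cannot be shrunk while keeping $h(\textbf{i}\alpha)\gtrsim 1$. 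The paper's mechanism is different and is indispensable here: it keeps $\widehat{\phi_T}(\textbf{i}/2)$ explicitly on the spectral side and then shows (Proposition~\ref{IV}) that the simple non-separating geometric term $\E[\mathrm{IV}]$ equals $\widehat{\phi_T}(\textbf{i}/2)$ up to $O(g(\ln g)^2)$, so the two $g^2$-sized quantities cancel exactly and only the residual enters the Markov step. Your proposal needs this cancellation, not a cleverer $h$.

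Second, relegating the non-simple primitive closed geodesics to ``lower-order amounts to be bounded crudely'' misses where the real difficulty lies: in the paper that contribution is Term~$\mathrm{V}$ and is the hardest part. Resolving intersections (Proposition~\ref{ns-sub}) attaches to $\gamma$ a filling subsurface $Y=X(\gamma)$ with $\ell(\partial Y)\le 2\ell_\gamma\le 2T$. When $|\chi(Y)|$ is small, the Weil--Petersson suppression from Mirzakhani's formula is only $V_g/g^{|\chi(Y)|}$, which can be as weak as $V_g/g$, and the crude multiplicity bound $\#_f(Y,T)\prec Te^T\approx g^4\ln g$ (Lemma~\ref{count-3}) then makes the Term~$\mathrm{V}$ average blow up by many powers of $g$. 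What saves the argument is the new counting result Theorem~\ref{sec-count}, $\#_f(Y,T)\le c(\eps_1,m)\,e^{T-\frac{1-\eps_1}{2}\ell(\partial Y)}$, which exploits that a filling geodesic of length $\le T$ forces $\ell(\partial Y)\le 2T$, so the count collapses to $\approx e^{\eps_1 T}=g^{4\eps_1}$ when $\ell(\partial Y)$ is near $2T$. This theorem, proved in Section~\ref{section-new count}, is the paper's central technical novelty; without an analogue of it the plan you outline for the non-simple geodesics does not close.
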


\begin{rem*}
Choose $\sigma=\frac{3}{4}+\eps$, Theorem \ref{main-2} implies Theorem \ref{main}. 
\end{rem*}

\begin{rem*}
One may see similar estimates in \cite[Theorem 11.7]{Iwa02} by Iwaniec for congruence covers of the moduli surface $\H/\rm{SL(2,\Z)}$, and in \cite[Theorem 1.7]{MNP20} by Magee-Naud-Puder for random cover surfaces.
\end{rem*}

Let $X\in \sM_g$ be a hyperbolic surface of genus $g$. A simple area argument tells that the \emph{diameter} $\diam(X)$ of $X$ satisfies $\diam(X)\geq \ln(4g-2)$. Surprisingly, Mirzakhani proved in \cite[Theorem 4.10]{Mirz13} that
\[\lim \limits_{g\to \infty}\Prob\left(X\in \M_g; \ \diam(X)< 40\ln(g)\right)=1.\]
Combine Theorem \ref{main} and a recent observation of Magee \cite{Magee-20}, we extend the above bound of Mirzakhani as following.
\bt\label{mt-diam}
For any $\eps>0$, we have
\[\lim \limits_{g\to \infty}\Prob\left(X\in \M_g; \ \diam(X)<(4+\eps)\ln(g) \right)=1.\]
\et

\subsection*{Strategy on the proof of Theorem \ref{main}.} The proofs of Theorem \ref{main} and \ref{main-2} are almost the same. We just briefly introduce the idea and novelty in the proof of Theorem \ref{main}. We will use Selberg's trace formula as a tool, and then combine similar ideas in \cite{Mirz07, NWX20} to find an effective way to compute the integral of  Selberg's trace formula over moduli space $\sM_g$. In the procedure of resolving intersections of non-simple closed geodesics, which is also the most difficult part in the proof of Theorem \ref{main}, we will prove a new counting result Theorem \ref{sec-count} for filling closed geodesics to control the multiplicity occurring in the resolution procedure. More precisely, let $X\in \sM_g$ be a closed hyperbolic surface of genus $g$, we rewrite Selberg's trace formula in the following form (\eg see \eqref{sel-split}) 

\small{\bear \label{sel-split-sec}
&  \sum_{k=0}^{\infty}\widehat{\phi_T}(r_k(X))=\underbrace{(g-1)\int_{-\infty}^\infty r \widehat{\phi_T}(r)\tanh(\pi r)dr}_{\rm I}\\
&+\underbrace{\sum_{\gamma \in \mathcal P(X)}\sum_{k=2}^\infty\frac{\ell_{\gamma}(X)}{2\sinh \left(\frac{k \ell_{\gamma}(X)}{2} \right)}\phi_T(k \ell_{\gamma}(X))}_{\rm{II}} +\underbrace{\sum_{\gamma \in \mathcal{P}^{s}_{sep}(X)} \frac{\ell_{\gamma}(X)}{2\sinh \left(\frac{\ell_{\gamma}(X)}{2} \right)}\phi_T( \ell_{\gamma}(X))}_{\rm{III}}\nonumber \\
&+\underbrace{\sum_{\gamma \in \mathcal{P}^{s}_{nsep}(X)} \frac{\ell_{\gamma}(X)}{2\sinh \left(\frac{\ell_{\gamma}(X)}{2} \right)}\phi_T( \ell_{\gamma}(X))}_{\rm{IV}} +\underbrace{\sum_{\gamma \in \mathcal{P}^{ns}(X)} \frac{\ell_{\gamma}(X)}{2\sinh \left(\frac{\ell_{\gamma}(X)}{2} \right)}\phi_T( \ell_{\gamma}(X))}_{\rm{V}} \nonumber 
\eear}
where
\begin{eqnarray*}
r_k(X)\overset{\text{def}}{=}
\begin{cases}
\sqrt{\lambda_k(X)-\frac{1}{4}}, &\text{if $\lambda_k(X)>\frac{1}{4}$};\\
\textbf{i}\cdot \sqrt{-\lambda_k(X)+\frac{1}{4}}, & \text{if $\lambda_k(X)\leq\frac{1}{4}$}.
\end{cases}
\end{eqnarray*}

\noindent Here we briefly introduce the five terms on the $\mathrm{RHS}$ above. One may see Section \ref{sec-proof-1} for more details. Term-I only depends on the genus $g$; Term -II is a summation over all non-primitive closed geodesics in $X$; Term-III is a summation over all primitive simple separating closed geodesics; Term-IV is a summation over all primitive simple non-separating closed geodesics; the last one Term-V is a summation over all primitive non-simple closed geodesics. We will give some efficient upper bounds for these five terms case by case.

First one may choose a suitable even function $\phi_T(\cdot)$ as shown in \cite{MNP20} (or see Section \ref{section trace}), where $T=4\ln(g)$, such that 
\begin{enumerate}
\item[(a)] $\Supp(\phi_T)=(-T,T)$;

\item[(b)] $\widehat{\phi_T}\geq 0$ on $\R\cup \textbf{i}\R$;

\item[(c)] for any $\eps>0$ and $X\in \sM_g$ with $\lmdx \leq \frac{3}{16}-\eps$, then there exists a constant $C_\eps>0$ independent of $g$ such that 
\be
\widehat{\phi_{T}}(r_1(X))\geq C_\eps g^{1+C_\eps}  \ln(g).
\ene
\end{enumerate}

 Next we take an integral of \eqref{sel-split-sec} over $\sM_g$. Let $V_g$ be the \wep \ volume of $\sM_g$. It is easy to see that (see Proposition \ref{I})
\be
\frac{\int_{\sM_g}\rm{I} \,dX}{V_g} \prec \frac{g}{\ln(g)}.
\ene 
Split $\sM_g$ into thick and thin parts, and then use a result in \cite{Mirz13} it is not hard to show that (see Proposition \ref{II})
\be
\frac{\int_{\sM_g}\rm{II} \,dX}{V_g} \prec g \ln(g)^2.
\ene 
Applying the Integration Formula of Mirzakhani \cite{Mirz07}, one may show that (see Proposition \ref{III})
\be
\frac{\int_{\sM_g}\rm{III} \,dX}{V_g} \prec g.
\ene 
Again by applying the Integration Formula of Mirzakhani \cite{Mirz07}, one may also show that (see Proposition \ref{IV})
\be
\left|\frac{\int_{\sM_g}\rm{IV} \,dX}{V_g}- \widehat{\phi_{T}}(r_0(X))\right|=\left|\frac{\int_{\sM_g}\rm{IV} \,dX}{V_g}- \widehat{\phi_{T}}(\frac{\textbf{i}}{2})\right|\prec g\ln(g)^2.
\ene 
Term \rm{V} in \eqref{sel-split-sec} is the most difficult part to study in the proof of Theorem \ref{main}, which is also the essential part of this work. Similar as in \cite{MP19, NWX20} we will first resolve intersections of non-simple closed geodesics where we will encounter a new essential multiplicity issue. Then we will show that (see Theorem \ref{V}) for any $\eps_1>0$ there exists a constant $c(\eps_1)>0$ only depending on $\eps_1$ such that
\be  \label{sec-ns-7}
\frac{\int_{\sM_g}\rm{V} \,dX}{V_g} \prec \left(g\ln(g)^3+c_1(\eps_1)g^{1+\eps_1}\ln(g)^{67} \right).
\ene

Finally after taking an integral of \eqref{sel-split-sec} over $\sM_g$, we only keep the first two terms in the \rm{LHS} of \eqref{sel-split-sec} and drop the remaining terms in the \rm{LHS} of \eqref{sel-split-sec}, and then combine all the equations \eqref{sel-split-sec}--\eqref{sec-ns-7} above to get
\be
\quad \ \Prob\left(X\in \M_g; \ \lmdx\leq\frac{3}{16}-\eps \right) \prec \frac{g(\ln(g))^3+c_1(\eps_1)g^{1+4\eps_1}(\ln(g))^{67}  }{C_\eps \ln(g) g^{1+C_\eps}}.\nonumber
\ene
Let $g\to \infty$, then Theorem \ref{main} follows by choosing $\eps_1>0$ with $4\eps_1<C_\eps$.\\

We enclose this introduction by the following new counting result for filling closed geodesics on compact hyperbolic surfaces with non-empty geodesic boundaries, which is essential in resolving the multiplicity issue in the proof of \eqref{sec-ns-7} and also interesting by itself. The proof will be postponed until Section \ref{section-new count}.
\begin{theorem}[\bf Key Counting]\label{sec-count}
For any $\eps_1>0$ and $m=2g-2+n\geq 1$, there exists a constant $c(\eps_1,m)>0$ only depending on $m$ and $\eps_1$ such that for all $T>0$ and any compact hyperbolic surface $X$ of genus $g$ with $n$ boundary simple closed geodesics, we have
\begin{equation*}
\#_f(X,T)\leq c(\eps_1,m) \cdot e^{T-\frac{1-\eps_1}{2}\ell(\partial X)}.
\end{equation*}
Where $\#_f(X,T)$ is the number of filling closed geodesics in $X$ of length $\leq T$ and $\ell(\partial X)$ is the total length of the boundary closed geodesics of $X$.
\end{theorem}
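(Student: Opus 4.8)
\noindent\emph{Proof strategy.} The plan is to split a filling closed geodesic into a ``peripheral part'', which is geometrically forced to wrap once around every boundary geodesic, and a genuinely free ``essential part'', to count these two pieces separately, and to keep the combinatorial bookkeeping uniform by an induction on the complexity $m=2g-2+n$.

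\smallskip
\noindent\emph{Step 1: geometry near the boundary.} Let $\gamma$ be a filling closed geodesic on $X$, taut and regarded as a four-valent graph with vertices at its self-intersections. By definition of filling, every component of $X\setminus\gamma$ is an open disk or a half-open annulus neighborhood of a boundary geodesic $\beta_i$. For the annular component $A_i$ abutting $\beta_i$, its non-$\beta_i$ boundary $\delta_i$ traverses sub-arcs of $\gamma$ and is freely homotopic to $\beta_i$, so $\ell(\delta_i)\ge\ell(\beta_i)=\ell_i$; since each non-vertex point of $\gamma$ lies on the frontier of at most two complementary regions, each sub-arc of $\gamma$ is reused by at most two of the $\delta_i$, whence $\ell(\partial X)=\sum_i\ell_i\le\sum_i\ell(\delta_i)\le 2\,\ell(\gamma)$. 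I would refine this with the collar lemma together with an Euler-characteristic count: since $\gamma$ is taut it has no bigons, and the filling condition caps the winding of $\gamma$ inside any standard half-collar (more winding would create a non-disk, non-peripheral complementary region), so the trace of $\gamma$ on the half-collar of each $\beta_i$ consists of boundedly many strands, winds boundedly often, and carries length at least $\ell_i$. Consequently $\gamma$ has a canonical cyclic decomposition into ``collar excursions'' — one bounded-complexity family per boundary component, of total length at least $\ell(\partial X)$ up to a multiplicative $(1-\eps_1)$ — alternating with ``bridges'', i.e.\ geodesic arcs through the complement of the half-collars, whose total length is therefore at most $T-(1-\eps_1)\ell(\partial X)$.

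\smallskip
\noindent\emph{Step 2: counting.} Double $X$ along $\partial X$ to a closed hyperbolic surface $DX$ of genus $2g+n-1=m+1$; every filling geodesic of $X$ is then an interior closed geodesic of $DX$, so all auxiliary constants depend on $m$ alone. By Step~1 a filling geodesic of length $\le T$ is determined by (i) the pattern of interleaving of its collar excursions with its bridges together with the combinatorial type of each excursion, which costs only a factor depending on $m$, and (ii) the concatenation of its bridges, a geodesic broken path through the complement of the half-collars of length $L\le T-(1-\eps_1)\ell(\partial X)$. A standard lattice-point count in $\mathbb{H}$ based at a thick-part point, made uniform over the moduli space of complexity $m$ by the collar lemma and a covering argument, bounds the number of such broken paths of length $\le L$ by $c(m)e^{L}$. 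Multiplying the two contributions and using $L\le T-(1-\eps_1)\ell(\partial X)\le T-\tfrac{1-\eps_1}{2}\ell(\partial X)$ gives $\#_f(X,T)\le c(\eps_1,m)\,e^{\,T-\frac{1-\eps_1}{2}\ell(\partial X)}$; the slack between this and the sharper-looking $e^{T-\ell(\partial X)}$ is exactly what absorbs the ragged cases of Step~1 and the uniformity issues below.

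\smallskip
\noindent\emph{Step 3: the main obstacle.} The real difficulty is the multiplicity control hidden in Steps~1--2: a priori neither the number of collar excursions, nor the interleaving pattern, nor the lattice-point constant is visibly bounded in terms of $m$ alone, and one must contend with short boundary geodesics (whose half-collars are wide, admitting many cheap windings) and with pinched surfaces (where the lattice-point count degenerates), not to mention that $A_i$ need not sit inside the standard half-collar. I would organize this as an induction on $m$: isolate an outermost collar excursion around some $\beta_i$, unwrap it (remove one wrapping and splice the loose ends of $\gamma$) to produce a filling-type geodesic on a surface of strictly smaller complexity, apply the inductive bound, and sum over the number of unwrappings; each unwrapping removes at least $(1-\eps_1)\ell_i$ from the length budget while multiplying the count by a controlled factor, so the series converges, and checking that the accumulated constant remains a function of $(m,\eps_1)$ only — uniformly as some $\ell_i\to 0$ — is the delicate point, and is precisely what forces the loss from $1$ to $\tfrac{1-\eps_1}{2}$ in the exponent and the presence of the parameter $\eps_1$. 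This uniform multiplicity bound is the essential content of the theorem.
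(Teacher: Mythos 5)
Your approach is fundamentally different from the paper's, and it has real gaps. The paper does not try to decompose the filling geodesic $\gamma$ into collar excursions and bridges and count these combinatorially. Instead it proves a \emph{surface deformation} result (Theorem~\ref{thm filling curve decrease}): if the total boundary length is large, one can shrink the boundary by $\Delta$ to get a surface $Y$ of the same topological type such that \emph{every} filling geodesic shortens by at least $\tfrac{1}{2}(1-\eps_1)\Delta$. This is established by repeatedly applying a one-step lemma (Proposition~\ref{prop filling curve decrease a little}): pick a carefully-chosen pair of pants $\mathcal P\subset X$ containing one or two of the longest boundary geodesics, shrink the boundary inside $\mathcal P$ by $\delta$ keeping $X\setminus\mathcal P$ fixed, and show via a case analysis on the fourteen (resp.\ six) types of geodesic segments $J_j\subset\mathcal P$ and hyperbolic trigonometry (Lemmas~\ref{lem derivative in rectangle}--\ref{lem type 2.4}) that $\ell(J_j)$ drops by at least $\tfrac{1}{2}(1-O(m/b))\delta$ on one of the decreasing types, which $J_j$ must contain. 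Then $\#_f(X,T)\leq\#_f\bigl(Y,T-\tfrac{1-\eps_1}{2}\Delta\bigr)$ and the soft bound of Lemma~\ref{count-3} on $Y$ gives the result. All multiplicity control is thereby delegated to the single inequality $\ell_\eta(X)-\ell_\eta(Y)\geq\tfrac{1}{2}(1-\eps_1)\Delta$, with no need to track combinatorics of windings or excursion patterns.

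Two specific gaps in your proposal. First, in Step~1 the assertion that ``the filling condition caps the winding of $\gamma$ inside any standard half-collar'' and that the trace on the half-collar ``consists of boundedly many strands, winds boundedly often'' is false. A filling geodesic can wind around a short boundary geodesic $\beta_i$ an arbitrary number of times without creating a non-disk, non-annular complementary region; the number of strands and windings is controlled by $T/\ell_i$, not by $m$ alone, so the combinatorial factor in Step~2(i) is \emph{not} a function of $m$. Your lower bound that the collar excursions carry total length $\geq(1-\eps_1)\ell(\partial X)$ is fine as a lower bound, but the upper-bound control you need for the count does not follow. Second, your Step~3 induction does not close: ``unwrap one wrapping and splice'' produces a (shorter) curve on the \emph{same} surface $X$ — it does not reduce the complexity $m=2g-2+n$, so the proposed induction on $m$ has no recursive structure, and the base case (a pair of pants, $m=1$) is itself nontrivial as the paper notes. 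Without an alternative mechanism for the uniformity in $m$ and $\eps_1$ — the very thing you acknowledge as ``the delicate point'' and ``the essential content of the theorem'' — the argument remains a heuristic rather than a proof. The excursion/bridge intuition is reasonable motivation, but the paper sidesteps the combinatorial explosion entirely by deforming the surface rather than dissecting the curve.
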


\begin{rem*}
A filling closed geodesic in $X$ always has length greater than  $\frac{\ell(\partial X)}{2}$. The importance of Theorem \ref{sec-count} above is that the boundary length $\ell(\partial X)$ is allowed to depend on $T$. In particular, if $\ell(\partial X)$ is closed to $2T$, then as $T\to \infty$, the growth rate of the number $\#_f(X,T)$ is no more than $e^{\eps_1 T }$, which is much less than the general bound $e^T$. To our best knowledge, Theorem \ref{sec-count} is new even for the case $X \cong S_{0,3}$ is a pair of pants in which a non-trivial closed geodesic is always filling. 
\end{rem*}

\subsection*{Notations.} For any two nonnegative functions $f$ and $h$ (may be of multi-variables), we say $f\prec h$ if there exists a uniform constant $C>0$ such that $f\leq Ch$. And we say $f\asymp h$ if $f\prec h$ and $h\prec f$. For any $r>0$, we denote by $[r]$ the largest integer part of $r$.

\subsection*{Plan of the paper.} In Sections \ref{section preliminaries}, \ref{section count}, \ref{section wp volume} and \ref{section trace}, we review the backgrounds, introduce some notations, and prove several lemmas. We prove the relative easy parts in the proofs of Theorem \ref{main} and \ref{main-2} in Section \ref{sec-proof-1}, i.e., the upper bounds for integrals of Term \rm{I}---Term \rm{IV} over the moduli space $\sM_g$. Then we prove the difficult part, i.e., the upper bound for $\int_{\sM_g} \rm{V}\,dX$, and then complete the proofs of Theorem \ref{main}, \ref{main-2} and \ref{mt-diam} in Section \ref{sec-main-1}, assuming the essential new counting result Theorem \ref{sec-count} which is proved in Section \ref{section-new count}. 

\subsection*{Acknowledgements.}The authors would like to thank Yang Shen for helpful discussions on the key counting result Theorem \ref{sec-count}. They also would like to thank Prof. S. T. Yau for his interests on this work. Both authors are supported by the NSFC grant No. $12171263$, and the first named author is also partially supported by a grant from Tsinghua University. We are also grateful to the referees for helpful comments and suggestions which improve this article.

\setcounter{tocdepth}{1}
\tableofcontents

\section{Preliminaries}\label{section preliminaries}
In this section, we set our notations and review the relevant background material about moduli space of Riemann surfaces, \wep \ metric and Mirzakhani's Integration Formula.

\subsection{Riemann surfaces.} \label{sec:wp background}
We denote by $S_{g,n}$ an oriented surface of genus $g$ with $n$ punctures or boundaries where $2g+n\geq 3$. Let $\sT_{g,n}$ be the Teichm\"uller space of surfaces of genus $g$ with $n$ punctures or boundaries, which we consider as the equivalence classes under the action of the group $\Diff_0(S_{g,n})$ of diffeomorphisms isotopic to the identity of the space of hyperbolic surfaces $X=(S_{g,n},\sigma(z)|dz|^2)$. The moduli space of Riemann surfaces $\sM_{g,n}$ is defined as $\sT_{g,n}/\Mod_{g,n}$ where $\Mod_{g,n}\overset{\text{def}}{=}\Diff^+(S_{g,n})/\Diff_0(S_{g,n})$ is the so-called mapping class group of $S_{g,n}$. If $n=0$, we write $\sM_g = \sM_{g,0}$ for simplicity. Given $\ve{L}=(L_1,\cdots, L_n)\in\R_{\geq0}^n$, the weighted Teichm\"uller space $\T_{g,n}(\ve{L})$ parametrizes hyperbolic surfaces $X$ marked by $S_{g,n}$ such that for each $i=1,\cdots,n$,
\begin{itemize}
	\item if $L_i=0$, the $i^\text{th}$ puncture of  $X$ is a cusp;
	\item if $L_i>0$, one can attach a circle to the $i^\text{th}$ puncture of $X$ to form a geodesic boundary loop of length $L_i$. 
\end{itemize}
The weighted moduli space $\M_{g,n}(\ve{L})\overset{\text{def}}{=}\T_{g,n}(\ve{L})/\Mod_{g,n}$ then parametrizes unmarked such surfaces.
\subsection{The Weil-Petersson metric}
Associated to a pants decomposition of $S_{g,n}$, the \emph{Fenchel-Nielsen coordinates}, given by $X \mapsto (\ell_{\alpha_i}(X),\tau_{\alpha_i}(X))_{i=1}^{3g-3+n}$, are global coordinates for the Teichm\"uller space $\sT_{g,n}$ of $S_{g,n}$. Where $\{\alpha_i\}_{i=1}^{3g-3+n}$ are disjoint simple closed geodesics, $\ell_{\alpha_i}(X)$ is the length of $\alpha_i$ on $X$ and $\tau_{\alpha_i}(X)$ is the twist along $\alpha_i$ (measured in length). Wolpert in \cite{Wolpert82} showed that the \wep \ symplectic structure has a natural form in Fenchel-Nielsen coordinates:
\bt[Wolpert]\label{wol-wp}
The \wep \ symplectic form $\omega_{\WP}$ on $\sT_{g,n}$ is given by
\[\omega_{\WP}=\sum_{i=1}^{3g-3+n}d\ell_{\alpha_i}\wedge d\tau_{\alpha_i}.\]
\et

We mainly work with the \emph{Weil-Petersson volume form}
$$
\dvol_{\WP}\overset{\text{def}}{=}\tfrac{1}{(3g-3+n)!}\underbrace{\omega_{\WP}\wedge\cdots\wedge\omega_{\WP}}_{\text{$3g-3+n$ copies}}~.
$$
It is a mapping class group invariant measure on $\T_{g,n}$, hence is the lift of a measure on $\M_{g,n}$, which we also denote by $\dvol_{\WP}$. The total volume of $\M_{g,n}$ is finite and we denote it by $V_{g,n}$. The Weil-Petersson volume form is also well-defined on the weighted moduli space $\M_{g,n}(\ve{L})$ and its total volume, denoted by $V_{g,n}(\ve{L})$, is finite.  

Following \cite{Mirz13}, we view a quantity $f:\M_g\to\R$ as a random variable on $\M_g$ with respect to the probability measure $\Prob$ defined by normalizing $\dvol_{\WP}$. Namely,
$$
\Prob(\mathcal{A})\overset{\text{def}}{=}\frac{1}{V_g}\int_{\M_g}\mathbf{1}_{\mathcal{A}}dX
$$
where $\mathcal{A}\subset\M_g$ is any Borel subset, $\mathbf{1}_\mathcal{A}:\M_g\to\{0,1\}$ is its characteristic function, and where $dX$ is short for $\dvol_{\WP}(X)$. One may see the book \cite{Wolpert-book} for recent developments on \wep \ geometry, and see the recent survey \cite{Wright-tour} for works of Mirzakhani including her coworkers on random surfaces in the \wep \ model. 

In this paper, we view the first eigenvalue function as a random variable on $\sM_g$, and study its asymptotic behavior as $g\to \infty$. One may also see \cite{DGZZ20-multi, GMST19, GPY11, Monk20, MP19, NWX20, PWX20} for related interesting topics.

\subsection{Mirzakhani's Integration Formula}

In this subsection we recall an integration formula in \cite{Mirz07, Mirz13}, which is essential in the study of random surfaces in the \wep \ model.

Given any non-peripheral closed curve $\gamma$ on a topological surface $S_{g,n}$ and $X\in\T_{g,n}$, we denote by $\ell_\gamma(X)$ the hyperbolic length of the unique closed geodesic in the homotopy class $\gamma$ on $X$. We also write $\ell(\gamma)$ for simplicity if we do not need to emphasize the surface $X$. Let $\Gamma=(\gamma_1,\cdots,\gamma_k)$ be an ordered k-tuple where the $\gamma_i$'s are distinct disjoint homotopy classes of nontrivial, non-peripheral, simple closed curves on $S_{g,n}$. We consider the orbit containing $\Gamma$ under $\Mod_{g,n}$ action
\begin{equation*}
\mathcal O_{\Gamma} = \{(h\cdot\gamma_1,\cdots,h\cdot\gamma_k) ; \ h\in\Mod\nolimits_{g,n}\}.
\end{equation*}
Given a function $F:\R^k_{\geq0} \rightarrow \R_{\geq0}$ one may define a function on $\M_{g,n}$
\begin{eqnarray*}
F^\Gamma:\M_{g,n} &\rightarrow& \R \\
X &\mapsto& \sum_{(\alpha_1,\cdots,\alpha_k)\in \mathcal O_\Gamma} F(\ell_{\alpha_1}(X),\cdots,\ell_{\alpha_k}(X)).
\end{eqnarray*}

\noindent Assume $S_{g,n}-\cup\gamma_j = \cup_{i=1}^s S_{g_i,n_i}$. For any given $\boldsymbol{x}=(x_1,\cdots,x_k)\in \R^k_{\geq0}$, we consider the moduli space $\M(S_{g,n}(\Gamma); \ell_{\Gamma}=\boldsymbol{x})$ of hyperbolic Riemann surfaces (possibly disconnected) homeomorphic to $S_{g,n}-\cup\gamma_j$ with $\ell_{\gamma_i^1} = \ell_{\gamma_i^2} =x_i$ for $i=1,\cdots,k$, where $\gamma_i^1$ and $\gamma_i^2$ are the two boundary components of $S_{g,n}-\cup\gamma_j$ given by cutting along $\gamma_i$. We consider the volume
\begin{equation*}
V_{g,n}(\Gamma,\boldsymbol{x}) = \Vol\nolimits_{\rm WP}\big(\M(S_{g,n}(\Gamma); \ell_{\Gamma}=\boldsymbol{x})\big).
\end{equation*}
In general
\begin{equation*}
V_{g,n}(\Gamma,\boldsymbol{x}) = \prod_{i=1}^s V_{g_i,n_i}(\boldsymbol{x}^{(i)})
\end{equation*}
where $\boldsymbol{x}^{(i)}$ is the list of those coordinates $x_j$ of $\boldsymbol{x}$ such that $\gamma_j$ is a boundary component of $S_{g_i,n_i}$. And $V_{g_i,n_i}(\boldsymbol{x}^{(i)})$ is the Weil-Petersson volume of the moduli space $\M_{g_i,n_i}(\boldsymbol{x}^{(i)})$. Mirzakhani used Theorem \ref{wol-wp} of Wolpert to get the following integration formula. One may refer to \cite[Theorem 7.1]{Mirz07} or \cite[Theorem 2.2]{MP19} or \cite[Theorem 4.1]{Wright-tour}.

\begin{theorem}\label{Mirz int formula}
For any $\Gamma=(\gamma_1,\cdots,\gamma_k)$, the integral of $F^\Gamma$ over $\M_{g,n}$ with respect to Weil-Petersson metric is given by
\begin{equation*}
\int_{\M_{g,n}} F^\Gamma(X)dX =
C_\Gamma\int_{\R^k_{\geq0}} F(x_1,\cdots,x_k)V_{g,n}(\Gamma,\boldsymbol{x}) \boldsymbol{x}\cdot d\boldsymbol{x}
\end{equation*}
where $\boldsymbol{x}\cdot d\boldsymbol{x} = x_1\cdots x_k dx_1\wedge\cdots\wedge dx_k$ and the constant $C_\Gamma \in (0,1]$ only depends on $\Gamma$. Moreover, $C_\Gamma=\frac{1}{2}$ if $g>2$ and $\Gamma$ is a simple non-separating closed curve. 
\end{theorem}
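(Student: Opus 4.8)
The plan is to deduce this from Wolpert's formula (Theorem~\ref{wol-wp}) by an unfolding argument over an intermediate cover of $\M_{g,n}$. Let $\mathrm{Stab}(\Gamma)\subset\Mod_{g,n}$ be the subgroup of mapping classes fixing each isotopy class $\gamma_i$, and form the intermediate orbifold cover $\M_{g,n}^{\Gamma}:=\T_{g,n}/\mathrm{Stab}(\Gamma)$ together with the natural covering map $\pi^{\Gamma}\colon\M_{g,n}^{\Gamma}\to\M_{g,n}$, which is a local Weil--Petersson isometry. The fiber of $\pi^{\Gamma}$ over a generic $X$ is $\mathrm{Stab}(\Gamma)\backslash\Mod_{g,n}$, which is in bijection with the orbit $\mathcal O_{\Gamma}$, while the function $X\mapsto F(\ell_{\gamma_1}(X),\dots,\ell_{\gamma_k}(X))$ is $\mathrm{Stab}(\Gamma)$-invariant and so descends to a function $\widetilde F\geq 0$ on $\M_{g,n}^{\Gamma}$. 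By construction the pushforward of $\widetilde F$ along $\pi^{\Gamma}$ is exactly $F^{\Gamma}$, so since $F\geq 0$ (Tonelli),
\[
\int_{\M_{g,n}}F^{\Gamma}(X)\,dX=\int_{\M_{g,n}^{\Gamma}}\widetilde F(Y)\,dY ,
\]
and everything reduces to computing the right-hand integral.

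To compute it I would cut along $\Gamma$ and split the volume form. Complete $\Gamma=(\gamma_1,\dots,\gamma_k)$ to a pants decomposition $(\gamma_1,\dots,\gamma_{3g-3+n})$ of $S_{g,n}$; this gives Fenchel--Nielsen coordinates $(\ell_i,\tau_i)_{i=1}^{3g-3+n}$ on $\T_{g,n}$, and by Theorem~\ref{wol-wp} one has $\dvol_{\WP}=\bigwedge_{i=1}^{3g-3+n}d\ell_i\wedge d\tau_i$. Writing $\ve{x}=(\ell_1,\dots,\ell_k)$, the remaining data $\big((\tau_1,\dots,\tau_k),(\ell_i,\tau_i)_{i>k}\big)$ restricted to the slice $\{\ell_{\gamma_i}=x_i\}$ are precisely the $k$ twist parameters of $\Gamma$ together with Fenchel--Nielsen coordinates for the cut surface $S_{g,n}(\Gamma)$ with boundary lengths $\ve{x}$. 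Hence $\T_{g,n}$ fibers over $\R^k_{>0}$ with fiber $\R^k_{\tau}\times\T\big(S_{g,n}(\Gamma);\ell_{\Gamma}=\ve{x}\big)$, and Wolpert's formula shows the volume form splits as $\big(\prod_{i=1}^k dx_i\,d\tau_i\big)\cdot\dvol_{\WP}^{\mathrm{cut}}$.

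Finally I would quotient fiberwise by $\mathrm{Stab}(\Gamma)$. The Dehn twists $T_{\gamma_i}\in\mathrm{Stab}(\Gamma)$ act by $\tau_i\mapsto\tau_i+x_i$, and the mapping class group of the cut surface $S_{g,n}(\Gamma)$ acts on the $\T^{\mathrm{cut}}$-factor; modding out by these collapses each twist coordinate to the circle $\R/x_i\Z$ (of circumference $x_i$) and collapses $\T^{\mathrm{cut}}$ to $\M\big(S_{g,n}(\Gamma);\ell_{\Gamma}=\ve{x}\big)$, of volume $V_{g,n}(\Gamma,\ve{x})$. Since $\widetilde F$ depends only on $\ve{x}$, Fubini then gives
\[
\int_{\M_{g,n}^{\Gamma}}\widetilde F=C_{\Gamma}\int_{\R^k_{>0}}F(x_1,\dots,x_k)\Big(\prod_{i=1}^k x_i\Big)V_{g,n}(\Gamma,\ve{x})\,dx_1\cdots dx_k ,
\]
where $C_{\Gamma}\in(0,1]$ is the reciprocal of the order of the finite residual symmetry group $\mathrm{Stab}(\Gamma)\big/\langle\, T_{\gamma_1},\dots,T_{\gamma_k},\ \Mod(S_{g,n}(\Gamma))\,\rangle$ --- the residual symmetries being half-twists and involutions swapping the two sides of some $\gamma_i$. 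For a single non-separating curve $\Gamma=(\gamma)$, cutting gives $S_{g-1,2}$, the gluing datum is a point of $\M_{g-1,2}(x,x)$ and a twist in $\R/x\Z$, and the only residual symmetry is the order-two swap of the two boundary circles of $S_{g-1,2}$; when $g>2$ it acts freely on the generic fiber, so $C_{\gamma}=\tfrac12$. (For $g=2$ this swap is realized by the hyperelliptic involution, which acts trivially on $\T_2$ and hence does not halve the volume --- which is exactly why the statement requires $g>2$.)

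The genuinely delicate point is the last step: one must make precise the structure of $\mathrm{Stab}(\Gamma)$ via the change-of-coordinates principle and Birman-type exact sequences, verify that modulo Dehn twists and $\Mod(S_{g,n}(\Gamma))$ it is a \emph{finite} group acting generically freely on the fibers (so the quotient volume form really is $C_{\Gamma}$ times the product form), and discard the measure-zero orbifold loci on which $\pi^{\Gamma}$ or the cutting map is not well-behaved. By contrast the volume-form splitting is immediate from Wolpert's theorem, and the unfolding step is purely formal, so essentially all the content is in pinning down the symmetry group and hence the value of $C_{\Gamma}$.
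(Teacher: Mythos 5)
Your proposal is correct and is essentially the argument behind this statement as the paper uses it: the paper gives no proof of its own but quotes Mirzakhani (\cite[Theorem 7.1]{Mirz07}, see also \cite[Theorem 4.1]{Wright-tour} and \cite[Theorem 2.2]{MP19}), and that proof is precisely your unfolding over $\T_{g,n}/\mathrm{Stab}(\Gamma)$ combined with Wolpert's formula in Fenchel--Nielsen coordinates and the analysis of the residual symmetries, including the boundary-swap that gives $C_\Gamma=\frac{1}{2}$ for a simple non-separating curve when $g>2$ and the hyperelliptic involution explaining why $g=2$ is excluded. One bookkeeping caveat: $C_\Gamma$ is not in general the reciprocal of $|\mathrm{Stab}(\Gamma)/\langle T_{\gamma_1},\dots,T_{\gamma_k},\Mod(S_{g,n}(\Gamma))\rangle|$, because the elliptic involution of a one-holed-torus complementary piece lies in the subgroup you quotient by yet acts trivially on the cut Teichm\"uller factor while shifting the corresponding twist coordinate by a half-twist (this is the source of Mirzakhani's factor $2^{-M(\gamma)}$), so the twist circles need not have circumference exactly $x_i$ after your quotient; this only affects the value of $C_\Gamma\in(0,1]$, not the form of the statement nor the non-separating $g>2$ case needed here.
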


\begin{rem*}
In \cite[Section 4]{Wright-tour} it has a detailed argument for $C_\Gamma=\frac{1}{2}$ when $g>2$ and $\Gamma$ is a simple non-separating closed curve. 
\end{rem*}

\begin{rem*}
Given an unordered multi-curve $\gamma=\sum_{i=1}^k c_i \gamma_i$ where $\gamma_i's$ are distinct disjoint homotopy classes of nontrivial, non-peripheral, simple closed curves on $S_{g,n}$, when $F$ is a symmetric function, we can define
\begin{eqnarray*}
F_\gamma:\M_{g,n} &\rightarrow& \R \\
X &\mapsto& \sum_{\sum_{i=1}^k c_i\alpha_i \in \Mod_{g,n}\cdot \gamma} F(c_1\ell_{\alpha_1}(X),\cdots,c_k\ell_{\alpha_k}(X)).
\end{eqnarray*}

\noindent It is easy to check that
\begin{equation*}
F^\Gamma(X) = |\Sym(\gamma)| \cdot F_\gamma(X)
\end{equation*}
where $\Gamma=(c_1\gamma_1,\cdots,c_k\gamma_k)$ and $\Sym(\gamma)$ is the symmetry group of $\gamma$ defined by
\begin{equation*}
\Sym(\gamma) = \mathop{\rm Stab}(\gamma) / \cap_{i=1}^k \mathop{\rm Stab}(\gamma_i).
\end{equation*}
\end{rem*}

\section{Counting closed geodesics}\label{section count}
In this section we first briefly introduce a geodesic subsurface for a non-simple closed geodesic, and then provide several useful counting results for closed geodesics.

First we recall the following construction as in \cite{MP19, NWX20}. 

\begin{con*}
Let $X\in \sM_g$ be a hyperbolic surface and $\gamma' \subset X$ be a non-simple closed geodesic. Consider the $\eps$-neighborhood $\mathcal{N}_{\eps}(\gamma')$ of $\gamma'$ where $\eps>0$ is small enough such that $\mathcal{N}_{\eps}(\gamma')$ is homotopic to $\gamma'$ in $X$. Now we obtain a \emph{subsurface $X(\gamma')$ of geodesic boundary} by deforming each of its boundary components $\xi\subset\partial(\mathcal{N}_{\eps}(\gamma'))$ as follows: 
\begin{itemize}
	\item if $\xi$ is homotopically trivial, we fill the disc bounded by $\xi$ into $\mathcal{N}_{\eps}(\gamma')$;
	\item otherwise, we deform $\mathcal{N}_{\eps}(\gamma')$ by shrinking $\xi$ to the unique simple closed geodesic homotopic to it. 
\end{itemize}
We remark here that if two components of $\partial \mathcal{N}_{\eps}(\gamma')$ deforms to the same simple closed geodesic, we do not glue them together, \ie, one may view $X(\gamma')$ as an open subsurface of $X$ (\eg see Figure \ref{X-gamma}).
\end{con*}

\begin{figure}[ht]
	\centering
	\includegraphics[width=12cm]{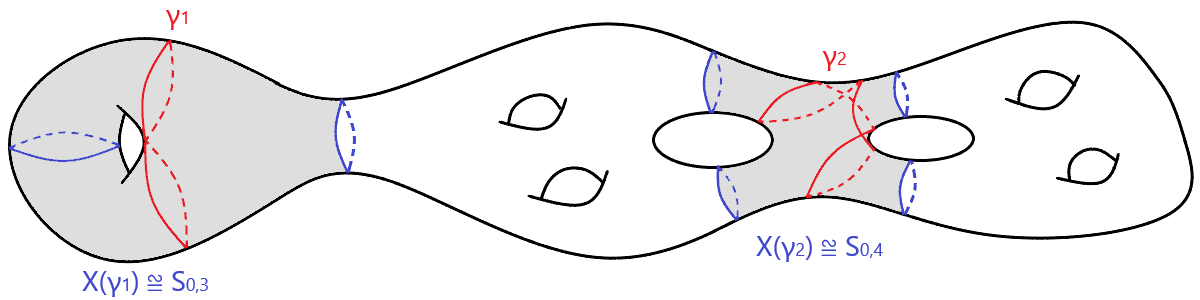}
	\caption{examples for $X(\gamma')$}
	\label{X-gamma}
\end{figure}

For a surface $X$ with possibly non-empty boundary, recall that a closed curve $\gamma \subset X$ is \emph{filling} if the complement $X\setminus \gamma$ of $\gamma$ in $X$ is a disjoint union of disks and cylinders such that each cylinder is homotopic to a boundary component of $X$. 

The following result is proved in \cite{NWX20}.

\begin{proposition}\cite[Proposition 47]{NWX20}\label{ns-sub}
Let $X\in \sM_g$ and $\gamma' \subset X$ be a non-simple closed geodesic. Then the connected subsurface $X(\gamma')$ of $X$ constructed above satisfies that,
\begin{itemize}
\item[(1)]  $\gamma' \subset X(\gamma')$ is filling;  

\item[(2)]  the possibly empty boundary $\partial X(\gamma')$ of $X(\gamma')$ consists of simple closed multi-geodesics with
\[\ell\left(\partial X(\gamma')\right) \leq 2\ell_{\gamma'}(X);\]

\item[(3)] the area $\area(X(\gamma')) \leq 4\ell_{\gamma'}(X)$. In particular, if $\ell_{\gamma'}(X) \prec \ln(g)$, then for large enough $g>1$, $X(\gamma')$ is a proper subsurface of $X$.
\end{itemize}
\end{proposition}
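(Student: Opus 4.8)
The plan is to extract parts (1) and (2) directly from the construction of $X(\gamma')$, and to reduce part (3) to Gauss--Bonnet together with the hyperbolic isoperimetric inequality. I would fix $\eps>0$ small enough that $\mathcal{N}_\eps(\gamma')$ is a regular neighborhood of the geodesic $\gamma'$, viewed as a $4$-valent graph whose $V\geq 1$ vertices are its self-intersection points (so it has $2V$ edges and $\chi(\gamma')=-V$); such $\eps$ exist, and for all of them $X(\gamma')$ has a fixed isotopy type. Since $\gamma'$ is connected, so are $\mathcal{N}_\eps(\gamma')$ and $X(\gamma')$. As $\gamma'$ is a spine of $\mathcal{N}_\eps(\gamma')$, the complement $\mathcal{N}_\eps(\gamma')\setminus\gamma'$ is a disjoint union of half-open collars of the boundary circles $\xi$ of $\mathcal{N}_\eps(\gamma')$; passing to $X(\gamma')$ replaces the collar of each homotopically trivial $\xi$ by an embedded disk and the collar of each essential $\xi$ by an embedded annulus, one of whose boundary curves is the simple closed geodesic $\xi^{*}$ homotopic to $\xi$. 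Hence $X(\gamma')\setminus\gamma'$ is a union of disks and boundary-parallel annuli — this is (1) — and, capping off $b_{0}$ trivial circles, $\chi(X(\gamma'))=-V+b_{0}$, which is negative because a subsurface of the hyperbolic surface $X$ carrying a filling non-simple curve cannot have non-negative Euler characteristic.

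For part (2): by construction $\partial X(\gamma')=\bigcup_{\xi\ \mathrm{ess.}}\xi^{*}$ is a union of simple closed geodesics, and since geodesic representatives minimize length in their free homotopy class,
\[
\ell\bigl(\partial X(\gamma')\bigr)=\sum_{\xi\ \mathrm{ess.}}\ell(\xi^{*})\ \leq\ \sum_{\xi}\ell(\xi)\ =\ \ell\bigl(\partial\mathcal{N}_\eps(\gamma')\bigr).
\]
A direct estimate of the right-hand side finishes the argument: away from the finitely many ``notch'' corners created at the self-intersections, $\partial\mathcal{N}_\eps(\gamma')$ consists, over each edge $e$ of $\gamma'$, of two hypercyclic arcs each of length $\cosh(\eps)\,\ell(e)$, while the notches only delete length; thus $\ell(\partial\mathcal{N}_\eps(\gamma'))\leq 2\cosh(\eps)\,\ell_{\gamma'}(X)$. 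As $\ell(\partial X(\gamma'))$ is independent of $\eps$, letting $\eps\to 0$ gives $\ell(\partial X(\gamma'))\leq 2\ell_{\gamma'}(X)$.

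For part (3): since $\partial X(\gamma')$ is geodesic, Gauss--Bonnet gives $\area(X(\gamma'))=-2\pi\chi(X(\gamma'))$, so it suffices to show $\area(X(\gamma'))\leq 4\ell_{\gamma'}(X)$. I would estimate the area through the decomposition $X(\gamma')=\mathcal{N}_\eps(\gamma')\sqcup\bigsqcup_i D_i\sqcup\bigsqcup_j \widehat{A}_j$ from the setup above. First, $\area(\mathcal{N}_\eps(\gamma'))\leq 2\sinh(\eps)\,\ell_{\gamma'}(X)\to 0$. Next, each complementary disk $D_i$ is embedded, hence lifts isometrically to $\mathbb{H}^2$, so the hyperbolic isoperimetric inequality gives $\area(D_i)\leq\ell(\partial D_i)$; and each complementary annulus $\widehat{A}_j$, cut open along a shortest geodesic arc between its two boundary circles, becomes an embedded disk in $\mathbb{H}^2$, to which the same inequality applies and bounds $\area(\widehat{A}_j)$ in terms of $\ell(\xi_j)$ — using $\ell(\xi_j^{*})\leq\ell(\xi_j)$ and a bound on the ``width'' of $\widehat{A}_j$ coming from the collar lemma for $\xi_j^{*}$. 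Since the inner boundaries of the $D_i$ and of the $\widehat{A}_j$ together constitute $\partial\mathcal{N}_\eps(\gamma')$, whose length is $\leq 2\cosh(\eps)\,\ell_{\gamma'}(X)$, summing all the pieces and letting $\eps\to 0$ yields $\area(X(\gamma'))\leq 4\ell_{\gamma'}(X)$. The last sentence of (3) is then immediate: $\ell_{\gamma'}(X)\prec\ln(g)$ forces $\area(X(\gamma'))\prec\ln(g)<\area(X)=4\pi(g-1)$ for large $g$, so $X(\gamma')$ is proper.

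I expect the main obstacle to be precisely the area estimate for the boundary-parallel annuli $\widehat{A}_j$ in part (3): a priori the geodesic $\xi_j^{*}$ may lie far from $\gamma'$, so ``area $\leq$ length of the non-geodesic boundary'' is not automatic and must be teased out of the hyperbolic isoperimetric inequality together with collar estimates around $\xi_j^{*}$; everything else is bookkeeping on the construction. This is the heart of \cite[Proposition 47]{NWX20}, from which the stated constant $4$ follows.
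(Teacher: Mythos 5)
Your parts (1) and (2) are fine and run along the same lines as the paper's (the paper's own proof is only an outline: containment of $\gamma'$ via uniqueness of the geodesic representative, and $\ell(\partial X(\gamma'))\leq 2\ell_{\gamma'}(X)$ ``by construction''; your hypercycle computation $\ell(\xi)\leq\cosh(\eps)\cdot(\text{length carried})$ plus $\ell(\xi^*)\leq\ell(\xi)$ is a legitimate filling-in of that). The genuine gap is exactly where you predict it, in part (3), and your proposed way around it would not deliver the stated bound. Cutting a complementary annulus $\widehat{A}_j$ open along an arc joining $\xi_j$ to $\xi_j^*$ and applying the disk isoperimetric inequality gives $\area(\widehat{A}_j)\leq\ell(\xi_j)+\ell(\xi_j^*)+2w_j$, where $w_j$ is the length of the cutting arc. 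This width is \emph{not} controlled by $\ell_{\gamma'}(X)$: if $\xi_j^*$ is a very short geodesic of length $\epsilon$, the curve $\xi_j$ (of bounded length) can sit at distance roughly $\ln(1/\epsilon)$ from $\xi_j^*$, deep outside its collar, so $w_j\to\infty$ as $\epsilon\to0$ while $\ell_{\gamma'}(X)$ stays fixed. The collar lemma around $\xi_j^*$ works against you here (short geodesic $\Rightarrow$ wide collar), so ``teasing the width out of collar estimates'' cannot recover $\area(X(\gamma'))\leq 4\ell_{\gamma'}(X)$; you would only get $4\ell_{\gamma'}(X)$ plus an unbounded error.

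The fix, and what the paper actually does, is to use the isoperimetric inequality for \emph{cylinders} directly, not just for disks: for a domain $C$ on a hyperbolic surface homeomorphic to an annulus with essential core, $\area(C)\leq\ell(\partial C)$ with $\partial C$ meaning both boundary circles and no width term. (This is the ``elementary Isoperimetric Inequality'' the paper cites to Buser's book and \cite{WX18}; it follows, e.g., by lifting $C$ to the annular cover $\H/\langle h\rangle$ and applying the divergence theorem to the unit-divergence field $\tanh(\rho)\,\partial_\rho$ in Fermi coordinates about the core geodesic, and the analogous radial field handles the disks.) With that single replacement your decomposition closes up exactly as in the paper: the inner boundaries of the disks and annuli make up $\partial\mathcal{N}_\eps(\gamma')$, of length at most $2\cosh(\eps)\ell_{\gamma'}(X)$, the outer boundaries are $\partial X(\gamma')$, of length at most $2\ell_{\gamma'}(X)$ by part (2), and letting $\eps\to0$ gives $\area(X(\gamma'))\leq 4\ell_{\gamma'}(X)$; the properness claim then follows from $\area(X)=4\pi(g-1)$ as you say. (The paper phrases the same computation slightly differently, decomposing $X(\gamma')\setminus\gamma'$ into disks and cylinders so that $\gamma'$ is counted twice, but the content is identical.)
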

\bp
We only outline a proof here for completeness. One may see \cite{NWX20} for more details.

For $(1)$: by construction the subsurface $X(\gamma')$ is freely homotopic to $\mathcal{N}_{\eps}(\gamma')$ in $X$. So it is also freely homotopic to $\gamma'$ in $X$. Since $\gamma'$ is the unique closed geodesic representing the free homotopy class $\gamma'$ and $X(\gamma')\subset X$ is a subsurface of geodesic boundary, we have \[\gamma'\subset X(\gamma').\]
By construction we know that $\gamma'$ is filling in $X(\gamma')$.

For (2): by construction clearly we have
\[\ell(\partial X(\gamma'))\leq 2\ell_{\gamma'}(X).\]

For (3): by construction we know that the complement $X(\gamma')\setminus \gamma'= (\sqcup D_i) \sqcup (\sqcup C_j)$ where the subsets are setwisely disjoint, the $D_i's$ are disjoint discs and the $C_j's$ are disjoint cylinders. By elementary Isoperimetric Inequality (\eg see \cite{Buser10, WX18}) we know that
\[\area(D_i)\leq \ell(\partial D_i) \quad \text{and} \quad \area(C_j)\leq \ell(\partial C_j).\]
Thus, we have
\begin{eqnarray*}
\area(X(\gamma'))&=&\sum(\area(D_i))+\sum(\area(C_j))\\
&\leq & \sum (\ell(\partial D_i))+\sum (\ell(\partial C_j))\\
&\leq & 4\ell_{\gamma'}(X).
\end{eqnarray*}
If $\ell_{\gamma'}(X) \prec \ln(g)$, we have $\area(X(\gamma'))\prec \ln(g)$. Then the conclusion clearly follows because $\area(X)=4\pi(g-1)$ by Gauss-Bonnet.
 
The proof is complete.
\ep

\begin{rem*}
It is not hard to see that the inequality $\area(X(\gamma')) \leq 4\ell_{\gamma'}(X)$ can be improved to be $\area(X(\gamma')) \leq 2\ell_{\gamma'}(X)$. For our purpose the constant $4$ in this bound is enough in this paper.
\end{rem*}

\begin{def*}
For any $L>0$, we define 
\begin{enumerate}
\item $\#(X,L)$ is the number of closed geodesics of length $\leq L$ on $X$;

\item $\#_f(X,L)$ is the number of filling closed geodesics of length $\leq L$ on $X$;

\item $\#_0(X,L)$ is the number of closed geodesics of length $\leq L$ on $X$ which are not iterates of any closed geodesic of length $\leq 2 \arcsinh 1$.
\end{enumerate}

\end{def*}

The map $\gamma' \mapsto X(\gamma')$ in the construction above is infinite-to-one. Indeed for any connected subsurface $Y\subset X$ of geodesic boundary, $X(\gamma_1)=X(\gamma_2)=Y$ for any two filling curves $\gamma_1, \gamma_2\subset Y$. However, the multiplicity of the map $\gamma' \mapsto X(\gamma')$ is always bounded if restricting the length of $\gamma'$ to be bounded. That is, $\#_f(Y,L)<\infty$ for any $L>0$. In this paper we prove the following counting result for compact hyperbolic surfaces of geodesic boundaries, which is essential in the proofs of Theorem \ref{main} and \ref{main-2} when dealing with primitive non-simple closed geodesics. Here a closed geodesic is called \emph{primitive} if it is not an iterate of any other closed geodesic at least twice. Since the proof is technical, we postpone the proof until a single section \ref{section-new count}.

\begin{theorem}[=Theorem \ref{sec-count}]\label{count}
For any $\eps_1>0$ and $m=2g-2+n\geq 1$, there exists a constant $c(\eps_1,m)$ only depending on $m$ and $\eps_1$ such that for any hyperbolic surface $X\in \T_{g,n}(x_1,...,x_n)$ we have
\begin{equation*}
\#_f(X,T)\leq c(\eps_1,m) \cdot e^{T-\frac{1-\eps_1}{2}\sum_{i=1}^n x_i}.
\end{equation*}
\end{theorem}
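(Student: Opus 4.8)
The plan is to reduce the count of filling closed geodesics to a symbolic count along a fixed topological decomposition of $X$, and to extract the length saving $\tfrac{1-\eps_1}{2}\sum_i x_i$ from the fact that a filling geodesic is forced to wind around each boundary geodesic. First I would fix, depending only on $m=2g-2+n$, a geodesic pants decomposition of $X$ together with a dual system of geodesic arcs, cutting $X$ into right-angled hexagons, so that every closed geodesic $\gamma$ acquires a cutting sequence --- a cyclic reduced word in the finitely many hexagon sides (a number depending only on $m$) --- and distinct primitive closed geodesics have distinct cutting sequences. The geometry enters only through the ``core'' of $X$, namely $X$ with the Margulis tubes of its short interior geodesics deleted: there the injectivity radius is bounded below by a constant $\mu(m)>0$ (a \emph{long} boundary geodesic has a narrow collar and so does \emph{not} create small injectivity radius), hence by the standard orbit-counting argument, using $\area(X)=2\pi m$, the number of conjugacy classes in $\pi_1(X)$ of translation length $\le L$ whose geodesic representative stays inside the core is at most $c(m)\,e^{L}$ --- with base exactly $e$, which is essential below.

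The second and main step is to feed the filling hypothesis into this coding via the complementary-region analysis already used for Proposition~\ref{ns-sub}, now applied to $\gamma$ itself: cutting $X$ along $\gamma$ leaves disks together with exactly one cylinder $C_i$ between $\gamma$ and each boundary $\beta_i$, and the $\gamma$-side boundary $\delta_i$ of $C_i$ is a piecewise-geodesic loop freely homotopic to $\beta_i$, so $\length(\delta_i)\ge x_i$, and the $\delta_i$ are carved out of disjoint portions of $\gamma$. I would then split the cutting sequence of $\gamma$ into a ``winding block'' near each $\beta_i$ (the portion realizing $\delta_i$), bounded-complexity excursions into the Margulis tubes of the interior curves, and a ``core'' part. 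For the winding blocks I claim that, recording the winding number $n_i\ge 1$ of $\delta_i$ about $\beta_i$, each block is determined up to at most $c(m)\,n_i$ combinatorial choices while contributing at least $n_i x_i\ge x_i$ to $\ell(\gamma)$; since $\sum_i\length(\delta_i)\le 2\ell(\gamma)\le 2T$, we get $\sum_i n_i x_i\le 2T$, so the winding data altogether costs at most $\prod_i\big(c(m)(1+2T/x_i)\big)$ choices. Crucially, the same filling hypothesis \emph{bounds} (by a constant $c(m)$) the winding of $\gamma$ about any \emph{interior} decomposition curve, since more winding would leave a non-peripheral annulus in $X\setminus\gamma$; this is exactly what rules out a proliferation of filling geodesics near short interior curves and keeps the core count of base $e$.

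Finally I would assemble the estimate: a filling geodesic of length $\le T$ is determined by its winding data ($\le\prod_i c(m)(1+2T/x_i)$ choices), its interior-tube excursion data ($\le c(m)$ choices), and a core cutting sequence of length at most $T-\sum_i n_i x_i\le T-\tfrac12\sum_i x_i$, of which there are at most $c(m)\,e^{\,T-\frac12\sum_i x_i}$. Multiplying these and then treating separately the regime $\sum_i x_i\le M(\eps_1,m)$ --- where the cruder uniform bound $\#_f(X,T)\le c(m)e^{T}$ already suffices, because then $e^{T-\frac{1-\eps_1}{2}\sum x_i}\asymp e^{T}$ with constant depending only on $\eps_1,m$ --- and the regime $\sum_i x_i> M(\eps_1,m)$ --- where the factor $\prod_i(1+2T/x_i)$ is at most $\mathrm{poly}(T)$ when the $x_i$ are bounded below and otherwise dominated by $e^{\eps_1(\sum_i x_i)/8}$ (here one uses that windings about a short boundary have length $\succ\log n_i$, as already visible for $X\cong S_{0,3}$ with a short boundary) --- gives $\#_f(X,T)\le c(\eps_1,m)\,e^{\,T-\frac{1-\eps_1}{2}\sum_i x_i}$.

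The hard part is the second step: making the decomposition of the cutting sequence into winding blocks, interior-tube excursions and core precise and stable as $x_i\to\infty$ and the hexagons degenerate; proving that a winding block of winding number $n_i$ has only $\le c(m)n_i$ combinatorial types while costing $\ge n_i x_i$ in length; and, for the core, obtaining the counting bound with base exactly $e$ uniformly in $m$ (this is where the injectivity-radius bound $\mu(m)$ and $\area(X)=2\pi m$ are used). The rest is bookkeeping, plus shrinking $\eps_1$ a bounded number of times.
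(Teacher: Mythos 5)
Your approach --- coding filling geodesics by cutting sequences through a fixed topological decomposition and separately counting "winding blocks," "interior-tube excursions," and a "core" word --- is genuinely different from the paper's. The paper's proof of Theorem~\ref{sec-count} is via Theorem~\ref{thm filling curve decrease}: it fixes the topological type $S_{g,n}$, continuously \emph{shrinks} the boundary lengths of $X$ by a total of $\Delta$, and shows (by a careful hyperbolic-trigonometry analysis of how geodesic arcs in a distinguished pair of pants $\mathcal P$ respond to boundary reduction) that every filling closed geodesic shortens by at least $\tfrac{1-\eps_1}{2}\Delta$. Then it applies the crude counting bound of Lemma~\ref{count-3} on the deformed surface, which is small. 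This entirely sidesteps the combinatorial bookkeeping your sketch relies on.

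As written, your proposal has gaps that prevent it from being a proof, and you acknowledge several of them yourself ("the hard part"). Two of the gaps are not merely technical. First, the "winding block" near $\beta_i$ is supposed to be "the portion realizing $\delta_i$," but $\delta_i$ is a union of arcs of $\gamma$ that is in general \emph{not} a contiguous subsegment of $\gamma$ and can interleave arbitrarily with the arcs realizing the other $\delta_j$'s and with the interior of $X$; there is no evident way to partition the cutting sequence of $\gamma$ into the three claimed pieces, and your ``winding number $n_i$'' is not a well-defined invariant of that partition (the cylinder $C_i$ is a single annulus, so $\delta_i$ winds around $\beta_i$ exactly once, giving $\length(\delta_i)\ge x_i$ but not $\ge n_i x_i$ for any $n_i>1$). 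Second, the claim that filling forces the winding of $\gamma$ about an interior short decomposition curve $\alpha$ to be $\le c(m)$ is false: a Dehn twist $T_\alpha$ is a homeomorphism, so $T_\alpha^k\gamma$ is filling for every $k$, yet its geodesic representative spirals $\sim k$ times through the Margulis tube of $\alpha$ while gaining length only $\sim k\,\ell_\alpha(X)$, which can be made arbitrarily small relative to $k$. Filling rules out a complementary region carrying $\alpha$, not high twisting about $\alpha$; the proliferation near short interior curves is instead controlled in the paper by invoking the uniform crude bound $\#_f(Y,L)\le \tfrac{m}{2}e^{L+6}$ of Lemma~\ref{count-3} (via doubling and Buser's Lemma~\ref{count-1}), not by bounding twisting. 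If you want to pursue a combinatorial route, the decomposition step and the loss of length due to boundary winding both need an honest argument; the paper's deformation argument (Proposition~\ref{prop filling curve decrease a little}, Lemmas~\ref{lem type 1.1}--\ref{lem type 2.4}) is the mechanism that actually delivers the $e^{-\frac{1-\eps_1}{2}\sum x_i}$ saving uniformly in the boundary lengths, including the degenerating regimes your sketch flags as the difficulty.
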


\begin{rem*}
Lalley in \cite{Lall89} showed for a compact hyperbolic surface $X$ of geodesic boundary, the number $\#(X,L)\sim \frac{1}{2} \frac{e^{\delta L}}{\delta L}$ as $L\to \infty$ where $\delta$ is the Hausdorff dimension of the limit set of the Fuchsian group of $X$ in the boundary of the upper half plane (the factor $\frac{1}{2}$ disappears if counting oriented closed geodesics). The upper bound in Theorem \ref{count} contains explicit information on the boundary length of $X$ and is uniform in $X$, which will play an essential role in the proofs of Theorem \ref{main} and \ref{main-2}.
\end{rem*}

Now we conclude this section by several general and soft counting results. First we recall the following bound (\eg see \cite[Lemma 6.6.4]{Buser10}).
\begin{lemma}\label{count-1}
For any $X\in \sM_g$ and $L>0$, we have
\[\#_0(X,L)\leq (g-1)e^{L+6}.\]
\end{lemma}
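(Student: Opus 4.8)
\textbf{Proof proposal for Lemma \ref{count-1}.} The plan is to bound the number of closed geodesics of length $\leq L$ that are not iterates of very short geodesics by comparing them to a packing problem in the surface, using the standard collar/injectivity-radius dichotomy. The key quantitative input is that $2\arcsinh 1$ is exactly the Margulis-type constant below which short geodesics have disjoint embedded collars of definite width; a primitive closed geodesic of length $\geq 2\arcsinh 1$ has an embedded collar of half-width $\geq \arcsinh(1/\sinh(\ell/2))$ which does not shrink to zero too fast, and a geodesic counted by $\#_0$ is either such a primitive one or an iterate of such a primitive one with total length still $\leq L$.

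First I would reduce to counting primitive closed geodesics: every geodesic counted by $\#_0(X,L)$ is the $k$-th iterate of a unique primitive closed geodesic $\gamma_0$ with $k\ell(\gamma_0)\leq L$, and the defining condition forces $\ell(\gamma_0) > 2\arcsinh 1$ (otherwise $\gamma_0$, and hence its iterate, would be an iterate of a geodesic of length $\leq 2\arcsinh 1$), so $k \leq L/(2\arcsinh 1)$. Hence $\#_0(X,L)$ is at most $\frac{L}{2\arcsinh 1}$ times the number of primitive closed geodesics of length in $(2\arcsinh 1, L]$. Next, to each such primitive geodesic $\gamma_0$ associate a point $p_{\gamma_0}\in\gamma_0$ and note that around $p_{\gamma_0}$ there is an embedded half-collar, or simply use that the number of primitive closed geodesics through a fixed ball of radius $r$ and of length $\leq L$ is controlled by the volume of a ball of radius $\sim L$ in $\mathbb{H}^2$; integrating (or summing over a maximal separated net of such base points) against $\area(X) = 4\pi(g-1)$ and the exponential volume growth $\Vol(B_{\mathbb H}(L)) \asymp e^{L}$ of the hyperbolic plane yields a bound of the shape $(g-1)e^{L+O(1)}$.

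Concretely, the cleanest route is the one in \cite[Lemma 6.6.4]{Buser10}: lift to the universal cover $\mathbb{H}$, fix a Dirichlet fundamental domain $D$ centered at a point $x_0$, and observe that a primitive closed geodesic of length $\leq L$ gives a hyperbolic element $g$ of the Fuchsian group whose axis passes within bounded distance of $x_0$ after a suitable choice of conjugate, so that $\dist(x_0, g x_0) \leq L + c_0$ for an absolute constant $c_0$ (coming from the diameter contribution of moving the base point onto the geodesic). Distinct primitive conjugacy classes, represented this way, give distinct lattice points $g x_0$, and these are $2\inj(X)$-separated only in the thick part — so instead one counts with multiplicity at most the number of $\Gamma$-translates of $D$ meeting the ball $B(x_0, L+c_0)$, which is $\leq \Vol(B(x_0,L+c_0+\diam D))/\Vol(D) \leq \frac{e^{L+c_0+\diam D}}{\area(X)}\cdot \area(X) \asymp (g-1)e^{L}$ after absorbing constants into the "$+6$". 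I would then check that the constant can be taken to be $6$ by being slightly careful with $\Vol(B_{\mathbb H}(r)) = 2\pi(\cosh r - 1) \leq \pi e^{r}$ and the relevant displacement estimate; this constant-chasing is the only place requiring care.

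The main obstacle is handling the \emph{thin part}: if $X$ has very short closed geodesics, the naive "distinct conjugacy classes give separated lattice points" argument fails near those geodesics, so one must either (i) note that short geodesics contribute boundedly many primitive classes themselves and their powers are excluded or controlled by the $\#_0$ restriction, or (ii) use the collar lemma to show that a primitive geodesic of length $\ell \in (2\arcsinh 1, L]$ meeting a thin collar still has an embedded tube of area bounded below by a universal constant, restoring the packing bound. Since this lemma is quoted from Buser, for the write-up I would simply invoke \cite[Lemma 6.6.4]{Buser10} and, if a self-contained argument is wanted, present the fundamental-domain/volume-packing argument above with the thin-part caveat resolved by the collar lemma.
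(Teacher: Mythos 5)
The paper states this lemma as a pure citation to \cite[Lemma 6.6.4]{Buser10} and supplies no proof, so your ultimate plan to invoke Buser matches the paper exactly. The self-contained sketch you offer alongside it, though, has two genuine gaps. First, your reduction to primitives — writing $\gamma=\gamma_0^k$ and bounding $\#_0(X,L)$ by $\frac{L}{2\arcsinh 1}$ times the number of primitives of length in $(2\arcsinh 1,L]$ — loses a multiplicative factor of $L$: a packing bound of $(g-1)e^{L+O(1)}$ on primitives would then yield only $L(g-1)e^{L+O(1)}$, which is weaker than $(g-1)e^{L+6}$ for large $L$. The reduction is also unnecessary, since the lattice-point/packing count applies directly to the conjugacy class of $\gamma_0^k$ just as well as to that of $\gamma_0$, so one should count all such geodesics at once. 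Second, the single-Dirichlet-domain route you call ``cleanest'' cannot supply an absolute constant $c_0$ in $d(x_0,gx_0)\leq L+c_0$: that $c_0$ is of the order $\diam(D)$, and your display $\frac{e^{L+c_0+\diam D}}{\area(X)}\cdot\area(X)\asymp(g-1)e^L$ quietly absorbs $e^{\diam D}$ into the implied constant, which is not legitimate because $\diam(X)$ grows like $\ln g$ even when the injectivity radius is everywhere $\geq\arcsinh 1$. So this is not merely a thin-part problem, as you suggest. The version that actually works is the one you relegate to a parenthetical: take a maximal $\arcsinh 1$-separated set of $O(g)$ points in the $\arcsinh 1$-thick part of $X$, use the collar lemma to see that any geodesic counted by $\#_0$ has a point of injectivity radius $\geq\arcsinh 1$ and therefore passes within $\arcsinh 1$ of some net point, and apply the packing bound locally at each such point; this eliminates both the spurious factor of $L$ and the $e^{\diam D}$ loss, and tracking $\area(X)=4\pi(g-1)$ against $\Vol(B_{\mathbb{H}}(r))=2\pi(\cosh r-1)$ produces the explicit constant $6$.
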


The following result is a direct consequence of Lemma \ref{count-1}.
\begin{lemma}\label{count-3}
Let $X$ be a compact hyperbolic surface of non-empty geodesic boundary. Then
\[\#_f(X,L)\leq \frac{\area(X)}{4\pi}e^{L+6}.\]
\end{lemma}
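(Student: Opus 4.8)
The plan is to prove the lemma exactly as advertised, as a direct consequence of Lemma \ref{count-1}: I would show that \emph{every} filling closed geodesic on $X$ is counted by $\#_0(X,L)$, so that $\#_f(X,L)\leq \#_0(X,L)$, and then invoke the (bordered version of the) estimate $\#_0(X,L)\leq \frac{\area(X)}{4\pi}e^{L+6}$. The only real content is therefore the inclusion $\#_f(X,L)\leq \#_0(X,L)$, i.e. that a filling closed geodesic $\gamma$ of length $\leq L$ is never an iterate of a closed geodesic of length $\leq 2\arcsinh 1$.

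To verify this, suppose $\gamma=\delta^{k}$ for some closed geodesic $\delta$ with $\ell_\delta(X)\leq 2\arcsinh 1$ and $k\geq 1$; after replacing $\delta$ by its primitive root (whose length is still $\leq 2\arcsinh 1$), we may assume $\delta$ is primitive. Since $\gamma$ and $\delta$ have the same image in $X$, the geodesic $\delta$ is also filling. On the other hand, any closed geodesic of length $\leq 2\arcsinh 1$ on a hyperbolic surface is simple — this is a standard consequence of the collar lemma, as every non-simple closed geodesic has length at least $2\arccosh 3 = 4\arcsinh 1$ (see e.g. \cite{Buser10}). But a simple closed geodesic cannot fill $X$: since $\chi(X)<0$, at least one connected component of $X\setminus\delta$ has negative Euler characteristic, hence is neither a disc nor an annulus homotopic to a boundary component. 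This contradiction shows $\gamma$ is counted by $\#_0(X,L)$, and hence $\#_f(X,L)\leq \#_0(X,L)$.

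Finally I would observe that Lemma \ref{count-1}, which is \cite[Lemma 6.6.4]{Buser10}, applies verbatim to a compact hyperbolic surface $X$ with non-empty geodesic boundary, with the factor $g-1$ replaced by $\frac{\area(X)}{4\pi}$ (which equals $g-1$ when $X$ is closed of genus $g$). Combining the two steps gives
\[\#_f(X,L)\leq \#_0(X,L)\leq \frac{\area(X)}{4\pi}e^{L+6},\]
which is the claim. The only mild points requiring care are the bookkeeping around non-primitive iterates and confirming that Buser's counting estimate is stated (or adapts immediately) for bordered surfaces; the topological input that a simple closed geodesic never fills a surface with $\chi<0$ is elementary, so I do not expect any genuine obstacle here.
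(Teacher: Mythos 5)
Your proposal shares the paper's key topological observation: a filling closed geodesic is never an iterate of a geodesic of length $\leq 2\arcsinh 1$, because short geodesics are simple (by the Collar Lemma) and a simple closed geodesic in a surface of negative Euler characteristic cannot fill. That part is correct, including the reduction to a primitive root and the $\chi$-count showing some complementary component is neither a disc nor a boundary-parallel annulus. However, the step where you invoke Lemma~\ref{count-1} ``verbatim'' for a bordered surface with $g-1$ replaced by $\area(X)/4\pi$ is an unproven assertion: Lemma~\ref{count-1} is cited from \cite[Lemma~6.6.4]{Buser10}, which is stated there for closed hyperbolic surfaces, and you neither re-derive the bordered version nor cite a source that does. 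You flag this yourself (``confirming that Buser's counting estimate ... adapts immediately for bordered surfaces''), but the proposal leaves it as an expectation rather than an argument; as written, this is the one real gap.

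The paper sidesteps this entirely by passing to the double $2X$, which is a \emph{closed} hyperbolic surface, so that Lemma~\ref{count-1} applies as stated. Each filling geodesic $\gamma\subset X$ together with its mirror image gives two distinct closed geodesics of $2X$ counted by $\#_0(2X,L)$, whence $2\#_f(X,L)\leq \#_0(2X,L)\leq (g(2X)-1)e^{L+6}=\frac{\area(X)}{2\pi}e^{L+6}$, and dividing by $2$ gives the claimed bound. I would suggest replacing your appeal to a bordered version of the counting lemma with this doubling argument: it is a one-line fix, uses the lemma exactly as cited, and removes the only unjustified step in your proof.
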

\bp
We first double two $X$'s to get a closed hyperbolic surface $2X$. Since $\area(2X)=2\area(X)$, by Gauss-Bonnet the genus of $2X$ is equal to $\frac{\area(X)}{2\pi}+1$. By symmetry, each closed geodesic counted in $\#_f(X,L)$ gives two curves counted in $\#_0(2X,L)$. By the Collar Lemma (\eg see \cite[Theorem 4.1.6]{Buser10}) it is known that a closed geodesic in $2X$ of length $\leq 2 \arcsinh 1$ is always simple. So each filling curve in $X$ is clearly not an iterate of any closed geodesic in $2X$ of length $\leq 2 \arcsinh 1$. Then it follows by Lemma \ref{count-1} that
\beqar
2\#_f(X,L)&\leq& \#_0(2X,L)\\
&\leq & \frac{\area(X)}{2\pi} e^{L+6}
\eeqar
which completes the proof.
\ep

Recall that
\[\sM_g^{\geq 1}\overset{\text{def}}{=}\{X\in \sM_g; \ell_{sys}(X)\geq 1\}\]
where $\ell_{sys}(X)$ is the length of shortest closed geodesic in $X$. Another direct consequence of Lemma \ref{count-1} is as follows which will be applied later to bound Term-\rm{II}.
\begin{lemma}\label{count-2}
For any $X\in \sM_g^{\geq 1}$ and $L>0$, we have
\[\#(X,L)\leq 2(g-1)e^{L+6}.\]
\end{lemma}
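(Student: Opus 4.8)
The statement to prove is Lemma~\ref{count-2}: for $X\in\sM_g^{\geq 1}$ and $L>0$, one has $\#(X,L)\leq 2(g-1)e^{L+6}$.

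\medskip

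The plan is to reduce the count $\#(X,L)$ of \emph{all} closed geodesics of length $\leq L$ to the count $\#_0(X,L)$ of those that are not iterates of a very short closed geodesic, and then invoke Lemma~\ref{count-1}. The key point is the hypothesis $\ell_{sys}(X)\geq 1$: since $1 > 2\arcsinh 1$ is \emph{false} (indeed $2\arcsinh 1 \approx 1.7627 > 1$), this naive reduction does not quite work, so I expect the real argument to count iterates more carefully. Concretely, every closed geodesic $\gamma$ of length $\leq L$ is an iterate $\gamma = \delta^k$ of a unique primitive closed geodesic $\delta$, with $k\cdot\ell_\delta(X) = \ell_\gamma(X)\leq L$, hence $k \leq L/\ell_\delta(X) \leq L/\ell_{sys}(X) \leq L$. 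So each primitive closed geodesic of length $\leq L$ contributes at most... this still gives an extra factor of $L$, which is not what we want. The cleaner route: observe that a closed geodesic of length $\leq L$ which \emph{is} an iterate of a closed geodesic of length $\leq 2\arcsinh 1$ must itself, being counted in $\#(X,L)$, be $\delta^k$ with $\ell_\delta \leq 2\arcsinh 1$; but on $X\in\sM_g^{\geq1}$ we have $\ell_\delta\geq 1$, so $k \leq L/1 = L$ and also $k \geq 1$. The set of such $\gamma$ is parametrized by pairs (primitive $\delta$ with $1\leq \ell_\delta\leq 2\arcsinh 1$, exponent $k\leq L$). So $\#(X,L) \leq \#_0(X,L) + L\cdot(\text{number of primitive }\delta\text{ with }\ell_\delta\leq 2\arcsinh 1)$.

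\medskip

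First I would recall that on any hyperbolic surface, two distinct primitive closed geodesics each of length $\leq 2\arcsinh 1$ are disjoint (Collar Lemma, \cite[Theorem 4.1.6]{Buser10}), and there are at most $3g-3$ of them since they form part of a pants decomposition. On $X\in\sM_g^{\geq1}$, each such $\delta$ has $\ell_\delta \in [1, 2\arcsinh 1]$, so its iterates $\delta^k$ with $k\ell_\delta\leq L$ number at most $\lfloor L/\ell_\delta\rfloor \leq L$. Hence the geodesics counted in $\#(X,L)$ but \emph{not} in $\#_0(X,L)$ number at most $(3g-3)\cdot L \leq 3gL$. Combined with Lemma~\ref{count-1}, $\#(X,L)\leq (g-1)e^{L+6} + 3gL$. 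Then I would check that $3gL \leq (g-1)e^{L+6}$ for all $L>0$ and $g\geq 2$ — since $e^{L+6}\geq e^6 > 400$ and $e^{L+6}$ dominates $L$, while $3g \leq 4(g-1)$ for $g\geq 4$ and small cases handled directly — so the right-hand side is at most $2(g-1)e^{L+6}$, giving the claim.

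\medskip

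The main obstacle is the arithmetic bookkeeping around $2\arcsinh 1 \approx 1.763$ versus the systole bound $1$: one must make sure the iterates of short geodesics are controlled by a quantity that is genuinely $o((g-1)e^{L+6})$, and the clean way is the Collar-Lemma bound of $3g-3$ disjoint short primitive geodesics times the $O(L)$ iterates each. An alternative, perhaps what the authors intend, is to note that the systole bound forces the number of closed geodesics of length $\leq 2\arcsinh 1$ to be small enough that counting \emph{their} iterates up to length $L$ only costs a subdominant term; either way the inequality $3gL\leq (g-1)e^{L+6}$ closes the estimate, and verifying it for all $L>0$, $g\geq 2$ is the only slightly delicate point. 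I would present the argument in this order: (i) split $\#(X,L) = \#_0(X,L) + (\text{iterates of geodesics} \leq 2\arcsinh 1 \text{ of length}\leq L)$; (ii) bound the second term by $3gL$ via the Collar Lemma; (iii) apply Lemma~\ref{count-1}; (iv) conclude by the elementary inequality $(g-1)e^{L+6} + 3gL \leq 2(g-1)e^{L+6}$.
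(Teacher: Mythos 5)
Your proposal is correct and follows essentially the same route as the paper: split $\#(X,L)$ into $\#_0(X,L)$ plus the iterates of closed geodesics of length $\leq 2\arcsinh 1$, bound the latter via the Collar Lemma ($\leq 3g-3$ such short primitive geodesics, each with $\leq L$ iterates because $\ell_{\sys}(X)\geq 1$), apply Lemma~\ref{count-1}, and absorb the lower-order term. The paper writes the iterate count as $(3g-3)(L+1)$ rather than your $3gL$, but the reasoning and final absorption into $2(g-1)e^{L+6}$ are the same.
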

\bp
Let $\#_1(X,L)$ be the number of closed geodesics of length $\leq L$ on $X$ which are iterates of closed geodesics of length $\leq 2 \arcsinh 1$. First by the Collar Lemma (\eg see \cite[Theorem 4.1.6]{Buser10}) we know that there are at most $(3g-3)$ simple closed geodesics of length $\leq 2 \arcsinh 1\sim 1.7627$. Moreover, they are mutually disjoint, and we denote them by $\Gamma$. Recall $X\in \sM_g^{\geq 1}$, so $\ell_{sys}(X)\geq 1$. Then each closed geodesic counted in $\#_1(X,L)$ is an iterate of some curve in $\Gamma$ at most $L$ times. Thus we have
\[\#_1(X,L)\leq (3g-3)(L+1)\]
which together with Lemma \ref{count-1} imply that
\beqar
\#(X,L)&=&\#_0(X,L)+\#_1(X,L)\\
&\leq &(g-1)e^{L+6}+(3g-3)(L+1) \nonumber\\
&\leq & 2(g-1)e^{L+6}\nonumber
\eeqar
completing the proof.
\ep


\section{Weil-Petersson volume}\label{section wp volume}

In this section we list some results on \wep \ volumes of moduli spaces which will be applied later in the proofs of Theorem \ref{main} and \ref{main-2}. All of them are already known results and presented in \cite{NWX20}. We denote $V_{g,n}(x_1,\cdots,x_n)$ to be the \wep \ volume of $\M_{g,n}(x_1,\cdots,x_n)$ and $V_{g,n}= V_{g,n}(0,\cdots,0)$. 

First we recall several results of Mirzakhani and her coauthors.

\begin{theorem} \cite[Theorem 1.1]{Mirz07} \label{Mirz vol lemma 0}
The volume $V_{g,n}(x_1,\cdots,x_n)$ is a polynomial in $x_1^2,\cdots,x_n^2$ with degree $3g-3+n$. Namely we have
\begin{equation*}
V_{g,n}(x_1,\cdots,x_n) = \sum_{\alpha; |\alpha|\leq 3g-3+n} C_\alpha \cdot x^{2\alpha}
\end{equation*}
where $C_\alpha>0$ lies in $\pi^{6g-6+2n-|2\alpha|} \cdot \mathbb Q$. Here $\alpha=(\alpha_1,\cdots,\alpha_n)$ is a multi-index and $|\alpha|=\alpha_1+\cdots+\alpha_n$, $x^{2\alpha}= x_1^{2\alpha_1}\cdots x_n^{2\alpha_n}$.
\end{theorem}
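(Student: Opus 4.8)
I would prove this by induction on the complexity $N=3g-3+n=\dim_{\mathbb C}\M_{g,n}$, using Mirzakhani's topological recursion for the volume polynomials. The two surfaces of complexity $N=1$ serve as base cases: a pair of pants is rigid, so $V_{0,3}(x_1,x_2,x_3)\equiv 1$; and for the one-holed torus $V_{1,1}(x_1)=\tfrac{1}{24}(x_1^2+4\pi^2)$ in the standard orbifold normalization — both plainly of the asserted shape, with positive coefficients in the correct $\pi$-graded pieces. For the inductive step one needs a formula expressing $V_{g,n}$ through volumes of surfaces of strictly smaller complexity; this is where the generalized McShane identity and Mirzakhani's integration formula (Theorem \ref{Mirz int formula}, which is a covering-space unfolding and does not presuppose polynomiality) enter.

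\textbf{The recursion.} Fix the first boundary $\partial_1$, of length $x_1$. The generalized McShane identity is an exact identity of the form $x_1=\sum_{\{\alpha,\beta\}}\mathcal D(x_1,\ell_\alpha(X),\ell_\beta(X))+\sum_{j=2}^n\sum_\alpha \mathcal R(x_1,x_j,\ell_\alpha(X))$, the sums ranging over embedded pairs of pants with $\partial_1$ as one cuff, and $\mathcal D,\mathcal R$ explicit elementary functions built from $\log$ and $\log\cosh$. Integrating this identity over $\M_{g,n}(x_1,\dots,x_n)$ and unfolding each $\Mod$-orbit sum by Theorem \ref{Mirz int formula} yields
\[
\frac{\partial}{\partial x_1}\bigl(x_1 V_{g,n}(x_1,\dots,x_n)\bigr)=\mathcal A^{\mathrm{ns}}+\mathcal A^{\mathrm{sep}}+\mathcal B,
\]
where $\mathcal A^{\mathrm{ns}}=\tfrac12\iint\widehat{\mathcal D}(x_1,s,t)\,V_{g-1,n+1}(s,t,x_2,\dots,x_n)\,st\,ds\,dt$, $\mathcal A^{\mathrm{sep}}$ is the sum over all stable ways of writing $S_{g,n}\setminus(\text{pair of pants})$ as a disjoint union $S_{g_1,n_1}\sqcup S_{g_2,n_2}$ of the corresponding double integrals with $V_{g_1,n_1+1}\cdot V_{g_2,n_2+1}$ inside, and $\mathcal B=\sum_{j=2}^n\int\widehat{\mathcal R}(x_1,x_j,t)\,V_{g,n-1}(t,x_2,\dots,\widehat{x_j},\dots,x_n)\,t\,dt$. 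The kernels $\widehat{\mathcal D},\widehat{\mathcal R}$ are explicit, elementary, \emph{nonnegative}, and assembled from the single function $H(x,y)=\tfrac{1}{1+e^{(x+y)/2}}+\tfrac{1}{1+e^{(x-y)/2}}$ and its primitives.

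\textbf{Closing the induction.} The technical core is a computational lemma: for each $k\ge 0$ the function $F_{2k+1}(t):=\int_0^\infty x^{2k+1}H(x,t)\,dx$ is a \emph{polynomial} in $t$, of degree $2k+2$, whose coefficients are positive rationals times the power of $\pi$ that makes $F_{2k+1}$ weighted-homogeneous of weight $2k+2$ with $t$ and $\pi$ both of weight $1$; this follows by differentiating under the integral sign and using $\int_0^\infty u^{2k+1}(1+e^{u})^{-1}\,du=(1-2^{-2k-1})(2k+1)!\,\zeta(2k+2)$ together with $\zeta(2k+2)\in\pi^{2k+2}\mathbb Q_{>0}$; the analogous statement holds for the primitives of $H$ appearing in $\widehat{\mathcal D},\widehat{\mathcal R}$. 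Granting this, insert the inductive hypothesis for the lower volumes into the recursion and integrate term by term in $s,t$: every monomial of $V_{g-1,n+1}$ (or of a product $V_{g_1,n_1+1}V_{g_2,n_2+1}$, or of $V_{g,n-1}$) is carried to a polynomial in $x_1^2,\dots,x_n^2$ with positive coefficients in the right $\pi$-graded pieces, so $\partial_{x_1}(x_1 V_{g,n})$ is such a polynomial. The degree bound $N=3g-3+n$ is obtained by bookkeeping: the highest-degree part of each lower volume has degree equal to its own complexity, the complexities of the pieces plus one for the removed pair of pants sum to $N$, and each pair of glued lengths contributes exactly the missing degrees when integrated against the kernel. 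The $\pi$-weighting is preserved for the same reason, since $6g-6+2n=-2\chi(S_{g,n})$ and Euler characteristic is additive under the decompositions. Finally $x_1V_{g,n}(x_1,\dots)$ has vanishing value at $x_1=0$, so it is recovered by integrating $\partial_{x_1}(x_1V_{g,n})$ from $0$; dividing by $x_1$ gives the desired polynomial, and its symmetry in $x_1,\dots,x_n$ is automatic from that of $\M_{g,n}$.

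\textbf{Main obstacle.} The substantive work is (i) the generalized McShane identity and the precise unfolding into the stated kernels, and (ii) the explicit evaluation of $F_{2k+1}$ and the bookkeeping of degree and $\pi$-weight through the recursion; everything else is formal induction. I note a much shorter alternative if one is willing to invoke more: Mirzakhani (building on Wolpert's Theorem \ref{wol-wp}) shows that on the Deligne--Mumford compactification the Weil--Petersson class with boundary lengths is $[\omega_{\WP}(L)]=2\pi^2\kappa_1+\tfrac12\sum_{i=1}^n L_i^2\psi_i$, so that $V_{g,n}(L)=\tfrac{1}{N!}\int_{\overline{\M}_{g,n}}\bigl(2\pi^2\kappa_1+\tfrac12\sum_i L_i^2\psi_i\bigr)^{N}$; expanding the top power gives polynomiality in the $L_i^2$, the exact degree $N$, positivity of all coefficients (positivity of the $\kappa$--$\psi$ intersection numbers), and the $\pi$-grading simultaneously — but this presupposes the cohomological formula, which is not among the results quoted in this excerpt.
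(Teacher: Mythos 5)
This statement is quoted in the paper without proof, as \cite[Theorem 1.1]{Mirz07}, so there is no in-paper argument to compare against; your sketch is essentially Mirzakhani's original proof from that cited source --- the generalized McShane identity unfolded via the integration formula into the recursion for $\partial_{x_1}\bigl(x_1V_{g,n}\bigr)$, with the kernel integrals $F_{2k+1}$ evaluated through $\zeta(2k+2)\in\pi^{2k+2}\mathbb{Q}_{>0}$ --- and it is correct in outline, including the degree and $\pi$-weight bookkeeping and the propagation of positivity of the coefficients through the nonnegative kernels and the positive base cases. The alternative you mention, reading polynomiality, positivity and the $\pi$-grading off from $V_{g,n}(L)=\frac{1}{N!}\int_{\overline{\M}_{g,n}}\bigl(2\pi^2\kappa_1+\tfrac12\sum_i L_i^2\psi_i\bigr)^{N}$, is also valid but rests on Mirzakhani's intersection-theoretic paper rather than the recursion paper cited here.
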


\begin{lemma}\label{Mirz vol lemma 1}
\begin{enumerate}

  \item \cite[Lemma 3.2]{Mirz13}
\begin{equation*}
V_{g,n} \leq V_{g,n}(x_1,\cdots,x_n) \leq e^{\frac{x_1+\cdots+x_n}{2}} V_{g,n}.
\end{equation*}

  \item \cite[Theorem 3.5]{Mirz13}
For fixed $n\geq 0$, as $g\rightarrow \infty$ we have
\begin{equation*}
\frac{V_{g,n}}{V_{g-1,n+2}} = 1 + O(\frac{1}{g}).
\end{equation*}
Where the implied constants are related to $n$ and independent of $g$.
\end{enumerate}

\end{lemma}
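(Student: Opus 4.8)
The plan is to treat the two assertions separately: the first is essentially formal once one invokes the polynomiality of $V_{g,n}(\boldsymbol{x})$, while the second genuinely requires Mirzakhani's recursion for Weil--Petersson volumes.

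\emph{Part (1).} For the lower bound I would use Theorem~\ref{Mirz vol lemma 0}: writing $V_{g,n}(x_1,\dots,x_n)=\sum_{|\alpha|\le 3g-3+n}C_\alpha x^{2\alpha}$ with every $C_\alpha>0$ and constant term $C_{\boldsymbol{0}}=V_{g,n}(0,\dots,0)=V_{g,n}$, all monomials are nonnegative for $x_i\ge0$, so $V_{g,n}(x_1,\dots,x_n)\ge V_{g,n}$. For the upper bound I would rewrite the volume polynomial in its intersection-theoretic normalization,
\[
V_{g,n}(2L_1,\dots,2L_n)=\sum_{|d|\le 3g-3+n}\ [\tau_{d_1}\cdots\tau_{d_n}]_{g}\ \prod_{i=1}^n\frac{L_i^{2d_i}}{(2d_i+1)!},
\]
where the coefficients $[\tau_{d_1}\cdots\tau_{d_n}]_{g}$ are the (suitably normalized) $\psi$-class intersection numbers, all nonnegative, with $[\tau_0\cdots\tau_0]_{g}=V_{g,n}$. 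The one substantive input is the monotonicity of these normalized intersection numbers in each index, so that $[\tau_{d_1}\cdots\tau_{d_n}]_{g}\le[\tau_0\cdots\tau_0]_{g}=V_{g,n}$; this Mirzakhani deduces from her recursion for the brackets. Granting it, I would bound termwise and extend to the full series (legitimate since every term is nonnegative for $L_i\ge0$):
\[
V_{g,n}(2L_1,\dots,2L_n)\le V_{g,n}\prod_{i=1}^n\sum_{d_i\ge0}\frac{L_i^{2d_i}}{(2d_i+1)!}=V_{g,n}\prod_{i=1}^n\frac{\sinh L_i}{L_i}\le V_{g,n}\prod_{i=1}^n e^{L_i},
\]
using $\frac{\sinh L}{L}\le\cosh L\le e^{L}$ for $L\ge0$; substituting $x_i=2L_i$ gives $V_{g,n}(\boldsymbol{x})\le e^{\frac12(x_1+\cdots+x_n)}V_{g,n}$.

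\emph{Part (2).} Here I would follow Mirzakhani's recursion for Weil--Petersson volumes. Applied to a surface of type $(g,n+1)$ with one distinguished boundary of length $L$, it writes $\partial_L\!\big(L\,V_{g,n+1}(L,\boldsymbol{0})\big)$ as a sum of integral transforms of (i) $V_{g-1,n+2}$, (ii) products $V_{g_1,n_1+1}V_{g_2,n_2+1}$ over stable splittings with $g_1+g_2=g$, and (iii) $V_{g,n}$; letting $L\to0$ isolates $V_{g,n+1}$ on the left. I would then estimate the right-hand side, using Part~(1) (in a slightly refined form) to control the boundary-length dependence of the volume factors, and using Mirzakhani's estimates on the super-exponential growth of $V_{g,n}$ in $g$ to see that the split-surface terms (ii) are smaller than (i) by a factor $O(1/g)$, and likewise for the lower-complexity contributions. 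Inducting on $2g+n$, one obtains that the ratios $V_{g,n+1}/V_{g,n}$ and $V_{g,n+1}/V_{g-1,n+2}$ stabilize with the same leading behaviour (of order $g$) and relative error $O(1/g)$; dividing the two relations yields $V_{g,n}/V_{g-1,n+2}=1+O(1/g)$. (A less economical alternative would be to quote the Mirzakhani--Zograf large-genus asymptotics for $V_{g,n}$ and read the ratio off directly, but this proves far more than is needed.)

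The main obstacle is Part~(2): arranging the recursion so that the split-surface and lower-complexity terms are provably $O(1/g)$ rather than merely $o(1)$, and carrying the induction on $(g,n)$ with all estimates uniform in the boundary parameters. Within Part~(1) the only nonformal ingredient is the coefficient monotonicity $[\tau_{d_1}\cdots\tau_{d_n}]_{g}\le V_{g,n}$, which is a standard consequence of the intersection-theoretic recursion.
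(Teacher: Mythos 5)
This lemma is not proved in the paper at all: both parts are quoted verbatim from Mirzakhani's paper \cite{Mirz13} (Lemma~3.2 and Theorem~3.5 respectively), so there is no in-paper proof to compare your argument against. What you have written is a reconstruction of Mirzakhani's original proofs, and on that footing it is largely sound. For Part~(1), the lower bound from Theorem~\ref{Mirz vol lemma 0} is immediate, and the upper bound is exactly Mirzakhani's argument: expand $V_{g,n}(2L_1,\dots,2L_n)$ in the $[\tau_{d_1}\cdots\tau_{d_n}]_g$ normalization, invoke the coefficient monotonicity $[\tau_{d_1}\cdots\tau_{d_n}]_g\le[\tau_0\cdots\tau_0]_g=V_{g,n}$ (which is the one genuinely nontrivial input, coming from her recursion), and sum the resulting $\prod_i \frac{\sinh L_i}{L_i}\le\prod_i e^{L_i}$. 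Substituting $x_i=2L_i$ gives the stated bound.

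For Part~(2), your sketch correctly identifies Mirzakhani's strategy (her topological recursion plus an induction on $2g+n$ with uniform error control), but as written it leaves essentially all of the work undone: the hard part in \cite{Mirz13} is precisely showing that the split-surface and lower-complexity terms in the recursion are $O(1/g)$ relative to the leading term rather than merely subdominant, and that the resulting error estimates close under the induction. Worth noting: the ``less economical alternative'' you mention at the end --- reading the ratio directly off the Mirzakhani--Zograf asymptotics --- is exactly what the paper's own remark after this lemma points to (``For Part~(2), one may also see the following Theorem~\ref{MZ vol thm} of Mirzakhani--Zograf''), and since Theorem~\ref{MZ vol thm} is already recalled and used elsewhere in the paper, that route is arguably the cleaner one in this context, even if it invokes a stronger result than strictly necessary.
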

\begin{rem*}
For Part $(2)$, one may also see the following Theorem \ref{MZ vol thm} of Mirzakhani-Zograf.
\end{rem*}

\begin{lemma}\cite[Corollary 3.7]{Mirz13} \label{Mirz vol lemma 2}
For fixed $b,k,r\geq 0$ and $C<C_0= 2\ln 2$,
\begin{equation*}
\sum_{\begin{array}{c}
        g_1+g_2=g+1-k  \\
        r+1\leq g_1\leq g_2
      \end{array}}
e^{Cg_1} \cdot g_1^b \cdot V_{g_1,k} \cdot V_{g_2,k} \asymp \frac{V_g}{g^{2r+k}}
\end{equation*}
as $g\rightarrow\infty$. The implied constants are related to $b,k,r,C$ and independent of $g$. 

\end{lemma}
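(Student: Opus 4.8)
The plan is to establish the two inequalities contained in ``$\asymp$'' separately, using throughout the large-genus asymptotics of Weil--Petersson volumes, namely
\[
V_{h,n}\ \asymp\ \frac{(2h-3+n)!\,(4\pi^2)^{2h-3+n}}{\sqrt h}\qquad\text{for each fixed }n
\]
(Mirzakhani--Zograf); for the boundedly many summands in which $g_1$ stays bounded, $V_{g_1,k}$ is a fixed positive constant and no asymptotic input is needed there.

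\emph{Lower bound.} Keep only the single summand $g_1=r+1$, a legitimate index once $g$ is large (then $r+1\le g-k-r=g_2$). Since $e^{C(r+1)}(r+1)^b V_{r+1,k}$ is a positive constant depending only on $b,k,r,C$, it suffices to check $V_{g-k-r,\,k}\asymp V_g/g^{2r+k}$. This is immediate from the asymptotics above: the ratio $V_{g-k-r,k}/V_g$ is $\asymp (4\pi^2)^{-(2r+k)}\prod_{i=0}^{2r+k-1}(2g-3-i)^{-1}\asymp g^{-(2r+k)}$.

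\emph{Upper bound.} Insert the volume asymptotics into a general summand and use $g_1+g_2=g+1-k$: with $p=2g_1-3+k$, $q=2g_2-3+k$ one has $p+q=2g-4$, the powers of $4\pi^2$ collapse to a constant, and $p!\,q!/(2g-3)!=\bigl((2g-3)\binom{2g-4}{p}\bigr)^{-1}$, whence
\[
\frac{1}{V_g}\sum_{\substack{g_1+g_2=g+1-k\\ r+1\le g_1\le g_2}} e^{Cg_1}g_1^b\, V_{g_1,k}V_{g_2,k}\ \asymp\ \frac{\sqrt g}{2g-3}\sum_{g_1=r+1}^{\lfloor (g+1-k)/2\rfloor}\frac{e^{Cg_1}\,g_1^{b}}{\sqrt{g_1 g_2}\,\binom{2g-4}{2g_1-3+k}}\,,
\]
with $g_2=g+1-k-g_1$ on the right. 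I would estimate this inner sum by splitting the range of $g_1$. For $r+1\le g_1\le A$ ($A$ a large fixed constant) we have $\binom{2g-4}{2g_1-3+k}\asymp g^{2g_1-3+k}$, so the $g_1$-th term is $\asymp g^{-(2g_1-5/2+k)}$; these decay geometrically, their sum is $\asymp g^{-(2r-1/2+k)}$, and after multiplying by the prefactor $\asymp V_g g^{-1/2}$ one recovers $\asymp V_g/g^{2r+k}$, matching the lower bound. For $A\le g_1\le\lfloor (g+1-k)/2\rfloor$ the ratio of consecutive terms is $\asymp e^{C}g_1^2/g_2^2$, which is $<1$ exactly while $g_1<e^{-C/2}g_2$; since $C<C_0=2\ln 2$ forces $e^{-C/2}>\tfrac12$, the term is decreasing on $[A,\,c_0 g]$ with $c_0=\frac{e^{-C/2}}{1+e^{-C/2}}>\tfrac13$ (so there it is $\le$ its value at $g_1=A$, which is negligible once $A$ is large) and increasing on $[c_0 g,\,\lfloor(g+1-k)/2\rfloor]$, hence there $\le$ its value near $g_1\approx g/2$, which using $\binom{2g-4}{g-2}\asymp 4^g/\sqrt g$ is $\prec \mathrm{poly}(g)\cdot e^{Cg/2}/4^g=\mathrm{poly}(g)\cdot e^{g(C-4\ln 2)/2}$. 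Summing the at most $O(g)$ terms of this tail contributes $o(V_g/g^{2r+k})$, completing the upper bound.

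\emph{Main difficulty.} The crux is the tail estimate: the inner summand is \emph{not} monotone in $g_1$ when $C>0$ --- it decreases away from $g_1=r+1$ but turns slightly increasing on a window near $g_1\approx g/2$ --- so the sum cannot be dominated term by term by (a multiple of) its first term. The saving feature is that on that problematic window the relevant (nearly central) binomial coefficient has size $\asymp 4^g/\sqrt g$, which outweighs $e^{Cg/2}$ precisely because of the hypothesis $C<C_0$; this is where the value $C_0=2\ln 2$ enters (any $C<4\ln 2$ would in fact suffice for this step). The remaining issues are routine: making the ``$\asymp$'' for $V_{h,n}$ uniform enough to carry inside the sum --- harmless, as $k$ is fixed, the factor $V_{g_2,k}$ with $g_2\to\infty$ carries a genuine asymptotic, and the finitely many small-genus factors are fixed positive constants --- and the parity of $g+1-k$ at the upper endpoint of summation.
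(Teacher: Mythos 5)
Your argument is essentially correct, but it is a genuinely different route from the one underlying the paper: here the lemma is not proved at all, it is quoted verbatim from \cite[Corollary 3.7]{Mirz13}, where Mirzakhani derives it from her elementary recursion-based volume estimates (ratio bounds of the type in Lemma \ref{Mirz vol lemma 1} together with crude binomial-type comparisons), and it is precisely that cruder term-by-term comparison which produces the threshold $C_0=2\ln 2$. You instead deduce the statement directly from the Mirzakhani--Zograf expansion (Theorem \ref{MZ vol thm}): the lower bound by retaining the single summand $g_1=r+1$, and the upper bound by converting the summand into $e^{Cg_1}g_1^b\bigl(\sqrt{g_1g_2}\,(2g-3)\binom{2g-4}{2g_1-3+k}\bigr)^{-1}$ up to constants and splitting at a large fixed $A$. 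This buys transparency and even a sharper range for the upper bound (any $C<4\ln 2$, since near the central window the relevant binomial is of size $4^g/\sqrt g$), at the cost of invoking the full \cite{MZ15} asymptotics, which Mirzakhani's original proof does not need. Two small points you should tighten, neither of which is a real gap: (i) the uniformity of $\asymp$ for $V_{h,k}$ over all $h\geq 1$ is fine for fixed $k$, but note the degenerate small cases (e.g.\ $g_1=r+1$ with $2g_1-2+k$ too small) are inherited from the statement itself; (ii) the claim that the summand is exactly decreasing on $[A,c_0g]$ and increasing beyond is only true up to $1+O(1/A)$ corrections in the consecutive ratio $e^{C}g_1^2/g_2^2$, so near the crossover the unimodality is not literally established; this is harmless because for $g_1\geq \eps g$ every term is already exponentially small by the same Stirling/entropy estimate you use at the right endpoint ($C\lambda<2H(\lambda)$ for $\lambda\in(0,\tfrac12]$ when $C<4\ln 2$, with $H$ the entropy function), so each middle term is dominated without any monotonicity claim.
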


The following several useful bounds for \wep \ volumes are proved in \cite{NWX20}. And their proofs highly rely on works in \cite{Mirz13} and the following asymptotic property of $V_{g,n}$ which dues to Mirzakhani-Zograf.

\begin{theorem}\cite[Theorem 1.2]{MZ15} \label{MZ vol thm}
There exists a universal constant $\alpha>0$ such that for any given $n\geq0$,
\begin{equation*}
V_{g,n} = \alpha \frac{1}{\sqrt{g}} (2g-3+n)! (4\pi^2)^{2g-3+n} \big(1+O(\frac{1}{g}) \big)
\end{equation*}
as $g\rightarrow\infty$. The implied constant is related to $n$ and independent of $g$.
\end{theorem}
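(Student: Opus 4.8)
The plan is to follow Mirzakhani--Zograf \cite{MZ15}, whose engine is Mirzakhani's topological recursion for the volume polynomials together with the coarse volume comparisons already recorded (Lemma~\ref{Mirz vol lemma 1} and Lemma~\ref{Mirz vol lemma 2}). An alternative route, through Aggarwal-type large-genus asymptotics of $\psi$-intersection numbers or through the Eynard--Orantin spectral-curve analysis, would also deliver the statement, but it is not closer to the tools assembled in this excerpt, so I will not pursue it.

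First I would pass to normalized intersection numbers. By Theorem~\ref{Mirz vol lemma 0} and Mirzakhani's identification of the coefficients of $V_{g,n}(L_1,\dots,L_n)$ with $\psi$--$\kappa$ intersection numbers on $\overline{\M}_{g,n}$, one rescales these coefficients to symbols $[\tau_{d_1}\cdots\tau_{d_n}]_g$, normalized so that $[\tau_0\cdots\tau_0]_g=V_{g,n}$. In these variables Mirzakhani's recursion takes the schematic shape
\[
[\tau_{d_1}\tau_{d_2}\cdots\tau_{d_n}]_g=\sum_{j\ge2}\mathcal A^{(j)}_{\mathbf d}+\tfrac12\,\mathcal B_{\mathbf d}+\tfrac12\,\mathcal C_{\mathbf d},
\]
where $\mathcal A^{(j)}$ merges the first and $j$-th boundary (so $n\mapsto n-1$), $\mathcal B$ attaches a handle at the first boundary (so $g\mapsto g-1$, $n\mapsto n+1$), and $\mathcal C$ is a sum of products $[\cdots]_{g_1}[\cdots]_{g_2}$ over $g_1+g_2=g$; each summand carries an explicit nonnegative weight built from the numbers $a_k=\zeta(2k)(1-2^{1-2k})$ that arise in the power-series expansion of Mirzakhani's integration kernels, and one exploits the convergence of $\sum_k a_k$ and $\sum_k k\,a_k$.

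The core of the argument is a simultaneous induction on the complexity $3g-3+n$ establishing uniform estimates of the form
\[
\Big|1-\frac{[\tau_{d_1}\cdots\tau_{d_n}]_g}{V_{g,n}}\Big|\le\frac{C(n,|\mathbf d|)}{g},
\]
together with the ratio asymptotics $V_{g,n+1}=4\pi^2(2g-2+n)\,V_{g,n}\big(1+O(1/g)\big)$ and $V_{g,n}=V_{g-1,n+2}\big(1+O(1/g)\big)$. Running the recursion, the $\mathcal A^{(j)}$-terms, fed by the inductive coefficient bounds, collapse (a geometric-type sum over the $a_k$'s) to reproduce the expected main term, while the $\mathcal B$-terms and, above all, the separating $\mathcal C$-terms must be shown to be of strictly lower order in $g$ --- this is precisely where Lemma~\ref{Mirz vol lemma 2} enters, bounding $\sum_{g_1+g_2=g}V_{g_1,\ast}V_{g_2,\ast}\prec V_g/g$. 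I expect this to be the main obstacle: one has to choose the exact quantity to induct on so that the $O(1/g)$ error produced at each of the $\asymp g$ steps of the recursion does not accumulate, while simultaneously controlling the infinite $k$-sums; this is the technical heart of \cite{MZ15}.

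Finally, granting the ratio asymptotics, I would telescope. Iterating $V_{g,n+1}=4\pi^2(2g-2+n)V_{g,n}(1+O(1/g))$ reduces the statement to the case $n=0$, hence to the single sequence $u_g:=V_g\big/\big((2g-3)!\,(4\pi^2)^{2g-3}\big)$. To produce the constant and the stated error one pushes the comparison $V_g/V_{g-1,2}$ one order further, to $1-\tfrac{1}{2g}+O(1/g^2)$ --- it is this second-order term that accounts for the factor $g^{-1/2}$ --- and establishes a two-term expansion $V_{g,n}=\alpha\,g^{-1/2}(2g-3+n)!(4\pi^2)^{2g-3+n}\big(1+c_n/g+O(1/g^2)\big)$ in which the coefficients $c_n$ are forced recursively and $\alpha$ is a single free universal constant independent of $n$. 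The consistency of this expansion makes the telescoped relation for $\sqrt g\,u_g$ carry a summable $O(1/g^2)$ error, so $\sqrt g\,u_g$ converges to a universal $\alpha>0$ with $\sqrt g\,u_g=\alpha\big(1+O(1/g)\big)$. Unwinding the reductions gives $V_{g,n}=\alpha\,g^{-1/2}(2g-3+n)!\,(4\pi^2)^{2g-3+n}\big(1+O(1/g)\big)$ with the implied constant depending only on $n$, as claimed.
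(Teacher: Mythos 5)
This statement is not proved in the paper at all: it is imported verbatim as \cite[Theorem 1.2]{MZ15}, so there is no internal argument to compare your proposal against. Your sketch is a faithful roadmap of the Mirzakhani--Zograf proof itself (normalized intersection numbers, Mirzakhani's recursion with the $a_k=\zeta(2k)(1-2^{1-2k})$ weights, ratio asymptotics $V_{g,n+1}=4\pi^2(2g-2+n)V_{g,n}(1+O(1/g))$ and $V_{g,n}=V_{g-1,n+2}(1+O(1/g))$, suppression of the separating terms, then telescoping $u_g=V_g/((2g-3)!(4\pi^2)^{2g-3})$ with a second-order $1-\tfrac{1}{2g}+O(1/g^2)$ expansion to extract the $g^{-1/2}$ and the constant $\alpha$), and there is no circularity in invoking Lemma~\ref{Mirz vol lemma 1} and Lemma~\ref{Mirz vol lemma 2}, which come from Mirzakhani's earlier work. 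However, as you acknowledge, the decisive content --- the uniform $O(1/g)$ bounds on the normalized coefficients carried through the recursion without error accumulation, and the summable second-order control that makes $\sqrt{g}\,u_g$ converge --- is only named, not established, so what you have is a correct plan summarizing the cited reference rather than a self-contained proof.
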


The first one is as following which is motivated by \cite[Proposition 3.1]{MP19} where the error term in the lower bound is different.
\begin{lemma} \cite[Lemma 20]{NWX20} \label{MP vol lemma}
Let $g,n\geq 1$ and $x_1,\cdots,x_n\geq 0$, then there exists a constant $c=c(n)>0$ independent of $g,x_1,\cdots,x_n$ such that
\begin{equation*}
\prod_{i=1}^n \frac{\sinh(x_i/2)}{x_i/2} \big(1- c(n)\frac{\sum_{i=1}^n x_i^2}{g}\big)
\leq \frac{V_{g,n} (x_1,\cdots,x_n)}{ V_{g,n}}
\leq \prod_{i=1}^n \frac{\sinh(x_i/2)}{x_i/2}.
\end{equation*}
\end{lemma}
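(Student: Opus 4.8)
\textbf{Proof proposal for Lemma \ref{MP vol lemma}.}

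The plan is to reduce the statement to the asymptotic expansion of Mirzakhani--Zograf (Theorem \ref{MZ vol thm}) combined with the explicit polynomial structure of $V_{g,n}(x_1,\dots,x_n)$ from Theorem \ref{Mirz vol lemma 0}. First I would recall Mirzakhani's exact formula expressing the coefficients $C_\alpha$ of the volume polynomial in terms of intersection numbers of $\psi$- and $\kappa$-classes: writing
\[
V_{g,n}(x_1,\dots,x_n)=\sum_{|\alpha|\le 3g-3+n} C_\alpha\, x^{2\alpha},\qquad
C_\alpha=\frac{1}{2^{|\alpha|}\,\alpha!}\int_{\overline{\mathcal M}_{g,n}}\psi_1^{\alpha_1}\cdots\psi_n^{\alpha_n}\cdot\omega^{3g-3+n-|\alpha|}/(3g-3+n-|\alpha|)!,
\]
so that $C_{\mathbf 0}=V_{g,n}$. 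The key quantitative input is a bound on the ratio $C_\alpha/C_{\mathbf 0}$. The upper bound is already classical in this circle of ideas: one has $\frac{[x^{2\alpha}]V_{g,n}}{V_{g,n}}\le \frac{1}{2^{2|\alpha|}\prod(2\alpha_i+1)!}$, which upon summing over $\alpha$ gives exactly $\prod_i \frac{\sinh(x_i/2)}{x_i/2}$, since $\sum_{k\ge0}\frac{(x/2)^{2k}}{(2k+1)!}=\frac{\sinh(x/2)}{x/2}$. This handles the right-hand inequality with no error term.

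For the lower bound, the point is to show that $C_\alpha/C_{\mathbf 0}$ is not merely bounded by the ``leading'' value $\bigl(2^{2|\alpha|}\prod(2\alpha_i+1)!\bigr)^{-1}$ but \emph{close} to it, with a relative deficiency of order $O(|\alpha|^2/g)$ (or $O(\sum x_i^2/g)$ after summation). Concretely, I would establish an estimate of the shape
\[
\frac{C_\alpha}{C_{\mathbf 0}}\;\ge\;\frac{1}{2^{2|\alpha|}\prod_{i=1}^n (2\alpha_i+1)!}\Bigl(1-c(n)\frac{|\alpha|^2}{g}\Bigr),
\]
valid uniformly in $g$. This follows by comparing $\int \psi_1^{\alpha_1}\cdots\psi_n^{\alpha_n}\,\omega^{\,d-|\alpha|}$ with $\omega^{\,d}$ ($d=3g-3+n$) through the string and dilaton equations together with the Mirzakhani--Zograf type estimates $\frac{V_{g,n+1}}{2g V_{g,n}}=1+O(1/g)$ and the analogous ratios among intersection numbers with one unit of $\psi$ traded for $\kappa_1$; iterating $|\alpha|$ times produces the factor $\prod$ of $(1+O(1/g))$'s, hence the $\bigl(1-O(|\alpha|^2/g)\bigr)$ form after one notes $|\alpha|\le d\asymp g$ so the error stays controlled. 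Summing the lower bound for $C_\alpha/C_{\mathbf 0}$ against $x^{2\alpha}$, the main term reassembles $\prod_i\frac{\sinh(x_i/2)}{x_i/2}$ and the error term is bounded by $\frac{c(n)}{g}\sum_\alpha \frac{|\alpha|^2\,x^{2\alpha}}{2^{2|\alpha|}\prod(2\alpha_i+1)!}$; using $|\alpha|^2\le n\sum_i\alpha_i^2$ and $\sum_{k}\frac{k^2 (x/2)^{2k}}{(2k+1)!}\prec e^{x/2}$ one absorbs this into $c(n)\frac{\sum x_i^2}{g}\prod_i\frac{\sinh(x_i/2)}{x_i/2}$ after adjusting the constant, since $e^{x/2}\asymp \frac{\sinh(x/2)}{x/2}\cdot(\text{poly})$ on the relevant range.

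The main obstacle I anticipate is making the relative comparison $C_\alpha/C_{\mathbf 0}\ge (\text{leading})\cdot(1-O(|\alpha|^2/g))$ genuinely uniform: one needs the $O(1/g)$ in each single step of trading a $\psi$ for the remaining structure to be uniform not only in $g$ but also independent of how many such steps (up to $|\alpha|$) have already been performed, so that the telescoping product $\prod_{j\le |\alpha|}(1+O(1/g))$ really is $1+O(|\alpha|/g)$ and not something that degrades. This is exactly the type of estimate carried out in \cite{Mirz13} and \cite{MZ15}, so I would lean on Lemma \ref{Mirz vol lemma 1}(2), Lemma \ref{Mirz vol lemma 2} and Theorem \ref{MZ vol thm} to supply the uniform ratio bounds, and organize the induction on $|\alpha|$ carefully; everything else (the two summations reproducing $\prod\frac{\sinh(x_i/2)}{x_i/2}$ and the crude bound on the error sum) is routine. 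Since the statement is quoted from \cite[Lemma 20]{NWX20}, in the write-up I would either cite it directly or reproduce this argument in the form just sketched.
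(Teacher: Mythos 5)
The paper does not prove this lemma; it quotes it from \cite[Lemma 20]{NWX20} and uses it as a black box. So there is no proof in this paper to compare against directly, and I will instead assess the soundness of your sketch against what the cited argument must look like.

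Your structural plan is the right one, and it is essentially the route taken in \cite{NWX20} (building on \cite{MP19}, \cite{Mirz13}, \cite{MZ15}): write $V_{g,n}(x)=\sum_\alpha C_\alpha x^{2\alpha}$, reduce the two-sided inequality to a per-coefficient comparison of $C_\alpha/C_{\mathbf 0}$ with $\bigl(2^{2|\alpha|}\prod_i(2\alpha_i+1)!\bigr)^{-1}$, then resum. The upper inequality is exactly as you say: $C_\alpha/C_{\mathbf 0}\le \prod_i\bigl(2^{2\alpha_i}(2\alpha_i+1)!\bigr)^{-1}$ summed against $x^{2\alpha}$ reassembles $\prod_i\tfrac{\sinh(x_i/2)}{x_i/2}$. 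For the lower inequality, you correctly identify that the whole content is a uniform coefficient-level bound of the form $C_\alpha/C_{\mathbf 0}\ge(\text{leading})\bigl(1-c(n)|\alpha|^2/g\bigr)$; this is precisely the Mirzakhani--Zograf--type estimate on $\psi$-$\kappa$ intersection ratios, and your description of obtaining it by trading one $\psi_i$ at a time via string/dilaton and tracking a uniform $1+O(1/g)$ per step is a fair description of how it is done. Since $C_\alpha\ge0$ always, the bound is harmless when $|\alpha|^2\gg g$ (it becomes vacuous), so you do not need to restrict the range of $\alpha$.

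One concrete computational slip in the last step: you bound $\sum_{k\ge0}\frac{k^2(x/2)^{2k}}{(2k+1)!}\prec e^{x/2}$ and then try to absorb $e^{x/2}$ into $\frac{\sinh(x/2)}{x/2}$ times a polynomial. Both halves of that are off. The correct estimate, valid uniformly in $x\ge0$, is
\[
\sum_{k\ge0}\frac{k^2 y^{2k}}{(2k+1)!}\;\asymp\; y^2\,\frac{\sinh y}{y},
\]
which for large $y$ is $\asymp y\,e^{y}$, not $\prec e^{y}$; and for small $y$ both sides are $\asymp y^2$, whereas $e^{y}\to1$ so $e^{y}$ is not $\prec y^2\sinh y/y$ there. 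The clean way to close the argument is to expand $|\alpha|^2=\sum_{i,j}\alpha_i\alpha_j$, factor the sum over $\alpha$, use $\sum_k\frac{k\,y^{2k}}{(2k+1)!}\prec y\frac{\sinh y}{y}$ and the display above for the diagonal terms, and then $x_ix_j\le\tfrac12(x_i^2+x_j^2)$ to land on $c(n)\frac{\sum_i x_i^2}{g}\prod_i\frac{\sinh(x_i/2)}{x_i/2}$. With that repair, and modulo filling in the coefficient-level lower bound which you have explicitly flagged as the load-bearing technical input, the sketch is sound.
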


\begin{rem*} In the lemma above,
\begin{enumerate}
\item for the lower bound, the $x_i's$ may be related to $g$ but $n$ is independent of $g$ as $g\rightarrow\infty$;
\item for the upper bound, both the $x_i's$ and $n$ may be related to $g$ as $g\rightarrow\infty$.
\end{enumerate}
\end{rem*}

As in \cite{NWX20}, for $r\geq1$ one may define 
$$
W_{r}\overset{\text{def}}{=}
\begin{cases}
V_{\frac{r}{2}+1}&\text{if $r$ is even},\\[5pt]
V_{\frac{r+1}{2},1}&\text{if $r$ is odd}.
\end{cases}
$$

The proof of the following result relies on Lemma \ref{Mirz vol lemma 1} and Lemma \ref{Mirz vol lemma 2}.
\begin{lemma}\cite[Lemma 21]{NWX20}\label{Wr-prop}
	\begin{enumerate}
		\item For any $g,n\geq 0$, we have
		$$V_{g,n}  \leq c \cdot W_{2g-2+n}$$
		for some universal constant $c>0$.
		\item For any $r\geq1$ and $m_0\leq \frac{1}{2}r$, we have
		$$\sum_{m=m_0}^{[\frac{r}{2}]} W_m W_{r-m} \leq c(m_0) \frac{1}{r^{m_0}}W_r$$
		for some constant $c(m_0)>0$ only depending on $m_0$.
	\end{enumerate}
\end{lemma}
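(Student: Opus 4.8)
The plan is to deduce both statements from the Mirzakhani--Zograf asymptotic expansion (Theorem \ref{MZ vol thm}) together with the comparison estimates of Lemmas \ref{Mirz vol lemma 1} and \ref{Mirz vol lemma 2}. Throughout write $r=2g-2+n$ for the complexity, so that by definition $W_r$ is the Weil--Petersson volume of the surface of complexity $r$ carrying the fewest marked points ($0$ if $r$ is even, $1$ if $r$ is odd); note that $2g-3+n=r-1$ is the common factorial index and exponent appearing in Theorem \ref{MZ vol thm} for \emph{every} moduli space of complexity $r$.

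For Part (1): I would reduce $V_{g,n}$ to a volume of the same complexity $r$ with at most one marked point, i.e. to $W_r$ itself, by repeatedly performing the move $(g,n)\mapsto(g+1,n-2)$, which leaves $r$ unchanged, until the number of marked points is $0$ or $1$. Each step compares two volumes of equal complexity, and by Theorem \ref{MZ vol thm} (supplemented by Lemma \ref{Mirz vol lemma 1} in the range where the genus is small compared to $n$) the ratio of two complexity-$r$ volumes is bounded above; the issue is that the telescoped product of these ratios must be bounded \emph{independently of the number of steps}, hence of $n$. In the regime $n=O(g)$ there are only $O(n/g)=O(1)$ steps and a uniform bound is immediate; in the extreme regime $n\gg g$ one uses instead that the low-genus (Manin--Zograf) volumes $V_{g,n}$ grow, in the factorial-normalised sense, with a strictly smaller exponential rate than $(4\pi^2)$, so that $V_{g,n}\le c\,W_r$ holds there with room to spare.

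For Part (2): this is the computational core. I would split $\sum_{m=m_0}^{[r/2]}W_mW_{r-m}$ into the two classes determined by the parity of $m$ (the parity of $r$ being fixed). In each class $W_mW_{r-m}$ becomes a product $V_{g_1,k}V_{g_2,k}$ with $k\in\{0,1\}$ fixed, with $g_1+g_2$ a constant determined by $r$, and with summation variable $g_1$ running over $\lceil m_0/2\rceil+O(1)\le g_1\le g_2$; when $r$ is odd the two factors carry different boundary counts, in which case one first applies the monotonicity $V_{g,0}\le V_{g,1}$ to make them match, at the price of replacing the target $W_r$ by $W_{r+1}\prec r\,W_r$ (immediate from Theorem \ref{MZ vol thm}). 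Then apply Lemma \ref{Mirz vol lemma 2} with $C=0$, $b=0$, the relevant $k\in\{0,1\}$, and lower cutoff $r_0=\lceil m_0/2\rceil$ chosen so that the constraint $g_1\ge r_0+1$ is forced by $m\ge m_0$; this yields
\[
\sum V_{g_1,k}V_{g_2,k}\;\asymp\;\frac{V_{g_1+g_2-1+k}}{(g_1+g_2)^{2r_0+k}}\;\prec\;\frac{W_r}{r^{m_0}}
\]
up to a constant depending only on $m_0$. Finally I would use Lemma \ref{Mirz vol lemma 1}(2) to absorb the $1+O(1/g)$ discrepancy between $V_{g_1+g_2-1+k}$ and the exact value $W_r$ (or $W_{r+1}$), and sum the two parity classes.

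The main obstacle is Part (1): keeping the constant $c$ independent of the number of marked points $n$, and not merely of $g$, is the delicate point, since the fixed-$n$ asymptotics of Theorem \ref{MZ vol thm} are not \emph{a priori} uniform in $n$ and one must control the transition between the large-genus and many-marked-points regimes. Part (2) is by comparison a mechanical bookkeeping exercise once the parity classes are separated and Lemma \ref{Mirz vol lemma 2} is applied with the right parameters.
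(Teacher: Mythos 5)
The paper does not prove this lemma; it cites \cite[Lemma 21]{NWX20} and merely notes that the proof there uses Lemma \ref{Mirz vol lemma 1} and Lemma \ref{Mirz vol lemma 2}. So there is no paper-internal proof to compare against, and I am judging your proposal on its own merits.

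Your Part (2) is essentially sound. The parity split is the right idea, the parameter matching in Lemma \ref{Mirz vol lemma 2} (take $C=0$, $b=0$, $k\in\{0,1\}$ according to parity, and $r_0=\lceil m_0/2\rceil$) does produce the exponent $2r_0+k\geq m_0$, and the odd-$r$ case does reduce to the even-$r$ case at the cost of a factor $\asymp r$, which is exactly recovered from $W_{r+1}\asymp r\,W_r$. The one unjustified step is the monotonicity $V_{g,0}\leq V_{g,1}$: this inequality is true (it follows from Theorem \ref{MZ vol thm} for $g$ large and can be checked directly for small $g$), but nothing in the paper's toolkit supplies it, so you would need to prove or cite it explicitly.

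Your Part (1) has a genuine gap, and your attempted repair is wrong as written. The number of moves $(g,n)\mapsto(g+1,n-2)$ required to drive $n$ down to $0$ or $1$ is $\lfloor n/2\rfloor$, which is $O(n)$, not $O(n/g)$; the assertion ``in the regime $n=O(g)$ there are only $O(n/g)=O(1)$ steps'' is simply false. Even under the most charitable reading --- that you meant the \emph{accumulated} multiplicative error $\prod_{j}(1+O(1/(g+j)))$ over all $\lfloor n/2\rfloor$ steps is $O(1)$ when $n=O(g)$ --- the argument still fails, because the $O(1/g)$ constant in Lemma \ref{Mirz vol lemma 1}(2) is explicitly stated to depend on $n$, and nothing controls it uniformly as $n$ grows along the telescope. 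The Manin--Zograf invocation for the $n\gg g$ regime is a reasonable heuristic (the exponential rate there is indeed strictly smaller than $4\pi^2$), but it is a different theorem from Mirzakhani--Zograf, it is not provided by the paper, and you give no argument for the intermediate range where neither the fixed-$n$ large-genus asymptotics nor the genus-zero asymptotics apply. In short, you correctly identify the obstruction (uniformity in $n$) but do not overcome it, and the one concrete claim you make to get around it does not hold.
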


The proof of the following lemma relies on Theorem \ref{MZ vol thm}. Which is also a generalization of \cite[Lemma 3.2]{MP19} and \cite[Lemma 6.3]{GMST19}. Here we allow the $n_i's$ and $q$ depend on $g$ as $g\to \infty$.

\begin{lemma}\cite[Lemma 22]{NWX20}\label{sum vol lemma}
Assume $q\geq 1$, $n_1,\cdots,n_q\geq 0$, $r\geq2$. Then there exists two universal constants $c,D>0$ such that
\begin{equation*}
\sum_{\{g_i\}} V_{g_1,n_1}\cdots V_{g_q,n_q} \leq c \big(\frac{D}{r}\big)^{q-1} W_r
\end{equation*}
where the sum is taken over all $\{g_i\}_{i=1}^q \sbs \N$ such that $2g_i-2+n_i \geq 1$ for all $i=1,\cdots,q$, and $\sum_{i=1}^q (2g_i-2+n_i) = r$. 
\end{lemma}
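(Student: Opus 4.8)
The plan is to deduce this from the two parts of Lemma~\ref{Wr-prop}, together with the precise asymptotics of Theorem~\ref{MZ vol thm}, by reducing to an estimate involving only the numbers $W_r$ and then inducting on $q$.

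First I would reduce the statement. By Lemma~\ref{Wr-prop}(1) there is a universal $c_0>0$ with $V_{g_i,n_i}\leq c_0 W_{2g_i-2+n_i}$, and since for fixed $n_i$ the integer $m_i:=2g_i-2+n_i$ determines $g_i$, the sum in question is at most $c_0^{\,q}$ times
\[
\Sigma_q(r):=\sum_{\substack{m_1+\cdots+m_q=r\\ m_i\geq1}}W_{m_1}\cdots W_{m_q}.
\]
So it suffices to find universal constants $c_1,D>0$ such that $\Sigma_q(r)\leq c_1(D/r)^{q-1}W_r$ for all $q\geq1$ and $r\geq 2$; note the sum is empty unless $q\leq r$, a constraint I will use throughout.

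The engine is a convolution bound: folding the sum $\sum_{a+b=r}$ about its midpoint and applying Lemma~\ref{Wr-prop}(2) with $m_0=1$ gives a universal $D_0$ with $\Sigma_2(r)\leq (D_0/r)W_r$. From Theorem~\ref{MZ vol thm} I would also record that $W_mW_{r-m}/W_r$ decays geometrically in $m$ for small $m$ (so $\Sigma_2(r)$ is dominated by its terms with $m\in\{1,r-1\}$, of size $\asymp W_1(4\pi^2)^{-1}r^{-1}W_r$) while $W_{[r/2]}^2/W_r\asymp r^{3/2}2^{-r}$ is super-exponentially small. Now I induct on $q$: the cases $q=1,2$ are done, and for the step I peel off one index, $\Sigma_q(r)=\sum_{m=1}^{\,r-q+1}W_m\,\Sigma_{q-1}(r-m)$, and insert the inductive bound $\Sigma_{q-1}(s)\leq c_1(D/s)^{q-2}W_s$. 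The $m=1$ term is dominant and, by the decay just mentioned, is $\lesssim \tfrac{W_1}{4\pi^2}c_1 D^{q-2}(r-1)^{-(q-1)}W_r\leq c_1 D^{q-1}r^{-(q-1)}W_r$ once $D\geq eW_1/(4\pi^2)$ (here $q\leq r$ makes $(r/(r-1))^{q-1}\leq e$). The terms with $m$ near $[r/2]$ occur only when $q\leq r/2$, and there $W_m\Sigma_{q-1}(r-m)\lesssim c_1 D^{q-2}2^{q-2}r^{-(q-2)}W_{[r/2]}^2$ is crushed by the factor $r^{3/2}2^{-r}$; the terms with $m$ near $r-q+1$ give $\Sigma_{q-1}(r-m)$ summed over compositions of a number near its minimum $q-1$, which is tiny, and the binomial factor in $W_{r-q+1}/W_r$ (again controlled via $q\leq r$ and Theorem~\ref{MZ vol thm}) supplies the remaining decay. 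Summing over $m$ closes the induction with universal $c_1,D$.

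The main obstacle is precisely keeping $c_1$ and $D$ independent of $q$: the content of the lemma is the geometric gain $(D/r)^{q-1}$, and a careless induction loses a factor at each step, so those losses would accumulate into a constant growing with $q$. Avoiding this forces a genuinely quantitative use of Theorem~\ref{MZ vol thm} and Lemma~\ref{Wr-prop}(2) — exploiting the constraint $q\leq r$ and the super-exponential smallness of $W_{[r/2]}^2/W_r$ — so that every per-step loss is swallowed by the single constant $D$. Everything else (the reduction, the base cases, the midpoint folding) is routine.
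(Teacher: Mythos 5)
The paper itself does not prove this lemma; it is cited directly as \cite[Lemma 22]{NWX20}, so there is no in-paper proof to compare against. Your proposed route — reduce to the single sequence $W_\bullet$ via Lemma~\ref{Wr-prop}(1), get the $q=2$ case from Lemma~\ref{Wr-prop}(2) with $m_0=1$, and then induct on $q$ by peeling one index — is a natural one given that Lemma~\ref{Wr-prop} is Lemma~21 of the same source, and it is essentially the right strategy. Your identification of the real difficulty (keeping $c$ and $D$ truly universal so that the losses do not compound over $q$ steps) is also correct.

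That said, there are two points where the sketch does not yet close, and one small numerical slip. First, in the inductive step you show the $m=1$ term is $\leq c_1 D^{q-1}r^{-(q-1)}W_r$ once $D\geq eW_1/(4\pi^2)$ — but this already exhausts the entire target. You then still have to add the contributions of $m\geq 2$, so the budgeting must be tightened: choose $D$ so the $m=1$ term is at most, say, half the target, and show the tail $\sum_{m\geq 2}W_mW_{r-m}(r-m)^{-(q-2)}$ fits into the remainder. Concretely, after using $W_mW_{r-m}/W_r\lesssim \bigl(r\binom{r-2}{m-1}\bigr)^{-1}$ (a consequence of Theorem~\ref{MZ vol thm} for large index, finitely many small cases checked by hand) and the constraint $q-2\leq r-m-1$, the whole inductive step reduces to verifying a purely combinatorial bound
\[
\sum_{k=0}^{r-2}\frac{1}{\binom{r-2}{k}}\left(\frac{r}{k+1}\right)^{k}\leq D''
\]
uniformly in $r$; this does hold (the sum is bounded, dominated by $k$ near $0$ and $k$ near $r-2$, converging to roughly $e+\text{const}$), but it is a real estimate that should be stated and proved rather than gestured at with ``crushed'' and ``tiny''. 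Second, the claimed asymptotic $W_{[r/2]}^2/W_r\asymp r^{3/2}2^{-r}$ is off: Stirling and Theorem~\ref{MZ vol thm} give $W_{[r/2]}^2/W_r\asymp r^{-1}2^{-r}$. This error is in the helpful direction (the true quantity is smaller), so it does not damage the argument, but it should be corrected. With these repairs the proof goes through.
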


We conclude this section by the following useful property whose proof relies on Lemma \ref{Mirz vol lemma 1} and Lemma \ref{sum vol lemma}.
\begin{proposition}\cite[Lemma 23]{NWX20}\label{1 over gm} 
	Given $m\geq 1$, for any $g\geq m+1$, $q\geq 1$, $n_1,...,n_q\geq 1$, there exists a constant $c(m)>0$ only depending on $m$ such that
	\begin{equation*}
	\sum_{\{g_i\}} V_{g_1,n_1}\cdots V_{g_q,n_q} \leq c(m)\frac{1}{g^m} V_g
	\end{equation*}
    where the sum is taken over all $\{g_i\}_{i=1}^q \sbs \N$ such that $2g_i-2+n_i \geq 1$ for all $i=1,\cdots,q$, and $\sum_{i=1}^q (2g_i-2+n_i) = 2g-2-m$.
\end{proposition}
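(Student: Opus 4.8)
The plan is to deduce Proposition~\ref{1 over gm} from Lemma~\ref{sum vol lemma} together with the Mirzakhani--Zograf asymptotics in Theorem~\ref{MZ vol thm}, which control the ratio $W_r/V_g$. First I would recall that the hypothesis gives $\sum_{i=1}^q(2g_i-2+n_i)=2g-2-m=:r$, so Lemma~\ref{sum vol lemma} applies with this value of $r$ (note $r\geq 2$ once $g\geq m+1$, and we may harmlessly assume $g$ large enough that $r\geq 2$; for the finitely many remaining $g$ the bound holds after enlarging $c(m)$) and yields
\begin{equation*}
\sum_{\{g_i\}} V_{g_1,n_1}\cdots V_{g_q,n_q} \leq c\Big(\frac{D}{r}\Big)^{q-1} W_r \leq c\, W_r,
\end{equation*}
where in the last step I used that $D/r\leq 1$ for $g$ large (again absorbing small $g$ into the constant), so the factor $(D/r)^{q-1}\leq 1$ since $q\geq 1$. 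Thus it suffices to show $W_r \prec \tfrac{1}{g^m} V_g$ when $r=2g-2-m$.

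The heart of the matter is therefore the comparison of $W_{2g-2-m}$ with $g^{-m}V_g$, and this is where Theorem~\ref{MZ vol thm} enters. By definition $W_r$ equals $V_{r/2+1}$ or $V_{(r+1)/2,1}$ depending on the parity of $r$; in either case Theorem~\ref{MZ vol thm} gives $W_r \asymp \tfrac{1}{\sqrt{h}}(r-1)!\,(4\pi^2)^{r-1}$ where $h\asymp r$ is the relevant genus, i.e.\ $W_r \asymp \tfrac{1}{r^{1/2}}(r-1)!\,(4\pi^2)^{r-1}$ up to a universal multiplicative constant (the parameter $n\in\{0,1\}$ is bounded so the implied constants are uniform). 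Applying the same asymptotic to $V_g=V_{g,0}\asymp \tfrac{1}{\sqrt g}(2g-3)!\,(4\pi^2)^{2g-3}$ and taking the quotient with $r=2g-2-m$, the powers of $4\pi^2$ contribute $(4\pi^2)^{(r-1)-(2g-3)}=(4\pi^2)^{-m}$, a constant, and the factorial ratio is
\begin{equation*}
\frac{(r-1)!}{(2g-3)!} = \frac{(2g-3-m)!}{(2g-3)!} = \frac{1}{(2g-3)(2g-4)\cdots(2g-2-m)} \asymp \frac{1}{g^m},
\end{equation*}
a product of $m$ factors each $\asymp g$. The remaining factor $\sqrt{g}/\sqrt{r}$ is bounded above and below by universal constants. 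Collecting these estimates gives $W_r \prec \tfrac{1}{g^m}V_g$ with an implied constant depending only on $m$, and combining with the displayed bound above finishes the proof.

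I do not expect a serious obstacle here — the argument is a clean chaining of Lemma~\ref{sum vol lemma} and Theorem~\ref{MZ vol thm} — but two bookkeeping points deserve care. The first is making the reduction $q\geq 1 \Rightarrow (D/r)^{q-1}\leq 1$ rigorous, which requires $r\geq D$ and hence only holds for $g$ large; one handles small $g$ by noting there are finitely many and absorbing them into $c(m)$, using that each individual term and the number of tuples are finite. The second is keeping track of the ``$\asymp$ up to $(1+O(1/g))$'' error terms in Theorem~\ref{MZ vol thm} when forming the ratio $W_r/V_g$: since both numerator and denominator are $\asymp$ their leading terms with universal implied constants (as $n$ ranges over the bounded set $\{0,1\}$), the quotient is $\asymp$ the quotient of leading terms, which is all we need for a $\prec$ bound. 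In fact this whole comparison $W_r\prec g^{-m}V_g$ is essentially recorded already in the proof of \cite[Lemma 23]{NWX20}, so I would cite that where convenient and keep the present write-up brief.
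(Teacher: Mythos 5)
Your proof is correct, and the overall route — apply Lemma~\ref{sum vol lemma} with $r=2g-2-m$, use $q\leq r$ to control $(D/r)^{q-1}$ after absorbing the finitely many small~$g$ into $c(m)$, then reduce to the comparison $W_{2g-2-m}\prec g^{-m}V_g$ — is the natural one the paper (via its citation to \cite{NWX20}) indicates. For the last step you invoke Theorem~\ref{MZ vol thm} directly and compute the factorial ratio; an alternative, more in the spirit of the lemmas the paper says the cited proof relies on, is to read the same bound off Lemma~\ref{Wr-prop}(2): taking $r=2g-2$ and $m_0=m$ there, the single summand $W_m W_{2g-2-m}$ already gives $W_m W_{2g-2-m}\leq c(m)(2g-2)^{-m}W_{2g-2}=c(m)(2g-2)^{-m}V_g$, and since $W_m>0$ depends only on $m$ this yields $W_{2g-2-m}\prec_m g^{-m}V_g$ without touching the Mirzakhani--Zograf constant~$\alpha$. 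Both derivations are short and ultimately rest on the same volume asymptotics, so this is a cosmetic difference rather than a different argument; your bookkeeping around the exceptional small-$g$ cases and the uniformity in $n\in\{0,1\}$ is also handled correctly.
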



\section{Selberg's trace formula} \label{section trace}
In this section we describe the Selberg trace formula for closed hyperbolic surfaces, which is a main tool in the proofs of Theorem \ref{main} and \ref{main-2}. 

Let $C_c^\infty(\R)$ denote the set of all smooth functions on $\R$ with compact support. Given a function $\phi \in C_c^\infty(\R)$, its Fourier transform is defined as
\[\widehat{\phi}(z)\overset{\text{def}}{=}\int_{-\infty}^{\infty}\phi(x)e^{-ixz}dx\]
for any $z\in \mathbb{C}$. For $\phi \in  C_c^\infty(\R)$, the above integral is an entire function over $\mathbb C$. In particular, it converges for any $z\in \mathbb{C}$.

Recall that a closed geodesic is primitive if it is not an iterate of any other closed geodesic at least twice. For any hyperbolic surface $X$, we let $\mathcal{P}(X)$ denote the set of all oriented primitive closed geodesics on $X$. Now we recall Selberg's trace formula in the form of \cite[Theorem 5.6]{B-book} or \cite[Theorem 9.5.3]{Buser10}. One may also see \eg \cite{Selb56, Hej76} for more details.
\bt[Selberg's trace formula] \label{sel}
Let $X$ be a closed hyperbolic surface of genus $g$ and let 
\[0=\lambda_0(X)<\lambda_1(X)\leq \lambda_2(X)\leq \cdots \to \infty\]
denote the spectrum of the Laplacian on $X$. For $k\in \Z^{\geq 0}$, let 
\begin{eqnarray*}
r_k(X)\overset{\text{def}}{=}
\begin{cases}
\sqrt{\lambda_k(X)-\frac{1}{4}}, &\text{if $\lambda_k(X)>\frac{1}{4}$};\\
\textbf{i}\cdot \sqrt{-\lambda_k(X)+\frac{1}{4}}, & \text{if $\lambda_k(X)\leq\frac{1}{4}$}.
\end{cases}
\end{eqnarray*} 
Then for any even function $\phi \in C_c^\infty(\R)$ we have
\begin{eqnarray*}
\sum_{k=0}^{\infty}\widehat{\phi}(r_k(X))=&&(g-1)\int_{-\infty}^\infty r \widehat{\phi}(r)\tanh(\pi r)dr\\
&&+\sum_{\gamma \in \mathcal P(X)}\sum_{k=1}^\infty\frac{\ell_{\gamma}(X)}{2\sinh \left(\frac{k \ell_{\gamma}(X)}{2} \right)}\phi(k \ell_{\gamma}(X)).
\end{eqnarray*}    
\noindent Both sides of the formula above are absolutely convergent.
\et 

\noindent \textbf{Choice of $\phi$.} In this paper, we apply the same function in \cite{MNP20} to Selberg's trace formula. For completeness, we briefly introduce such a function. One may see \cite[Section 2]{MNP20} for more details.

First one may let $\psi:\R \to \R^{\geq 0}$ be a smooth and even function whose support is exactly $(-\frac{1}{2},\frac{1}{2})$. Then we define
\be
\phi_0(x)\overset{\text{def}}{=}\int_{\R}\psi(x-t)\psi(t)dt.\nonumber
\ene

It is not hard to see that
\begin{lemma}\label{phi0}
The function $\phi_0$ satisfies that
\begin{enumerate}
\item $\phi_0$ is non-negative and even.  
\item $\Supp(\phi_0)=(-1,1)$.
\item The Fourier transform $\widehat{\phi_0}$ satisfies $\widehat{\phi_0}(\xi)\geq 0$ for all $\xi \in \R \cup \textbf{i}\R$.
\end{enumerate}
\end{lemma}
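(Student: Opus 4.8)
The statement to prove is Lemma~\ref{phi0}, which is a collection of three elementary facts about the autocorrelation function $\phi_0(x) = \int_{\R} \psi(x-t)\psi(t)\,dt$ where $\psi$ is smooth, even, nonnegative, with support exactly $(-\tfrac12,\tfrac12)$.

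The plan is to verify each of the three properties in turn using only standard properties of convolution and Fourier transform. For property (1), I would first observe that $\phi_0 = \psi * \psi$ is a convolution of two nonnegative functions, hence nonnegative pointwise. Evenness follows because $\psi$ is even: substituting $t \mapsto -t$ in the integral defining $\phi_0(-x)$ and using $\psi(-x+t)\psi(-t) = \psi(x-t)\psi(t)$ gives $\phi_0(-x) = \phi_0(x)$. Smoothness is inherited from $\psi$ since convolving a compactly supported smooth function with an $L^1$ function is smooth (one may differentiate under the integral sign).

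For property (2), I would compute the support of the convolution. The integrand $\psi(x-t)\psi(t)$ is supported where both $|t| < \tfrac12$ and $|x-t| < \tfrac12$; this region of $t$ is nonempty precisely when $|x| < 1$, so $\Supp(\phi_0) \subseteq [-1,1]$. To see the support is \emph{exactly} $(-1,1)$ and not smaller, I would note that for $x \in (-1,1)$ the overlap region $\{t : |t|<\tfrac12,\ |x-t|<\tfrac12\}$ is a nonempty open interval on which the integrand is strictly positive (as $\psi > 0$ on $(-\tfrac12,\tfrac12)$, being a nonnegative smooth bump with that exact support — here I use that $\psi$ is positive on the interior of its support, which should be taken as part of the setup, or else replace $\psi$ by one with this property), so $\phi_0(x) > 0$; combined with the containment this gives $\Supp(\phi_0) = (-1,1)$.

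For property (3), the key point is the classical identity $\widehat{\psi * \psi} = \widehat{\psi} \cdot \widehat{\psi} = (\widehat{\psi})^2$, so $\widehat{\phi_0}(\xi) = \widehat{\psi}(\xi)^2$. Since $\psi$ is real and even, $\widehat{\psi}$ is real-valued on $\R$, hence $\widehat{\psi}(\xi)^2 \geq 0$ for $\xi \in \R$. For $\xi \in \mathbf{i}\R$, writing $\xi = \mathbf{i} s$ with $s \in \R$, we have $\widehat{\psi}(\mathbf{i}s) = \int_\R \psi(x) e^{sx}\,dx$, which is real because $\psi$ is real; therefore $\widehat{\phi_0}(\mathbf{i}s) = \widehat{\psi}(\mathbf{i}s)^2 \geq 0$ as well. (That $\widehat{\phi_0} = (\widehat{\psi})^2$ extends to all of $\mathbb{C}$ follows since both sides are entire and agree on $\R$.) None of these steps presents a genuine obstacle — the lemma is a routine warm-up — the only point requiring a little care is making sure that $\psi$ is chosen strictly positive on $(-\tfrac12,\tfrac12)$ so that the support of $\phi_0$ is genuinely all of $(-1,1)$, which one can always arrange.
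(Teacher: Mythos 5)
Your proof is correct, and since the paper simply asserts this lemma with ``It is not hard to see that'' (deferring to \cite[Section 2]{MNP20}), your argument fills in exactly the standard reasoning: $\phi_0 = \psi * \psi$ inherits nonnegativity and evenness from $\psi$; the support of a convolution of two functions supported on $(-\tfrac12,\tfrac12)$ lies in $(-1,1)$ and is all of it when $\psi>0$ on its support; and $\widehat{\phi_0} = \widehat{\psi}^2$ with $\widehat{\psi}$ real on $\R$ (by evenness) and real on $\mathbf i\R$ (since $\widehat{\psi}(\mathbf i s) = \int \psi(x)e^{sx}\,dx$), whence the square is nonnegative. You are right to flag the one genuine subtlety: ``support exactly $(-\tfrac12,\tfrac12)$'' for a nonnegative bump does not by itself force strict positivity on the interior, but this is implicitly part of the standard construction (and one could also argue it is forced if one reads ``support'' as the closure of $\{\psi\neq 0\}$, which then must be $[-\tfrac12,\tfrac12]$, ruling out an interior interval of zeros). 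Your handling of this point is appropriate and does not affect the proofs of (1) and (3), which only use $\psi\geq 0$, nor the application of the lemma later, which only needs $\Supp(\phi_0)\subseteq(-1,1)$ together with $\int_{1-\eps}^1\phi_0>0$ for small $\eps$.
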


Now for any $T>0$ we define
\be
\phi_T(x)\overset{\text{def}}{=} \phi_0\left(\frac{x}{T}\right).  \nonumber
\ene

The following property \cite[Lemma 2.4]{MNP20} will be applied later. 
\bl\label{1+eps}
For any small enough $\eps>0$, then there exists a constant $C_\eps>0$ depending on $\eps$ and $\phi_0$ such that for $t\geq 0$,
\[\widehat{\phi_{4\ln(g)}}(\textbf{i} t)\geq C_\eps g^{4(1-\eps)t}  \ln(g).\]
In particular, for any hyperbolic surface $X\in \sM_g$ with $\lmdx \leq \frac{3}{16}-\eps$ we have
\[\widehat{\phi_{4\ln(g)}}(r_1(X))\geq C_\eps g^{1+C_\eps}  \ln(g)\]
where $r_1(X)=\rm{\textbf{i}}\cdot \sqrt{-\lambda_1(X)+\frac{1}{4}}$ in Selberg's trace formula. 
\el

\bp
We outline a proof here for completeness. Since $\phi_0\geq 0$ and $\Supp(\phi_0)=(-1,1)$, we have that for any $\eps>0$ near $0$,
\begin{eqnarray*}
\widehat{\phi_{4\ln(g)}}(\textbf{i} t)&=&4\ln(g)\int_\R \phi_0(x)e^{\left(4\ln(g) t \cdot x\right)}dx \\
&\geq & 4\ln(g)\int_{1-\eps}^1 \phi_0(x) e^{\left(4\ln(g) t \cdot x\right) }dx \nonumber\\
&\geq & B_\eps  g^{4(1-\eps)t}  \ln(g) \nonumber
\end{eqnarray*}
where $B_{\eps}=4\int_{1-\eps}^1 \phi_0(x)dx>0$ depends on $\eps$ and $\phi_0$. For any hyperbolic surface $X\in \sM_g$ with $\lmdx \leq \frac{3}{16}-\eps$, then $r_1(X)= \rm{\textbf{i}}\cdot \sqrt{-\lambda_1(X)+\frac{1}{4}}$ where $ \sqrt{-\lambda_1(X)+\frac{1}{4}}\geq \sqrt{\frac{1}{16}+\eps}$. Then the conclusion follows by choosing
\[C_\eps=\min\{B_{\eps},\left(\sqrt{1+16\eps}(1-\eps)-1\right)\}.\]

The proof is complete.
\ep

\section{Proofs of Theorem \ref{main} and \ref{main-2}--relatively easy parts}\label{sec-proof-1}
In the following two sections we finish the proofs of Theorem \ref{main} and \ref{main-2}. In this section we study the relative easy parts.

Let $X\in \sM_g$ be a hyperbolic surface of genus $g$. A closed geodesic $\gamma \subset X$ is called \emph{non-simple} if $\gamma$ intersects itself; otherwise it is called \emph{simple}. A simple closed geodesic $\alpha \subset X$ is called \emph{non-separating} if the complement $X\setminus \alpha$ is connected; otherwise it is called \emph{separating}. Recall that $\mathcal{P}(X)$ is the set of all oriented primitive closed geodesics on $X$. Now we split it as the following cases:
\begin{enumerate}
\item $\mathcal{P}^{s}_{sep}(X)\overset{\text{def}}{=}\{\gamma \in \mathcal{P}(X) \ \text{is simple and separating}\}.$
 
\item $\mathcal{P}^{s}_{nsep}(X)\overset{\text{def}}{=}\{\gamma \in \mathcal{P}(X) \ \text{is simple and non-separating}\}.$

\item $\mathcal{P}^{ns}(X)\overset{\text{def}}{=}\{\gamma \in \mathcal{P}(X) \ \text{is non-simple}\}.$
\end{enumerate}
Clearly we have $$\mathcal{P}(X)=\mathcal{P}^{s}_{sep}(X) \cup \mathcal{P}^{s}_{nsep}(X) \cup \mathcal{P}^{ns}(X).$$

Let $\phi_T(x)$ be the function in Section \ref{section trace}. We plug $\phi_T$ into Selberg's trace formula Theorem \ref{sel} and rewrite it as
\bear \label{sel-split}
&& \quad \quad  \sum_{k=0}^{\infty}\widehat{\phi_T}(r_k(X))=(g-1)\int_{-\infty}^\infty r \widehat{\phi_T}(r)\tanh(\pi r)dr+\\
&& \sum_{\gamma \in \mathcal P(X)} \frac{\ell_{\gamma}(X)}{2\sinh \left(\frac{\ell_{\gamma}(X)}{2} \right)}\phi_T( \ell_{\gamma}(X))+\sum_{\gamma \in \mathcal P(X)}\sum_{k=2}^\infty\frac{\ell_{\gamma}(X)}{2\sinh \left(\frac{k \ell_{\gamma}(X)}{2} \right)}\phi_T(k \ell_{\gamma}(X))\nonumber\\
&&=\underbrace{(g-1)\int_{-\infty}^\infty r \widehat{\phi_T}(r)\tanh(\pi r)dr}_{\rm I}+\underbrace{\sum_{\gamma \in \mathcal P(X)}\sum_{k=2}^\infty\frac{\ell_{\gamma}(X)}{2\sinh \left(\frac{k \ell_{\gamma}(X)}{2} \right)}\phi_T(k \ell_{\gamma}(X))}_{\rm{II}} \nonumber\\
&&+\underbrace{\sum_{\gamma \in \mathcal{P}^{s}_{sep}(X)} \frac{\ell_{\gamma}(X)}{2\sinh \left(\frac{\ell_{\gamma}(X)}{2} \right)}\phi_T( \ell_{\gamma}(X))}_{\rm{III}}+\underbrace{\sum_{\gamma \in \mathcal{P}^{s}_{nsep}(X)} \frac{\ell_{\gamma}(X)}{2\sinh \left(\frac{\ell_{\gamma}(X)}{2} \right)}\phi_T( \ell_{\gamma}(X))}_{\rm{IV}} \nonumber \\
&& +\underbrace{\sum_{\gamma \in \mathcal{P}^{ns}(X)} \frac{\ell_{\gamma}(X)}{2\sinh \left(\frac{\ell_{\gamma}(X)}{2} \right)}\phi_T( \ell_{\gamma}(X))}_{\rm{V}}. \nonumber 
\eear 

Next we will take an integral of Equation \eqref{sel-split} over the moduli space $\sM_g$ endowed with the \wep \ metric. Recall that $\widehat{\phi_T}(r_k(X))\geq 0$ for all $k\geq 0$. For the $\LHS$ of \eqref{sel-split}, we will only keep the first two terms $\widehat{\phi_T}(r_0(X))$ and $\widehat{\phi_T}(r_1(X))$. For the $\RHS$ of \eqref{sel-split}, we will bound the five terms case by case: Term-\rm{I} can be bounded by using an elementary observation; we will combine an argument in \cite{MNP20} and a result in \cite{Mirz13} to bound Term-\rm{II}; for Term-\rm{III} and Term-\rm{IV} we will apply Mirzakhani's integration formula \cite{Mirz07} to bound them; the last Term-\rm{V} is the most difficult case on which we will apply the new counting result Theorem \ref{count} and combine similar ideas in \cite{Mirz07, NWX20} to get the desired bound. We postpone the study of Term-\rm{V} until the next section.   
\subsection{An upper bound for $\int_{\sM_g}\rm{I} dX$} 
In this subsection we provide the following bound on Term \rm{I} in the $\rm RHS$ of \eqref{sel-split}.
\begin{proposition}\label{I}
Let $\phi_T$ be the function in Section \ref{section trace}. Then we have for all $T>1$ and as $g\to \infty$,
\[\left|\frac{1}{V_g}\int_{\sM_g}\left((g-1)\int_{-\infty}^\infty r \widehat{\phi_T}(r)\tanh(\pi r)dr \right)dX \right|\prec \frac{g}{T}.\]
\end{proposition}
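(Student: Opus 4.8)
The plan is to bound the term
\[
(g-1)\int_{-\infty}^\infty r\,\widehat{\phi_T}(r)\tanh(\pi r)\,dr
\]
directly, \emph{before} integrating over $\sM_g$: observe that this quantity does not actually depend on the point $X\in\sM_g$ at all, only on the genus $g$ and on the auxiliary function $\phi_T$. Hence $\frac{1}{V_g}\int_{\sM_g}\mathrm{I}\,dX = \mathrm{I}$ itself, and the whole problem reduces to estimating the single integral $(g-1)\int_{-\infty}^\infty r\,\widehat{\phi_T}(r)\tanh(\pi r)\,dr$. Since $|\tanh(\pi r)|\le 1$, it suffices to show $\int_{-\infty}^\infty |r|\cdot\bigl|\widehat{\phi_T}(r)\bigr|\,dr\prec \frac{1}{T}$.

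The key step is therefore a decay estimate on $\widehat{\phi_T}$. Recall $\phi_T(x)=\phi_0(x/T)$ where $\phi_0=\psi\ast\psi$ with $\psi$ smooth, even, supported in $(-\tfrac12,\tfrac12)$. Scaling gives $\widehat{\phi_T}(r)=T\,\widehat{\phi_0}(Tr)$, and $\widehat{\phi_0}=(\widehat{\psi})^2\ge 0$. So the integral becomes
\[
\int_{-\infty}^\infty |r|\,\bigl|\widehat{\phi_T}(r)\bigr|\,dr
= T\int_{-\infty}^\infty |r|\,\widehat{\phi_0}(Tr)\,dr
= \frac{1}{T}\int_{-\infty}^\infty |u|\,\widehat{\phi_0}(u)\,du,
\]
after the substitution $u=Tr$. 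It remains to check that $\int_{-\infty}^\infty |u|\,\widehat{\phi_0}(u)\,du$ is a finite constant (depending only on the fixed choice of $\psi$, hence of $\phi_0$). This is immediate because $\phi_0\in C_c^\infty(\mathbb R)$: its Fourier transform is Schwartz, so $|u|\,\widehat{\phi_0}(u)$ decays faster than any polynomial and is integrable. Plugging back, $\bigl|\mathrm{I}\bigr|\le (g-1)\cdot\frac{1}{T}\int_{-\infty}^\infty |u|\,\widehat{\phi_0}(u)\,du\prec \frac{g}{T}$, which is the claim (and in our application $T=4\ln(g)$, matching the informal bound $\mathrm{I}/V_g\prec g/\ln(g)$ in the introduction).

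There is essentially no obstacle here; the only mild point to get right is the bookkeeping of the scaling relation $\widehat{\phi_T}(r)=T\widehat{\phi_0}(Tr)$ and the fact that the constant $\int|u|\,\widehat{\phi_0}(u)\,du$ is independent of $g$ and $T$ (it depends only on $\phi_0$, which is fixed once and for all in Section~\ref{section trace}). One could alternatively avoid even computing $\widehat{\phi_0}$ explicitly by integrating by parts twice in the definition $\widehat{\phi_0}(u)=\int\phi_0(x)e^{-ixu}dx$ to gain the factor $u^{-2}$ and combining with the trivial bound $|\widehat{\phi_0}(u)|\le \|\phi_0\|_{L^1}$ near $u=0$; either route gives the required $O(1)$ bound on $\int|u|\,\widehat{\phi_0}(u)\,du$.
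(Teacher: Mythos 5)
Your argument is correct and matches the paper's proof essentially line by line: both observe that Term I is independent of $X$ so the moduli-space integral is trivial, both use the scaling $\widehat{\phi_T}(r)=T\widehat{\phi_0}(Tr)$ together with $|\tanh(\pi r)|\le 1$ and a change of variables to pull out the $1/T$, and both close by noting $\widehat{\phi_0}$ is Schwartz (since $\phi_0\in C_c^\infty$) so $\int |u|\,|\widehat{\phi_0}(u)|\,du$ is a finite absolute constant. The small extras you add (the positivity $\widehat{\phi_0}=(\widehat\psi)^2\ge 0$, and the alternative integration-by-parts route) are fine but not needed.
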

\bp
Since $\phi_0$ is compactly supported, its Fourier transform $\widehat{\phi_0}$ is a Schwartz function which decays faster than any polynomial. In particular there exists a constant $C>0$ such that
\[\int_0^\infty |x| |\widehat{\phi_0}(x) |dx \leq C<\infty.\]
Recall that $\phi_T$ is an even function and $\left|\tanh(\pi r)\right|\leq 1$. Since $\widehat{\phi_T}(r)=T\widehat{\phi_0}(Tr)$,
\begin{eqnarray*}
\left|\int_{-\infty}^\infty r \widehat{\phi_T}(r)\tanh(\pi r)dr\right|&=&T \left|\int_{-\infty}^\infty r\widehat{\phi_0}(Tr) \tanh(\pi r)dr \right|\\
&\leq & \frac{2}{T} \int_0^\infty |x| |\widehat{\phi_0}(x) |dx \nonumber\\
&\leq & \frac{2C}{T} \nonumber
\end{eqnarray*}
which clearly implies the conclusion.
\ep
\subsection{An upper bound for $\int_{\sM_g}\rm{II} dX$} 
In this subsection we prove the following bound on Term \rm{II} in the $\rm RHS$ of \eqref{sel-split}.
\begin{proposition}\label{II}
Let $\phi_T$ be the function in Section \ref{section trace}. Then we have for all $T>1$ and as $g\to \infty$,
\[\frac{1}{V_g}\int_{\sM_g}\sum_{\gamma \in \mathcal P(X)}\sum_{k=2}^\infty\frac{\ell_{\gamma}(X)}{2\sinh \left(\frac{k \ell_{\gamma}(X)}{2} \right)}\phi_T(k \ell_{\gamma}(X)) dX\prec T^2 g.\]
\end{proposition}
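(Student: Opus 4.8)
The plan is to control $\mathrm{II}(X)=\sum_{\gamma\in\mathcal P(X)}T_\gamma(X)$ geodesic by geodesic, where $T_\gamma(X):=\sum_{k\ge2}\frac{\ell_\gamma(X)}{2\sinh(k\ell_\gamma(X)/2)}\phi_T(k\ell_\gamma(X))$ is the contribution of a primitive $\gamma$, using two elementary estimates. Since $\Supp(\phi_T)=(-T,T)$ and $\frac{u}{\sinh u}\le1$ for all $u>0$, we always have
\[
0\le T_\gamma(X)\le\norm{\phi_0}_\infty\sum_{2\le k<T/\ell_\gamma(X)}\frac1k\le\norm{\phi_0}_\infty\ln\Big(\frac{eT}{\ell_\gamma(X)}\Big),
\]
while if $\ell_\gamma(X)\ge1$ then, using $\sinh u\ge\tfrac12 e^{u}(1-e^{-2})$ for $u\ge1$ and summing the geometric series in $k$, there is a universal constant $C>0$ with
\[
T_\gamma(X)\le C\norm{\phi_0}_\infty\,\ell_\gamma(X)\,e^{-\ell_\gamma(X)}\,\mathbf{1}_{\{\ell_\gamma(X)<T\}}.
\]
The first bound is wasteful for long geodesics but is the only one that survives as $\ell_\gamma\to0$; the second is the efficient one once $\ell_\gamma\gtrsim1$.

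Following the strategy sketched in the introduction, I would split $\sM_g=\sM_g^{\ge1}\cup\sM_g^{<1}$. On the thick part every $\ell_\gamma(X)\ge1$, so only the second bound is needed; combining it with Lemma \ref{count-2}, which bounds the number of closed geodesics of length $\le L$ by $2(g-1)e^{L+6}$, and grouping geodesics by the integer part of their length (equivalently, integrating by parts against $L\mapsto\#(X,L)$), the factor $e^{-\ell_\gamma}$ exactly absorbs the exponential growth $e^{L}$ of the counting function, so that
\[
\mathrm{II}(X)\le C\norm{\phi_0}_\infty\sum_{1\le\ell_\gamma(X)<T}\ell_\gamma(X)e^{-\ell_\gamma(X)}\prec g\sum_{n=1}^{\lceil T\rceil}n\prec gT^2
\]
uniformly on $\sM_g^{\ge1}$; since $\Vol_{\WP}(\sM_g^{\ge1})\le V_g$ this already gives $\frac1{V_g}\int_{\sM_g^{\ge1}}\mathrm{II}\,dX\prec gT^2$.

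On the thin part I would split $\mathcal P(X)$ according to whether $\ell_\gamma(X)>2\arcsinh1$ or not. A primitive geodesic of length $>2\arcsinh1$ is never an iterate of a closed geodesic of length $\le2\arcsinh1$, so the number of such geodesics of length $\le L$ is at most $\#_0(X,L)\le(g-1)e^{L+6}$ by Lemma \ref{count-1}, and the same cancellation as above gives a contribution $\prec gT^2$, again uniformly. Geodesics of length $\le2\arcsinh1$ are simple and pairwise disjoint by the Collar Lemma, hence there are at most $3g-3$ of them and each has length $\ge\ell_{sys}(X)$; by the first bound their total contribution is at most $(3g-3)\norm{\phi_0}_\infty\ln(eT/\ell_{sys}(X))$. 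Therefore
\[
\frac1{V_g}\int_{\sM_g^{<1}}\mathrm{II}\,dX\prec gT^2+\frac{g}{V_g}\int_{\sM_g^{<1}}\ln\Big(\frac{eT}{\ell_{sys}(X)}\Big)dX.
\]
Writing $\ln(eT/\ell_{sys})=\ln(eT)+\ln(1/\ell_{sys})$ and invoking Mirzakhani's estimate $\Vol_{\WP}(\{X\in\sM_g:\ell_{sys}(X)<\epsilon\})\prec\epsilon^2V_g$ for $\epsilon\le1$ (a result in \cite{Mirz13}), the layer-cake formula gives $\int_{\sM_g^{<1}}\ln(1/\ell_{sys})\,dX=\int_0^\infty\Vol_{\WP}(\{\ell_{sys}<e^{-t}\})\,dt\prec V_g$, so the right-hand side is $\prec gT^2+g\ln T\prec gT^2$. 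Adding the thick and thin contributions proves the claim for all $T>1$.

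The main obstacle is the contribution of the short geodesics on the thin part: there the trace-formula kernel $\frac{\ell_\gamma}{2\sinh(k\ell_\gamma/2)}$ decays only like $1/k$, so roughly $T/\ell_\gamma$ iterates contribute and the pointwise estimate diverges like $\ln(1/\ell_{sys})$ as a surface degenerates; one genuinely has to integrate over $\sM_g$ and exploit the quadratic decay of the \wep\ volume of the thin part. Alternatively one can avoid this measure input by applying Mirzakhani's Integration Formula directly to the orbit sum $\sum_{\gamma\text{ simple}}\ln(eT/\ell_\gamma)\,\mathbf{1}_{\{\ell_\gamma<1\}}$, separated by topological type, using $V_{g-1,2}(x,x)\le e^{x}V_{g-1,2}\asymp V_g$ for the non-separating type and Lemma \ref{Mirz vol lemma 2} to sum $\sum_i V_{i,1}V_{g-i,1}\asymp V_g/g$ over the separating types; each piece then contributes $\prec\ln T$.
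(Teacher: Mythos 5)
Your proof is correct, and for the thick part it is essentially the argument of the paper (compare Lemma \ref{II-1}): the geometric bound $T_\gamma\prec\ell_\gamma e^{-\ell_\gamma}\mathbf 1_{\{\ell_\gamma<T\}}$ plus the counting bound $\#(X,L)\prec ge^L$ give $\mathrm{II}(X)\prec gT^2$ uniformly on $\sM_g^{\geq1}$. Where you genuinely diverge is the thin part. The paper (Lemma \ref{II-2}) is cruder at the pointwise level: it bounds the harmonic sum by $\ln(2T/\ell_\gamma)\leq 2T/\ell_\gamma$, so the contribution of short primitives is controlled by $T\sum_{\ell_\alpha<1}1/\ell_\alpha$, and then it invokes Mirzakhani's estimate $\int_{\sM_g}\sum_{\ell_\alpha\le1}\tfrac{1}{\ell_\alpha}\,dX\asymp V_g$ (Lemma \ref{II-3}) to finish. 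You instead keep the logarithm, bound each of the $\leq 3g-3$ short primitives by $\ln(eT/\ell_{\sys})$, and integrate using the layer-cake formula together with the quadratic decay $\Vol_{\WP}(\{\ell_{\sys}<\eps\})\prec\eps^2V_g$. Both inputs are Mirzakhani-type consequences of the Integration Formula, and both give the same final bound $\prec gT^2$; your pointwise estimate is sharper (a log in place of an inverse length) but you pay for it with the crude factor $3g-3$, whereas the paper wastes exponentially at the pointwise level and recovers it at the integral level. Your alternative at the end---applying Theorem \ref{Mirz int formula} directly to $\sum_{\gamma\text{ simple}}\ln(eT/\ell_\gamma)\mathbf 1_{\{\ell_\gamma<1\}}$---avoids the $3g-3$ loss as well and is in fact closest in spirit to the way Lemma \ref{II-3} itself is proved. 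One small point worth flagging: your bound $T_\gamma\leq C\norm{\phi_0}_\infty\ell_\gamma e^{-\ell_\gamma}\mathbf 1_{\{\ell_\gamma<T\}}$ is stated with the indicator on $\ell_\gamma<T$, but the sum over $k\geq2$ is actually empty unless $\ell_\gamma<T/2$; using the weaker indicator is still a valid upper bound, so nothing breaks.
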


We split the proof into several lemmas.
\bl \label{II-1}
For any $X\in \sM_g^{\geq 1}$ and $T> 1$, then we have
\[\sum_{\gamma \in \mathcal P(X)}\sum_{k=2}^\infty\frac{\ell_{\gamma}(X)}{2\sinh \left(\frac{k \ell_{\gamma}(X)}{2} \right)}\phi_T(k \ell_{\gamma}(X)) \prec T^2 g.\]
\el

\bp
Since $X\in \sM_g^{\geq 1}$, we have that for all $\gamma \in \mathcal P(X)$,
\[\sinh \left(\frac{k \ell_{\gamma}(X)}{2} \right) \asymp e^{\frac{k\ell_{\gamma}(X)}{2}}.\]
Now we follow \cite{MNP20}. Recall that $\phi_T$ is bounded and $\Supp(\phi_T)=(-T,T)$. So we have
\bear \label{ii-k-1-0}
&&\sum_{\gamma \in \mathcal P(X)}\sum_{k=2}^\infty\frac{\ell_{\gamma}(X)}{2\sinh \left(\frac{k \ell_{\gamma}(X)}{2} \right)}\phi_T(k \ell_{\gamma}(X))\\ &&\asymp \sum_{\gamma \in \mathcal P(X)}\sum_{k=2}^\infty \ell_{\gamma}(X) e^{- \frac{k\ell_{\gamma}(X)}{2}}\phi_T(k \ell_{\gamma}(X)) \nonumber\\
&&\prec  \sum_{\gamma \in \mathcal P(X), \ \ell_{\gamma}(X)\leq \frac{T}{2}}\ell_{\gamma}(X) e^{-\ell_{\gamma}(X)} \nonumber\\
&&\leq \sum_{m=1}^{[T]}me^{-m} \cdot \#\{\gamma \in \mathcal{P}(X);  m\leq \ell_{\gamma}(X) <m+1\}. \nonumber
\eear
It follows by Lemma \ref{count-2} that
\bear\label{ii-k-k-1}
 \#\{\gamma \in \mathcal{P}(X);  m\leq \ell_{\gamma}(X) <m+1\} \prec g e^m
\eear
which together with \eqref{ii-k-1-0} imply that
\bear
 \sum_{\gamma \in \mathcal P(X)}\sum_{k=2}^\infty\frac{\ell_{\gamma}(X)}{2\sinh \left(\frac{k \ell_{\gamma}(X)}{2} \right)}\phi_T(k \ell_{\gamma}(X)) &\prec& \sum_{m=1}^{[T]}me^{-m} \cdot g\cdot e^m \nonumber\\
& \asymp& T^2g. \nonumber
\eear

The proof is complete.
\ep

\bl \label{II-2}
For any $X\in \sM_g^{< 1}$ and $T>1$, then we have
\[\sum_{\gamma \in \mathcal P(X)}\sum_{k=2}^\infty\frac{\ell_{\gamma}(X)}{2\sinh \left(\frac{k \ell_{\gamma}(X)}{2} \right)}\phi_T(k \ell_{\gamma}(X)) \prec \left(T^2 g+ \sum_{\substack{\alpha \in \mathcal{P}(X);\\ \ell_\alpha(X)< 1}}\frac{T}{\ell_\alpha(X)}\right).\]
\el

\bp
First we rewrite
\bear \label{2-1-2}
&&\sum_{\gamma \in \mathcal P(X)}\sum_{k=2}^\infty\frac{\ell_{\gamma}(X)}{2\sinh \left(\frac{k \ell_{\gamma}(X)}{2} \right)}\phi_T(k \ell_{\gamma}(X))\\
&&=\sum_{\substack{\alpha \in \mathcal{P}(X);\\ \ell_\alpha(X)< 1}}\sum_{k=2}^\infty\frac{\ell_{\gamma}(X)}{2\sinh \left(\frac{k \ell_{\gamma}(X)}{2} \right)}\phi_T(k \ell_{\gamma}(X)) \nonumber\\
&&+\sum_{\substack{\alpha \in \mathcal{P}(X);\\ \ell_\alpha(X)\geq 1}}\sum_{k=2}^\infty\frac{\ell_{\gamma}(X)}{2\sinh \left(\frac{k \ell_{\gamma}(X)}{2} \right)}\phi_T(k \ell_{\gamma}(X)). \nonumber
\eear

\noindent For the second term of the $\RHS$ above, one may apply the same argument in the proof of Lemma \ref{II-1} to get
\be\label{2-2-2}
\sum_{\substack{\alpha \in \mathcal{P}(X);\\ \ell_\alpha(X)\geq 1}}\sum_{k=2}^\infty\frac{\ell_{\gamma}(X)}{2\sinh \left(\frac{k \ell_{\gamma}(X)}{2} \right)}\phi_T(k \ell_{\gamma}(X))\prec T^2 g.
\ene

\noindent For the first term of the $\RHS$ of \eqref{2-1-2} above, recall that $\phi_T$ is bounded and $\Supp(\phi_T)=(-T,T)$. Since $\sinh(kx)\geq kx$ for all $x\geq 0$ and $k\geq 0$, we have
\bear
\sum_{\substack{\alpha \in \mathcal{P}(X);\\ \ell_\alpha(X)< 1}}\sum_{k=2}^\infty\frac{\ell_{\gamma}(X)}{2\sinh \left(\frac{k \ell_{\gamma}(X)}{2} \right)}\phi_T(k \ell_{\gamma}(X)) \prec \sum_{\substack{\alpha \in \mathcal{P}(X);\\ \ell_\alpha(X)< 1}} \sum_{k=2}^{[\frac{T}{\ell_\gamma(X)}]+1}\frac{1}{k}  \nonumber
\eear 
For all $n\geq 2$, it is not hard to see that
\[\frac{1}{2}+\frac{1}{3}+\cdots+\frac{1}{n}<4 \ln(n).\]
Thus, we have
\bear\label{2-3-2}
&&\sum_{\substack{\alpha \in \mathcal{P}(X);\\ \ell_\alpha(X)< 1}}\sum_{k=2}^\infty\frac{\ell_{\gamma}(X)}{2\sinh \left(\frac{k \ell_{\gamma}(X)}{2} \right)}\phi_T(k \ell_{\gamma}(X)) \\
&& \prec \sum_{\substack{\alpha \in \mathcal{P}(X);\\ \ell_\alpha(X)< 1}} \ln \left([\frac{T}{\ell_\gamma(X)}]+1  \right) \nonumber \\
&&\leq  \sum_{\substack{\alpha \in \mathcal{P}(X);\\ \ell_\alpha(X)< 1}} \ln \left(\frac{2T}{\ell_\gamma(X)}  \right) \nonumber \\
&& \leq 2\sum_{\substack{\alpha \in \mathcal{P}(X);\\ \ell_\alpha(X)< 1}}\frac{T}{\ell_\alpha(X)}  \nonumber
\eear 
where in the last inequality we apply the fact that $e^x\geq x$ for all $x\geq 0$.

Then the conclusion clearly follows by \eqref{2-1-2}, \eqref{2-2-2} and \eqref{2-3-2}.
\ep

The Collar Lemma \cite{Buser10} implies that $\alpha$ is always simple if $\ell_{\alpha}(X)<1$. As in \cite[Page 292]{Mirz13} we define $f:\sM_g \to \R^{\geq 0}$ as
\[f(X)\overset{\text{def}}{=} \sum_{\ell_\alpha(X)\leq 1}\frac{1}{\ell_\alpha(X)}.\]
By using her Integration Formula (see Theorem \ref{Mirz int formula}), Mirzakhani \cite[Page 292]{Mirz13} showed that
\bl \label{II-3}
\[\int_{\sM_g}f(X)dX \asymp V_g.\]
\el

Now we are ready to prove Proposition \ref{II}.
\bp [Proof of Proposition \ref{II}]
It follows by Lemma \ref{II-1}, \ref{II-2} and \ref{II-3} that
\beqar
&&\frac{1}{V_g}\int_{\sM_g}\sum_{\gamma \in \mathcal P(X)}\sum_{k=2}^\infty\frac{\ell_{\gamma}(X)}{2\sinh \left(\frac{k \ell_{\gamma}(X)}{2} \right)}\phi_T(k \ell_{\gamma}(X)) dX\\
&&=\frac{1}{V_g}\int_{\sM_g^{\geq 1}}\sum_{\gamma \in \mathcal P(X)}\sum_{k=2}^\infty\frac{\ell_{\gamma}(X)}{2\sinh \left(\frac{k \ell_{\gamma}(X)}{2} \right)}\phi_T(k \ell_{\gamma}(X)) dX\nonumber\\
&&+\frac{1}{V_g}\int_{\sM_g^{< 1}}\sum_{\gamma \in \mathcal P(X)}\sum_{k=2}^\infty\frac{\ell_{\gamma}(X)}{2\sinh \left(\frac{k \ell_{\gamma}(X)}{2} \right)}\phi_T(k \ell_{\gamma}(X)) dX\nonumber\\
&& \prec T^2g\cdot  \frac{\Vol(\sM_g^{\geq 1})}{V_g}+\left( T^2g\cdot  \frac{\Vol(\sM_g^{< 1})}{V_g}+\frac{T}{V_g}\int_{\sM_g^{<1}}f(X)dX \right)\nonumber\\
&&\prec T^2 g. \nonumber
\eeqar

The proof is complete.
\ep

\subsection{An upper bound for $\int_{\sM_g}\rm{III} dX$} 
In this subsection we apply the Integration Formula of Mirzakhani (see Theorem \ref{Mirz int formula}) as a tool to prove the following bound on Term \rm{III} in the $\rm RHS$ of \eqref{sel-split}.
\begin{proposition}\label{III}
Let $\phi_T$ be the function in Section \ref{section trace}. Then we have for all $T>1$ and as $g\to \infty$,
\[\frac{1}{V_g}\int_{\sM_g}\sum_{\gamma \in \mathcal P^s_{sep}(X)}\frac{\ell_{\gamma}(X)}{2\sinh \left(\frac{ \ell_{\gamma}(X)}{2} \right)}\phi_T( \ell_{\gamma}(X)) dX\prec \frac{e^{\frac{T}{2}}}{g}.\]
\end{proposition}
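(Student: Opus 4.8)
The plan is to apply Mirzakhani's Integration Formula to the function $F^\Gamma$ where $\Gamma$ ranges over mapping class group orbits of simple separating closed curves, then use the polynomial volume bounds from Section \ref{section wp volume} to control the resulting integral over the length parameter. First I would decompose the sum over $\gamma\in\mathcal P^s_{sep}(X)$ according to the topological type of the complement: a simple separating closed curve $\gamma$ splits $S_g$ into $S_{i,1}$ and $S_{g-i,1}$ for some $1\le i\le g/2$. So
\[
\frac{1}{V_g}\int_{\sM_g}\sum_{\gamma\in\mathcal P^s_{sep}(X)}\frac{\ell_\gamma(X)}{2\sinh(\ell_\gamma(X)/2)}\phi_T(\ell_\gamma(X))\,dX
=\frac{1}{V_g}\sum_{i=1}^{[g/2]}C_{\Gamma_i}\int_0^\infty \frac{x\,\phi_T(x)}{2\sinh(x/2)}\,V_{i,1}(x)V_{g-i,1}(x)\,x\,dx,
\]
where for $i=g/2$ one includes the symmetry factor, and $C_{\Gamma_i}\le 1$.

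Next I would bound the integrand. Since $\Supp(\phi_T)=(-T,T)$ and $\phi_T$ is bounded, the integral over $x$ is supported on $(0,T)$; using $\sinh(x/2)\ge \tfrac12 e^{x/2}\cdot(1-e^{-x})$ crudely, or simply $\frac{x}{2\sinh(x/2)}\le 1$, the key saving comes from Lemma \ref{Mirz vol lemma 1}(1): $V_{i,1}(x)\le e^{x/2}V_{i,1}$ and similarly for $V_{g-i,1}(x)$, so $V_{i,1}(x)V_{g-i,1}(x)\le e^{x}V_{i,1}V_{g-i,1}$. Combining, the $x$-integral is $\prec \int_0^T e^{x}\,x\,dx\prec T\,e^T$ at worst; more carefully, keeping the $\frac{1}{\sinh(x/2)}$ factor gives $\int_0^T \frac{x^2}{2\sinh(x/2)}e^{x}\,dx\prec \int_0^T x^2 e^{x/2}\,dx\prec T^2 e^{T/2}$. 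So the whole expression is
\[
\prec \frac{T^2 e^{T/2}}{V_g}\sum_{i=1}^{[g/2]}V_{i,1}V_{g-i,1}.
\]

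Then I would invoke the volume sum estimates. Writing $W_r$ as in Section \ref{section wp volume}, we have $V_{i,1}=W_{2i-1}$ and $V_{g-i,1}=W_{2(g-i)-1}$, so $\sum_{i=1}^{[g/2]}V_{i,1}V_{g-i,1}=\sum W_{2i-1}W_{2g-2i-1}$, a sum of the form appearing in Lemma \ref{Wr-prop}(2) with $r=2g-2$ and lower index $m_0=1$, giving $\sum_i V_{i,1}V_{g-i,1}\prec \frac{1}{g}W_{2g-2}\prec \frac{1}{g}V_g$ by Lemma \ref{Wr-prop}(1) (note $W_{2g-2}=V_g$). Hence the whole quantity is $\prec \frac{T^2 e^{T/2}}{g}$. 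With $T=4\ln g$ this is $\prec \frac{(\ln g)^2 e^{2\ln g}}{g}\asymp g(\ln g)^2$; the stated proposition with bound $\frac{e^{T/2}}{g}$ presumably absorbs $T^2$ into the $\prec$ constant when $T$ is treated as a parameter, or one tracks $T^2e^{T/2}$ — I would state it with the $T$-dependence made explicit and then specialize. The main obstacle, which is really only bookkeeping, is being careful about the symmetry factor at $i=g/2$ and about whether the sharper bound $V_{i,1}(x)\le \prod\frac{\sinh(x/2)}{x/2}V_{i,1}$ from Lemma \ref{MP vol lemma} should be used instead of the crude $e^{x/2}$ bound — using Lemma \ref{MP vol lemma} would replace the $e^x$ growth by $\big(\frac{\sinh(x/2)}{x/2}\big)^2\asymp \frac{e^x}{x^2}$, which combined with the $\frac{x^2}{2\sinh(x/2)}$ prefactor gives an $x$-integral of $\int_0^T \frac{\sinh(x/2)}{x/2}\cdot x\,dx\prec e^{T/2}$, cleanly yielding the $\frac{e^{T/2}}{g}$ in the proposition with no extraneous $T^2$. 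That is the route I would take.
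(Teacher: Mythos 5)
Your final route is correct and is essentially the paper's own proof: decompose by the topological type of the separating curve, apply Mirzakhani's integration formula, and crucially use the sharp bound $V_{k,1}(x)\leq \tfrac{\sinh(x/2)}{x/2}V_{k,1}$ from Lemma~\ref{MP vol lemma} (not the crude $e^{x/2}$ bound, which as you noted leaves a spurious $T^2$) so that the $x$-integrand collapses to $2\sinh(x/2)\phi_T(x)\prec e^{x/2}$, then sum the volume products over $k$. The only cosmetic difference is that you invoke Lemma~\ref{Wr-prop}(2) for $\sum_k V_{k,1}V_{g-k,1}\prec V_g/g$ while the paper cites Lemma~\ref{Mirz vol lemma 2} directly; these are interchangeable here (and you are right that the orientation factor of $2$ and the $i=g/2$ symmetry factor only change the implied constant).
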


\bp
For each $1\leq k \leq [\frac{g}{2}]$, we let $\alpha_k \subset X$ be an unoriented simple closed geodesic separating $X$ into $X_{k,1}\cup X_{g-k,1}$ where $X_{k,1}$ is a subsurface in $X$ of $k$ genus with one boundary curve $\alpha_k$. Recall that $\mathcal P^s_{sep}(X)$ is the set of oriented simple and separating closed geodesics. So for $\gamma \in \mathcal P^s_{sep}(X)$, $\gamma \neq \gamma^{-1}\in \mathcal P^s_{sep}(X)$. But they have the same lengths. By symmetry we have
\[\sum_{\gamma \in \mathcal P^s_{sep}(X)}\frac{\ell_{\gamma}(X)}{2\sinh \left(\frac{ \ell_{\gamma}(X)}{2} \right)}\phi_T( \ell_{\gamma}(X))=2\sum_{k=1}^{[\frac{g}{2}]} \sum_{\gamma \in \Mod_g\cdot \alpha_k}\frac{\ell_{\gamma}(X)}{2\sinh \left(\frac{ \ell_{\gamma}(X)}{2} \right)}\phi_T( \ell_{\gamma}(X)).\]
Now one may apply the Integration Formula of Mirzakhani (see Theorem \ref{Mirz int formula}) to get
\bear \label{3-1-0}
&&\frac{1}{V_g}\int_{\sM_g}\sum_{\gamma \in \mathcal P^s_{sep}(X)}\frac{\ell_{\gamma}(X)}{2\sinh \left(\frac{ \ell_{\gamma}(X)}{2} \right)}\phi_T( \ell_{\gamma}(X)) dX\\
&&\leq \frac{2}{V_g}\sum_{k=1}^{[\frac{g}{2}]} \int_0^{\infty} \frac{x}{2\sinh\left(\frac{x}{2}\right)} \phi_T(x) V_{k,1}(x) V_{g-k,1}(x)xdx.\nonumber
\eear
Recall that Lemma \ref{MP vol lemma} tells that for all $1\leq k \leq [\frac{g}{2}]$,
\[V_{k,1}(x)\leq \frac{\sinh(x/2)}{x/2}V_{k,1} \ \text{and} \ V_{g-k,1}(x)\leq \frac{\sinh(x/2)}{x/2}V_{g-k,1}.\]
Thus, combine \eqref{3-1-0} and the two inequalities above we get
\bear \label{3-1-0-2}
&&\frac{1}{V_g}\int_{\sM_g}\sum_{\gamma \in \mathcal P^s_{sep}(X)}\frac{\ell_{\gamma}(X)}{2\sinh \left(\frac{ \ell_{\gamma}(X)}{2} \right)}\phi_T( \ell_{\gamma}(X)) dX\\
&&\leq 2 \int_0^{\infty} 2\sinh\left(\frac{x}{2}\right) \phi_T(x) dx\cdot \left(\frac{\sum_{k=1}^{[\frac{g}{2}]}V_{k,1}V_{g-k,1}}{V_g} \right) \nonumber \\
&&\leq 2 \int_0^{\infty} e^{\frac{x}{2}} \phi_T(x) dx\cdot \left(\frac{\sum_{k=1}^{[\frac{g}{2}]}V_{k,1}V_{g-k,1}}{V_g} \right). \nonumber
\eear
By Lemma \ref{Mirz vol lemma 2} we know that
\bear\label{sum-k-1}
\frac{\sum_{k=1}^{[\frac{g}{2}]}V_{k,1}V_{g-k,1}}{V_g} \asymp \frac{1}{g}.
\eear
Plug \eqref{sum-k-1} into \eqref{3-1-0-2}, since $\phi_T$ is bounded and $\Supp(\phi_T)=(-T,T)$ we get
\bear
\frac{1}{V_g}\int_{\sM_g}\sum_{\gamma \in \mathcal P^s_{sep}(X)}\frac{\ell_{\gamma}(X)}{2\sinh \left(\frac{ \ell_{\gamma}(X)}{2} \right)}\phi_T( \ell_{\gamma}(X)) dX &\prec& \frac{1}{g} \cdot \int_0^T e^{\frac{x}{2}} dx  \nonumber\\
&\asymp& \frac{e^{\frac{T}{2}}}{g}.\nonumber
\eear

The proof is complete.
\ep

\subsection{A bound for $\int_{\sM_g}\rm{IV} dX$} 
In this subsection we also apply the Integration Formula of Mirzakhani (see Theorem \ref{Mirz int formula}) as a tool to prove the following bound to link Term \rm{IV} in the $\rm RHS$ of \eqref{sel-split} and $\widehat{\phi_T}(\frac{\textbf{i}}{2})$.
\begin{proposition}\label{IV}
Let $\phi_T$ be the function in Section \ref{section trace}. Then we have for all $T>1$ and as $g\to \infty$,
\[\left|\frac{1}{V_g}\int_{\sM_g}\sum_{\gamma \in \mathcal P^{s}_{nsep}(X)}\frac{\ell_{\gamma}(X)}{2\sinh \left(\frac{ \ell_{\gamma}(X)}{2} \right)}\phi_T( \ell_{\gamma}(X)) dX-\widehat{\phi_T}(\frac{\textbf{i}}{2})\right|\prec \left(\frac{T^2e^{\frac{T}{2}}}{g}+1\right).\]
\end{proposition}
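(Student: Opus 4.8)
The plan is to apply Mirzakhani's Integration Formula to the single topological type of a non-separating simple closed curve and then extract the leading term, which will turn out to be $\widehat{\phi_T}(\tfrac{\textbf{i}}{2})$ up to lower-order errors. First I would recall that, up to the action of $\Mod_g$, there is exactly one non-separating simple closed curve $\gamma_0$ on $S_g$, and $\gamma \neq \gamma^{-1}$ for $\gamma \in \mathcal P^s_{nsep}(X)$ while they have equal length; so the sum over $\mathcal P^s_{nsep}(X)$ equals $2\sum_{\gamma \in \Mod_g \cdot \gamma_0}\frac{\ell_\gamma(X)}{2\sinh(\ell_\gamma(X)/2)}\phi_T(\ell_\gamma(X))$. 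Cutting $S_g$ along $\gamma_0$ yields $S_{g-1,2}$, so Theorem \ref{Mirz int formula} (with $C_{\gamma_0}=\tfrac12$ since $g>2$) gives
\[
\frac{1}{V_g}\int_{\sM_g}\sum_{\gamma \in \mathcal P^s_{nsep}(X)}\frac{\ell_\gamma(X)}{2\sinh(\ell_\gamma(X)/2)}\phi_T(\ell_\gamma(X))\,dX = \frac{1}{2V_g}\int_0^\infty \frac{x\,\phi_T(x)}{2\sinh(x/2)}\,V_{g-1,2}(x)\,x\,dx.
\]

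Next I would replace $V_{g-1,2}(x)$ by its leading behaviour using Lemma \ref{MP vol lemma}: we have the two-sided bound $\prod_{i=1}^2\frac{\sinh(x/2)}{x/2}\big(1-c\frac{2x^2}{g}\big) \le \frac{V_{g-1,2}(x)}{V_{g-1,2}} \le \prod_{i=1}^2\frac{\sinh(x/2)}{x/2}$, i.e. $V_{g-1,2}(x) = \big(\frac{\sinh(x/2)}{x/2}\big)^2 V_{g-1,2}\cdot(1+O(x^2/g))$ on the support $x<T$ of $\phi_T$. Substituting, the main term becomes
\[
\frac{V_{g-1,2}}{2V_g}\int_0^\infty \frac{x^2 \phi_T(x)}{2\sinh(x/2)}\cdot\frac{4\sinh^2(x/2)}{x^2}\,dx = \frac{V_{g-1,2}}{V_g}\int_0^\infty \sinh(x/2)\,\phi_T(x)\,dx = \frac{V_{g-1,2}}{V_g}\cdot\frac12\int_{-\infty}^\infty e^{x/2}\phi_T(x)\,dx,
\]
using that $\phi_T$ is even so $\int_0^\infty \sinh(x/2)\phi_T(x)dx = \tfrac12\int_{\mathbb R}\sinh(x/2)\phi_T(x)dx = \tfrac12\int_{\mathbb R}e^{x/2}\phi_T(x)dx$ (the $e^{-x/2}$ part contributes the same). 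Now observe that $\widehat{\phi_T}(\tfrac{\textbf{i}}{2}) = \int_{\mathbb R}\phi_T(x)e^{-ix\cdot \textbf{i}/2}dx = \int_{\mathbb R}\phi_T(x)e^{x/2}dx$, so the main term is exactly $\frac{V_{g-1,2}}{V_g}\widehat{\phi_T}(\tfrac{\textbf{i}}{2})$. Finally, by Lemma \ref{Mirz vol lemma 1}(2), $\frac{V_{g-1,2}}{V_g} = 1 + O(1/g)$, and since $\widehat{\phi_T}(\tfrac{\textbf{i}}{2}) \asymp e^{T/2}$ (bounded above and below by a constant times $e^{T/2}$, as $\phi_0\ge 0$ with support $(-1,1)$), the discrepancy coming from $\frac{V_{g-1,2}}{V_g}-1$ contributes $\prec e^{T/2}/g$.

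It remains to control the error term from the $(1+O(x^2/g))$ factor in Lemma \ref{MP vol lemma}: this produces a contribution bounded by $\frac{V_{g-1,2}}{g V_g}\int_0^T x^2\sinh(x/2)\phi_T(x)\,dx \prec \frac{1}{g}\cdot T^2 e^{T/2}$, since $x^2 \le T^2$ and $\int_0^T \sinh(x/2)dx \asymp e^{T/2}$. Collecting the two error sources gives the claimed bound $\big|\tfrac{1}{V_g}\int_{\sM_g}\mathrm{IV}\,dX - \widehat{\phi_T}(\tfrac{\textbf{i}}{2})\big| \prec \big(\tfrac{T^2 e^{T/2}}{g} + 1\big)$, where the additive $1$ is harmless slack (indeed one gets $\prec T^2 e^{T/2}/g$, which is $\prec T^2 e^{T/2}/g + 1$). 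The main obstacle is purely bookkeeping: one must be careful that the lower bound in Lemma \ref{MP vol lemma} is only valid for $n$ fixed (here $n=2$, fine) and that the $\sinh(x/2)$ in the denominator is exactly cancelled by the $\sinh^2(x/2)$ from the volume polynomial asymptotics, leaving a clean single $\sinh(x/2)$ — i.e., the identification of $\int_{\mathbb R}e^{x/2}\phi_T(x)dx$ with $\widehat{\phi_T}(\textbf{i}/2)$ is the conceptual crux, while everything else is routine estimation on the interval $(-T,T)$.
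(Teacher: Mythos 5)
Your overall approach is the same as the paper's (Mirzakhani's integration formula for the unique mapping-class-group orbit of non-separating curves, Lemma~\ref{MP vol lemma} to control $V_{g-1,2}(x,x)/V_{g-1,2}$, and Lemma~\ref{Mirz vol lemma 1} to control $V_{g-1,2}/V_g$), but there is a genuine error at the ``conceptual crux'' you identify. You write
\[
\int_0^\infty \sinh(x/2)\,\phi_T(x)\,dx \;=\; \tfrac12\int_{\mathbb R}\sinh(x/2)\,\phi_T(x)\,dx \;=\; \tfrac12\int_{\mathbb R}e^{x/2}\phi_T(x)\,dx,
\]
but this is false: $\sinh(x/2)\phi_T(x)$ is \emph{odd}, so $\int_{\mathbb R}\sinh(x/2)\phi_T(x)\,dx = 0$, and the doubling trick you invoke is valid for $\cosh$, not $\sinh$. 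The correct relation is $\widehat{\phi_T}(\tfrac{\textbf{i}}{2}) = \int_{\mathbb R}e^{x/2}\phi_T(x)\,dx = \int_0^\infty 2\cosh(x/2)\phi_T(x)\,dx$, while the leading term of the integral you computed is $\int_0^\infty 2\sinh(x/2)\phi_T(x)\,dx$. These are \emph{not} equal; they differ by $2\int_0^\infty e^{-x/2}\phi_T(x)\,dx$, which is bounded but does not tend to zero as $g\to\infty$ (indeed it is $\asymp 1$ since $\phi_0(0)>0$). This is precisely where the $+1$ in the stated bound comes from, so your closing remark that ``one gets $\prec T^2 e^{T/2}/g$'' and that the additive $1$ is harmless slack is incorrect; the $+1$ is necessary and has a concrete source that your argument overlooks.

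You also have a bookkeeping slip: after noting that the sum over $\mathcal P^s_{nsep}(X)$ is twice the sum over a single $\Mod_g$-orbit and that $C_\Gamma = \tfrac12$, these two factors cancel, so the prefactor should be $\tfrac{1}{V_g}$, not $\tfrac{1}{2V_g}$. Similarly, even granting your (false) identity, $\tfrac{V_{g-1,2}}{V_g}\cdot\tfrac12\widehat{\phi_T}(\tfrac{\textbf{i}}{2})$ does not equal $\tfrac{V_{g-1,2}}{V_g}\widehat{\phi_T}(\tfrac{\textbf{i}}{2})$; a second factor of $\tfrac12$ is silently dropped. These two slips happen to cancel, so your final displayed bound agrees with the paper's, but the derivation as written is not correct. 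To repair: split $2\sinh(x/2) = e^{x/2} - e^{-x/2}$, identify $\widehat{\phi_T}(\tfrac{\textbf{i}}{2}) = \int_0^\infty 2\cosh(x/2)\phi_T(x)\,dx$, use $\int_0^\infty e^{x/2}\phi_T\,dx\prec e^{T/2}$ and $\int_0^\infty e^{-x/2}\phi_T\,dx\prec 1$, and track the factors of $2$ carefully; this is exactly what the paper does.
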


We first list the following elementary properties for $\phi_T$:
\begin{enumerate}
\item[(a)] $\int_0^\infty e^{-\frac{x}{2}}\phi_T(x)dx \prec 1$.

\item[(b)] $\int_{0}^\infty e^{\frac{x}{2}}\phi_T(x)dx\prec e^{\frac{T}{2}}.$

\item[(c)] $\widehat{\phi_T}(\frac{\textbf{i}}{2})=\int_{-\infty}^\infty e^{\frac{x}{2}}\phi_T(x)dx= \int_{0}^\infty 2\cosh\left(\frac{x}{2}\right)\phi_T(x)dx\prec e^{\frac{T}{2}}.$
\end{enumerate}

Now we prove Proposition \ref{IV}.
\bp [Proof of Proposition \ref{IV}]
Let $\alpha_0 \subset X$ be an unoriented simple non-separating closed geodesic. Similar as in the proof of Proposition \ref{III}, for $\gamma \in \mathcal P^s_{nsep}(X)$, $\gamma \neq \gamma^{-1}\in \mathcal P^s_{nsep}(X)$. But they have the same lengths. So by symmetry we have
\[\sum_{\gamma \in \mathcal P^{s}_{nsep}(X)}\frac{\ell_{\gamma}(X)}{2\sinh \left(\frac{ \ell_{\gamma}(X)}{2} \right)}\phi_T( \ell_{\gamma}(X))= 2 \left(\sum_{\gamma \in \Mod_g\cdot \alpha_0}\frac{\ell_{\gamma}(X)}{2\sinh \left(\frac{ \ell_{\gamma}(X)}{2} \right)}\phi_T( \ell_{\gamma}(X))\right).\]
Since $\alpha_0$ is simple and non-separating and $g>2$, one may apply the Integration Formula of Mirzakhani (see Theorem \ref{Mirz int formula}) to get
\bear
&&\int_{\sM_g}\sum_{\gamma \in \Mod_g\cdot \alpha_0}\frac{\ell_{\gamma}(X)}{2\sinh \left(\frac{ \ell_{\gamma}(X)}{2} \right)}\phi_T( \ell_{\gamma}(X))dX\\
&&=\frac{1}{2}\int_0^{\infty} \frac{x}{2\sinh\left(\frac{x}{2}\right)} \phi_T(x) V_{g-1,2}(x,x)xdx. \nonumber
\eear
Thus, we have
\bear\label{4-1-1}
&& \int_{\sM_g}\sum_{\gamma \in \mathcal P^{s}_{nsep}(X)}\frac{\ell_{\gamma}(X)}{2\sinh \left(\frac{ \ell_{\gamma}(X)}{2} \right)}\phi_T( \ell_{\gamma}(X)) dX\\
&&=\int_0^{\infty} \frac{x}{2\sinh\left(\frac{x}{2}\right)} \phi_T(x) V_{g-1,2}(x,x)xdx. \nonumber
\eear
By Lemma \ref{MP vol lemma} we know that
\[\frac{V_{g-1,2}(x,x)}{V_{g-1,2}}=\left( \frac{\sinh(x/2)}{x/2} \right)^2 \cdot \left(1+O\left(\frac{x^2}{g}\right) \right)\]
where the implied constant is uniform. By Lemma \ref{Mirz vol lemma 1} we have
\[V_{g-1,2}=V_g \cdot \left(1+O\left(\frac{1}{g}\right)\right)\]
where the implied constant is also uniform. So we have
\be \label{4-1-1-0}
\frac{V_{g-1,2}(x,x)}{V_{g}}=\left( \frac{\sinh(x/2)}{x/2} \right)^2 \cdot \left(1+O\left(\frac{1+x^2}{g}\right) \right).
\ene
Plug \eqref{4-1-1-0} into \eqref{4-1-1} we get
\bear\label{4-2-1}
&& \frac{1}{V_g}\int_{\sM_g}\sum_{\gamma \in \mathcal P^{s}_{nsep}(X)}\frac{\ell_{\gamma}(X)}{2\sinh \left(\frac{ \ell_{\gamma}(X)}{2} \right)}\phi_T( \ell_{\gamma}(X)) dX \\
&&=\int_0^{\infty} 2\sinh\left(\frac{x}{2}\right) \phi_T(x) \cdot \left(1+O\left(\frac{1+x^2}{g}\right) \right)dx. \nonumber\\
&&=\int_0^{\infty} e^{\frac{x}{2}} \phi_T(x) \cdot \left(1+O\left(\frac{1+x^2}{g}\right) \right)dx \nonumber \\
&&  -\int_0^{\infty} e^{\frac{-x}{2}} \phi_T(x) \cdot \left(1+O\left(\frac{1+x^2}{g}\right) \right)dx. \nonumber
\eear
Recall that we have $\widehat{\phi_T}(\frac{\textbf{i}}{2})=\int_{0}^\infty 2\cosh\left(\frac{x}{2}\right)\phi_T(x)dx $, $\int_{0}^\infty \phi_T(x)e^{\frac{x}{2}}dx\prec e^{\frac{T}{2}}$ and $\int_0^\infty e^{-\frac{x}{2}}\phi_T(x)dx \prec 1$. Now it follows by \eqref{4-2-1} that
\bear\label{4-3-1}
 &&   \left|\frac{1}{V_g}\int_{\sM_g}\sum_{\gamma \in \mathcal P^{s}_{nsep}(X)}\frac{\ell_{\gamma}(X)}{2\sinh \left(\frac{ \ell_{\gamma}(X)}{2} \right)}\phi_T( \ell_{\gamma}(X)) dX-\widehat{\phi_T}(\frac{\textbf{i}}{2})\right| \nonumber \\
&&\prec \int_0^{\infty} e^{\frac{x}{2}} \phi_T(x) \cdot \left(\frac{1+x^2}{g}\right) dx  \nonumber \\
&&  + \int_0^{\infty} e^{\frac{-x}{2}} \phi_T(x) \cdot \left(2+\frac{1+x^2}{g} \right)dx \nonumber \\
&& \prec \frac{T^2e^{\frac{T}{2}}}{g}+\left(2+\frac{T^2}{g} \right) \nonumber\\
&& \prec \frac{T^2e^{\frac{T}{2}}}{g}+1. \nonumber
\eear

The proof is complete.
\ep

\section{Proofs of Theorem \ref{main}, \ref{main-2} and \ref{mt-diam}}\label{sec-main-1}
In this section we resolve intersections of non-simple closed geodesics, and apply the new counting result Theorem \ref{count} and similar ideas in \cite{Mirz07, NWX20} to study the most difficult case: Term \rm{V} in the $\rm RHS$ of \eqref{sel-split}. Then we will combine the new desired bound on Term \rm{V} and the results in the previous section to complete the proofs of Theorem \ref{main} and \ref{main-2}.

\subsection{An upper bound for Term \rm{V}} 
In this subsection we apply the new counting result Theorem \ref{count} to give an effective upper bound for Term \rm{V} of \eqref{sel-split}.

Throughout this section we always assume that $g>1$ is large enough and \be T=a\ln(g)\ene where $a>0$ is any fixed constant. 

Let $X\in \sM_g$ be a hyperbolic surface and $\gamma \in \mathcal{P}^{ns}(X)$ be a non-simple closed geodesic of length $\ell_\gamma(X)\leq T$. By Proposition \ref{ns-sub} one may assume that $X(\gamma)\subset X$ is a connected subsurface of geodesic boundary (we warn here that two distinct simple closed geodesics on the boundary of $X(\gamma)$ may correspond to a single simple closed geodesic in $X$) such that
\ben
\item $\gamma\subset X(\gamma)$ is filing;
\item $\ell(\partial X(\gamma))\leq 2\ell_\gamma(X)\leq 2T$;
\item $\area(X(\gamma))\leq 4 \ell_\gamma(X) \leq 4T$.
\een

\begin{def*} For $T=a\ln(g)>0$ and $X\in \sM_g$, we define
$$
\rm{Sub_T(X)}\overset{\text{def}}{=}\left\{ \parbox[1]{8.5cm}{$Y\subset X$ is a connected subsurface of geodesic boundary such that $\ell(\partial Y)\leq 2T$ and $\area(Y)\leq 4T$
}\right\}
$$ 
where we allow two distinct simple closed geodesics on the boundary of $Y$ to be a single simple closed geodesic in $X$.
\end{def*}
For large enough $g>1$, we have that the map
\[Y \mapsto \partial Y\]
is injective; indeed if $\partial Y_1=\partial Y_2$ for $Y_1\neq Y_2 \in \rm{Sub_T(X)}$, then  $Y_1$ and $Y_2$ lie on the two different sides of $\partial Y_1=\partial Y_2$ and we have $Y_1\cup Y_2=X$ implying that 
\[4\pi(g-1)=\area(X)\leq\area(Y_1)+\area(Y_2)\leq 8a \ln(g)\] 
which is impossible for large enough $g>1$. 

Let $\gamma \in \mathcal{P}^{ns}(X)$ be of length $\ell_\gamma(X)\leq T$ and consider the composition $\mathcal{F}$ of the following two maps
\[\gamma \mapsto X(\gamma) \mapsto \partial X(\gamma),\]
then we have that for any $\gamma \in \mathcal{P}^{ns}(X)$ of length $\ell_\gamma(X)\leq T$,
\be \label{up-mul}
\#\left\{\gamma' \in \mathcal{P}^{ns}(X); \ \ell_{\gamma'}(X)\leq T \ \textit{and} \ \mathcal{F}(\gamma')=\mathcal{F}(\gamma) \right\}\leq 2\#_f\left(X(\gamma), T\right)
\ene
where $\#_f\left(X(\gamma), T\right)$ is the number of filling (unoriented) closed geodesics in $X(\gamma)$ of length less than or equal to $T$.

We prove the following upper bound for Term \rm{V} in the $\rm RHS$ of \eqref{sel-split}.
\begin{proposition}\label{V-upp}
Let $\phi_T$ be the function in Section \ref{section trace}. For any $\eps_1>0$, there exists a constant $c(\eps_1)>0$ only depending on $\eps_1$ such that as $g\to \infty$,
\bear \label{eq-v-upp}
&&\sum_{\gamma \in \mathcal{P}^{ns}(X)} \frac{\ell_{\gamma}(X)}{2\sinh \left(\frac{\ell_{\gamma}(X)}{2} \right)}\phi_T( \ell_{\gamma}(X))\\
&&\prec T^2e^T\left(\sum_{\substack{Y\in \rm{Sub_T(X)};\\ |\chi(Y)|\geq 17}} e^{-\frac{\ell(\partial Y)}{4}}\textbf{1}_{[0,2T]}(\ell(\partial Y))\right) \nonumber\\
&&+c(\eps_1)T \left(\sum_{\substack{Y\in \rm{Sub_T(X)};\\ 1\leq |\chi(Y)|\leq 16}} e^{\frac{T}{2}-\frac{1-\eps_1}{2}\ell(\partial Y)} \mathrm{\textbf{1}_{[0,2T]}}(\ell(\partial Y))\right). \nonumber
\eear
\end{proposition}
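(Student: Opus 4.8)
The plan is to decompose the sum over $\mathcal{P}^{ns}(X)$ by first grouping primitive non-simple closed geodesics according to the value of the map $\mathcal{F}\colon \gamma\mapsto \partial X(\gamma)$, and then summing over subsurfaces $Y\in\mathrm{Sub}_T(X)$. By Proposition~\ref{ns-sub}, every $\gamma\in\mathcal{P}^{ns}(X)$ with $\ell_\gamma(X)\le T$ fills a connected subsurface $X(\gamma)\in\mathrm{Sub}_T(X)$, and by the injectivity of $Y\mapsto\partial Y$ established above, grouping by $\mathcal{F}(\gamma)$ is the same as grouping by $X(\gamma)=Y$. For a fixed $Y$, the number of $\gamma$ mapping to $Y$ with length $\le T$ is at most $2\#_f(Y,T)$ by \eqref{up-mul}. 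On each such term the factor $\tfrac{\ell_\gamma(X)}{2\sinh(\ell_\gamma(X)/2)}\phi_T(\ell_\gamma(X))$ is bounded: since $\phi_T$ is bounded with support in $(-T,T)$, and a filling geodesic in $Y$ has length at least $\tfrac12\ell(\partial Y)$ (the remark after Theorem~\ref{sec-count}), we get a crude bound like $\tfrac{\ell_\gamma(X)}{2\sinh(\ell_\gamma(X)/2)}\prec \ell_\gamma(X)e^{-\ell_\gamma(X)/2}\prec T e^{-\ell(\partial Y)/4}$, with the restriction that $\ell(\partial Y)\le 2T$ forces $\mathbf 1_{[0,2T]}(\ell(\partial Y))$ to appear. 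Hence

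\begin{equation*}
\sum_{\gamma\in\mathcal{P}^{ns}(X)}\frac{\ell_\gamma(X)}{2\sinh(\ell_\gamma(X)/2)}\phi_T(\ell_\gamma(X))
\;\prec\; T\sum_{Y\in\mathrm{Sub}_T(X)} e^{-\ell(\partial Y)/4}\,\#_f(Y,T)\,\mathbf 1_{[0,2T]}(\ell(\partial Y)).
\end{equation*}

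Next I would split the sum over $Y$ according to the Euler characteristic, using the threshold $|\chi(Y)|\ge 17$ versus $1\le|\chi(Y)|\le 16$, and apply a different counting bound in each regime. For the "large complexity" case $|\chi(Y)|\ge 17$, I would use the soft bound from Lemma~\ref{count-3}: $\#_f(Y,T)\le\frac{\area(Y)}{4\pi}e^{T+6}\prec T e^T$ since $\area(Y)\le 4T$ for $Y\in\mathrm{Sub}_T(X)$. Feeding this in gives exactly the first term $T^2e^T\sum e^{-\ell(\partial Y)/4}\mathbf 1_{[0,2T]}(\ell(\partial Y))$, which matches \eqref{eq-v-upp}. (The constant $6$ and the $\frac1{4\pi}$ get absorbed; the extra $T$ from $\area(Y)\le 4T$ combines with the $T$ already present to produce $T^2$.) For the "small complexity" case $1\le|\chi(Y)|\le 16$, i.e. $m=|\chi(Y)|\le 16$, I would instead invoke the sharp Key Counting Theorem~\ref{sec-count}/\ref{count}: $\#_f(Y,T)\le c(\eps_1,m)e^{T-\frac{1-\eps_1}{2}\ell(\partial Y)}$, where since $m$ ranges over a fixed finite set $\{1,\dots,16\}$ the constant $c(\eps_1)=\max_{1\le m\le 16}c(\eps_1,m)$ depends only on $\eps_1$. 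Substituting, the factor $e^{-\ell(\partial Y)/4}$ from the length-of-geodesic bound combines with $e^{-\frac{1-\eps_1}{2}\ell(\partial Y)}$; one should check that the resulting exponent $-\tfrac14-\tfrac{1-\eps_1}{2}$ is at least $-\tfrac{1-\eps_1}{2}$ for the stated form, or simply absorb the extra decay — in either case one lands on $c(\eps_1)T\sum e^{\frac T2-\frac{1-\eps_1}{2}\ell(\partial Y)}\mathbf 1_{[0,2T]}(\ell(\partial Y))$. Wait — the stated RHS has $e^{\frac T2}$, not $e^T$, in the second term; so I need to be more careful here: the point is that in this regime one does \emph{not} use the trivial length-to-volume estimate $\ell_\gamma e^{-\ell_\gamma/2}$ but rather keeps more precisely that the relevant mass is concentrated and, combined with $\ell(\partial Y)\le 2T$ giving $e^{-\frac{1-\eps_1}{2}\ell(\partial Y)}\ge e^{-(1-\eps_1)T}$, the product with $e^{T}$ is $e^{\eps_1 T}$; re-examining the bookkeeping, the $e^{T/2}$ arises because a filling geodesic has $\ell_\gamma\ge\frac12\ell(\partial Y)$ is combined with $\phi_T$ supported on length $\le T$ in a way that yields only half a factor of $T$ in the exponent. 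I would reconcile this by writing $\frac{\ell_\gamma}{2\sinh(\ell_\gamma/2)}\le \ell_\gamma e^{-\ell_\gamma/2}\le T e^{-\ell(\partial Y)/4}$ and noting that $\#_f(Y,T)\le c(\eps_1,m)e^{T-\frac{1-\eps_1}2\ell(\partial Y)}$, then bounding $e^{-\ell(\partial Y)/4}\cdot e^{T} \le e^{3T/4}$... which still is not $e^{T/2}$.

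Let me instead reorganize: the honest route to the $e^{T/2}$ in the second term is to \emph{not} throw away the $e^{-\ell_\gamma(X)/2}$ against a filling lower bound of $\tfrac12\ell(\partial Y)$ but rather to observe that one may pick the specific filling geodesic to have $\ell_\gamma(X)$ comparable to $\ell(\partial X(\gamma))$ only up to the counting, and that the dominant contribution after applying Theorem~\ref{count} with its $e^{-\frac{1-\eps_1}{2}\ell(\partial Y)}$ savings, together with the summation weight, produces the stated shape; concretely the term $e^{T-\frac{1-\eps_1}{2}\ell(\partial Y)}$ from Theorem~\ref{count} multiplied against $\sup_{\ell\ge\frac12\ell(\partial Y)}\ell e^{-\ell/2}\phi_T(\ell) \le T e^{-\ell(\partial Y)/4}$ and then the crude estimate $e^{T}\cdot e^{-\ell(\partial Y)/4}$ is replaced in the final combined statement by the cleaner $e^{T/2}\cdot e^{-\frac{1-\eps_1}{2}\ell(\partial Y)}$ after re-absorbing: note $e^{T - \ell(\partial Y)/4 - \frac{1-\eps_1}{2}\ell(\partial Y)} = e^{T/2}\cdot e^{T/2 - (\frac34 - \frac{\eps_1}{2})\ell(\partial Y)}$ and on the support $\ell(\partial Y)\le 2T$ the extra factor $e^{T/2-(\frac34-\frac{\eps_1}2)\ell(\partial Y)}$ is $\le e^{(\frac{\eps_1}{2}-\frac14)\ell(\partial Y)}\le 1$ for small $\eps_1$; wait this requires $\ell(\partial Y) \ge $ something. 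Given the delicacy, the cleanest plan is: bound $\frac{\ell_\gamma}{2\sinh(\ell_\gamma/2)}\le 2\ell_\gamma e^{-\ell_\gamma/2}$, use that a filling geodesic satisfies $\ell_\gamma(X)\ge \frac12\ell(\partial Y)$ \emph{only to cut off}, then write $\sum_{\gamma\mapsto Y} \ell_\gamma e^{-\ell_\gamma/2}\phi_T(\ell_\gamma) \le T e^{-\ell(\partial Y)/4}\#_f(Y,T)$ and in the small-complexity regime apply Theorem~\ref{count} and absorb $e^{-\ell(\partial Y)/4}$ into the $e^{-\frac{1-\eps_1}{2}\ell(\partial Y)}$ by enlarging $\eps_1$ slightly (legitimate since $\eps_1>0$ is arbitrary), yielding $c(\eps_1)Te^{T}e^{-\frac{1-\eps_1}{2}\ell(\partial Y)}$; the reduction of $e^T$ to $e^{T/2}$ then comes from the sharper observation that for a \emph{filling} geodesic $\#_f(Y,T)$ is only nonzero when $\ell(\partial Y)\le 2T$ and one gains a full $e^{-\ell(\partial Y)/4}\le e^{-\ell_\gamma/2+T/4}$...

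Given the above tangle, the \textbf{main obstacle}, and the step I would spend the most care on, is precisely this exponent bookkeeping: extracting the optimal $e^{T/2}$ (rather than $e^{T}$ or $e^{3T/4}$) in the low-complexity term by correctly combining (i) the upper bound on $\tfrac{\ell_\gamma}{2\sinh(\ell_\gamma/2)}$, (ii) the support condition $\ell_\gamma\le T$ and the lower bound $\ell_\gamma\ge\tfrac12\ell(\partial Y)$ for filling geodesics, and (iii) the boundary-length-sensitive count of Theorem~\ref{count}; together with verifying that the family $\{c(\eps_1,m):1\le m\le 16\}$ yields a single constant $c(\eps_1)$. Everything else — grouping by $\mathcal{F}$, the injectivity of $Y\mapsto\partial Y$, the multiplicity bound \eqref{up-mul}, the split $|\chi(Y)|\ge 17$ vs.\ $\le 16$, and the application of Lemma~\ref{count-3} in the high-complexity case — is routine once the exponent arithmetic is pinned down.
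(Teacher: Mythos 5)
Your overall architecture matches the paper: rewrite the sum over $\mathcal{P}^{ns}(X)$ as a double sum over subsurfaces $Y=X(\gamma)\in\mathrm{Sub}_T(X)$ and filling geodesics in each $Y$ (with a factor $2$ for orientation), split at $|\chi(Y)|=17$, and use Lemma~\ref{count-3} together with $\area(Y)\leq 4T$ in the high-complexity regime and the Key Counting Theorem~\ref{count} in the low-complexity one. The high-complexity term is handled correctly and exactly as in the paper, and your identification of $c(\eps_1)=\max_{1\leq m\leq 16}c(\eps_1,m)$ is fine.

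In the low-complexity regime there is a genuine gap, which you diagnose yourself but do not resolve. The bound you arrive at, $\bigl(\sup_{\gamma}\ell_\gamma e^{-\ell_\gamma/2}\bigr)\cdot\#_f(Y,T)\leq Te^{-\ell(\partial Y)/4}\cdot c(\eps_1)e^{T-\frac{1-\eps_1}{2}\ell(\partial Y)}$, gives $c(\eps_1)Te^{T-(\frac14+\frac{1-\eps_1}{2})\ell(\partial Y)}$, which is \emph{not} dominated by the stated $c(\eps_1)Te^{\frac T2-\frac{1-\eps_1}{2}\ell(\partial Y)}$: on the support $\ell(\partial Y)\leq 2T$ the ratio of the two is $e^{\frac T2-\frac14\ell(\partial Y)}\geq 1$, and for $\ell(\partial Y)$ small it is of order $e^{T/2}=g^2$ when $T=4\ln g$. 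This loss is not cosmetic: pushed through Proposition~\ref{16-small-1}, it replaces $e^{T/2+\eps_1T}\asymp g^{2+4\eps_1}$ by essentially $e^{T}=g^4$, and after dividing by $g^m$ (with $m$ as small as $1$) and then by $C_\eps g^{1+C_\eps}\ln g$ in the endgame, the resulting probability bound no longer tends to $0$.

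The missing idea is to keep the length-dependent decay of the weight coupled to the length-dependent growth of the count, rather than decoupling them by a uniform supremum over $\gamma$. Decompose the filling geodesics in $Y$ into unit length shells and apply Theorem~\ref{count} at the shell scale $L=n+1$ rather than at $L=T$:
\begin{align*}
\sum_{\gamma\subset Y \text{ filling}} \ell_\gamma e^{-\ell_\gamma/2}\,\mathbf{1}_{[0,T]}(\ell_\gamma)
&\leq \sum_{n=0}^{[T]} (n+1)e^{-n/2}\,\#_f(Y,n+1)\\
&\leq c(\eps_1)\,e^{1-\frac{1-\eps_1}{2}\ell(\partial Y)}\sum_{n=0}^{[T]}(n+1)e^{n/2}\\
&\prec c(\eps_1)\,T\,e^{\frac T2-\frac{1-\eps_1}{2}\ell(\partial Y)},
\end{align*}
with the cutoff $\mathbf{1}_{[0,2T]}(\ell(\partial Y))$ supplied by the filling lower bound $\ell_\gamma\geq\tfrac12\ell(\partial Y)$ together with $\Supp(\phi_T)\subset(-T,T)$. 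The factor $e^{-n/2}$ from $\sinh$ cancels half of the exponential growth $e^{n}$ of the shell count, leaving $(n+1)e^{n/2}$, whose sum over $n\leq T$ is $\asymp Te^{T/2}$ rather than $Te^{T}$. This is exactly the step you flag as ``the honest route to the $e^{T/2}$'' and then circle around; without the shell decomposition the bound you propose does not prove the proposition.
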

\bp
Recall that every non-simple closed geodesic has length at least $4\arcsinh(1)$ (\eg see \cite[4.2.2]{Buser10}). Since $\phi_T\geq 0$ is bounded and $\Supp(\phi_T)=(-T,T)$, we get
\bear \label{V-e-0-1}
&&\sum_{\gamma \in \mathcal{P}^{ns}(X)} \frac{\ell_{\gamma}(X)}{2\sinh \left(\frac{\ell_{\gamma}(X)}{2} \right)}\phi_T( \ell_{\gamma}(X))\asymp \sum_{\gamma \in \mathcal{P}^{ns}(X)} \ell_{\gamma}(X)e^{-\frac{\ell_{\gamma}(X)}{2} }\phi_T( \ell_{\gamma}(X))\\
&&\prec \sum_{\gamma \in \mathcal{P}^{ns}(X)} \ell_{\gamma}(X)e^{-\frac{\ell_{\gamma}(X)}{2} } \textbf{1}_{[0,T]}(\ell_{\gamma}(X)) \nonumber \\
&&= 2\times \left( \sum_{Y\in \rm{Sub_T(X)}}\sum_{\gamma\subset Y \ \textit{is filling}} \ell_{\gamma}(X)e^{-\frac{\ell_{\gamma}(X)}{2} } \textbf{1}_{[0,T]}(\ell_{\gamma}(X))\right) \nonumber \\
&&= 2\times  \sum_{\substack{Y\in \rm{Sub_T(X)};\\ |\chi(Y)|\geq 17}} \ \sum_{\gamma\subset Y \ \textit{is filling}} \ell_{\gamma}(X)e^{-\frac{\ell_{\gamma}(X)}{2} } \textbf{1}_{[0,T]}(\ell_{\gamma}(X)) \nonumber\\
&&+2\times\sum_{\substack{Y\in \rm{Sub_T(X)};\\ 1\leq|\chi(Y)|\leq 16}}\ \sum_{\gamma\subset Y \ \textit{is filling}} \ell_{\gamma}(X)e^{-\frac{\ell_{\gamma}(X)}{2} } \textbf{1}_{[0,T]}(\ell_{\gamma}(X)). \nonumber
\eear
Where the factor $2$ is from the multiplicity because the curves in $\mathcal{P}^{ns}(X)$ are oriented.
Now we consider the first term in the \rm{RHS} of \eqref{V-e-0-1}. Since $\gamma \subset Y$ is filling, 
\be \label{in-to-b}
\frac{\ell(\partial Y)}{2}\leq \ell_\gamma(Y)=\ell_{\gamma}(X).
\ene 
For $Y\in \rm{Sub_T(X)}$ we know that $\area(Y)\leq 4 T$. So by Lemma \ref{count-3} we have 
\be\label{c-g-b}
\#_f\left(Y, T\right)\prec Te^T.
\ene
By \eqref{in-to-b} and \eqref{c-g-b} we have    
\bear \label{V-e-0-2}
&&\sum_{\substack{Y\in \rm{Sub_T(X)};\\ |\chi(Y)|\geq 17}} \ \sum_{\gamma\subset Y \ \textit{is filling}} \ell_{\gamma}(X)e^{-\frac{\ell_{\gamma}(X)}{2} } \textbf{1}_{[0,T]}(\ell_{\gamma}(X))\\
&& \leq \sum_{\substack{Y\in \rm{Sub_T(X)};\\ |\chi(Y)|\geq 17}} \#_f\left(Y, T\right)T e^{-\frac{\ell(\partial Y)}{4} } \textbf{1}_{[0,2T]}(\ell(\partial Y)) \nonumber \\
&& \prec T^2e^T\left(\sum_{\substack{Y\in \rm{Sub_T(X)};\\ |\chi(Y)|\geq 17}} e^{-\frac{\ell(\partial Y)}{4}}\textbf{1}_{[0,2T]}(\ell(\partial Y))\right).\nonumber
\eear

Now we consider the second term in the \rm{RHS} of \eqref{V-e-0-1}. Let $Y\in \rm{Sub_T(X)}$ with $m=|\chi(Y)|\in[1,16]$. For any $\eps_1>0$, let $c(\eps_1)=\max\limits_{1\leq m \leq 16} c(m,\eps_1)>0$ where $c(m,\eps_1)>0$ is the constant in Theorem \ref{count}. Then we have that for large enough $g>1$,
\bear\label{f-u-s-1}
&&\sum_{\gamma\subset Y \ \textit{is filling}} \ell_{\gamma}(X)e^{-\frac{\ell_{\gamma}(X)}{2} } \textbf{1}_{[0,T]}(\ell_{\gamma}(X))\\
&&\leq \sum_{n=0}^{[T]}(n+1)e^{-\frac{n}{2}}\cdot \#\{\gamma\subset Y \ \textit{is filling}; \ n< \ell_\gamma(Y)\leq n+1\}\cdot \textbf{1}_{[0,2T]}(\ell(\partial Y)).\nonumber
\eear
By the new counting result Theorem \ref{count} we know that for each $n\geq 0$,
\be\label{n-c-f-1}
\#\{\gamma\subset Y \ \textit{is filling}; \ n< \ell_\gamma(Y)\leq n+1\}\leq c(\eps_1)e^{n+1-\frac{1-\eps_1}{2}\ell(\partial Y)}.
\ene
By \eqref{f-u-s-1} and \eqref{n-c-f-1} we have
\beqar
&& \sum_{\gamma\subset Y \ \textit{is filling}} \ell_{\gamma}(X)e^{-\frac{\ell_{\gamma}(X)}{2} } \textbf{1}_{[0,T]}(\ell_{\gamma}(X))\\
&&\leq c(\eps_1)e^{1-\frac{1-\eps_1}{2}\ell(\partial Y)}\sum_{n=0}^{[T]}(n+1)e^{\frac{n}{2}}\textbf{1}_{[0,2T]}(\ell(\partial Y))\\
&& \prec c(\eps_1) T e^{\frac{T}{2}-\frac{1-\eps_1}{2}\ell(\partial Y)} \textbf{1}_{[0,2T]}(\ell(\partial Y))
\eeqar
which implies that
\bear \label{V-e-0-3}
&&\sum_{\substack{Y\in \rm{Sub_T(X)};\\ 1\leq|\chi(Y)|\leq 16}}\ \sum_{\gamma\subset Y \ \textit{is filling}} \ell_{\gamma}(X)e^{-\frac{\ell_{\gamma}(X)}{2} } \textbf{1}_{[0,T]}(\ell_{\gamma}(X))\\
&&\prec c(\eps_1)T \left(\sum_{\substack{Y\in \rm{Sub_T(X)};\\ 1\leq |\chi(Y)|\leq 16}} e^{\frac{T}{2}-\frac{1-\eps_1}{2}\ell(\partial Y)} \textbf{1}_{[0,2T]}(\ell(\partial Y))\right).\nonumber
\eear

Then the conclusion follows by \eqref{V-e-0-1}, \eqref{V-e-0-2} and \eqref{V-e-0-3}. 
\ep

\begin{rem*}
The critical value $16$ for Euler characteristic in the Proposition above is not the unique choice. Actually the remaining argument also works if replacing $16$ by any positive integer larger than $16$.
\end{rem*}

\subsection{An upper bound for $\int_{\sM_g}\rm{V}dx$} In this subsection we prove our desired upper bound for $\int_{\sM_g}\rm{V}dx$. Recall that the map $Y\mapsto \partial Y$ is injective for $Y\in \rm{Sub_T(X)}$ and large enough $g>1$. We take an integral of \eqref{eq-v-upp} in Proposition \ref{V-upp} over $\sM_g$, and then apply the Integration Formula of Mirzakhani (see Theorem \ref{Mirz int formula}) to get the desired upper bounds. 

First we restrict the argument to a single orbit $\Mod_g \cdot Y \subset \rm{Sub_T(X)}$.

\begin{assum*} \label[$\star$]{star} let $Y_0\in \rm{Sub_T(X)}$ satisfying
	\begin{itemize}
           \item $Y_0$ is homeomorphic to $S_{g_0,k}$ for some $g_0\geq 0$ and $k>0$ with $m=|\chi(Y_0)|= 2g_0-2+k\geq 1$;
		\item the boundary $\partial Y_0$ is a simple closed multi-geodesics in $X$ consisting of $k$ simple closed geodesics which has $n_0$ pairs of simple closed geodesics for some $n_0\geq 0$ such that each pair corresponds to a single simple closed geodesic in $X$;
           \item the interior of its complement $X\setminus S_{g_0,k}$ consists of $q$ components $S_{g_1,n_1},\cdots,$ $S_{g_q,n_q}$ for some $q\geq 1$ where $\sum_{i=1}^q n_i=k-2n_0$.
	\end{itemize}
    (\eg see Figure \ref{Y0}). 
\end{assum*}

\begin{figure}[ht]
	\centering
	\includegraphics[width=7cm]{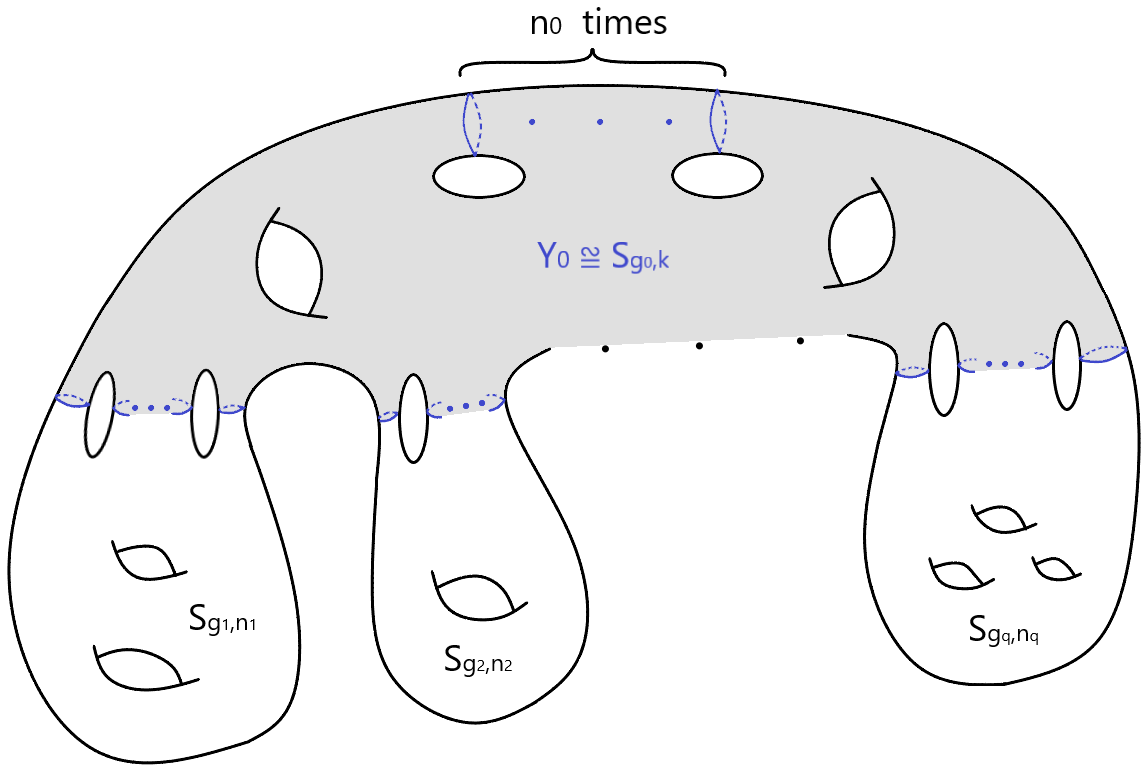}
	\caption{an example for $Y_0$}
	\label{Y0}
\end{figure}

Let $h:\R^{\geq 0}\to \R^{\geq 0}$ be a continuous function. Next we compute the integral $\int_{\sM_g}\sum_{Y\in \Mod_g \cdot Y_0}h(\ell(\partial Y))\textbf{1}_{[0,2T]}(\ell(\partial Y)) dX$.

Recall that the map $Y\mapsto \partial Y$ is injective for $Y\in \rm{Sub_T(X)}$ and large enough $g>1$, and $\ell(\partial Y)\leq 2T$. It follows by the Integration Formula of Mirzakhani (see Theorem \ref{Mirz int formula}) that
\bear \label{int-h-0}
&&\int_{\sM_g}\sum_{Y\in \Mod_g \cdot Y_0}h(\ell(\partial Y))\textbf{1}_{[0,2T]}(\ell(\partial Y)) dX \nonumber\\
&&= \int_{\sM_g}\sum_{ \partial Y\in \Mod_g \cdot \partial Y_0}h(\ell(\partial Y))\textbf{1}_{[0,2T]}(\ell(\partial Y)) dX \nonumber\\
&&\leq \frac{1}{|\Sym|} \int_{\R_{\geq0}^{k-n_0}}h\left(\sum_{j=1}^{n_0}2x_j'+\sum_{i=1}^q(x_{i,1} + \cdots +x_{i,n_i})\right) \nonumber \\
&&\mathbf 1_{[0,2T]}\left(\sum_{j=1}^{n_0}2x_j'+\sum_{i=1}^q(x_{i,1} + \cdots +x_{i,n_i})\right)  \nonumber \\
& & V_{g_0,k}(x_1',x_1',\cdots, x_{n_0}',x_{n_0}',x_{1,1},\cdots,x_{1,n_1}, x_{2,1}, \cdots,x_{q,n_q})\nonumber \\
& & V_{g_1,n_1}(x_{1,1},\cdots,x_{1,n_1})\cdots V_{g_q,n_q}(x_{q,1},\cdots,x_{q,n_q})\nonumber \\
& & x_1'\cdots x_{n_0}'\cdot x_{1,1}\cdots x_{q,n_q} dx_1'\cdots dx_{n_0}' dx_{1,1}\cdots dx_{q,n_q}.  \nonumber
\eear

\noindent Recall that the symmetry is given by $\Sym(\partial Y_0) = \mathop{\rm Stab}(\partial Y_0) / \cap_{\gamma\in\partial Y_0} \mathop{\rm Stab}(\gamma)$. For each $1\leq i \leq q$, the set of all permutations of $n_i$ boundary geodesics of $S_{g_i,n_i}$ gives $n_i!$ elements in $\Sym(\partial Y_0)$. Meanwhile, the set of all permutations of $n_0$ pairs of geodesics defined in Assumption ($\star$) gives $n_0!$ elements. So we have

\begin{equation*}
|\Sym| \geq n_0!n_1!\cdots n_q!.
\end{equation*}
By Lemma \ref{Mirz vol lemma 1}, we have
\begin{equation*}
V_{g,n}(x_1,\cdots,x_n) \leq e^{\frac{x_1+\cdots +x_n}{2}} V_{g,n},
\end{equation*}
Set the condition
\begin{equation*}
Cond:=\{0\leq x_{j}', \ 0\leq x_{i,j}, \  \sum_{j=1}^{n_0}2x_j'+\sum_{i=1}^q\sum_{j=1}^{n_i} x_{i,j} \leq 2T  \}.
\end{equation*}
Put all these equations together we get
\bear \label{int-h}
&& \quad  \int_{\sM_g}\sum_{Y\in \Mod_g \cdot Y_0}h(\ell(\partial Y))\textbf{1}_{[0,2T]}(\ell(\partial Y)) dX  \\
&& \leq \frac{1}{n_0!n_1!\cdots n_q!} V_{g_0,k} V_{g_1,n_1}\cdots V_{g_q,n_q} \times \nonumber\\
&&   \int_{Cond}  h\left(\sum_{j=1}^{n_0}2x_j'+\sum_{i=1}^q(x_{i,1} + \cdots +x_{i,n_i})\right)\cdot e^{\sum_{j=1}^{n_0}x_j'+\sum_{i=1}^q(x_{i,1} + \cdots +x_{i,n_i})} \nonumber\\
& & x_1'\cdots x_{n_0}'\cdot x_{1,1}\cdots x_{q,n_q} dx_1'\cdots dx_{n_0}' dx_{1,1}\cdots dx_{q,n_q}.  \nonumber
\eear

Now we apply \eqref{int-h} to bound the integral of the first term in the \rm{RHS} of \eqref{eq-v-upp} in Proposition \ref{V-upp}.

\begin{proposition}\label{upp-orbit}
Let $Y_0\in \rm{Sub_T(X)}$ satisfying Assumption ($\star$). Then we have that as $g\to \infty$,
\beqar
\int_{\sM_g}\sum_{Y\in \Mod_g \cdot Y_0}e^{-\frac{\ell(\partial Y)}{4}}\textbf{1}_{[0,2T]}(\ell(\partial Y))dX \prec  e^{\frac{7}{2}T} V_{g_0,k} \frac{ V_{g_1,n_1}\cdots V_{g_q,n_q}}{n_0!n_1!\cdots n_q!}.
\eeqar
\end{proposition}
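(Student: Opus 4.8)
The plan is to insert the continuous function $h(u)=e^{-u/4}$ into the bound \eqref{int-h} and then estimate the resulting Euclidean integral over $Cond$ by separating its exponential part from its polynomial part. With the notation of \eqref{int-h}, the argument of $h$ is $u=\sum_{j=1}^{n_0}2x_j'+\sum_{i=1}^q\sum_{l=1}^{n_i}x_{i,l}$, and the factor produced by applying Lemma \ref{Mirz vol lemma 1} to $V_{g_0,k}(\cdots)V_{g_1,n_1}(\cdots)\cdots V_{g_q,n_q}(\cdots)$ is $e^{\sum_j x_j'+\sum_{i,l}x_{i,l}}$; hence with $h(u)=e^{-u/4}$ the product of these two exponentials equals $e^{\frac12\sum_j x_j'+\frac34\sum_{i,l}x_{i,l}}$.

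First I would bound this exponential factor on $Cond$. Each coefficient occurring in that exponent ($\tfrac12$ in front of each $x_j'$ and $\tfrac34$ in front of each $x_{i,l}$) is at most $\tfrac34$ times the corresponding coefficient ($2$, resp.\ $1$) appearing in $u$, so on $Cond$ one has $\tfrac12\sum_j x_j'+\tfrac34\sum_{i,l}x_{i,l}\le\tfrac34 u\le\tfrac34\cdot 2T=\tfrac32 T$; thus the exponential factor is $\le e^{3T/2}$ throughout $Cond$. It remains to bound the polynomial integral $I:=\int_{Cond}x_1'\cdots x_{n_0}'\,x_{1,1}\cdots x_{q,n_q}\,dx_1'\cdots dx_{q,n_q}$. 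Putting $N=k-n_0$ for the number of variables and performing the change of variables $x_j'\mapsto 2x_j'$ for $1\le j\le n_0$ (which introduces an overall constant $4^{-n_0}\le 1$ and turns $Cond$ into the simplex $\{z\in\R^N_{\ge0}:\ z_1+\cdots+z_N\le 2T\}$), one gets $I\le\int_{\{z\in\R^N_{\ge0}:\ z_1+\cdots+z_N\le 2T\}}z_1\cdots z_N\,dz=\dfrac{(2T)^{2N}}{(2N)!}$. The crucial point, which makes the estimate uniform in the topological type of $Y_0$ (and in particular in $m=|\chi(Y_0)|$, which is unbounded in the application to Proposition \ref{V-upp}), is the elementary inequality $\dfrac{(2T)^{2N}}{(2N)!}\le\sum_{j\ge0}\dfrac{(2T)^j}{j!}=e^{2T}$, valid for every $N\ge1$.

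Combining the two estimates gives $\int_{Cond}(\cdots)\,dx\le e^{3T/2}\cdot e^{2T}=e^{7T/2}$, and substituting this into \eqref{int-h} yields
\[
\int_{\sM_g}\sum_{Y\in\Mod_g\cdot Y_0}e^{-\frac{\ell(\partial Y)}{4}}\textbf{1}_{[0,2T]}(\ell(\partial Y))\,dX\ \le\ \frac{1}{n_0!\,n_1!\cdots n_q!}\,V_{g_0,k}V_{g_1,n_1}\cdots V_{g_q,n_q}\,e^{\frac{7}{2}T},
\]
which is precisely the asserted bound (in fact with implied constant $1$). The computation is essentially routine; the only point demanding attention is to carry out the exponential bookkeeping uniformly in the topology of $Y_0$, and this is exactly what the bound $\tfrac{(2T)^{2N}}{(2N)!}\le e^{2T}$ supplies, so I do not expect a genuine obstacle here.
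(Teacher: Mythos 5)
Your proof is correct and follows essentially the same route as the paper's: plug $h(u)=e^{-u/4}$ into \eqref{int-h}, observe that the resulting exponent $\tfrac12\sum_j x_j'+\tfrac34\sum_{i,l}x_{i,l}$ is at most $\tfrac34\ell(\partial Y)\le\tfrac32 T$ on $Cond$, and bound the remaining monomial integral over the simplex by $(2T)^{2N}/(2N)!\le e^{2T}$. The paper obtains that last simplex bound by the containment $Cond\subset\{\sum_j x_j'+\sum_{i,l}x_{i,l}\le 2T\}$ rather than your change of variables $x_j'\mapsto 2x_j'$, but both yield the same estimate, and your remark that the inequality $(2T)^{2N}/(2N)!\le e^{2T}$ is what keeps the bound uniform in $N=k-n_0$ (hence in the topology of $Y_0$) is exactly the point that matters for the later summation over topological types.
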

\bp
We apply \eqref{int-h} for the case that $h(x)=e^{-\frac{x}{4}}$ to get
\bear \label{single-e-1}
&&\int_{\sM_g}\sum_{Y\in \Mod_g \cdot Y_0}e^{-\frac{\ell(\partial Y)}{4}}\textbf{1}_{[0,2T]}(\ell(\partial Y))dX\\
&&\prec \frac{1}{n_0!n_1!\cdots n_q!} V_{g_0,k} V_{g_1,n_1}\cdots V_{g_q,n_q} \nonumber \\
&&  \times \int_{Cond} e^{\frac{1}{2}\cdot(\sum_{j=1}^{n_0}x_j')+\frac{3}{4}\cdot(\sum_{i=1}^q(x_{i,1} + \cdots +x_{i,n_i}))} \nonumber\\
&& x_1'\cdots x_{n_0}'\cdot x_{1,1}\cdots x_{q,n_q} dx_1'\cdots dx_{n_0}' dx_{1,1}\cdots dx_{q,n_q}.  \nonumber
\eear
Recall that
\be
\int_{x_i\geq 0,\ \sum_{j=1}^n x_i \leq s}x_1\cdots x_n dx_1\cdots dx_n=\frac{(s)^{2n}}{(2n)!}<e^{s}. \nonumber
\ene
Since $Cond \subset \{x_i'\geq 0, x_{i,j}\geq 0,  \  \sum_{j=1}^{n_0}x_j'+\sum_{i=1}^q\sum_{j=1}^{n_i} x_{i,j} \leq 2T \}$, 
\bear\label{s-e-1-0}
\int_{Cond} x_1'\cdots x_{n_0}'\cdot x_{1,1}\cdots x_{q,n_q} dx_1'\cdots dx_{n_0}' dx_{1,1}\cdots dx_{q,n_q}< e^{2T}.  
\eear

\noindent Since $\frac{1}{2}\cdot(\sum_{j=1}^{n_0}x_j')+\frac{3}{4}\cdot(\sum_{i=1}^q(x_{i,1} + \cdots +x_{i,n_i}))\leq \frac{3}{4} \ell(\partial Y)\leq \frac{3}{2}T$ on $Cond $, we combine \eqref{single-e-1} and \eqref{s-e-1-0} to get
\bear
&&\int_{\sM_g}\sum_{Y\in \Mod_g \cdot Y_0}e^{-\frac{\ell(\partial Y)}{4}}\textbf{1}_{[0,2T]}(\ell(\partial Y))dX \nonumber\\
&& \prec \frac{1}{n_0!n_1!\cdots n_q!} V_{g_0,k} V_{g_1,n_1}\cdots V_{g_q,n_q}  \times (e^{\frac{3}{2}T} e^{2T})\nonumber \\
&&=   e^{\frac{7}{2}T} \frac{1}{n_0!n_1!\cdots n_q!} V_{g_0,k} V_{g_1,n_1}\cdots V_{g_q,n_q} \nonumber
\eear
as desired.
\ep

Next we bound the integral of $e^{-\frac{\ell(\partial Y)}{4}}\textbf{1}_{[0,2T]}(\ell(\partial Y))$ over $\sM_g$ when the topological type of $Y$ is fixed. Assume that $Y\cong S_{g_0,k}$ meaning that $Y$ is homeomorphic to $S_{g_0,k}$ (\eg see Figure \ref{S03}).
\begin{figure}[ht]
	\centering
	\includegraphics[width=12cm]{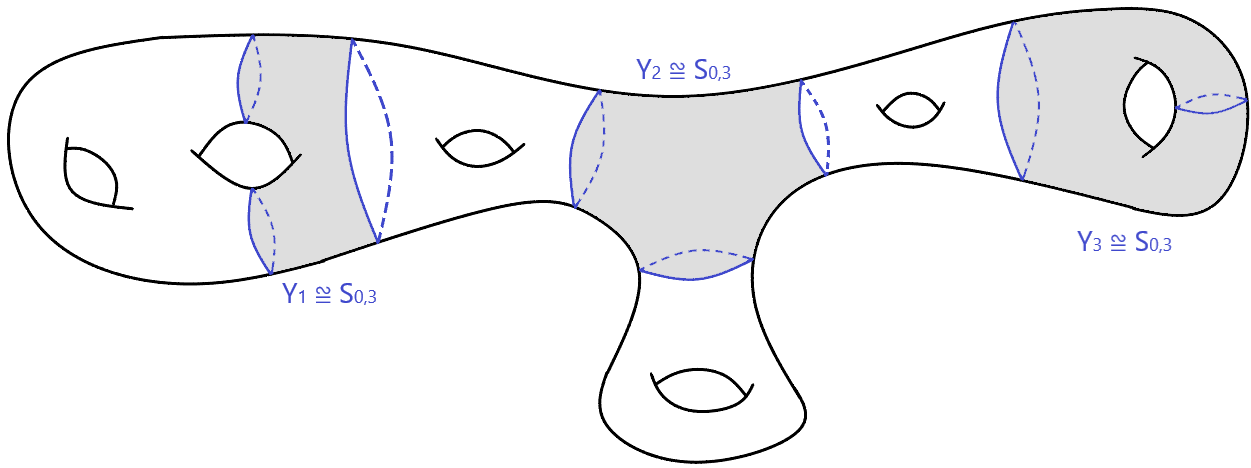}
	\caption{$Y_1\cong Y_2 \cong Y_3 \cong S_{0,3}$, but they are in different orbits}
	\label{S03}
\end{figure}

\noindent Then we let $g_0$ and $k$ be fixed in Assumption ($\star$), and let $q\geq 1$, $\{g_i\}_{1\leq i \leq q}$ and $\{n_i\}_{1\leq i \leq q}$ vary. We list several facts for $Y_0\in \rm{Sub_T(X)}$ satisfying Assumption ($\star$) which will be applied later.
	\begin{itemize}
       \item $Y_0$ is homeomorphic to $S_{g_0,k}$ for some fixed $g_0\geq 0$ and $k>0$ with $m=|\chi(Y_0)|= 2g_0-2+k \geq 1$;
       \item by Gauss-Bonnet $2\pi m=2\pi (2g_0-2+k)\leq 4T$;
		\item $\sum_{i=1}^q n_i=k-2n_0$. In particular $n_0$ is determined by $\{n_i\}_{1\leq i \leq q}$;
       \item $\sum_{i=1}^q(2g_i-2+n_i)=2g-2-m=2g-2g_0-k$. In particular, for large enough $g>1$, $2g-2g_0-k\geq 2g-2-\frac{4T}{2\pi}>g$ because $T=a\ln(g)$.
	\end{itemize}

Recall that as in Section \ref{section wp volume} for all $r\geq 1$,
$$
W_{r}=
\begin{cases}
V_{\frac{r}{2}+1}&\text{if $r$ is even},\\[5pt]
V_{\frac{r+1}{2},1}&\text{if $r$ is odd}.
\end{cases}
$$

\begin{proposition}\label{upp-orbit-gk}
Let $g_0\geq 0$ and $k\geq 1$ be fixed with $m=2g_0-2+k\geq 1$. Then we have that as $g\to \infty$,
\beqar 
\int_{\sM_g}\sum\limits_{\substack{Y\in \rm{Sub_T(X)}; \\ Y\cong S_{g_0,k}}} e^{-\frac{\ell(\partial Y)}{4}}\textbf{1}_{[0,2T]}(\ell(\partial Y)) dX \prec  e^{\frac{7}{2}T}W_{2g_0+k-2} W_{2g-2g_0-k}.  \nonumber
\eeqar
\end{proposition}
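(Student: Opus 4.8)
The plan is to sum the single-orbit bound of Proposition \ref{upp-orbit} over all mapping class group orbits $\Mod_g\cdot Y_0$ with $Y_0\cong S_{g_0,k}$, and then control the resulting series of Weil-Petersson volumes by Lemmas \ref{Wr-prop} and \ref{sum vol lemma}. First I would record how such orbits are parametrized: each is determined by the combinatorial data in Assumption ($\star$) --- the number $q\geq 1$ and topological types $(g_i,n_i)_{i=1}^q$ of the complementary pieces, the number $n_0$ of self-glued pairs of boundary components of $Y_0$, and a gluing pattern organizing the $k$ boundary circles of $Y_0$ --- subject to the constraints $n_i\geq 1$, $2g_i-2+n_i\geq 1$, $\sum_{i=1}^q n_i=k-2n_0$, and, by additivity of Euler characteristic under gluing along circles, $\sum_{i=1}^q(2g_i-2+n_i)=2g-2-m=2g-2g_0-k$. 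Since $g_0,k$ (hence $m$) are fixed, one gets $q\le k$ and $n_0\le k/2$, and the number of admissible tuples $(q,n_0,(n_i)_i)$ together with gluing patterns is bounded by a constant $c_1(m)$ depending only on $m$.

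Next, for each such orbit Proposition \ref{upp-orbit} gives
$$\int_{\sM_g}\sum_{Y\in\Mod_g\cdot Y_0}e^{-\frac{\ell(\partial Y)}{4}}\textbf{1}_{[0,2T]}(\ell(\partial Y))\,dX\ \prec\ e^{\frac{7}{2}T}\,V_{g_0,k}\,\frac{V_{g_1,n_1}\cdots V_{g_q,n_q}}{n_0!n_1!\cdots n_q!}.$$
Here I would bound $V_{g_0,k}\leq c\,W_{2g_0+k-2}$ by Lemma \ref{Wr-prop}$(1)$ and simply drop the factor $\tfrac{1}{n_0!n_1!\cdots n_q!}\leq1$. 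It remains to sum $V_{g_1,n_1}\cdots V_{g_q,n_q}$ over the complementary genera: for fixed $q$ and $(n_i)_i$ the tuples $(g_i)_i$ range exactly over those with $2g_i-2+n_i\geq1$ and $\sum_i(2g_i-2+n_i)=r:=2g-2-m$, so Lemma \ref{sum vol lemma} yields $\sum_{(g_i)}V_{g_1,n_1}\cdots V_{g_q,n_q}\leq c(D/r)^{q-1}W_r$. Since $q\leq k$ is fixed while $r\to\infty$, the factor $(D/r)^{q-1}$ is $\prec 1$ for large $g$, hence this sum is $\prec W_{2g-2-m}=W_{2g-2g_0-k}$.

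Finally, summing over the at most $c_1(m)$ admissible combinatorial configurations multiplies everything by a constant depending only on $m$, and combining the three estimates gives
$$\int_{\sM_g}\sum_{\substack{Y\in\rm{Sub_T(X)};\\ Y\cong S_{g_0,k}}}e^{-\frac{\ell(\partial Y)}{4}}\textbf{1}_{[0,2T]}(\ell(\partial Y))\,dX\ \prec\ e^{\frac{7}{2}T}\,W_{2g_0+k-2}\,W_{2g-2g_0-k},$$
as claimed. I expect the main obstacle to be the bookkeeping in the first step: correctly enumerating the orbits through the data of Assumption ($\star$), verifying the Euler characteristic identity $\sum_i(2g_i-2+n_i)=2g-2-m$ and the hypothesis $2g_i-2+n_i\geq1$ of Lemma \ref{sum vol lemma} (i.e.\ that no complementary piece is a disc or annulus --- the annuli being precisely the $n_0$ self-glued pairs, which are excluded from the count), and confirming that the number of admissible configurations is bounded independently of $g$. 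The volume estimates themselves are then an immediate application of the cited lemmas.
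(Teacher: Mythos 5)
Your proof is correct and follows the paper's approach: sum Proposition~\ref{upp-orbit} over orbits, control the inner genus sum by Lemma~\ref{sum vol lemma}, and bound $V_{g_0,k}\prec W_{2g_0+k-2}$ by Lemma~\ref{Wr-prop}(1). The paper then closes the outer sum over $(q,n_0,(n_i))$ via the multinomial identity $\sum_{n_1+\cdots+n_q=k-2n_0}\frac{(k-2n_0)!}{n_1!\cdots n_q!}=q^{k-2n_0}$ and a geometric series in $q$, whereas you simply observe that since $(g_0,k)$ is fixed the outer sum has a bounded number of terms and $(D/r)^{q-1}\prec 1$; this is equally valid and a little cleaner for this fixed-$(g_0,k)$ statement. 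One small imprecision worth flagging: the $n_0$ self-glued pairs are not annular complementary pieces --- a hyperbolic annulus with geodesic boundary would have zero area, so it cannot occur --- rather they are pairs of boundary circles of $Y_0$ that coincide with a single geodesic of $X$; but the substantive point you need, that every complementary piece $S_{g_i,n_i}$ satisfies $2g_i-2+n_i\geq 1$ so Lemma~\ref{sum vol lemma} applies, is correct.
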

\bp
It follows by Proposition \ref{upp-orbit} that
\bear\label{total-i-1-0}
&&\int_{\sM_g}\sum\limits_{\substack{Y\in \rm{Sub_T(X)}; \\ Y\cong S_{g_0,k}}} e^{-\frac{\ell(\partial Y)}{4}}\textbf{1}_{[0,2T]}(\ell(\partial Y)) dX  \\
&&\prec \sum_q\sum_{n_1,\cdots,n_q}\sum_{g_1,\cdots,g_q}  e^{\frac{7}{2}T} \frac{1}{n_0!n_1!\cdots n_q!} V_{g_0,k} V_{g_1,n_1}\cdots V_{g_q,n_q} \nonumber
\eear
where the summation takes over all possible $q\geq 1$ and $(g_1,n_1),\cdots,(g_q,n_q)$ such that $\sum_{i=1}^q(2g_i-2+n_i)=2g-2-m=2g-2g_0-k$.

For fixed $q\geq 1$ and $\{n_i\}_{1\leq i \leq q} $, it follows by Lemma \ref{sum vol lemma} that
\bear\label{vol-pro-1} 
\sum_{g_1,\cdots,g_q} V_{g_1,n_1}\cdots V_{g_q,n_q}&\leq& c \left(\frac{D}{2g-2g_0-k}\right)^{q-1} W_{2g-2g_0-k}\\
&<&c \left(\frac{D}{g}\right)^{q-1} W_{2g-2g_0-k} \nonumber
\eear
where we apply $2g-2g_0-k>g$ for large enough $g$ in the last inequality. Then by \eqref{total-i-1-0} and \eqref{vol-pro-1} we have
\bear \label{upp-gk-1-0}
&&\int_{\sM_g}\sum\limits_{\substack{Y\in \rm{Sub_T(X)}; \\ Y\cong S_{g_0,k}}} e^{-\frac{\ell(\partial Y)}{4}}\textbf{1}_{[0,2T]}(\ell(\partial Y)) dX  \\
&& \prec e^{\frac{7}{2}T}V_{g_0,k} W_{2g-2g_0-k} \times \left(\sum_q\sum_{n_1,\cdots,n_q} \frac{1}{n_0!n_1!\cdots n_q!}  \left(\frac{D}{g}\right)^{q-1} \right). \nonumber
\eear

Recall that for fixed $k-2n_0 \geq 1$, we always have
\begin{equation*}
\sum_{n_1+..+n_q=k-2n_0,\ n_i\geq0} \frac{(k-2n_0)!}{n_1!\cdots n_q!}  ={\underbrace{(1+1+\cdots+1)}_{q\  \text{times}}}^{k-2n_0} = q^{k-2n_0}.
\end{equation*}
Since $0\leq n_0 \leq [\frac{k-1}{2}]$ and $\frac{q^{k-2n_0}}{(k-2n_0)!}<e^{q}$, we have that for large enough $g>1$,
\bear \label{upp-cons}
&& \sum_q\sum_{n_1,\cdots,n_q} \frac{1}{n_0!n_1!\cdots n_q!}  \left(\frac{D}{g}\right)^{q-1}\\
&=& \sum_q\sum_{0\leq n_0 \leq [\frac{k-1}{2}]} \frac{q^{k-2n_0}}{n_0!(k-2n_0)!}  \left(\frac{D}{g}\right)^{q-1}\nonumber \\
& <&\left(\sum_q e^q \cdot \left(\frac{D}{g}\right)^{q-1}\right) \left( \sum_{0\leq n_0 \leq [\frac{k-1}{2}]} \frac{1}{n_0!}\right) \nonumber \\
&<& \sum_q e \cdot e \cdot  \left(\frac{eD}{g}\right)^{q-1}\nonumber\\
&\prec & 1.\nonumber
\eear
Recall that Part $(1)$ of Lemma \ref{Wr-prop} states that $V_{g,n}\leq c W_{2g-2+n}$ for a universal constant $c>0$. Then we plug \eqref{upp-cons} into \eqref{upp-gk-1-0} to get the conclusion.
\ep

For any $Y\in \rm{Sub_T(X)}$ it is known that $|\chi(Y)|\leq \frac{4T}{2\pi}$. Now we bound the integral of $\sum \limits_{Y\in \rm{Sub_T(X)}} e^{-\frac{\ell(\partial Y)}{4}}\textbf{1}_{[0,2T]}(\ell(\partial Y))$
over $\sM_g$ when $m\leq|\chi(Y)|\leq [\frac{4T}{2\pi}]$ where $m\geq 1$ is a fixed integer. That is, we allow $g_0$ and $k$ in Assumption ($\star$) vary such that $m\leq 2g_0+k-2\leq [\frac{4T}{2\pi}]$.
\begin{proposition}\label{upp-orbit-m}
Let $m\geq 1$ be any fixed integer. Then we have that there exists a constant $c(m)>0$ only depending on $m$ such that as $g\to \infty$,
\beqar 
\int_{\sM_g}\sum\limits_{\substack{Y\in \rm{Sub_T(X)}; \\ m\leq|\chi(Y)|\leq [\frac{4T}{2\pi}]}} e^{-\frac{\ell(\partial Y)}{4}}\textbf{1}_{[0,2T]}(\ell(\partial Y))  dX  \prec  Te^{\frac{7}{2}T} c(m) \frac{V_g}{g^m}.  \nonumber
\eeqar
\end{proposition}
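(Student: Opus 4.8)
The plan is to reduce to Proposition~\ref{upp-orbit-gk} by organizing the sum over $Y$ according to its Euler characteristic, summing over the finitely many topological types at each level, and then collapsing the resulting double sum with Part~$(2)$ of Lemma~\ref{Wr-prop}.

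First I would fix an integer $m'$ and consider the contribution of those $Y\in\rm{Sub_T(X)}$ with $|\chi(Y)|=m'$. Each such $Y$ is homeomorphic to some $S_{g_0,k}$ with $2g_0-2+k=m'$, $k\geq1$, $g_0\geq0$, and the number of such pairs $(g_0,k)$ is $\lfloor\tfrac{m'+1}{2}\rfloor+1\prec m'$. For any one of them one has $2g_0+k-2=m'$ and $2g-2g_0-k=2g-2-m'$, so Proposition~\ref{upp-orbit-gk} gives
\[
\int_{\sM_g}\sum_{\substack{Y\in\rm{Sub_T(X)};\\ Y\cong S_{g_0,k}}}e^{-\frac{\ell(\partial Y)}{4}}\textbf{1}_{[0,2T]}(\ell(\partial Y))\,dX\prec e^{\frac{7}{2}T}\,W_{m'}\,W_{2g-2-m'},
\]
and summing over the $\prec m'$ topological types at level $m'$ yields
\[
\int_{\sM_g}\sum_{\substack{Y\in\rm{Sub_T(X)};\\ |\chi(Y)|=m'}}e^{-\frac{\ell(\partial Y)}{4}}\textbf{1}_{[0,2T]}(\ell(\partial Y))\,dX\prec m'\,e^{\frac{7}{2}T}\,W_{m'}\,W_{2g-2-m'}.
\]

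Next I would sum this over $m\leq m'\leq M:=[\tfrac{4T}{2\pi}]$, the upper cutoff being forced by the Gauss--Bonnet bound $2\pi|\chi(Y)|\leq\area(Y)\leq4T$ for $Y\in\rm{Sub_T(X)}$. Bounding the factor $m'$ by $M$, the left-hand side of the Proposition is
\[
\prec M\,e^{\frac{7}{2}T}\sum_{m'=m}^{M}W_{m'}W_{2g-2-m'}.
\]
Since $T=a\ln(g)$, for $g$ large we have $M\leq g-1=[\tfrac{2g-2}{2}]$, so the sum on the right is a truncation of the one in Part~$(2)$ of Lemma~\ref{Wr-prop} with $r=2g-2$ and $m_0=m$; because $W_{2g-2}=V_g$, that lemma gives $\sum_{m'=m}^{M}W_{m'}W_{2g-2-m'}\leq c(m)\,(2g-2)^{-m}V_g\prec c(m)\,g^{-m}V_g$. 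Finally $M=[\tfrac{4T}{2\pi}]\prec T$, and combining the estimates gives the claimed bound $\prec T\,e^{\frac{7}{2}T}\,c(m)\,V_g/g^m$.

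The argument is mostly bookkeeping once Proposition~\ref{upp-orbit-gk} and Lemma~\ref{Wr-prop}$(2)$ are available; the only points that need care are the elementary identities $2g_0+k-2=m'$, $2g-2g_0-k=2g-2-m'$ and $W_{2g-2}=V_g$, the count $\prec m'$ of homeomorphism types at each level, and the inequality $M\leq g-1$ for large $g$ (which uses $T=a\ln(g)$) ensuring that the truncated sum indeed lies within the range handled by Lemma~\ref{Wr-prop}$(2)$. Thus there is no serious obstacle in this aggregation step; the real content of Term~$\mathrm V$ sits in the counting result Theorem~\ref{count} and in Propositions~\ref{upp-orbit} and~\ref{upp-orbit-gk}.
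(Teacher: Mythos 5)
Your proof is correct and takes essentially the same route as the paper: both reduce to Proposition~\ref{upp-orbit-gk} and then collapse the remaining double sum with Part~$(2)$ of Lemma~\ref{Wr-prop}, with the extra factor of $T$ coming from the finite range $|\chi(Y)|\le[\tfrac{4T}{2\pi}]$. You organize the outer sum by the Euler characteristic level $m'$ (absorbing the $\prec m'$ homeomorphism types and bounding $m'\le M\prec T$), whereas the paper sums over the boundary count $k\le[\tfrac{4T}{2\pi}]+2$ first and then over $g_0$; these are just two orderings of the same finite double sum, so the content is identical.
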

\bp
Let $Y \cong S_{g_0,k}\in \rm{Sub_T(X)}$ with $|\chi(Y)|=2g_0+k-2 \geq m$. In particular we have
\[1\leq k \leq [\frac{4T}{2\pi}]+2.\]
Then it follows by Proposition \ref{upp-orbit-gk} that
\bear \label{31-e-1}
&& \int_{\sM_g}\sum\limits_{\substack{Y\in \rm{Sub_T(X)}; \\ m\leq|\chi(Y)|\leq [\frac{4T}{2\pi}]}} e^{-\frac{\ell(\partial Y)}{4}}\textbf{1}_{[0,2T]}(\ell(\partial Y))  dX \\
&& = \int_{\sM_g} \sum \limits_{ m\leq|\chi(Y)|\leq [\frac{4T}{2\pi}]}\sum\limits_{\substack{Y\in \rm{Sub_T(X)}; \\ Y\cong S_{g_0,k}}}  e^{-\frac{\ell(\partial Y)}{4}}\textbf{1}_{[0,2T]}(\ell(\partial Y))  dX \nonumber\\
&& \prec \sum \limits_{1\leq k \leq [\frac{4T}{2\pi}]+2} \ \sum \limits_{g_0:m\leq 2g_0-2+k\leq [\frac{4T}{2\pi}]} e^{\frac{7}{2}T}W_{2g_0+k-2} W_{2g-2g_0-k}. \nonumber
\eear
By Part $(2)$ of Lemma \ref{Wr-prop} we know that for fixed $k\geq 1$,
\bear\label{31-e-2}
\sum \limits_{g_0: m\leq 2g_0-2+k\leq [\frac{4T}{2\pi}]}W_{2g_0+k-2} W_{2g-2g_0-k}\leq c(m) \frac{1}{(2g-2)^m}W_{2g-2}
\eear
for some constant $c(m)>0$ only depending on $m$. By definition we know that $W_{2g-2}=V_g$. Thus, it follows by \eqref{31-e-1} and \eqref{31-e-2} that
\bear
&&\int_{\sM_g}\sum\limits_{\substack{Y\in \rm{Sub_T(X)}; \\ m\leq|\chi(Y)|\leq [\frac{4T}{2\pi}]}} e^{-\frac{\ell(\partial Y)}{4}}\textbf{1}_{[0,2T]}(\ell(\partial Y))  dX \nonumber \\
&&\prec  e^{\frac{7}{2}T} \left( \sum \limits_{1\leq k \leq [\frac{4T}{2\pi}]+2}  \frac{c(m)}{(2g-2)^m}V_{g}\right) \nonumber \\
&& \prec  T e^{\frac{7}{2}T} c(m) \frac{V_g}{g^m}. \nonumber
\eear

The proof is complete.
\ep

Our aim is to show that when $T=4\ln(g)$, the expected value of the \rm{RHS} in Proposition \ref{V-upp} behaves like $o(g^{1+\eps})$ for any $\eps>0$ as $g\to \infty$. Proposition \ref{upp-orbit-m} implies that if $m\geq 17$, we truly have this property. More precisely, if $T=4\ln(g)$ we have
\[\small{\frac{T^2e^T}{V_g}\int_{\sM_g}\sum\limits_{\substack{Y\in \rm{Sub_T(X)}; \\ m\leq|\chi(Y)|\leq [\frac{4T}{2\pi}]}} e^{-\frac{\ell(\partial Y)}{4}}\textbf{1}_{[0,2T]}(\ell(\partial Y))  dX\prec (\ln(g))^3 g}.\]  
Next we will apply similar ideas in the proof of Proposition \ref{upp-orbit-gk} to show the expected value of the second term of the \rm{RHS} of \eqref{eq-v-upp} in Proposition \ref{V-upp} also has the same property even if $1\leq m \leq 16$.


In the sequel, we always assume that $Y_0 \cong S_{g_0,k}\in \rm{Sub_T(X)}$ satisfies Assumption ($\star$) with an additional assumption that
\[1\leq 2g_0+k-2\leq 16.\]
Then $1\leq k\leq 18$ and $0\leq g_0\leq 8$. Actually there are 88 pairs of such integers $(g_0,k)$ satisfying the above inequality, even we will only apply the finiteness. Now we prove
\begin{proposition}\label{16-small-1}
Let $g_0\geq 0$ and $k\geq 1$ be fixed with $m=2g_0-2+k \in [1,16]$. For any $\eps_1>0$, then we have that as $g\to \infty$,
\[\int_{\sM_g}\sum\limits_{\substack{Y\in \rm{Sub_T(X)}; \\ Y\cong S_{g_0,k}}} e^{\frac{T}{2}-\frac{1-\eps_1}{2}\ell(\partial Y)} \textbf{1}_{[0,2T]}(\ell(\partial Y)) dX  \prec T^{66}e^{\frac{T}{2}+\eps_1T}\frac{V_g}{g^{m}}. \]
\end{proposition}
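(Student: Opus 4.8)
The plan is to run the argument parallel to the proof of Proposition~\ref{upp-orbit-gk} (i.e.\ applying Mirzakhani's Integration Formula orbit by orbit and then summing over topological types), with one essential modification: the volume factor attached to the bounded surface $Y_0\cong S_{g_0,k}$ will be estimated by a polynomial in $T$ rather than by an exponential. We may assume $\eps_1\le\frac12$. As in Assumption~$(\star)$, fix a representative $Y_0\cong S_{g_0,k}$ with $n_0$ pairs of identified boundary geodesics and complementary pieces $S_{g_1,n_1},\dots,S_{g_q,n_q}$, so $\sum_{i=1}^q n_i=k-2n_0$ and $\sum_{i=1}^q(2g_i-2+n_i)=2g-2-m$. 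Applying Mirzakhani's Integration Formula (Theorem~\ref{Mirz int formula}) to $\sum_{Y\in\Mod_g\cdot Y_0}e^{\frac T2-\frac{1-\eps_1}{2}\ell(\partial Y)}\textbf{1}_{[0,2T]}(\ell(\partial Y))$, exactly as in the derivation of \eqref{int-h} but \emph{before} the step where $V_{g_0,k}$ is bounded by $e^{(\sum x)/2}V_{g_0,k}$, produces an integral over the region $Cond$ of the product of $V_{g_0,k}(x_1',x_1',\dots,x_{n_0}',x_{n_0}',x_{1,1},\dots,x_{q,n_q})$, the factors $V_{g_i,n_i}(x_{i,1},\dots,x_{i,n_i})$, the Mirzakhani monomial $x_1'\cdots x_{n_0}'\cdot x_{1,1}\cdots x_{q,n_q}$, and the weight $e^{\frac T2}e^{-\frac{1-\eps_1}{2}(2\sum_j x_j'+\sum_{i,j}x_{i,j})}$.

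For the key estimate: since $g_0$ and $k$ are fixed and bounded, Theorem~\ref{Mirz vol lemma 0} shows that $V_{g_0,k}$ is a fixed polynomial of degree $2(3g_0-3+k)$ in its arguments, and all of its arguments are $\le 2T$ on $Cond$; hence $V_{g_0,k}(\cdots)\le c(m)(1+2T)^{N_0}$ there, where $c(m)$ and $N_0$ depend only on $m$. Bound each $V_{g_i,n_i}(x_{i,1},\dots,x_{i,n_i})\le\prod_{j=1}^{n_i}\tfrac{\sinh(x_{i,j}/2)}{x_{i,j}/2}V_{g_i,n_i}$ by Lemma~\ref{MP vol lemma} (the case $g_i=0$ is handled as well, $V_{0,n_i}$ being then a fixed polynomial which $\tfrac{\sinh(x/2)}{x/2}$ outgrows). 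After these substitutions the variables decouple: each $x_j'$ contributes an integrand $\prec(\text{polynomial in }T)\cdot x_j'e^{-(1-\eps_1)x_j'}$, integrable over $\R^{\ge0}$ with bounded integral; and each $x_{i,j}$ contributes $\tfrac{\sinh(x_{i,j}/2)}{x_{i,j}/2}\cdot x_{i,j}\cdot e^{-\frac{1-\eps_1}{2}x_{i,j}}=2\sinh(\tfrac{x_{i,j}}{2})e^{-\frac{1-\eps_1}{2}x_{i,j}}\le e^{\frac{\eps_1}{2}x_{i,j}}$. Using $\sum_{i,j}x_{i,j}\le 2T$ on $Cond$, integrating $e^{\frac{\eps_1}{2}\sum_{i,j}x_{i,j}}$ over the simplex costs at most $e^{\eps_1 T}(2T)^{N'}$ with $N'=k-2n_0$. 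Altogether the orbit–integral is
\[
\prec c(\eps_1,m)\,T^{N_0+N'}\,e^{\frac T2+\eps_1 T}\,V_{g_1,n_1}\cdots V_{g_q,n_q},
\]
and a direct check of the exponents gives $N_0+N'\le 66$.

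Finally, I would sum over orbits exactly as in Proposition~\ref{upp-orbit-gk}: the data $n_0$, $q$ and the composition $(n_1,\dots,n_q)$ of $k-2n_0$ range over a set whose cardinality is bounded in terms of $m$ (since $\sum_i n_i=k-2n_0\le k$), and for each such choice Proposition~\ref{1 over gm} (valid since $n_i\ge1$, $2g_i-2+n_i\ge1$ and $g$ is large) gives $\sum_{\{g_i\}}V_{g_1,n_1}\cdots V_{g_q,n_q}\le c(m)\tfrac{V_g}{g^m}$. Assembling these estimates gives the claimed bound.

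The heart of the matter is the exponential order in $T$. Carrying out the estimate naively through \eqref{int-h}, i.e.\ bounding $V_{g_0,k}(\cdots)\le e^{(\sum x)/2}V_{g_0,k}$, produces $e^{\frac{3T}{2}+\eps_1 T}$ in place of $e^{\frac T2+\eps_1 T}$, which after setting $T=4\ln g$ is a factor $\asymp g^{4}$ too large and makes the bound useless for small $m$. Two points must be handled with care: (i) the decay $e^{-\frac{1-\eps_1}{2}\ell(\partial Y)}$ furnished by the Key Counting Theorem~\ref{count} is precisely what cancels the $e^{x/2}$–growth of $\tfrac{\sinh(x/2)}{x/2}\cdot x$ in each of the (at most $18$) complement boundary variables, leaving only the harmless slack $e^{\frac{\eps_1}{2}x}$; and (ii) no exponential weight may be spent on $Y_0$ itself, which forces one to exploit that $V_{g_0,k}$, with $g_0$ and $k$ bounded, is a \emph{fixed polynomial} and so contributes only a power of $T$. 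Once this bookkeeping is arranged, everything else is routine.
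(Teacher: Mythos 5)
Your proof is correct and takes essentially the same route as the paper: the decisive move in both is to exploit that $g_0$ and $k$ are bounded so that Theorem~\ref{Mirz vol lemma 0} gives a \emph{polynomial} (rather than exponential) bound on $V_{g_0,k}(\cdots)$ over $Cond$, while the decay $e^{-\frac{1-\eps_1}{2}\ell(\partial Y)}$ exactly cancels the $e^{\frac12\sum x_{i,j}}$ growth of the complementary volumes, leaving only the harmless slack $e^{\eps_1 T}$; summing over orbits then reduces to Proposition~\ref{1 over gm}. The one deviation is cosmetic: the paper bounds $V_{g_i,n_i}(\cdots)\leq e^{\frac12\sum x_{i,j}}V_{g_i,n_i}$ via Lemma~\ref{Mirz vol lemma 1}, then integrates the full Mirzakhani monomial over $Cond$ to pick up $T^{2(k-n_0)}$, whereas you use the $\prod\frac{\sinh(x/2)}{x/2}$ bound of Lemma~\ref{MP vol lemma}, which absorbs the monomial factor $x_{i,j}$ into $2\sinh(x_{i,j}/2)$ and yields the slightly better power $T^{k-2n_0}$ from the simplex volume. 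Both land comfortably under $T^{66}$. Do note, though, that the paper's Lemma~\ref{MP vol lemma} as stated requires $g,n\geq 1$, so your appeal to it for complementary pieces with $g_i=0$ (which can certainly occur) is not directly covered by the cited result; the parenthetical ``$V_{0,n_i}$ being a fixed polynomial which $\frac{\sinh(x/2)}{x/2}$ outgrows'' does not by itself establish $V_{0,n_i}(x)\leq V_{0,n_i}\prod\frac{\sinh(x_j/2)}{x_j/2}$. The clean fix is simply to use Lemma~\ref{Mirz vol lemma 1} here, as the paper does, since that bound has no restriction on $g$; the rest of your bookkeeping goes through unchanged with the slightly larger but still admissible power $T^{2(k-n_0)}$.
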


\bp
Let $Y_0 \cong S_{g_0,k}\in \rm{Sub_T(X)}$ satisfying Assumption ($\star$). Recall that the map $Y\mapsto \partial Y$ is injective for $Y\in \rm{Sub_T(X)}$ and large enough $g>1$. Now we apply the Integration Formula of Mirzakhani (see Theorem \ref{Mirz int formula}) to get
\bear \label{int-e-h-1}
&&\int_{\sM_g}\sum_{Y\in \Mod_g \cdot Y_0}e^{\frac{T}{2}-\frac{1-\eps_1}{2}\ell(\partial Y)} \textbf{1}_{[0,2T]}(\ell(\partial Y))   dX \\
&&= \int_{\sM_g}\sum_{\partial Y\in \Mod_g \cdot \partial Y_0}e^{\frac{T}{2}-\frac{1-\eps_1}{2}\ell(\partial Y)} \textbf{1}_{[0,2T]}(\ell(\partial Y)) dX \nonumber\\
&&\leq \frac{1}{|\Sym|} \int_{\R_{\geq0}^{k-n_0}}e^{\frac{T}{2}-\frac{1-\eps_1}{2}\left(\sum_{j=1}^{n_0}2x_j'+\sum_{i=1}^q(x_{i,1} + \cdots +x_{i,n_i})\right)} \nonumber \\
&&\mathbf 1_{[0,2T]}\left(\sum_{j=1}^{n_0}2x_j'+\sum_{i=1}^q(x_{i,1} + \cdots +x_{i,n_i})\right)  \nonumber \\
& & V_{g_0,k}(x_1',x_1',\cdots, x_{n_0}',x_{n_0}',x_{1,1},\cdots,x_{1,n_1}, x_{2,1}, \cdots,x_{q,n_q})\nonumber \\
& & V_{g_1,n_1}(x_{1,1},\cdots,x_{1,n_1})\cdots V_{g_q,n_q}(x_{q,1},\cdots,x_{q,n_q})\nonumber \\
& & x_1'\cdots x_{n_0}'\cdot x_{1,1}\cdots x_{q,n_q} dx_1'\cdots dx_{n_0}' dx_{1,1}\cdots dx_{q,n_q}.  \nonumber
\eear
Now we apply Theorem \ref{Mirz vol lemma 0} of Mirzakhani to $V_{g_0,k}(\cdots \cdots)$ to get that the volume satisfies that $V_{g_0,k}(x_1',x_1',\cdots, x_{n_0}',x_{n_0}',x_{1,1},\cdots,x_{1,n_1}, x_{2,1}, \cdots,x_{q,n_q})$ is a polynomial of degree $6g_0-6+2k$ with coefficients depending on $g_0$ and $k$. Thus there exists a constant $c_1>0$ only depending on $g_0$ and $k$ such that
\be \label{V-upper-1-1}
V_{g_0,k}(x_1',x_1',\cdots, x_{n_0}',x_{n_0}',x_{1,1},\cdots,x_{1,n_1}, x_{2,1}, \cdots,x_{q,n_q}) \leq c_1(1+T^{6g_0-6+2k}).
\ene
In our case since $1\leq 2g_0 -2 +k\leq 16$ holds for only finite $(g_0,k)$'s (actually there are $88$ solutions), one may take $c_1>0$ to be universal. It is clear that the symmetry satisfies
\begin{equation}\label{V-U-1-1-1}
|\Sym| \geq n_0!n_1!\cdots n_q!.
\end{equation}
By Lemma \ref{Mirz vol lemma 1}, we have
\begin{equation}\label{V-U-1-1-2}
V_{g,n}(x_1,\cdots,x_n) \leq e^{\frac{x_1+\cdots +x_n}{2}} V_{g,n}.
\end{equation}
Then we plug \eqref{V-upper-1-1}, \eqref{V-U-1-1-1} and \eqref{V-U-1-1-2} into \eqref{int-e-h-1}, and apply $\ell(\partial Y)\leq 2T$ to get
\bear \label{int-e-h-2-0-0}
&&  \int_{\sM_g}\sum_{Y\in \Mod_g \cdot Y_0}e^{\frac{T}{2}-\frac{1-\eps_1}{2}\ell(\partial Y)} \textbf{1}_{[0,2T]}(\ell(\partial Y))   dX \\
&&\leq \frac{c_1\cdot(1+T^{6g_0-6+2k})}{n_0!n_1!\cdots n_q!} \int_{\R_{\geq0}^{k-n_0}}e^{\frac{T}{2}-\frac{1-\eps_1}{2}\left(\sum_{j=1}^{n_0}2x_j'+\sum_{i=1}^q(x_{i,1} + \cdots +x_{i,n_i})\right)} \nonumber \\
&&\mathbf 1_{[0,2T]}\left(\sum_{j=1}^{n_0}2x_j'+\sum_{i=1}^q(x_{i,1} + \cdots +x_{i,n_i})\right)  \nonumber  \\
&& e^{\frac{\sum_{i=1}^q(x_{i,1} + \cdots +x_{i,n_i})}{2}}V_{g_1,n_1}\cdots V_{g_q, n_q} \nonumber \\
& & x_1'\cdots x_{n_0}'\cdot x_{1,1}\cdots x_{q,n_q} dx_1'\cdots dx_{n_0}' dx_{1,1}\cdots dx_{q,n_q} \nonumber \\
&& \prec  \frac{T^{6g_0-6+2k}}{n_0!n_1!\cdots n_q!}e^{\frac{T}{2}+\eps_1T} V_{g_1,n_1}\cdots V_{g_q, n_q} \nonumber \\
&& \times \int_{Cond} x_1'\cdots x_{n_0}'\cdot x_{1,1}\cdots x_{q,n_q} dx_1'\cdots dx_{n_0}' dx_{1,1}\cdots dx_{q,n_q} \nonumber
\eear
where
\begin{equation*}
Cond:=\{0\leq x_{j}', \ 0\leq x_{i,j}, \  \sum_{j=1}^{n_0}2x_j'+\sum_{i=1}^q\sum_{j=1}^{n_i} x_{i,j} \leq 2T  \}.
\end{equation*}

\noindent Recall that
\[\int_{x_i\geq 0,\ \sum_{j=1}^n x_i \leq s}x_1\cdots x_n dx_1\cdots dx_n=\frac{(s)^{2n}}{(2n)!}\]
which, together with the fact that $n_0+\sum_{i=1}^qn_i=k-n_0$, imply that
\bear \label{m-inte}
 && \int_{Cond} x_1'\cdots x_{n_0}'\cdot x_{1,1}\cdots x_{q,n_q} dx_1'\cdots dx_{n_0}' dx_{1,1}\cdots dx_{q,n_q}\\
&&=\frac{(2T)^{2(k-n_0)}}{2^{2n_0}(2(k-n_0))!} \nonumber\\
&& \prec  T^{2(k-n_0)}. \nonumber
\eear

\noindent Since $6g_0-6+4k= 3m+k \leq 48+18=66$, we plug \eqref{m-inte} into \eqref{int-e-h-2-0-0} to get
\bear \label{int-e-h-2}
&& \int_{\sM_g}\sum_{Y\in \Mod_g \cdot Y_0}e^{\frac{T}{2}-\frac{1-\eps_1}{2}\ell(\partial Y)} \textbf{1}_{[0,2T]}(\ell(\partial Y))   dX \\
&& \prec  \frac{T^{66}e^{\frac{T}{2}+\eps_1T}}{n_0!n_1!\cdots n_q!} V_{g_1,n_1}\cdots V_{g_q, n_q}.\nonumber
\eear
Now let $q\geq 1$ and $\{(g_i,n_i)\}_{1\leq i \leq q}$ vary, it follows by \eqref{int-e-h-2} that 
\bear \label{int-e-h-3} 
&&\int_{\sM_g}\sum\limits_{\substack{Y\in \rm{Sub_T(X)}; \\ Y\cong S_{g_0,k}}} e^{\frac{T}{2}-\frac{1-\eps_1}{2}\ell(\partial Y)} \textbf{1}_{[0,2T]}(\ell(\partial Y)) dX  \\
&& \prec T^{66}e^{\frac{T}{2}+\eps_1T} \sum_q\sum_{n_1,\cdots,n_q}\sum_{g_1,\cdots,g_q}   \frac{1}{n_0!n_1!\cdots n_q!} V_{g_1,n_1}\cdots V_{g_q,n_q}.  \nonumber
\eear
where the summation takes over all possible $q\geq 1$ and $(g_1,n_1),\cdots,(g_q,n_q)$ such that $\sum_{i=1}^q(2g_i-2+n_i)=2g-2-m=2g-2g_0-k$. For fixed $\{n_1, \cdots, n_q\}$, it follows by Proposition \ref{1 over gm} that
\begin{eqnarray}\label{vi-m}
\sum_{g_1,\cdots,g_q} V_{g_1,n_1}\cdots V_{g_q,n_q}\prec  \frac{V_g}{g^{m}}.
\end{eqnarray} 
Since $1\leq q\leq k \leq 18$, $n_i\geq 1$ and $n_1+..+n_q=k-2n_0\leq 18$, we have all $q$ and $n_i$'s are bounded from above by $18$. So we have
\bear \label{upp-cons-m}
\sum_q\sum_{n_1,\cdots,n_q} \frac{1}{n_0!n_1!\cdots n_q!}  \prec  1.
\eear
Combine \eqref{int-e-h-3}, \eqref{vi-m} and \eqref{upp-cons-m} we get
\bear \label{int-e-h-4} 
&&\int_{\sM_g}\sum\limits_{\substack{Y\in \rm{Sub_T(X)}; \\ Y\cong S_{g_0,k}}} e^{\frac{T}{2}-\frac{1-\eps_1}{2}\ell(\partial Y)} \textbf{1}_{[0,2T]}(\ell(\partial Y)) dX \prec T^{66}e^{\frac{T}{2}+\eps_1T}\frac{V_g}{g^{m}}  \nonumber
\eear
as desired.
\ep

As a direct consequence of Proposition \ref{16-small-1} we have
\begin{proposition}\label{16-small-2}
For any $\eps_1>0$, then we have that as $g\to \infty$,
\[\int_{\sM_g}\sum_{\substack{Y\in \rm{Sub_T(X)};\\ 1\leq |\chi(Y)|\leq 16}} e^{\frac{T}{2}-\frac{1-\eps_1}{2}\ell(\partial Y)} \textbf{1}_{[0,2T]}(\ell(\partial Y) dX  \prec T^{66}e^{\frac{T}{2}+\eps_1T}\frac{V_g}{g}. \]
\end{proposition}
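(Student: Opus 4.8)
The plan is to deduce this from Proposition \ref{16-small-1} by decomposing the sum over $Y\in\rm{Sub_T(X)}$ with $1\leq|\chi(Y)|\leq 16$ according to the topological type of $Y$. First I would record the elementary fact that a connected surface $S_{g_0,k}$ with $1\leq|\chi(S_{g_0,k})|=2g_0-2+k\leq 16$ must have $0\leq g_0\leq 8$ and $1\leq k\leq 18$, so there are only finitely many such pairs $(g_0,k)$ (in fact $88$ of them, as already noted before Proposition \ref{16-small-1}). Consequently
\[
\sum_{\substack{Y\in \rm{Sub_T(X)};\\ 1\leq |\chi(Y)|\leq 16}} e^{\frac{T}{2}-\frac{1-\eps_1}{2}\ell(\partial Y)} \textbf{1}_{[0,2T]}(\ell(\partial Y))
=\sum_{\substack{(g_0,k):\\ 1\leq 2g_0-2+k\leq 16}}\ \sum\limits_{\substack{Y\in \rm{Sub_T(X)}; \\ Y\cong S_{g_0,k}}} e^{\frac{T}{2}-\frac{1-\eps_1}{2}\ell(\partial Y)} \textbf{1}_{[0,2T]}(\ell(\partial Y)),
\]
and integrating over $\sM_g$ and interchanging this finite sum with the integral reduces the statement to a finite sum of quantities already estimated in Proposition \ref{16-small-1}.

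Next I would apply Proposition \ref{16-small-1} to each inner term. With $m=2g_0-2+k$, which satisfies $m\geq 1$ for every pair $(g_0,k)$ occurring in the sum, Proposition \ref{16-small-1} gives
\[
\int_{\sM_g}\sum\limits_{\substack{Y\in \rm{Sub_T(X)}; \\ Y\cong S_{g_0,k}}} e^{\frac{T}{2}-\frac{1-\eps_1}{2}\ell(\partial Y)} \textbf{1}_{[0,2T]}(\ell(\partial Y))\, dX
\prec T^{66}e^{\frac{T}{2}+\eps_1T}\frac{V_g}{g^{m}}
\leq T^{66}e^{\frac{T}{2}+\eps_1T}\frac{V_g}{g},
\]
where the last inequality uses $m\geq 1$, hence $g^{-m}\leq g^{-1}$. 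Summing over the at most $88$ pairs $(g_0,k)$, the number of terms is absorbed into the implied constant, which yields exactly the claimed bound
\[
\int_{\sM_g}\sum_{\substack{Y\in \rm{Sub_T(X)};\\ 1\leq |\chi(Y)|\leq 16}} e^{\frac{T}{2}-\frac{1-\eps_1}{2}\ell(\partial Y)} \textbf{1}_{[0,2T]}(\ell(\partial Y))\, dX
\prec T^{66}e^{\frac{T}{2}+\eps_1T}\frac{V_g}{g}.
\]

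There is essentially no obstacle here: the proof is a finite repackaging of Proposition \ref{16-small-1}. The only points worth a line of justification are (i) that the implied constant in Proposition \ref{16-small-1} may be taken uniform over the finitely many admissible pairs $(g_0,k)$, which is immediate since one takes the maximum of finitely many constants, and (ii) that every such pair has $m\geq 1$, so that the weakening $g^{-m}\leq g^{-1}$ is legitimate and the exponent $66$ (which dominates $6g_0-6+4k=3m+k\leq 66$ for all these pairs) is a valid uniform choice.
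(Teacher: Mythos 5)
Your proposal is correct and follows essentially the same route as the paper's own proof: both reduce to Proposition \ref{16-small-1} by observing that there are only finitely many topological types $(g_0,k)$ with $1\leq 2g_0-2+k\leq 16$, apply that proposition term by term, and then use $m\geq 1$ to absorb $g^{-m}\leq g^{-1}$, with the finitely many terms absorbed into the implied constant. The paper phrases the finite sum as a sum over $m=1,\dots,16$ rather than over pairs $(g_0,k)$, but this is a cosmetic difference.
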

\bp
Assume that $Y\cong S_{g_0,k}$. Since $|\chi(Y)|=m=2g_0+k-2\in [1,16]$, both $g_0$ and $k$ are uniformly bounded. By Proposition \ref{16-small-1} we have
\bear
&&\int_{\sM_g}\sum_{\substack{Y\in \rm{Sub_T(X)};\\ 1\leq |\chi(Y)|\leq 16}} e^{\frac{T}{2}-\frac{1-\eps_1}{2}\ell(\partial Y)} \textbf{1}_{[0,2T]}(\ell(\partial Y) dX  \nonumber\\
&& \prec \sum_{m=1}^{16}T^{66}e^{\frac{T}{2}+\eps_1T}\frac{V_g}{g^{m}} \nonumber\\
&& \prec T^{66}e^{\frac{T}{2}+\eps_1T}\frac{V_g}{g} \nonumber
\eear 
as desired.
\ep

\begin{rem*}
In the proofs of Theorem \ref{main} and \ref{main-2} we will apply $T=4\ln(g)$. Then the upper bound $\frac{T^{66}e^{\frac{T}{2}+\eps_1T}}{g}\asymp (\ln(g))^{66} g^{1+4\eps_1}$ as $g\to \infty$.
\end{rem*}

Now we are ready to prove the following effective bound for the integral of Term \rm{V} in \eqref{sel-split} over $\sM_g$.
\begin{theorem}\label{V}
Let $\phi_T$ be the function in Section \ref{section trace}. Then for any $\eps_1>0$, there exists a constant $c_1(\eps_1)>0$ only depending on $\eps_1$ such that as $g\to \infty$,
\[\frac{1}{V_g}\int_{\sM_g}\sum_{\gamma \in \mathcal{P}^{ns}(X)} \frac{\ell_{\gamma}(X)}{2\sinh \left(\frac{\ell_{\gamma}(X)}{2} \right)}\phi_T( \ell_{\gamma}(X))dX \prec \frac{T^3 e^{\frac{9}{2}T}}{g^{17}}+c_1(\eps_1)\frac{T^{67}e^{\frac{T}{2}+\eps_1T}}{g}.\]
\end{theorem}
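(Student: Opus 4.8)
The plan is to feed the pointwise estimate of Proposition \ref{V-upp} directly into Mirzakhani's integration machinery that has already been packaged in Propositions \ref{upp-orbit-m} and \ref{16-small-2}, so that the proof of Theorem \ref{V} becomes a short assembly step. First I would integrate the inequality \eqref{eq-v-upp} over $\sM_g$ and divide by $V_g$, which splits the quantity to be bounded into two pieces: a ``high complexity'' piece with prefactor $T^2 e^T$ carrying the sum $\sum_{Y\in \rm{Sub_T(X)},\,|\chi(Y)|\geq 17} e^{-\ell(\partial Y)/4}\textbf{1}_{[0,2T]}(\ell(\partial Y))$, and a ``low complexity'' piece with prefactor $c(\eps_1)T$ carrying the sum $\sum_{Y\in \rm{Sub_T(X)},\,1\leq |\chi(Y)|\leq 16} e^{\frac{T}{2}-\frac{1-\eps_1}{2}\ell(\partial Y)}\textbf{1}_{[0,2T]}(\ell(\partial Y))$.

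For the high complexity piece I would use the elementary fact that every $Y\in \rm{Sub_T(X)}$ has $|\chi(Y)|\leq \frac{4T}{2\pi}$ (Gauss--Bonnet together with $\area(Y)\leq 4T$), so the sum there is really over $17\leq |\chi(Y)|\leq [\frac{4T}{2\pi}]$; Proposition \ref{upp-orbit-m} applied with the fixed integer $m=17$ then gives $\frac{1}{V_g}\int_{\sM_g}\sum_{Y}e^{-\ell(\partial Y)/4}\textbf{1}_{[0,2T]}(\ell(\partial Y))\,dX\prec T e^{\frac{7}{2}T}\,g^{-17}$, and multiplying by the prefactor $T^2 e^T$ produces the term $\frac{T^3 e^{\frac{9}{2}T}}{g^{17}}$ in the statement (the universal constant $c(17)$ is absorbed into $\prec$). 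For the low complexity piece, Proposition \ref{16-small-2} gives directly $\frac{1}{V_g}\int_{\sM_g}\sum_{Y:\,1\leq|\chi(Y)|\leq 16}e^{\frac{T}{2}-\frac{1-\eps_1}{2}\ell(\partial Y)}\textbf{1}_{[0,2T]}(\ell(\partial Y))\,dX\prec T^{66}e^{\frac{T}{2}+\eps_1 T}\,g^{-1}$, and multiplying by the prefactor $c(\eps_1)T$ yields $c_1(\eps_1)\frac{T^{67}e^{\frac{T}{2}+\eps_1 T}}{g}$ with $c_1(\eps_1)$ a constant depending only on $\eps_1$ (essentially $c(\eps_1)$ times the implied constant). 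Adding the two contributions gives exactly the claimed bound.

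Since all the analytic work — resolving self-intersections into subsurfaces, the key counting Theorem \ref{count}, and the volume sums of Section \ref{section wp volume} — has already been carried out in Propositions \ref{V-upp}, \ref{upp-orbit-m} and \ref{16-small-2}, there is no genuine obstacle remaining at this stage; the only thing to be careful about is the bookkeeping, namely (i) checking that the range of Euler characteristics in the first sum is automatically truncated at $[\frac{4T}{2\pi}]$ so that Proposition \ref{upp-orbit-m} is applicable, and (ii) tracking the exponents $2+1=3$ on $T$ and $1+\frac72=\frac92$ in the exponential for the first term, and $1+66=67$ on $T$ for the second. I would therefore present the proof as a two-line combination of the three cited propositions, with a sentence recording the Gauss--Bonnet truncation and the choice of $m=17$.
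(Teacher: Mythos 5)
Your proof is correct and is essentially the same as the paper's, which simply cites Proposition \ref{V-upp}, Proposition \ref{upp-orbit-m} with $m=17$, and Proposition \ref{16-small-2}. Your additional bookkeeping — the Gauss--Bonnet truncation $|\chi(Y)|\leq [\frac{4T}{2\pi}]$ and the exponent arithmetic $2+1=3$, $1+\tfrac72=\tfrac92$, $1+66=67$ — is exactly the right way to make that ``clearly follows'' explicit.
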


\bp
The conclusion clearly follows by Proposition \ref{V-upp}, Proposition \ref{upp-orbit-m} for $m=17$ and Proposition \ref{16-small-2}.
\ep

\subsection{Endgame for the proof of Theorem \ref{main}}
Now we are ready to prove Theorem \ref{main}.
\bp[Proof of Theorem \ref{main}]
Let $\phi_T$ be the function in Section \ref{section trace} and $T=4\ln(g)$. For any $X\in \sM_g$, Equation \eqref{sel-split} says that 
\bear \label{sel-split-1}
&  \sum_{k=0}^{\infty}\widehat{\phi_T}(r_k(X))=\underbrace{(g-1)\int_{-\infty}^\infty r \widehat{\phi_T}(r)\tanh(\pi r)dr}_{\rm I}\\
&+\underbrace{\sum_{\gamma \in \mathcal P(X)}\sum_{k=2}^\infty\frac{\ell_{\gamma}(X)}{2\sinh \left(\frac{k \ell_{\gamma}(X)}{2} \right)}\phi_T(k \ell_{\gamma}(X))}_{\rm{II}} +\underbrace{\sum_{\gamma \in \mathcal{P}^{s}_{sep}(X)} \frac{\ell_{\gamma}(X)}{2\sinh \left(\frac{\ell_{\gamma}(X)}{2} \right)}\phi_T( \ell_{\gamma}(X))}_{\rm{III}}\nonumber \\
&+\underbrace{\sum_{\gamma \in \mathcal{P}^{s}_{nsep}(X)} \frac{\ell_{\gamma}(X)}{2\sinh \left(\frac{\ell_{\gamma}(X)}{2} \right)}\phi_T( \ell_{\gamma}(X))}_{\rm{IV}} +\underbrace{\sum_{\gamma \in \mathcal{P}^{ns}(X)} \frac{\ell_{\gamma}(X)}{2\sinh \left(\frac{\ell_{\gamma}(X)}{2} \right)}\phi_T( \ell_{\gamma}(X))}_{\rm{V}} \nonumber 
\eear

\noindent Recall that $\widehat{\phi_T}(\cdot)\geq 0$ on $\R \cup \textbf{i}\R$. For any $\eps>0$, we take an integral of \eqref{sel-split-1} over $\sM_g$ and only keep the first two terms in the \rm{LHS} to get
\bear \label{5-term}
 && \quad \widehat{\phi_T}(\frac{\textbf{i}}{2})+\frac{1}{V_g}\int_{\sM_g}\widehat{\phi_T}(r_1(X))dX\\
&&\leq \frac{1}{V_g}\cdot \left( \int_{\sM_g}\rm{I}dX+\int_{\sM_g}\rm{II}dX+\int_{\sM_g}\rm{III}dX+\int_{\sM_g}\rm{IV}dX+\int_{\sM_g}\rm{V}dX\right). \nonumber
\eear 
It follows by Lemma \ref{1+eps} that there exists a constant $C_\eps>0$ depending on $\eps$ and $\phi_0$ such that 
\be \label{r1-16}
\frac{\int_{\sM_g}\widehat{\phi_T}(r_1(X))dX}{V_g}>\Prob\left(X\in \M_g; \ \lmdx\leq\frac{3}{16}-\eps \right)\cdot \left( C_\eps g^{1+C_\eps}  \ln(g) \right)
\ene
which together with \eqref{5-term} imply that
\bear
&&\Prob\left(X\in \M_g; \ \lmdx\leq\frac{3}{16}-\eps \right)\\
&&<\frac{ \left( \int_{\sM_g}\rm{I}dX+\int_{\sM_g}\rm{II}dX+\int_{\sM_g}\rm{III}dX+\int_{\sM_g}\rm{V}dX \right)}{C_\eps g^{1+C_\eps}  \ln(g)V_g} \nonumber\\
&&+ \frac{\left|\frac{1}{V_g}\int_{\sM_g}\rm{IV}dX- \widehat{\phi_T}(\frac{\textbf{i}}{2})\right|}{C_\eps \ln(g) g^{1+C_\eps}}. \nonumber
\eear
For any $\eps_1>0$, let $c(\eps_1)>0$ be the constant in Theorem \ref{V}. Recall that $T=4\ln(g)$. Then it follows by Proposition \ref{I}, Proposition \ref{II}, Proposition \ref{III}, Proposition \ref{IV} and Theorem \ref{V} that

\bear \label{mtm-e-1}
&& \quad \ \Prob\left(X\in \M_g; \ \lmdx\leq\frac{3}{16}-\eps \right) \\
&&\prec \frac{\frac{g}{\ln(g)}+g (\ln(g))^2+g+ \left(g(\ln(g))^3+c_1(\eps_1)g^{1+4\eps_1}(\ln(g))^{67} \right) }{C_\eps \ln(g) g^{1+C_\eps}  } \nonumber\\
&& + \frac{g (\ln(g))^2}{C_\eps \ln(g)g^{1+C_\eps}} \nonumber\\
&&\prec \frac{g(\ln(g))^3+c_1(\eps_1)g^{1+4\eps_1}(\ln(g))^{67} }{C_\eps \ln(g) g^{1+C_\eps}  }. \nonumber
\eear
Now we choose $\eps_1>0$ such that $\eps_1<\frac{C_\epsilon}{4}$ and let $g\to \infty$ in \eqref{mtm-e-1} we get
\[\lim \limits_{g\to \infty}\Prob\left(X\in \M_g; \ \lmdx\leq\frac{3}{16}-\eps \right)=0\]
which clearly implies the conclusion. 

The proof is complete.
\ep

\subsection{Endgame for the proof of Theorem \ref{main-2}}
Now we are ready to prove Theorem \ref{main-2}, which is almost the same as the proof of Theorem \ref{main}.
\bp[Proof of Theorem \ref{main-2}]
For any $X\in \sM_g$, we let \[0=\lambda_0(X)<\lambda_1(X)\leq \cdots \leq\lambda_{s(X)}(X)\leq \frac{1}{4}\] denote the collection of all eigenvalues of $X$ at most $\frac{1}{4}$ counted with multiplicity. For each eigenvalue $\lambda_j(X)$ we write $\lambda_j(X)=s_j(X)(1-s_j(X))$ for some $s_j(X) \in [\frac{1}{2},1]$. In Selberg's trace formula Theorem \ref{sel}, the corresponding quantity satisfies that $$r_j(X)=\textbf{i}(s_j(X)-\frac{1}{2})$$ for each $0\leq j \leq s(X)$. Now we consider any eigenvalue $\lambda_j(X)$ with $\lambda_j(X)=s_j(X)(1-s_j(X))<\sigma(1-\sigma)$. First we have $s_j(X)>\sigma$. Recall that $N_\sigma(X)=\#\{1\leq j \leq s(X); \ \lambda_j(X)<\sigma(1-\sigma)\}$. Let $\phi_T$ be the function in Section \ref{section trace} and $T=4\ln(g)$. By Lemma \ref{1+eps} for any $\eps>0$ and large enough $g$,
\bear \label{sum-small-2}
\sum_{j=1}^{N_\sigma(X)} \widehat{\phi_T}(r_j(X))&=&\sum_{j=1}^{N_\sigma(X)} \widehat{\phi_T}\left(\textbf{i}\left(s_j(X)-\frac{1}{2}\right)\right)\\
&>& \sum_{j=1}^{N_\sigma(X)} C_\eps T e^{T(1-\eps)(\sigma-\frac{1}{2})} \nonumber \\
&=&  4C_\eps \ln(g) g^{4(1-\eps)(\sigma-\frac{1}{2})}N_\sigma(X) \nonumber \\
&\geq &  g^{4(1-\eps)(\sigma-\frac{1}{2})}N_\sigma(X). \nonumber
\eear 
Similar as in the proof of Theorem \ref{main}, we take an integral of \eqref{sel-split-1} over $\sM_g$ and keep more terms in the \rm{LHS} to get
\bear \label{5-term-2}
 && \quad \widehat{\phi_T}(\frac{\textbf{i}}{2})+\frac{1}{V_g}\int_{\sM_g}\left(\sum_{j=1}^{N_\sigma(X)} \widehat{\phi_T}(r_i(X))\right)dX \nonumber\\
&&\leq \frac{1}{V_g}\cdot \left( \int_{\sM_g}\rm{I}dX+\int_{\sM_g}\rm{II}dX+\int_{\sM_g}\rm{III}dX+\int_{\sM_g}\rm{IV}dX+\int_{\sM_g}\rm{V}dX\right). \nonumber
\eear
Which together with Proposition \ref{I}, Proposition \ref{II}, Proposition \ref{III}, Proposition \ref{IV} and Theorem \ref{V} imply that for any $\eps_1>0$ there exists a uniform constant $C>0$ such that for large enough $g>0$,
\bear \label{sum-exp-up-2}
&&\frac{1}{V_g}\int_{\sM_g}\left(\sum_{j=1}^{N_\sigma(X)} \widehat{\phi_T}(r_i(X))\right)dX  \\
&& \leq C\cdot\left( \frac{g}{\ln(g)}+g (\ln(g))^2+g+ \left(g(\ln(g))^3+c_1(\eps_1)g^{1+4\eps_1}(\ln(g))^{67}\right)\right) \nonumber \\
&&\leq g^{1+4\eps_1}(\ln(g))^{68} \nonumber
\eear
where $c_1(\eps_1)>0$ is the constant in Theorem \ref{V}. Now combine \eqref{sum-small-2} and \eqref{sum-exp-up-2}, by Markov's inquality we have that for large enough $g>0$,
\bear \label{eps1-eps}
\Prob\left( X\in \M_g; \ g^{4(1-\eps)(\sigma-\frac{1}{2})} N_\sigma(X) > g^{1+\eps}\right)\leq \frac{(\ln(g))^{68}}{g^{\eps-4\eps_1}}.
\eear
Recall that $\eps_1>0$ is arbitrary. Now one may choose $\eps_1>0$ with $4\eps_1<\eps$. Then by \eqref{eps1-eps} we have
\bear \label{eps1-eps-2}
\lim \limits_{g\to \infty}\Prob\left( X\in \M_g; \  N_\sigma(X) > g^{1+\eps-4(1-\eps)(\sigma-\frac{1}{2})}\right)=0.
\eear
Since $1+\eps-4(1-\eps)(\sigma-\frac{1}{2})\leq 3-4\sigma+3\eps$, we get
\[\lim \limits_{g\to \infty} \Prob\left( X\in \M_g; \ N_\sigma(X) \leq g^{3-4\sigma+\epsilon}\right)=1\]
as desired. 
\ep

\subsection{Proof of Theorem \ref{mt-diam}} Let $X\in \sM_g$ be a hyperbolic surface of genus $g$ and $\inj(X)$ be the injectivity radius of $X$. Magee in \cite{Magee-20} observes the following upper bound on diameter of $X$ in terms of $\lambda_1(X)$ and $\inj(X)$. More precisely,
\begin{proposition}[Magee] \label{up-b-d}
Let $X\in \sM_g$ satisfying $\lambda_1(X)\geq \frac{1-\delta^2}{4}$ and $\inj(X)\geq c_0$ for some $\delta \in (0,1)$ and $c_0>0$. Then there exists a universal constant $C>0$ such that
\[\diam(X)\leq 2c_0+\frac{2}{1-\delta}\left(\ln \left( \frac{C g}{\sinh (c_0/2)^2} \right) +2\ln \ln \left( \frac{C g}{\sinh (c_0/2)^2} \right)\right)\]
\end{proposition}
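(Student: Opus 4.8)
The plan is to combine finite propagation speed of the shifted wave operator on $X$ with the spectral gap. Fix $x,y\in X$ and put $B_x=B(x,c_0)$, $B_y=B(y,c_0)$; since $\inj(X)\ge c_0$ these are embedded metric balls, so $\area(B_x)=\area(B_y)=A_0:=4\pi\sinh^2(c_0/2)$, while $\area(X)=4\pi(g-1)$ by Gauss--Bonnet. Let $\{\phi_k\}_{k\ge0}$ be an orthonormal eigenbasis of $\Delta_X$ with $\Delta_X\phi_k=\lambda_k\phi_k$, and for $t>0$ consider $U_t:=\cos\big(t\sqrt{\Delta_X-\tfrac14}\big)$, defined by functional calculus: its eigenvalue on $\phi_k$ is $\cos(t\sqrt{\lambda_k-1/4})$, read as $\cosh(t\sqrt{1/4-\lambda_k})$ when $\lambda_k<1/4$ (in particular the eigenvalue on $\phi_0$ is $\cosh(t/2)$). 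I will use that $U_t$ is bounded on $L^2$ (its operator norm is $\cosh(t/2)$) and that it is the Cauchy solution operator $f\mapsto u(t,\cdot)$ of the wave equation $\partial_t^2u+(\Delta_X-\tfrac14)u=0$ with data $(u,\partial_t u)|_{t=0}=(f,0)$; since the zeroth–order shift does not change the principal symbol, this equation has unit propagation speed, so $U_t\mathbf{1}_{B_x}$ is supported in $\overline{B(x,c_0+t)}$.

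The key computation is of $\langle U_t\mathbf{1}_{B_x},\mathbf{1}_{B_y}\rangle$. Expanding spectrally, the $k=0$ term equals $\cosh(t/2)\,A_0^2/\area(X)$ because $\phi_0\equiv\area(X)^{-1/2}$. For $k\ge1$ the hypothesis $\lambda_1\ge\frac{1-\delta^2}{4}$ gives $\mu:=\sqrt{\max(0,\tfrac14-\lambda_1)}\le\delta/2$, and $|\cos(t\sqrt{\lambda_k-1/4})|\le\cosh(t\mu)$ for every $k\ge1$ (trivially when $\lambda_k\ge\tfrac14$ since $|\cos|\le1\le\cosh(t\mu)$, and by monotonicity of $\cosh$ when $\lambda_1\le\lambda_k<\tfrac14$). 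Hence by Cauchy--Schwarz
\[
\Big|\langle U_t\mathbf{1}_{B_x},\mathbf{1}_{B_y}\rangle-\cosh(t/2)\,\tfrac{A_0^2}{\area(X)}\Big|\le\cosh(t\mu)\sum_{k\ge1}|\langle\mathbf{1}_{B_x},\phi_k\rangle\,\langle\mathbf{1}_{B_y},\phi_k\rangle|\le\cosh(t\mu)\,A_0 .
\]

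Now I would invoke the dichotomy. If $d(x,y)>2c_0+t$ then $\overline{B(x,c_0+t)}$ is disjoint from $B_y$, so finite propagation speed gives $\langle U_t\mathbf{1}_{B_x},\mathbf{1}_{B_y}\rangle=0$, and the displayed inequality forces $\cosh(t/2)\,A_0^2/\area(X)\le\cosh(t\mu)\,A_0$. Using $\cosh(t/2)\ge\tfrac12e^{t/2}$, $\cosh(t\mu)\le e^{t\mu}$ and $\tfrac12-\mu\ge\tfrac{1-\delta}{2}>0$, this yields $e^{(1-\delta)t/2}\le e^{t(1/2-\mu)}\le 2\,\area(X)/A_0$, i.e. $t\le\frac{2}{1-\delta}\ln\!\big(2\area(X)/A_0\big)$. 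Contrapositively: as soon as $t>\frac{2}{1-\delta}\ln\!\big(2\area(X)/A_0\big)$ every pair of points of $X$ lies within distance $2c_0+t$; letting $t$ decrease to the threshold,
\[
\diam(X)\le 2c_0+\frac{2}{1-\delta}\ln\!\Big(\frac{2(g-1)}{\sinh^2(c_0/2)}\Big),
\]
which is even stronger than the asserted bound (one may take $C=2$, and the $\ln\ln$ term is superfluous).

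The step I expect to need the most care is the finite–propagation–speed statement for $U_t=\cos(t\sqrt{\Delta_X-1/4})$: since $\Delta_X-\tfrac14$ is not nonnegative, the square root and cosine must be interpreted through the functional calculus (the $\cosh$ branch on the low part of the spectrum), and one must check that $U_t$ is genuinely realized as the unit–speed Cauchy evolution of a hyperbolic equation and that the support statement persists for merely $L^2$ data — all standard but worth spelling out. Everything else (the two elementary $\cosh$ inequalities, Gauss--Bonnet, Cauchy--Schwarz, the $k\ge1$ uniform bound) is routine. A heat–kernel substitute for $U_t$ also proves a bound of this shape, but one then pays for the Gaussian off–diagonal tail of the hyperbolic heat kernel and loses a factor in the constant in front of $\ln g$; using the wave operator is precisely what avoids that loss.
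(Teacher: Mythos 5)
The paper itself gives no proof of this proposition: it is attributed directly to Magee's note \cite{Magee-20}, so there is no in-text argument to compare against. That said, your proposal is correct and is essentially the standard ``finite propagation speed of the shifted wave propagator'' argument (in the spirit of Cheeger--Gromov--Taylor), and in fact it yields a bound that is strictly stronger than what the proposition asserts, namely
\[
\diam(X)\le 2c_0+\frac{2}{1-\delta}\ln\!\Big(\frac{2(g-1)}{\sinh^2(c_0/2)}\Big),
\]
with no $\ln\ln$ correction at all. All the computations check out: since $\inj(X)\ge c_0$ the balls $B_x,B_y$ are embedded, $\area(B_x)=4\pi\sinh^2(c_0/2)$ and so $\sinh^2(c_0/2)\le g-1$ (making the logarithm positive), the $k=0$ term is $\cosh(t/2)A_0^2/\area(X)$, the uniform bound $|\cos(t\sqrt{\lambda_k-1/4})|\le\cosh(t\mu)$ for $k\ge1$ is correct, and the $\ell^2$ Cauchy--Schwarz step gives exactly $A_0$.

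The one point you correctly flag as needing care is the finite propagation speed for $U_t=\cos(t\sqrt{\Delta-1/4})$: here $\Delta-\tfrac14$ is \emph{not} nonnegative, so the na\"ive energy $\int|\partial_t u|^2+|\nabla u|^2-\tfrac14u^2$ is not positive-definite, and one should instead run the propagation-cone energy argument with the modified energy density $|\partial_t u|^2+|\nabla u|^2+|u|^2$ and a Gr\"onwall absorption of the zero-order term. Since $\cos(t\sqrt{z})$ is an entire even function of $z$, the operator $U_t$ is well-defined on $L^2$ by spectral calculus, solves the Cauchy problem for $\partial_t^2+(\Delta-\tfrac14)$, and support propagation for $L^2$ data follows by approximating $\mathbf 1_{B_x}$ in $L^2$ by smooth compactly supported functions and using boundedness of $U_t$ on $L^2$. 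Once that is spelled out, the argument is complete. Since the only statement the paper actually uses in the proof of Theorem~\ref{mt-diam} is the one you establish (and yours is cleaner, with $C=2$ admissible and the $\ln\ln$ dropped), your proposal is a valid, indeed sharper, substitute for the cited observation; the discrepancy almost certainly reflects a different (e.g.\ heat-kernel or smoothed test-function) route taken in Magee's note, whose Gaussian tails would introduce exactly the sort of $\ln\ln$ loss you mention at the end.
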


Now we are ready to prove Theorem \ref{mt-diam}.
\bp[Proof of Theorem \ref{mt-diam}]
First by \cite[Page 292]{Mirz13} we know that 
\be \label{lng-p}
\lim \limits_{g\to \infty} \Prob\left(X\in \M_g; \ \inj(X)\geq \frac{1}{\ln(g)}\right)=1.
\ene
For any $\eps>0$, we set
\[\mathcal{A}_g\overset{\text{def}}{=}\left\{X\in \M_g; \ \lmdx>\frac{3}{16}-\eps \ \text{and} \ \inj(X)\geq \frac{1}{\ln(g)}  \right\}.\]
By Theorem \ref{main} and \eqref{lng-p} we know that
\be \label{lng-p-u}
\lim \limits_{g\to \infty} \Prob\left(X\in \mathcal{A}_g\right)=1.
\ene
Now we complete the proof by showing that $\diam(X)<(4+\eps)\ln(g)$ for any $X\in \mathcal{A}_g$ and large enough $g$. Choose $\delta=\sqrt{\frac{1}{4}+4\eps}$ and $c_0=\frac{1}{\ln(g)}$. Recall that $\sinh(x)\geq x$ for $x\geq 0$. Then it follows by Proposition \ref{up-b-d} that for all $X\in \mathcal{A}_g$ and $g$ large enough, 
\beqar
\diam(X)
&\leq& \frac{2}{\ln(g)}+\frac{2}{1-\delta}\left( \ln\left( \frac{C g}{\sinh (\frac{1}{2\ln g})^2} \right) + 2\ln\ln\left( \frac{C g}{\sinh (\frac{1}{2\ln g})^2} \right) \right) \\
&\leq& \frac{2}{\ln(g)}+\frac{2}{1-\delta}\left( \ln\left(4Cg(\ln g)^2 \right) + 2\ln\ln\left( 4Cg(\ln g)^2 \right) \right) \\ 
&\leq& \frac{2}{\ln(g)}+\frac{2}{1-\delta}\left( \ln g + \ln(4C) + 2\ln\ln g + 2\ln\ln(g^2) \right) \\
&=& \frac{2}{\ln(g)}+\frac{2}{1-\delta}\left(\ln g+4\ln \ln g+\ln(16C) \right).
\eeqar 
Then the conclusion follows since $\eps>0$ is arbitrary.
\ep


\section{A new counting result for filling closed geodesics}\label{section-new count}
In this section we prove Theorem \ref{sec-count}, which relies on the following technical result.
\begin{theorem}\label{thm filling curve decrease}
There exists a universal constant $L_0>0$ such that for any $0<\eps_1<1$, $m=2g-2+n\geq 1$, $\Delta\geq 0$ and $\sum_{i=1}^n x_i \geq \Delta +L_0\cdot\frac{m\cdot n}{\eps_1}$, the following holds: for any hyperbolic surface $X\in \T_{g,n}(x_1,\cdots,x_n)$, one can always find a new hyperbolic surface $Y\in \T_{g,n}(y_1,\cdots,y_n)$ satisfying
\begin{enumerate}
\item $y_i\leq x_i$ for all $1\leq i\leq n$;
\item $\sum_{i=1}^n x_i - \sum_{i=1}^n y_i = \Delta$;
\item for all filling curve $\eta$ in $S_{g,n}$, we have
$$\ell_\eta(X) - \ell_\eta(Y) \geq \frac{1}{2}(1-\eps_1)\Delta.$$ 
\end{enumerate}
\end{theorem}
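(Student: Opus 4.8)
The plan is to realise the required deformation by shrinking only the \emph{long} boundary cuffs, proportionally to their lengths, along a piecewise‑linear path. Fix a threshold $\theta=\theta(m,\eps_1)\asymp m/\eps_1$ and call $\beta_i$ \emph{long} if $x_i\ge\theta$. Leaving the short cuffs and all twist parameters fixed, one is free to decrease the long cuffs, and the hypothesis $\sum_i x_i\ge\Delta+L_0\frac{mn}{\eps_1}$ guarantees that the total length available among the long cuffs (keeping each above $\theta/2$, say) exceeds $\Delta$; so there is a path $X_t$, $t\in[0,1]$, with $X_0=X$, along which each long cuff decreases, short cuffs and twists stay constant, and $\sum_i\big(\ell_{\beta_i}(X_0)-\ell_{\beta_i}(X_1)\big)=\Delta$. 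Setting $Y:=X_1$ gives (1) and (2), and for (3) it suffices, after differentiating in $t$ and integrating, to establish the pointwise inequality
\begin{equation*}
\sum_i\Big(-\tfrac{d}{dt}\ell_{\beta_i}(X_t)\Big)\Big(-\tfrac{\partial\ell_\eta}{\partial\ell_{\beta_i}}\Big)\ \ge\ \tfrac12(1-\eps_1)\sum_i\Big(-\tfrac{d}{dt}\ell_{\beta_i}(X_t)\Big)
\end{equation*}
for every filling curve $\eta$ in $S_{g,n}$ and every $t$, the derivatives being taken in the Fenchel--Nielsen length coordinates of the cuffs.

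This rests on two inputs. The first is the elementary \emph{filling bound}: on a compact hyperbolic surface $Z$ with geodesic boundary, any filling closed geodesic $\eta$ satisfies $\ell_\eta(Z)\ge\tfrac12\sum_i\ell_{\beta_i}(Z)$, because $\eta$ being filling produces, for each $i$, a complementary cylinder $A_i$ cobounded by $\beta_i$ and a curve $c_i$ assembled from sub‑arcs of $\eta$, so that $\ell(c_i)\ge\ell_{\beta_i}(Z)$ (as $c_i$ is freely homotopic to $\beta_i$) while $\sum_i\ell(c_i)\le 2\ell_\eta(Z)$ (each sub‑arc of $\eta$ borders at most two complementary regions of $\eta$). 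The second is a \emph{quasi‑homogeneity} estimate: when the cuffs being shrunk all have length $\ge\theta$, the sum $\sum_i \ell_{\beta_i}(X_t)\big(-\partial\ell_\eta/\partial\ell_{\beta_i}\big)$ over those cuffs is at least $\big(\tfrac12-\mathrm{err}\big)\sum_i\ell_{\beta_i}(X_t)$ over the same cuffs, where $\mathrm{err}$ is an accumulation of $O(e^{-c\,\theta})$ terms over the $\le m$ pairs of pants of a pants decomposition. This is a pants‑by‑pants computation: cutting $X$ and $\eta$ along a pants decomposition, in each pair of pants the length of a geodesic sub‑arc of $\eta$ responds to shrinking a cuff of length $L$ at rate $\tfrac12+O(e^{-cL})$ for a direct arc between the other two cuffs, and at rate $w+O(e^{-cL})$ for an arc wrapping $w$ times around that cuff; assembling these local rate estimates and feeding in the curves $c_i$ from the filling bound, one finds that the shrunk cuffs account for a $\big(\tfrac12-\mathrm{err}\big)$‑fraction of the total length response of $\ell_\eta$. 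Choosing $\theta\asymp m/\eps_1$ (with $L_0$ a suitable universal constant) forces $\mathrm{err}\le\tfrac{\eps_1}{2}$, which yields the pointwise inequality and hence (3).

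I expect the technical heart — and the main obstacle — to be this quasi‑homogeneity estimate. Its subtle points are: the bookkeeping must be uniform in $\eta$ and in $X$, with errors controlled only by $m$; a filling curve may wind around a \emph{short} cuff arbitrarily many times, inflating $\ell_\eta$ without contributing to what we shrink, so one must show the long cuffs still carry a $\ge\tfrac12$‑share of the total response (again exploiting the complementary‑cylinder structure of a filling curve); and the pants‑decomposition argument degenerates when $X\cong S_{0,3}$, where one instead cuts $\eta$ at its self‑intersections and argues directly from the trace identities in the pair of pants. The quantity $L_0\frac{mn}{\eps_1}$ in the hypothesis plays both roles at once — leaving a long‑cuff budget exceeding $\Delta$ and pushing the geometric error terms below $\eps_1$. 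Finally, Theorem~\ref{sec-count} follows by applying Theorem~\ref{thm filling curve decrease} once with $\Delta=\sum_i x_i-L_0\frac{mn}{\eps_1}$ (or iterating): it produces $Y$ with boundary of total length $\le L_0\frac{mn}{\eps_1}$ in which every filling geodesic of $X$ of length $\le T$ becomes a filling geodesic of length $\le T-\tfrac{1-\eps_1}{2}\big(\sum_i x_i-L_0\frac{mn}{\eps_1}\big)$; since the number of filling closed geodesics of length $\le L$ on a compact hyperbolic surface of complexity $m$ is at most $\tfrac m2 e^{L+6}$ by Lemma~\ref{count-3}, and homotopy classes are preserved, $\#_f(X,T)\le c(\eps_1,m)\,e^{T-\frac{1-\eps_1}{2}\sum_i x_i}$.
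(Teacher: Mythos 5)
Your overall plan---shrink the long cuffs, show $\ell_\eta$ drops at rate at least $\tfrac12(1-\eps_1)$ per unit of boundary length removed, and then feed this into Lemma~\ref{count-3}---matches the paper's strategy, and the filling bound $\ell_\eta\geq\tfrac12\ell(\partial Z)$ and the final counting step are correct. But the technical heart, what you call the ``quasi-homogeneity estimate,'' has a genuine gap, and it is not just a matter of bookkeeping. You claim that in a pair of pants, a geodesic arc between the two cuffs not being shrunk responds to a decrease of the third cuff (of length $L$) at rate $\tfrac12+O(e^{-cL})$. This is false: the error is not controlled by the length $L$ of the shrunk cuff, but by the lengths of the \emph{other} two cuffs (equivalently, by the width of the embedded half-collar around $L$). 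In a pair of pants where all three cuffs have comparable length, the collar around the shrunk cuff has width of order $1$, and the rate of response of a crossing arc differs from $\tfrac12$ by a constant, not by something exponentially small in $L$. An arbitrary pants decomposition of $X$ gives you no control over the non-boundary cuffs, so your ``$\leq m$ pairs of pants with error $O(e^{-c\theta})$ each'' bound does not hold, and choosing $\theta\asymp m/\eps_1$ does not force the error below $\eps_1$.

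The paper avoids this by \emph{not} using a pants decomposition: it constructs a single special pair of pants $\mathcal{P}$ from the maximal embedded half-collar of the longest boundary cuff $2b$, keeps $X\setminus\mathcal{P}$ frozen, shrinks only the boundary cuff(s) of $X$ that lie on $\partial\mathcal{P}$, and iterates. The half-collar construction gives $\sinh w\leq \pi m/b$ (equation~\eqref{w-small}), which is exactly what bounds the other two cuffs $a,c$ of $\mathcal{P}$ via~\eqref{restriction type 1} or \eqref{restriction type 2} and makes the rate error $O_A(m^2/b^2)$ (Lemmas~\ref{lem type 1.1}--\ref{lem type 1.13-1.14}); note this error is polynomial in $m/b$, not exponential in $b$. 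There is also a second point you wave at but do not supply: one needs a topological lemma (Lemmas~\ref{lem J contain one of 8 kinds type 1} and~\ref{lem J contain two of 3 kinds type 2}) guaranteeing that every excursion $J_j$ of the filling geodesic into $\mathcal{P}$ actually contains a sub-arc of a ``responding'' type---a crossing arc, not merely arcs parallel to the unshrunk cuffs or winding around them. Without both of these ingredients (the collar-based rate control and the per-excursion topological lower bound), the pointwise differential inequality you postulate is not established.
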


Now we prove Theorem \ref{sec-count} assuming Theorem \ref{thm filling curve decrease}.
\bp[Proof of Theorem \ref{sec-count}]
Let $X\in  \T_{g,n}(x_1,\cdots,x_n)$ for certain $x_i$'s. 

If the total boundary length $\sum_{i=1}^n x_i \geq L_0\frac{mn}{\eps_1}$, by Theorem \ref{thm filling curve decrease} one may take $\Delta = \sum x_i - L_0\frac{mn}{\eps_1}$ and then get a hyperbolic surface $Y\in\T_{g,n}(y_1,\cdots,y_n)$ such that $\sum y_i = L_0\frac{mn}{\eps_1}$ and for any filling curve $\eta \subset S_{g,n}$ we have
$$\ell_\eta (X) - \ell_\eta(Y) \geq \frac{1-\eps_1}{2}\cdot \left(\sum_{i=1}^n x_i - L_0\frac{mn}{\eps_1}\right).$$
Thus 
$$\#_f(X,T) \leq \#_f\left(Y,T-\frac{1-\eps_1}{2}\left(\sum_{i=1}^n x_i - L_0\frac{mn}{\eps_1}\right)\right) $$
which together with Lemma \ref{count-3} and the fact that $1\leq n \leq m+2$ implies that
\begin{eqnarray}
\#_f(X,T) &\leq& \frac{1}{2}m \cdot e^{T-\frac{1-\eps_1}{2}(\sum x_i - L_0\frac{mn}{\eps_1}) +6} \\
&=& \frac{m}{2}\cdot e^{\frac{(1-\eps_1)L_0 mn}{2\eps_1}+6} \cdot e^{T-\frac{1-\eps_1}{2}\sum x_i}\nonumber\\
&\leq & c(\eps_1,m) \cdot e^{T-\frac{1-\eps_1}{2}\sum x_i} \nonumber
\end{eqnarray}
where
\[c(\eps_1,m)=\frac{m}{2}\cdot e^{\frac{(1-\eps_1)L_0 m(m+2)}{2\eps_1}+6}.\]

If the total boundary length $\sum_{i=1}^n x_i \leq L_0\frac{mn}{\eps_1}$, then by Lemma \ref{count-3} we have 
\begin{eqnarray}
\#_f(X,T) &\leq& \frac{1}{2}m \cdot e^{T+6} \\
&=& \frac{m}{2}\cdot e^{\frac{1-\eps_1}{2}L_0\frac{mn}{\eps_1}+6} \cdot e^{T-\frac{1-\eps_1}{2}L_0\frac{mn}{\eps_1}} \nonumber \\
&\leq& c(\eps_1,m) \cdot e^{T-\frac{1-\eps_1}{2}\sum x_i}. \nonumber
\end{eqnarray}

The proof is complete.
\ep

\begin{rem*}
The proof of Theorem \ref{thm filling curve decrease} requires some technical assumptions for the total boundary length, which is enough for us to prove Theorem \ref{main} and \ref{main-2}. It would be \emph{interesting} to know whether Theorem \ref{sec-count} holds for $\eps_1=0$ and a uniform lower bound of the total boundary length $\sum x_i$. More precisely, \begin{question}
	For any surface $X\in \T_{g,n}(x_1,...,x_n)$, does the following holds:
	\begin{equation*}
		\#_f(X,T)\leq \frac{1}{2}(2g-2+n) \cdot e^{T-\frac{1}{2}\sum_{i=1}^n x_i +6}?
	\end{equation*}
\end{question} 
\end{rem*}

The proof of Theorem \ref{thm filling curve decrease} is technical. We first briefly explain the strategy as follows.

\noindent \textbf{Strategy on the proof of Theorem \ref{thm filling curve decrease}.} 
For any given hyperbolic surface $X\in \T_{g,n}(x_1,...,x_n)$, we first find a special pair of pants $\mathcal P \subset X$ with one or two boundary closed geodesics in $\partial X$ (or three if $X=\mathcal P \cong S_{0,3}$). Then we reduce the length of these boundary curves in $\mathcal P$ by certain $\delta>0$ and do not change the remained part $X\setminus \mathcal P$ to obtain a new hyperbolic surface $X_\delta$. For any filling closed geodesic $\eta$ in $X$, it must intersect with the boundary of $\mathcal P$ (if $X \neq \mathcal{P}$). We fix the part of $\eta$ in $X\setminus \mathcal P$. And for each segment $J$ of $\eta\cap\mathcal P$, we replace $J$ to a geodesic segment in $X_\delta$ which has the same endpoints and is in the same homotopy class (with endpoints fixed) as $J$. Then we get a closed piecewise geodesic $\eta'$ in $X_\delta$ which is in the same homotopy class as $\eta$ (e.g. see Figure \ref{fig eta'}). Clearly, $\ell(\eta') \geq \ell_\eta(X_\delta)$. Then we will show that $\ell_\eta (X) - \ell(\eta') \geq \frac{1}{2}(1-\eps_1)\delta$ if the total boundary length of $X$ is large enough, and hence $\ell_\eta (X) - \ell_\eta(X_\delta) \geq \frac{1}{2}(1-\eps_1)\delta$. Then Theorem \ref{thm filling curve decrease} follows by repeating the process above by finite times.  \\

 We make the following notations throughout this section.

\noindent \textbf{Notations.} $(1)$. Denote  $m=2g-2+n=|\chi(S_{g,n})|\geq 1$.

\noindent $(2)$. Fix a constant $A>0$. Then for two quantities $h_1,h_2$, we say $h_1=O_A(h_2)$ if $|h_1|\leq A'|h_2|$ for some constant $A'>0$ only depending on $A$. 

\noindent $(3)$. We use the same letters for geodesics and their lengths. \\

Now we start the proof of Theorem \ref{thm filling curve decrease}, which will be split into several parts. 

For a hyperbolic surface $X\in \T_{g,n}(x_1,...,x_n)$, now we always assume the total boundary length 
\be 
\sum_{i=1}^n x_i \geq 2Amn.
\ene 
In particular, the longest boundary $2b$ of $X$ satisfies 
$$2b\geq 2Am.$$ 
Consider the maximal embedded half-collar of boundary $2b$ in $X$ with width $w$. A simple computation shows that the area of this half-collar is equal to $2b\sinh w$, and is bounded from above by $\area(X)=2\pi m$. Thus we have
\begin{equation} \label{w-small}
\sinh w\leq \frac{\pi m}{b}.
\end{equation}

\noindent The maximal embedded half-collar must be one of the following two types.
\begin{enumerate}
	\item \textbf{Type-1:} The maximal half-collar does not touch any other boundary of $X$.

	\item \textbf{Type-2:} The maximal half-collar touches another boundary of $X$.
\end{enumerate}

\begin{con*} [for $\mathcal P$]
 See Figure \ref{fig 2 type pants} for an illustration.
In Type-1, first one may choose a self-tangent point on the boundary of the maximal half-collar of $2b$, then take two-sides perpendiculars to $2b$ inside the half-collar, we obtain a geodesic segment $2w$ orthogonal to $2b$ at  both endpoints. Then the two curves $2b$ and $2w$ uniquely determine a pair of pants, denoted by $\mathcal{P}$. In Type-2, assume that the maximal half-collar touches another boundary $2c$ (if it simultaneously touches two boundary geodesics, we just pick one of them which is denoted by $2c$). Let $\alpha=w$ be the perpendicular between $2b$ and $2c$ inside the half-collar. Then the curves $w,2b$ and $2c$ determine a unique pair of pants, also denoted by $\mathcal{P}$. 
\begin{figure}[ht]
	\centering
	\includegraphics[width=12cm]{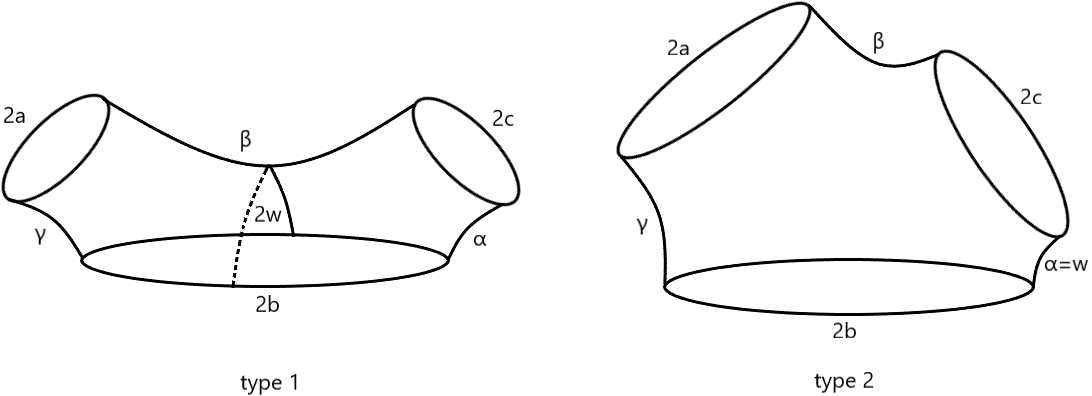}
	\caption{two types of $\mathcal{P}'s$}
	\label{fig 2 type pants}
\end{figure}
\end{con*}

One may cut the pair of pants in Figure \ref{fig 2 type pants} along the shortest perpendiculars between three closed boundary geodesics of $\mathcal{P}$ to get two right-angled hexagons as shown in Figure \ref{fig 2 type hexagon}.
\begin{figure}[ht]
	\centering
	\includegraphics[width=10cm]{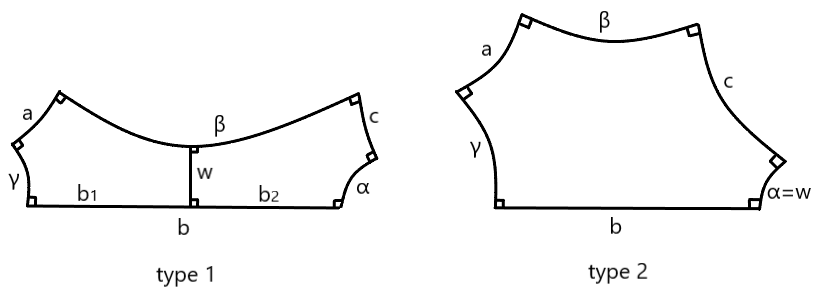}
	\caption{}
	\label{fig 2 type hexagon}
\end{figure}

\begin{rem*}
	In Type-1, $2b$ is a boundary curve of $X$, and $2a,2c$ may or may not be part of the boundary. In Type-2, $2b$ and $2c$ are two boundary curves of $X$ and $2a$ may or may not be.
\end{rem*}

\begin{con*}[for $X_\delta$]Now for certain $\delta>0$, we construct a new hyperbolic surface $X_\delta$ based on $X\in \T_{g,n}(x_1,...,x_n)$ with total boundary length reduced by $\delta$.
From the discussion above, one may first find a pair of pants $\mathcal P \sbs X$ with one boundary curve $2b$.

\noindent \emph{Case-a: $X\neq \mathcal{P}\cong S_{0,3}$}. If $\mathcal P$ is of Type-1 as shown in Figure \ref{fig 2 type pants} above, let $\mathcal P_\delta$ be the pair of pants with boundary lengths $(2a,2b-\delta,2c)$. If $\mathcal P$ is of Type-2 as shown in Figure \ref{fig 2 type pants} above, let $\mathcal P_\delta$ be the pair of pants with boundary lengths $(2a,2b-\frac{1}{2}\delta,2c-\frac{1}{2}\delta)$. Then we glue $\mathcal P_\delta$ and $X\setminus \mathcal P$ together along the same closed geodesics and twists as those when gluing $\mathcal P$ and $X\setminus \mathcal P$ back into $X$. Hence we get a new hyperbolic surface $X_\delta$ with total boundary length reduced by $\delta$. In other words, we fix the part $X\setminus \mathcal P$ and just decrease the length of boundary $2b$ or $(2b,2c)$ to get $X_\delta$. 

\noindent \emph{Case-b: $X= \mathcal{P}\cong S_{0,3}$}. We only decrease $2b$ to $2b-\delta$ in Type-1 and decrease $(2b,2c)$ to $(2b-\frac{1}{2}\delta,2c-\frac{1}{2}\delta)$ in Type-2 as above. 

That is,
$$X_\delta \in \T_{g,n}(x_1-\delta,x_2,\cdots,x_n)$$
if $\mathcal P$ is of Type-1 (assume $x_1=2b$) and 
$$X_\delta \in \T_{g,n}(x_1-\frac{1}{2}\delta,x_2-\frac{1}{2}\delta,x_3,\cdots,x_n)$$
if $\mathcal P$ is of Type-2 (assume $x_1=2b$ and $x_2=2c$).
\end{con*}

Our main task in this section is to show the following result.

\begin{proposition}\label{prop filling curve decrease a little}
There exists a constant $A_0>0$ only depending on $A$ such that the following hold: for any $0< \delta\leq A_0$ and $X\in\T_{g,n}(x_1,...,x_n)$ with $\sum x_i \geq n(2Am+\delta)$, the above construction for $X_\delta$ exists, and moreover for any filling closed curve $\eta$ in $S_{g,n}$, we have 
$$\ell_\eta (X) - \ell_\eta(X_\delta) \geq \frac{1}{2}\left(1-O_A(\frac{mn}{\sum x_i})\right)\delta.$$ 
\end{proposition}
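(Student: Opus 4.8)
The plan is to analyze how the length of a filling geodesic $\eta$ changes when we pass from $X$ to $X_\delta$. The key geometric idea is that the construction only modifies the pair of pants $\mathcal{P}$ with long boundary $2b$ (and possibly a second boundary $2c$), and since $\eta$ is filling it must cross $\partial\mathcal{P}$ transversally into $\mathcal{P}$ and out again (unless $X=\mathcal{P}$, which is handled separately but identically in spirit). Decompose $\eta$ as a concatenation of arcs: the arcs $K_1,\dots,K_r$ lying in $X\setminus\mathcal{P}$ and the arcs $J_1,\dots,J_r$ lying in $\mathcal{P}$, where each $J_i$ has both endpoints on $\partial\mathcal{P}$. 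We leave the $K_i$'s untouched, and for each $J_i$ we replace it by the geodesic arc $J_i'$ in $\mathcal{P}_\delta$ with the same endpoints (on the correspondingly shortened boundary, via the natural identification of boundary geodesics as circles of the prescribed length up to a marked basepoint) and in the same relative homotopy class. This produces a piecewise-geodesic closed curve $\eta'$ in $X_\delta$ freely homotopic to $\eta$, so $\ell_\eta(X_\delta)\le\ell(\eta')=\sum\ell(K_i)+\sum\ell(J_i')$. Hence it suffices to show
$$\sum_i \ell(J_i) - \sum_i \ell(J_i') \ \geq\ \tfrac{1}{2}\bigl(1-O_A(\tfrac{mn}{\sum x_i})\bigr)\delta.$$

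The heart of the argument is therefore a local estimate inside a single pair of pants: if $J$ is a geodesic arc in $\mathcal{P}$ with endpoints $p,q$ on $\partial\mathcal{P}$ and $J'$ is the corresponding arc in $\mathcal{P}_\delta$, how much shorter is $J'$? First I would observe that each time $\eta$ enters $\mathcal{P}$ it must travel essentially across the collar of width $w$ around $2b$ (twice, once in and once out), and each such collar crossing of a geodesic orthogonal-ish arc contributes a definite amount; crucially by \eqref{w-small} the collar width $w$ satisfies $\sinh w\le \pi m/b$, so $w$ is small when $b$ (hence $\sum x_i$) is large. The key hexagon computation: cutting $\mathcal{P}$ into two right-angled hexagons (Figure \ref{fig 2 type hexagon}) and using the hyperbolic hexagon/pentagon trigonometric identities, one shows that shortening the boundary $2b$ by $\delta$ (resp. $(2b,2c)$ each by $\delta/2$) shortens the orthogonal segment across the collar — and more generally any geodesic arc with endpoints on the boundary whose homotopy class forces it to wind around $2b$ — by an amount that is $\tfrac{1}{2}\delta$ up to a multiplicative error controlled by $w$, i.e. up to a factor $1+O(w)=1+O_A(m/b)$. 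Summing over all arcs $J_i$ and using that $\eta$ visits $\mathcal{P}$ some number $r\ge 1$ of times, with the total gain being at least $r\cdot\tfrac{1}{2}\delta(1-O_A(m/b))\ge \tfrac{1}{2}\delta(1-O_A(mn/\sum x_i))$ (the $n$ appearing because $b\ge \tfrac{1}{n}\sum x_i$ as $2b$ is the longest boundary), gives the claim. The constant $A_0$ is chosen so that for $\delta\le A_0$ the shortened pants $\mathcal{P}_\delta$ still has positive boundary lengths $2a,2b-\delta,2c$ etc. and remains a genuine hyperbolic pair of pants, so the construction is well-defined.

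I would organize the write-up as: (i) reduce to the single-pants arc estimate by the replacement construction above, checking carefully that $\eta'$ is homotopic to $\eta$ and is piecewise geodesic, and that when $X=\mathcal{P}$ the filling curve still crosses the shortened collar; (ii) prove the pants lemma — a geodesic arc $J$ with endpoints on $\partial\mathcal{P}$ that is not homotopic into the boundary (which holds for arcs of a filling curve provided one cuts $\eta$ at boundary crossings appropriately, possibly absorbing trivial excursions) satisfies $\ell_{\mathcal{P}}(J)-\ell_{\mathcal{P}_\delta}(J')\ge \tfrac{1}{2}\delta(1-C\sinh w)$ for a universal $C$ — via explicit hexagon trigonometry together with the collar bound \eqref{w-small}; (iii) sum up and track the constants, using $1\le n\le m+2$ and $b\ge\tfrac{1}{n}\sum x_i$.

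\textbf{Main obstacle.} The delicate point is step (ii): controlling $\ell(J)-\ell(J')$ \emph{uniformly over all homotopy classes of arc $J$}, not just the perpendicular seam. An arc of $\eta$ can enter and exit $\mathcal{P}$ through the same boundary component $2b$ and wind around one of the cuffs $2a$ or $2c$ many times before leaving; one must show the length gain is still at least $\tfrac12\delta$ up to the $O_A(w)$ error and never negative, which requires a convexity/monotonicity statement for the length of a geodesic arc as a function of the boundary length $2b$ with the homotopy class and endpoint-parameters held fixed. I expect this to follow from differentiating the hexagon relations (the derivative of the arc length with respect to the cuff length $2b$ is between $0$ and roughly $\tfrac12$, with the discrepancy from $\tfrac12$ governed by how much of $J$ lies deep inside the collar versus near the cuff), but making it clean and uniform — including the degenerate Type-2 case and the case $X=\mathcal{P}\cong S_{0,3}$ — is the technical crux, and is presumably where the universal constant $L_0$ in Theorem \ref{thm filling curve decrease} ultimately comes from after iterating Proposition \ref{prop filling curve decrease a little}.
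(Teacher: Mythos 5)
Your high-level framework matches the paper exactly: decompose $\eta$ into arcs $I_i$ outside $\mathcal{P}$ and $J_j$ inside, replace each $J_j$ by the relative-homotopic geodesic arc $J'_j$ in $\mathcal{P}_\delta$ to get a piecewise geodesic $\eta'$ with $\ell_\eta(X_\delta)\le\ell(\eta')$, and reduce everything to a per-pants estimate $\ell(J_j)-\ell(J'_j)\ge\tfrac12(1-O_A(\cdot))\delta$, with the error ultimately controlled by the collar width bound $\sinh w\le\pi m/b$ from \eqref{w-small}. That much is right, and your identification of ``step (ii)'' as the crux is also right. But you have not proved step (ii), and the clean ``pants lemma'' you state there is actually false as written: it is \emph{not} true that every geodesic arc in $\mathcal{P}$ with endpoints on $\partial\mathcal{P}\cap\partial(X\setminus\mathcal{P})$ that is non-boundary-parallel gains $\tfrac12\delta(1-C\sinh w)$. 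In the Type-1 pants, arcs whose lifts only cross the seam $\gamma$ (and possibly $a$) without ever crossing the collar of $2b$ --- the paper's types $1.3$, $1.9$, $1.10$ and their mirror images on the $\alpha$/$c$ side --- have length that does not change at all as $b$ shrinks, because the portion of the hexagon they live in is untouched. Your heuristic (``the derivative is between $0$ and $\tfrac12$'') allows for this, but then the claimed lower bound does not follow without knowing that $J_j$ is forced to contain a crossing of the \emph{right kind}.

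The paper supplies precisely the two missing ingredients. First, it cuts $J_j$ along its intersections with the three seams $\alpha,\beta,\gamma$ and classifies the resulting sub-segments into $14$ kinds for Type-1 (Lemma \ref{14}, Figure \ref{fig 14 kinds of type 1}) and $6$ kinds for Type-2 (Lemma \ref{type2-6}), and for each kind it computes $\dot d$ explicitly via the hexagon/pentagon/rectangle identities of Lemma \ref{lem hyp formula} together with the workhorse derivative estimate Lemma \ref{lem derivative in rectangle}, showing $\dot d=0$ for some types and $|\dot d+1|=O_A(m^2/b^2)$ (Type-1) or $O_A(m/b)$ (Type-2) for the others (Lemmas \ref{lem type 1.1}--\ref{lem type 1.13-1.14}, \ref{lem type 2.2 2.3}, \ref{lem type 2.4}). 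Second --- and this is the topological input that your sketch gestures at with ``forces it to wind around $2b$'' and ``absorbing trivial excursions'' but never establishes --- the paper proves that a sub-arc of a \emph{filling} curve with endpoints on $\partial\mathcal{P}\cap\partial(X\setminus\mathcal{P})$ must contain at least one decreasing type in Type-1 (Lemma \ref{lem J contain one of 8 kinds type 1}) and at least two in Type-2 (Lemma \ref{lem J contain two of 3 kinds type 2}); the proof is a short but non-obvious argument that otherwise $J_j$ would be homotopic into a piece of $a$ or $c$, contradicting geodesicity of $\eta$. There is also a third issue you mention only in passing: one must verify that the controlling inequalities \eqref{w-small-1}, \eqref{w-small-2} remain valid throughout the deformation (not just at $t=0$ where \eqref{w-small} holds), which the paper does at the start of Propositions \ref{key-pro-1} and \ref{key-pro-2} by bounding $\sinh w'/\sinh w<2$ for $\delta\le A_0$. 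These three items --- the type classification with per-type derivative bounds, the ``at least one/two decreasing types'' lemma, and the persistence of the collar bound along the deformation --- are the actual content of the proposition, and they are absent from your proposal.
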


Assuming Proposition \ref{prop filling curve decrease a little} first, we now finish the proof of Theorem \ref{thm filling curve decrease}. 
\begin{proof}[Proof of Theorem \ref{thm filling curve decrease}]
We choose $A=1$ in Proposition \ref{prop filling curve decrease a little}. Assume that the term $|O_A(\frac{mn}{\sum x_i})| \leq C \frac{mn}{\sum x_i}$ in Proposition \ref{prop filling curve decrease a little} for some constant $C>0$. Take 
$$L_0=\max\{C, 2A+A_0\}.$$

\noindent For $0\leq \Delta\leq \sum x_i -L_0\frac{mn}{\eps_1}$, assume $\Delta = k\delta$ where $k>0$ is an integer and $0< \delta\leq A_0$. One may repeat the construction above in $k$ times to find a sequence of pairs of pants, reduce total boundary length for $\delta$ each time, and obtain a sequence of hyperbolic surfaces $X^0,X^1,\cdots,X^k$ where $X^0=X$ and $X^{j+1}=(X^j)_{\delta}$ is the new hyperbolic surface obtained from $X^j$ by the construction above. Then $X^j$ has total boundary length equal to $\left(\sum x_i -j\delta\right)$. In particular $$X^k \in \T_{g,n}(y_1,\cdots,y_n)$$ for some $y_i\leq x_i$ with $\sum x_i - \sum y_i =k\delta= \Delta$. We remark here that one can always apply the construction above to $X^j$ for each $0\leq j\leq k$ since its total boundary length $\sum x_i -j\delta \geq L_0\frac{mn}{\eps_1} \geq (2A+A_0)mn$ which satisfies the assumption. Then it follows by Proposition \ref{prop filling curve decrease a little} that for any filling closed curve $\eta$ in $S_{g,n}$ and each $0\leq j \leq k-1$, 
$$\ell_\eta (X^j) - \ell_\eta(X^{j+1}) \geq \frac{1}{2} \left(1-C\frac{mn}{L_0 mn/\eps_1}\right) \delta \geq \frac{1}{2} (1-\eps_1) \delta$$ 
which implies that
$$\ell_\eta (X) - \ell_\eta(X^k) \geq \frac{1}{2}(1-\eps_1)\Delta.$$
Then the conclusion follows by choosing $Y=X^k$.
\end{proof}

Now we aim to prove Proposition \ref{prop filling curve decrease a little}. 

We first assume the existence of $X_\delta$. Normally it is hard to calculate the length $\ell_\eta(X_\delta)$. Instead, we construct a closed piecewise geodesic $\eta'$ in $X_\delta$ homotopic to $\eta$ and then show that 
$$\ell_\eta (X) - \ell(\eta') \geq \frac{1}{2}\left(1-O_A(\frac{mn}{\sum x_i})\right)\delta$$
implying that
\[\ell_\eta (X) - \ell_\eta(X_\delta) \geq \frac{1}{2}\left(1-O_A(\frac{mn}{\sum x_i})\right)\delta\]
where $\ell(\eta')$ is the length of the closed piecewise geodesic $\eta'$ in $X_\delta$, and clearly we have $\ell(\eta')\geq \ell_\eta(X_\delta)$.

Let $\eta$ be a closed filling curve in $S_{g,n}$ and still use $\eta$ to denote the corresponding closed geodesic representative in $X$. Since $\eta$ is filling, it must intersect with the pair of pants $\mathcal P$ for certain times if $X\neq \mathcal P\cong S_{0,3}$. Let's first assume that $X$ is not $S_{0,3}$. Consider the set of all the intersection points $\eta\cap\partial\mathcal P$ which separates $\eta$ into several segments. Assume that 
\be\label{J-j}
\eta=(\cup I_i)\cup(\cup J_j)
\ene 
where the $I_i$'s are geodesic segments in $X\setminus \mathcal P$ with endpoints on $\partial\mathcal P \cap \partial(X\setminus\mathcal P)$, and the $J_j$'s are geodesic segments in $\mathcal P$ with endpoints on $\partial\mathcal P \cap \partial(X\setminus\mathcal P)$. For each $J_j$, let $J'_j$ be the geodesic segment representative in $\mathcal P_\delta$ (defined in the construction above) which is homotopic to $J_j$ in $\mathcal{P}$ (as a topological space) relative to the two endpoints of $J_j$. In other words, we fix the endpoints of $J_j$ and shrink $J_j$ to a shortest geodesic segment $J'_j$ in $\mathcal P_\delta$. Since $X$ and $X_\delta$ have the same length and twist parameters for those simple closed geodesics $\partial\mathcal P \cap \partial(X\setminus\mathcal P)$, replacing $\mathcal P$ and $J_j$ by $\mathcal P_\delta$ and $J_j'$ respectively, we have that the closed curve
$$\eta'=(\cup I_i)\cup(\cup J'_j)\subset X_\delta$$
is a closed piecewise geodesic homotopic to $\eta$ and 
\begin{equation}\label{eqn eta-eta' = sum J-J'}
\ell_\eta(X)-\ell(\eta')=\sum \left(\ell(J_j)-\ell(J'_j)\right).
\end{equation}
See Figure \ref{fig eta'} for an example. If $X=\mathcal P\cong S_{0,3}$, we just denote $\eta'$ to be the closed geodesic in $X_\delta$ homotopic to $\eta$. We will show that for each $j$, 
\begin{equation}\label{eqn J-J' geq 1/2}
\ell(J_j)-\ell(J'_j) \geq \frac{1}{2}\left(1-O_A(\frac{mn}{\sum x_i})\right)\delta.
\end{equation}

\begin{figure}[ht]
	\centering
	\includegraphics[width=12cm]{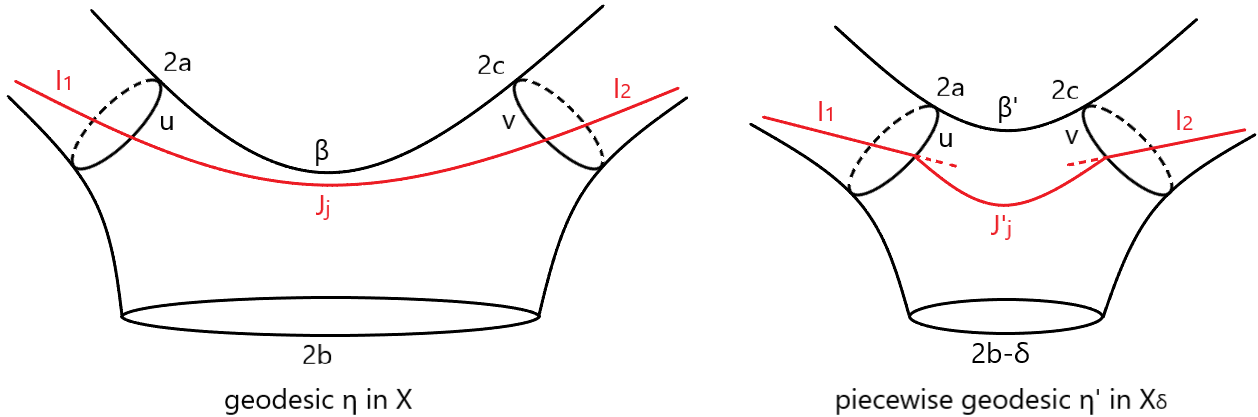}
	\caption{an example for $\eta$ and $\eta'$}
	\label{fig eta'}
\end{figure}

\begin{rem*}
As shown in Figure \ref{fig eta'}, we denote $u$ to be the part of geodesic $2a$ from the endpoint of $J_j$ to $\beta$ (the shortest geodesic between $2a$ and $2c$), and denote $v$ to be the part of geodesic $2c$ from the endpoint of $J_j$ to $\beta$. By saying fixing the endpoints of $J_j$, we just mean to keep the lengths of $u$ and $v$ to be unchanged.
\end{rem*}

\begin{remark}
The existence of $J_j$ is the only place where we apply the assumption that $\eta$ is filling. Actually if $\eta$ is an arbitrary closed curve in $S_{g,n}$, we will show that 
$$\ell_\eta(X)-\ell_\eta(X_\delta) \geq \frac{k}{2}\left(1-O_A(\frac{mn}{\sum x_i})\right)\delta$$
where $k$ is the number of components of $\eta\cap\mathring{\mathcal P}$ where $\mathring{\mathcal P}$ is the interior of $\mathcal P$. 
\end{remark}

To prove \eqref{eqn J-J' geq 1/2}, we separate $J_j$ into several pieces. In the universal covering space of the pair of pants $\mathcal P$, $J_j$ is lifted onto a simple geodesic segment with endpoints on the boundary. See Figure \ref{fig curve in pants} for an example. 
\begin{figure}[ht]
	\centering
	\includegraphics[width=12cm]{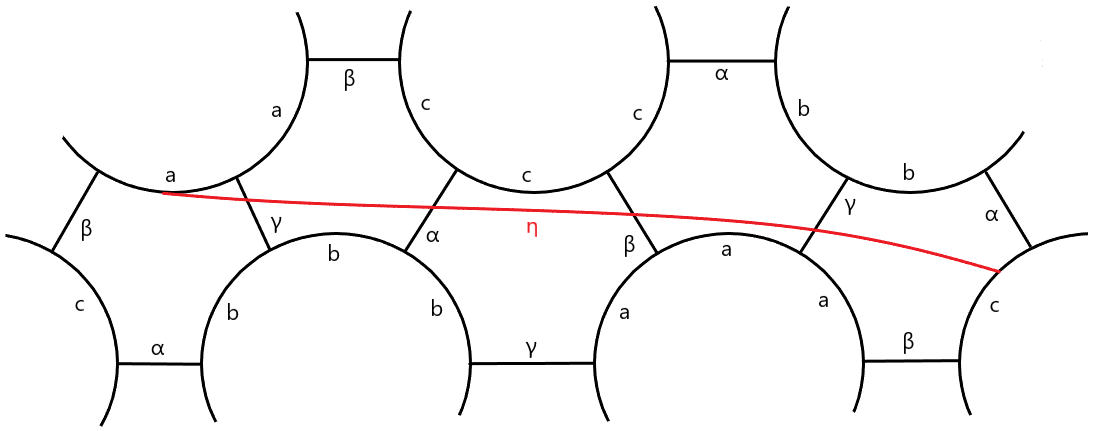}
	\caption{an example for $\eta$ in $\mathcal P$}
	\label{fig curve in pants}
\end{figure}

Now we split the proof of Proposition \ref{prop filling curve decrease a little} into the following subsections.

\subsection{A technical lemma for deforming pairs of pants}
First we recall the following formulas for hyperbolic distance, which one may refer to \cite[Formula Glossary and (2.3.2)]{Buser10}.

\begin{lemma}\label{lem hyp formula}
	In the right-angled hexagon in Figure \ref{fig 3 polygon}, we have
	$$\cosh c = \sinh a\sinh b\cosh \gamma - \cosh a\cosh b,$$
	$$\frac{\sinh\alpha}{\sinh a} = \frac{\sinh\beta}{\sinh b} = \frac{\sinh\gamma}{\sinh c}.$$
	
	In the right-angled pentagon in Figure \ref{fig 3 polygon}, we have
	$$\cosh c = \sinh a\sinh b.$$
	
	In the rectangle with right angle between $x,a$ and right angle between $y,a$ in Figure \ref{fig 3 polygon}, we have 
	$$\cosh \alpha = \cosh x\cosh y\cosh a - \sinh x\sinh y.$$
	\begin{figure}[ht]
		\centering
		\includegraphics[width=12cm]{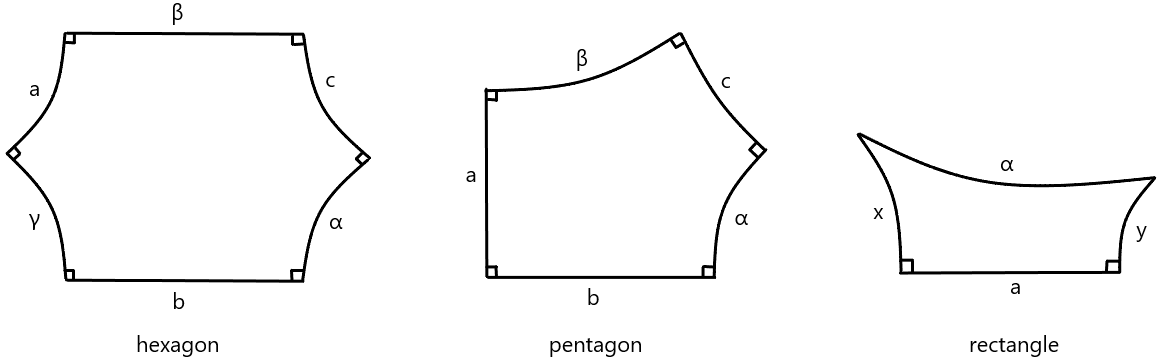}
		\caption{}
		\label{fig 3 polygon}
	\end{figure}
\end{lemma}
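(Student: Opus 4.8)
\textbf{Proof proposal for Lemma \ref{lem hyp formula}.}

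The plan is to derive all three identities from the standard apparatus of hyperbolic trigonometry for right-angled polygons, treating each polygon as built up from right-angled triangles by dropping appropriate perpendiculars. For the right-angled hexagon, the first identity (the \emph{law of cosines for right-angled hexagons}) and the accompanying \emph{law of sines} are classical; I would either cite them directly from Buser's book \cite[\S 2.4 and Formula Glossary]{Buser10}, or reconstruct them by cutting the hexagon along a diagonal into two right-angled pentagons (or a pentagon and a quadrilateral) and iterating the pentagon formula together with the triangle formulas. Concretely, label the hexagon with alternating sides $a, \gamma, b, \alpha, c, \beta$ (sides and opposite "sides" in the sense of the figure); dropping the common perpendicular between the side of length $c$ and the opposite vertex region splits the configuration so that repeated application of the pentagon relation $\cosh c = \sinh a \sinh b$ and the right-triangle relations yields the stated hexagon cosine rule, and the sine rule follows by symmetry of the construction under permuting which pair of opposite sides one singles out.

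For the right-angled pentagon identity $\cosh c = \sinh a \sinh b$, I would drop the perpendicular from the vertex between the two sides adjacent to neither $a$ nor $b$, splitting the pentagon into two right-angled quadrilaterals (Lambert quadrilaterals), and then apply the relation for the Lambert quadrilateral; alternatively this is again a direct quotation from \cite[(2.3.4) or Formula Glossary]{Buser10}. For the rectangle identity (with right angles between $x$ and $a$ and between $y$ and $a$, i.e.\ a Lambert-type quadrilateral with the fourth angle acute, here denoted via its "diagonal" relation), I would express $\cosh\alpha$ by splitting along the segment realizing $a$ and composing two right-triangle relations, using $\cosh(\text{hypotenuse}) = \cosh(\text{leg}_1)\cosh(\text{leg}_2)$ and the addition formula for $\cosh$, which produces the cross term $-\sinh x \sinh y$.

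Since all three formulas are standard and appear explicitly in \cite[Formula Glossary and (2.3.2)]{Buser10} as the excerpt itself notes, the cleanest route is simply to \emph{cite} Buser and perhaps record a one-line indication of the decomposition used in each case; there is no genuine obstacle here. The only mild care needed is bookkeeping: matching the labels in Figure \ref{fig 3 polygon} to Buser's conventions (which sides are "opposite", which angles are right), and making sure the signs in the $\cosh$ addition formula come out as stated. I expect the write-up to be essentially a reference plus a remark that each identity is obtained by subdividing the polygon into right triangles and Lambert quadrilaterals and composing the elementary relations; the "hard part," such as it is, is purely notational alignment rather than mathematical.
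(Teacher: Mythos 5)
Your proposal is correct and takes essentially the same approach as the paper: the paper itself gives no proof, stating only that these are standard formulas and pointing to Buser's \emph{Formula Glossary} and (2.3.2), which is exactly what you conclude is the cleanest route. Your supplementary sketch of deriving the identities by subdividing the polygons into right triangles and Lambert quadrilaterals is the standard derivation and would work, but it is not carried out in the paper and is not needed.
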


\begin{rem*}
	In the rectangle case above, the same formula still holds when $x$ or $y$ may be negative. Where for $x\cdot y<0$, it means that the edges $x$ and $y$ are on the different sides of $a$. In this case $\alpha$ will intersect with $a$.
\end{rem*}

In the rectangle case above, we provide the following technical lemma which will be repeatedly applied later. 

\begin{lemma}\label{lem derivative in rectangle}
	In the rectangle as shown in Figure \ref{fig 3 polygon} above, assume that $a,x,y$ and  $\alpha$ are smooth functions parametrized on the same domain by the variable $t$. Then we have
	\begin{eqnarray*}
		&& \left|\dot\alpha \tanh\alpha - \dot a\tanh a - \dot x\tanh x - \dot y\tanh y \right| \\
		&\leq& \frac{1}{\cosh a -1}\left(|\dot a|\tanh a + \frac{|\dot x|}{\cosh^2 x} + \frac{|\dot y|}{\cosh^2 y}\right).
	\end{eqnarray*}
In particular, if $\alpha\geq a\geq C$ for some constant $C>0$, then we have
\[\left| \dot\alpha-\dot a  \right| \leq C'_C \cdot \left( e^{-a}|\dot a| + |\dot x|+|\dot y| \right) \]
for some constant $C'_C>0$ only depending on $C$. 
\end{lemma}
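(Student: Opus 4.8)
\textbf{Proof proposal for Lemma \ref{lem derivative in rectangle}.}

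The plan is to differentiate the rectangle identity $\cosh\alpha = \cosh x\cosh y\cosh a - \sinh x\sinh y$ directly and then estimate the error terms. First I would differentiate both sides with respect to $t$: the left side gives $\dot\alpha\sinh\alpha$, and the right side expands by the product rule into
\[
\dot x\sinh x\cosh y\cosh a + \dot y\cosh x\sinh y\cosh a + \dot a\cosh x\cosh y\sinh a - \dot x\cosh x\sinh y - \dot y\sinh x\cosh y.
\]
The strategy is to recognize the ``main part'' of this expression. Using $\cosh\alpha+1 = \cosh x\cosh y\cosh a - \sinh x\sinh y + 1$, one checks (this is the routine computation I would not grind through in detail) that the leading behavior of the right side is $\dot a\tanh a\sinh\alpha + \dot x\tanh x\sinh\alpha + \dot y\tanh y\sinh\alpha$ up to terms controlled by $\cosh a - 1$ in the denominator. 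Concretely, I would write
\[
\dot\alpha\tanh\alpha - \dot a\tanh a - \dot x\tanh x - \dot y\tanh y = \frac{\dot\alpha\sinh\alpha - (\dot a\tanh a + \dot x\tanh x + \dot y\tanh y)\cosh\alpha}{\cosh\alpha}\cdot(\text{correction}),
\]
and substitute the differentiated identity into the numerator. After cancellation, the remaining numerator should factor so that each surviving term carries a factor of $(\cosh a - 1)^{-1}$ relative to $\cosh\alpha$, times one of $|\dot a|\tanh a$, $|\dot x|/\cosh^2 x$, $|\dot y|/\cosh^2 y$. Dividing through by $\cosh\alpha$ and using $\cosh\alpha \geq \cosh a$ (which follows since $\alpha\geq a$ when $xy\geq 0$, and needs a small separate check when one of $x,y$ is negative using the remark preceding the lemma) gives the first displayed inequality.

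For the second assertion, I would simply feed in the hypotheses $\alpha\geq a\geq C$. Then $\tanh\alpha \geq \tanh C$ is bounded below by a positive constant, so $|\dot\alpha - \dot a| \leq (\tanh C)^{-1}\big(|\dot\alpha\tanh\alpha - \dot a\tanh a| + |\dot a||\tanh\alpha - \tanh a|\big)$. The first summand is bounded using the first part of the lemma together with $|\dot x\tanh x|\leq|\dot x|$ and $|\dot y\tanh y|\leq|\dot y|$, and noting $1/(\cosh a - 1) \leq C'_C e^{-a}\cdot e^{a}/(\cosh a-1)$ is bounded by a constant times $1$ (or one keeps the $e^{-a}$ factor on the $|\dot a|$ term as stated, since $\tanh a/(\cosh a - 1)\asymp e^{-a}$ for $a\geq C$). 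The second summand $|\dot a||\tanh\alpha - \tanh a|$ is bounded by $|\dot a|\cdot|\alpha - a|\cdot\max(1-\tanh^2)$, but more simply by observing $|\tanh\alpha - \tanh a|$ is at most $1-\tanh a \asymp e^{-2a}$, hence absorbed into the $e^{-a}|\dot a|$ term (possibly after enlarging $C'_C$). Collecting the bounds yields $|\dot\alpha - \dot a|\leq C'_C(e^{-a}|\dot a| + |\dot x| + |\dot y|)$.

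The main obstacle I anticipate is bookkeeping in the first step: correctly identifying which combinations of the five differentiated terms assemble into the clean ``$\dot a\tanh a + \dot x\tanh x + \dot y\tanh y$'' main term and verifying that every leftover is genuinely of size $O\big((\cosh a - 1)^{-1}\big)$ relative to the main term, rather than merely $O(1)$. This requires careful use of the defining relation to rewrite products like $\cosh x\cosh y\cosh a$ in terms of $\cosh\alpha$, and handling the sign cases for $x,y$ (the remark's extension of the formula) without losing the inequality $\cosh\alpha\geq\cosh a$. Once the algebraic identity is pinned down, the estimates themselves are elementary monotonicity facts about $\tanh$ and $\cosh$.
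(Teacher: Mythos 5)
Your overall plan for the first part---differentiate $\cosh\alpha = \cosh x\cosh y\cosh a - \sinh x\sinh y$, compare each coefficient to the corresponding $\tanh$, and bound the discrepancies---is the paper's plan. But the execution has a genuine gap. The displayed ``identity'' with a ``(correction)'' factor is not a real algebraic step: the left-hand side already \emph{is} the first factor on the right, so the extra factor has no meaning. More importantly, the lower bound you propose for $\cosh\alpha$ is the wrong one. You invoke $\cosh\alpha\geq\cosh a$, but this cannot produce the stated right-hand side: it does not supply the factors $\tanh a$, $1/\cosh^2 x$, $1/\cosh^2 y$, nor the denominator $\cosh a - 1$. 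The bound the argument actually needs is
\[
\cosh\alpha = \cosh x\cosh y(\cosh a - 1) + \cosh(x-y) \;\geq\; \cosh x\cosh y(\cosh a - 1),
\]
which holds regardless of the signs of $x,y$ (no separate case check is needed), and which is precisely what turns the three exact discrepancies into the stated bounds. Concretely, after differentiating and dividing by $\cosh\alpha$, one computes the exact errors
\[
\frac{\cosh x\cosh y\sinh a}{\cosh\alpha}-\tanh a = \frac{\tanh a\,\sinh x\sinh y}{\cosh\alpha},
\quad
\frac{\sinh x\cosh y\cosh a - \cosh x\sinh y}{\cosh\alpha}-\tanh x = \frac{-\sinh y}{\cosh x\cosh\alpha},
\]
and the symmetric third one, then bounds each numerator by $\cosh x\cosh y$ (resp.\ $\cosh y$, $\cosh x$) and uses $\cosh\alpha\geq\cosh x\cosh y(\cosh a-1)$. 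Without this specific lower bound, the RHS of the lemma simply does not appear; you cannot ``grind through'' the routine computation and land on the claimed inequality using $\cosh\alpha\geq\cosh a$. Your second part (bounding $|\dot\alpha-\dot a|$ via $|\dot\alpha-\dot a| \leq \tfrac{1}{\tanh\alpha}|\dot\alpha\tanh\alpha - \dot a\tanh a| + |\dot a|\,\tfrac{|\tanh\alpha-\tanh a|}{\tanh\alpha}$, using $\tanh\alpha\geq\tanh C$, $1-\tanh a\asymp e^{-2a}$, and $\tanh a/(\cosh a-1)\asymp e^{-a}$) is correct and matches the paper.
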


\begin{proof}
	By Lemma \ref{lem hyp formula} we have 
	\begin{eqnarray}\label{cosha-l}
		\cosh \alpha &=& \cosh x\cosh y\cosh a - \sinh x\sinh y  \\
		&=& \cosh x\cosh y(\cosh a-1) + \cosh(x-y). \nonumber
	\end{eqnarray}

	Taking derivative we have
	\begin{eqnarray}
		\frac{\sinh \alpha}{\cosh \alpha} \dot \alpha 
		&=& \dot a \frac{\cosh x\cosh y\sinh a}{\cosh \alpha} \\
		&& +\dot x \frac{\sinh x\cosh y(\cosh a-1)}{\cosh \alpha} \nonumber\\
		&& +\dot y \frac{\cosh x\sinh y(\cosh a-1)}{\cosh \alpha} \nonumber\\
		&& +(\dot x-\dot y) \frac{\sinh(x-y)}{\cosh \alpha} \nonumber\\
		&=& \dot a \frac{\cosh x\cosh y\sinh a}{\cosh \alpha} \nonumber\\
		&& +\dot x \frac{\sinh x\cosh y\cosh a - \cosh x\sinh y}{\cosh \alpha} \nonumber\\
		&& +\dot y \frac{\cosh x\sinh y\cosh a - \sinh x\cosh y}{\cosh \alpha}.\nonumber
	\end{eqnarray}
	
	Now we estimate the difference. From \eqref{cosha-l}, we have 
	\begin{eqnarray}
		&& \left|\frac{\cosh x\cosh y\sinh a}{\cosh \alpha} -\frac{\sinh a}{\cosh a}\right| \\
		&=& \frac{\tanh a \sinh x\sinh y}{\cosh \alpha} \nonumber\\
		&\leq& \frac{\tanh a}{\cosh a -1}, \nonumber
	\end{eqnarray}
	
	\begin{eqnarray}
		&& \left|\frac{\sinh x\cosh y\cosh a - \cosh x\sinh y}{\cosh \alpha} -\frac{\sinh x}{\cosh x}\right| \\
		&=& \frac{\sinh y}{\cosh x\cosh \alpha}\nonumber \\
		&\leq& \frac{1}{\cosh^2 x(\cosh a -1)},\nonumber
	\end{eqnarray}

and
	
	\begin{eqnarray}
		&& \left|\frac{\cosh x\sinh y\cosh a - \sinh x\cosh y}{\cosh \alpha} -\frac{\sinh y}{\cosh y}\right| \\
		&=& \frac{\sinh x}{\cosh y\cosh \alpha}\nonumber \\
		&\leq& \frac{1}{\cosh^2 y(\cosh a -1)}.\nonumber
	\end{eqnarray}
	
	Put all the inequalities above together we get
	\begin{eqnarray*}
		&& |\dot\alpha \tanh\alpha - \dot a\tanh a - \dot x\tanh x - \dot y\tanh y | \\
		&\leq& \frac{1}{\cosh a -1}(|\dot a|\tanh a + \frac{|\dot x|}{\cosh^2 x} + \frac{|\dot y|}{\cosh^2 y})
	\end{eqnarray*}
which completes the proof of the first part.

The second part follows by the first part:
\begin{eqnarray*}
|\dot\alpha \tanh\alpha - \dot a\tanh a|
&\leq& |\dot\alpha \tanh\alpha - \dot a\tanh a - \dot x\tanh x - \dot y\tanh y | \\
&& + |\dot x\tanh x| + |\dot y\tanh y| \\
&\leq& \frac{|\dot a|+|\dot x|+|\dot y|}{\cosh a -1} + |\dot x|+|\dot y| \\
&=& \frac{|\dot a|}{\cosh a -1} + (|\dot x|+|\dot y|) \cdot \frac{\cosh a }{\cosh a -1} .
\end{eqnarray*}
Since $\alpha \geq a \geq C$, we have
\begin{eqnarray*}
|\dot\alpha - \dot a|
&\leq& \frac{1}{\tanh\alpha}|\dot\alpha \tanh\alpha - \dot a\tanh a| + |\dot a(1-\frac{\tanh a}{\tanh\alpha})| \\
&\leq& \frac{|\dot a|}{\tanh a(\cosh a -1)} + \frac{(|\dot x|+|\dot y|)\cosh a}{\tanh a(\cosh a -1)} + |\dot a|(1-\tanh a) \\
&\leq& C'_C \left( e^{-a}|\dot a| + |\dot x|+|\dot y| \right) 
\end{eqnarray*} 
for some constant $C'_C>0$ only depending on $C$. 
\end{proof}

Next we will prove Proposition \ref{prop filling curve decrease a little} for Type-1 and Type-2 separately in the following two subsections.
\subsection{Proof of Proposition \ref{prop filling curve decrease a little} for Type-1}
Now we consider a pair of pants in Figure \ref{fig 2 type pants} of Type-1 and the corresponding right-angled hexagon in Figure \ref{fig 2 type hexagon} of Type-1. 

In this subsection, we always use the notation $w$ to be the perpendicular between $b$ and $\beta$ in Figure \ref{fig 2 type hexagon} of Type-1. For the pants $\mathcal P$ of Type-1 we construct, \eqref{w-small} holds. But when reducing $b$, the number $w$ may increase such that \eqref{w-small} does not hold again. Instead, we always assume $w$ satisfies
\begin{equation}\label{w-small-1}
\sinh w \leq 2\pi\frac{m}{b}.
\end{equation}
Later in the proof of Proposition \ref{key-pro-1} we will show \eqref{w-small-1} always holds when $b$ does not reduce too much.

Let $b_1,b_2>0$ as shown in Figure \ref{fig 2 type hexagon}. By Lemma \ref{lem hyp formula} we have
$$\cosh a= \sinh b_1 \sinh w$$ and
$$\cosh c= \sinh b_2 \sinh w.$$
So we have 
\begin{eqnarray}\label{restriction type 1}
\cosh a\cosh c & = & \sinh^2 w \sinh b_1\sinh b_2  \\
& = & \frac{1}{2}\sinh^2 w (\cosh(b_1+b_2)-\cosh(b_1-b_2))  \nonumber\\
& \leq & \frac{1}{2}\sinh^2 w \cosh b. \nonumber
\end{eqnarray}
Now we consider reducing the length of boundary $2b$ of $\mathcal P$. Let $\{a(t),b(t),c(t)\}$ be the length functions in terms of a common parameter $t$. In our process from $\mathcal P$ to $\mathcal P_\delta$, we assume $a$ and $c$ are fixed and $b$ decreases with constant speed. More precisely, denote $\dot b = \frac{db}{dt}$ and assume
\be
\dot a =\dot c=0 \quad \emph{and} \quad \dot b =-1.
\ene

\noindent As shown in Figure \ref{fig 2 type pants}, in the pair of pants with boundary length $(2a,2b,2c)$, we always denote $\alpha$ to be the shortest perpendicular between $2b$ and $2c$, denote $\beta$ to be the shortest perpendicular between $2a$ and $2c$, and denote $\gamma$ to be the shortest perpendicular between $2a$ and $2b$. One may also see Figure \ref{fig 3 polygon} for the corresponding hexagon. Then by Lemma \ref{lem hyp formula}, we have
\be \label{alpha}
\cosh \alpha = \frac{\cosh a+\cosh b\cosh c}{\sinh b\sinh c},
\ene
\be \label{gamma}
\cosh \gamma = \frac{\cosh c+\cosh a\cosh b}{\sinh a\sinh b},
\ene
\be \label{beta}
\cosh \beta = \frac{\cosh b+\cosh a\cosh c}{\sinh a\sinh c},
\ene
and
\begin{equation}\label{sinh a sinh gamma =1 type 1}
\cosh w = \sinh c\sinh \alpha = \sinh a\sinh \gamma.
\end{equation}
A direct computation shows that 
\begin{eqnarray}\label{derivative alpha type 1}
\dot \alpha &=& \frac{1}{\sinh\alpha}\frac{\cosh c + \cosh a\cosh b}{\sinh^2 b \sinh c} \\
&=& \frac{1}{\cosh w}\frac{\cosh c + \cosh a\cosh b}{\sinh^2 b}, \nonumber
\end{eqnarray}
\begin{eqnarray}\label{derivative gamma type 1}
\dot \gamma &=& \frac{1}{\sinh\gamma}\frac{\cosh a + \cosh b\cosh c}{\sinh^2 b \sinh a} \\
&=& \frac{1}{\cosh w}\frac{\cosh a + \cosh b\cosh c}{\sinh^2 b},\nonumber
\end{eqnarray}
and
\begin{eqnarray}\label{derivative beta type 1}
\dot \beta &=& -\frac{1}{\sinh\beta}\frac{\sinh b}{\sinh a \sinh c} \\
&=& -\frac{\cosh \beta}{\sinh \beta}\frac{\sinh b}{\cosh b +\cosh a\cosh c}.\nonumber
\end{eqnarray}
A direct consequence of all the equations above is
\bl \label{pro-abg}
If $b\geq A m$ and \eqref{w-small-1} holds, then we have
\begin{enumerate}
\item $0<\dot\alpha= O_A\left(\frac{m^2}{b^2}\right)$ and $0<\dot\gamma= O_A\left(\frac{m^2}{b^2}\right)$.
\item $\cosh\beta \geq 1+\frac{1}{2\pi^2} \frac{b^2}{m^2}$, $\dot \beta<0$ and $|\dot\beta+1|= O_A\left(\frac{m^2}{b^2}\right)$.
\end{enumerate}
\el
\bp
For Part $(1)$, clearly we have $0<\dot\alpha$ and $0<\dot\gamma$. By \eqref{restriction type 1} and \eqref{derivative alpha type 1} we have
\begin{eqnarray*}
\dot \alpha &\leq&  \frac{2}{\cosh w}\frac{\cosh c\cosh a\cosh b}{\sinh^2 b}\\
&\leq &\frac{\sinh^2 w}{\cosh w} \frac{\cosh^2 b}{\sinh^2 b}\\
&\leq& \frac{1}{\tanh^2 A} 4\pi^2 \frac{m^2}{b^2} 
\end{eqnarray*}
where in the last inequality we apply $b\geq Am\geq A$, $\cosh w\ge 1$ and \eqref{w-small-1}. By a similar argument one may also show $$\dot\gamma\leq \frac{1}{\tanh^2 A} 4\pi^2 \frac{m^2}{b^2}.$$

For Part $(2)$, clearly we have $\dot \beta<0$. By \eqref{w-small-1}, \eqref{restriction type 1} and \eqref{beta} we have
\begin{eqnarray*}
\cosh \beta &\geq& \frac{\cosh b}{\cosh a \cosh c}+1\\
&\geq& \frac{2}{\sinh^2 w}+1\\
&\geq& 1+\frac{1}{2\pi^2} \frac{b^2}{m^2}.
\end{eqnarray*}
In particular $e^{\beta}>\frac{1}{2\pi^2} \frac{b^2}{m^2}\geq \frac{A}{2\pi^2}$. Then by \eqref{restriction type 1} and \eqref{derivative beta type 1} we have 
\begin{eqnarray*}
|\dot\beta +1|
&=& \left| 1-\frac{\tanh b}{\tanh\beta}\frac{1}{1+\frac{\cosh a \cosh c}{\cosh b}} \right| \\
&\leq& \left| 1-\frac{1}{1+\frac{\cosh a \cosh c}{\cosh b}} \right| + \left| 1-\frac{\tanh b}{\tanh\beta} \right| \frac{1}{1+\frac{\cosh a \cosh c}{\cosh b}} \\
&\leq& \frac{\cosh a \cosh c}{\cosh b} + \left| 1-\frac{\tanh b}{\tanh\beta} \right| \\
&\leq& \frac{1}{2}\sinh^2 w + \frac{|\sinh(\beta-b)|}{\sinh\beta \cosh b}. 
\end{eqnarray*}
Since $b\geq mA \geq A$, $\frac{|\sinh(\beta-b)|}{\sinh\beta \cosh b}= O_A\left(e^{-2\beta}+e^{-2b}\right)$. Thus, by  \eqref{w-small-1} we obtain
\begin{eqnarray*}
|\dot\beta +1|
 &\leq& 2\pi^2 \frac{m^2}{b^2} + O_A\left(e^{-2\beta}+e^{-2b}\right)  \\
&\leq& O_A\left(\frac{m^2}{b^2}\right)
\end{eqnarray*}
as desired.
\ep

Let $J_j \subset \mathcal{P}$ be a geodesic segment in \eqref{J-j}. We classify $J_j$ in terms of its possible intersections with $\alpha,\gamma,a$ and $c$ (without orientation). Actually we have
\bl\label{14}
There are 14 kinds of possible segments as shown in Figure \ref{fig 14 kinds of type 1} where each one is a fundamental domain for $\mathcal P$ (or half of a fundamental domain). More precisely, they are: from $\alpha$ to $\gamma$ (type 1.1, 1.2), from $\alpha$ to $\alpha$ (type 1.4), from $\gamma$ to $\gamma$ (type 1.3), from $a$ to $\alpha$ (type 1.5, 1.6), from $a$ to $\gamma$ (type 1.9, 1.10), from $c$ to $\gamma$ (type 1.7, 1.8), from $c$ to $\alpha$ (type 1.11, 1.12) and from $a$ to $c$ (type 1.13, 1.14).
\el

 We denote $d$ as the geodesic segment of $J_j$ in each kind.

If $J_j$ intersects with a piece of $\alpha$, we denote $p$ to be the intersection point and $x$ to be the half segment of $\alpha$ that goes from $p$ to $c$: for this case, see types $1.1, 1.2, 1.4, 1.5, 1.6, 1.11$ and $1.12$ as shown in Figure \ref{fig 14 kinds of type 1}. If $J_j$ intersects with a piece of $\gamma$, denote $q$ to be the intersection point and $y$ to be the half segment of $\gamma$ that goes from $q$ to $a$: see types $1.1, 1.2, 1.3, 1.7, 1.8, 1.9$ and $1.10$ as shown in Figure \ref{fig 14 kinds of type 1}. If $J_j$ does not intersect with any piece of $\alpha$ and $\gamma$, see types $1.13$ and $1.14$ as shown in Figure \ref{fig 14 kinds of type 1}. We fix these points $\{p,q\}$ as $b$ decreases, that is, we keep the length of $x$ and $y$ to be unchanged as $b$ decreases. We remark here that by Lemma \ref{pro-abg} both $\alpha$ and $\gamma$ increase as $b$ decreases. So the points $p$ and $q$ still lie in $\alpha$ and $\gamma$ respectively during the process. 

\begin{figure}[ht]
	\centering
	\includegraphics[width=12.6cm]{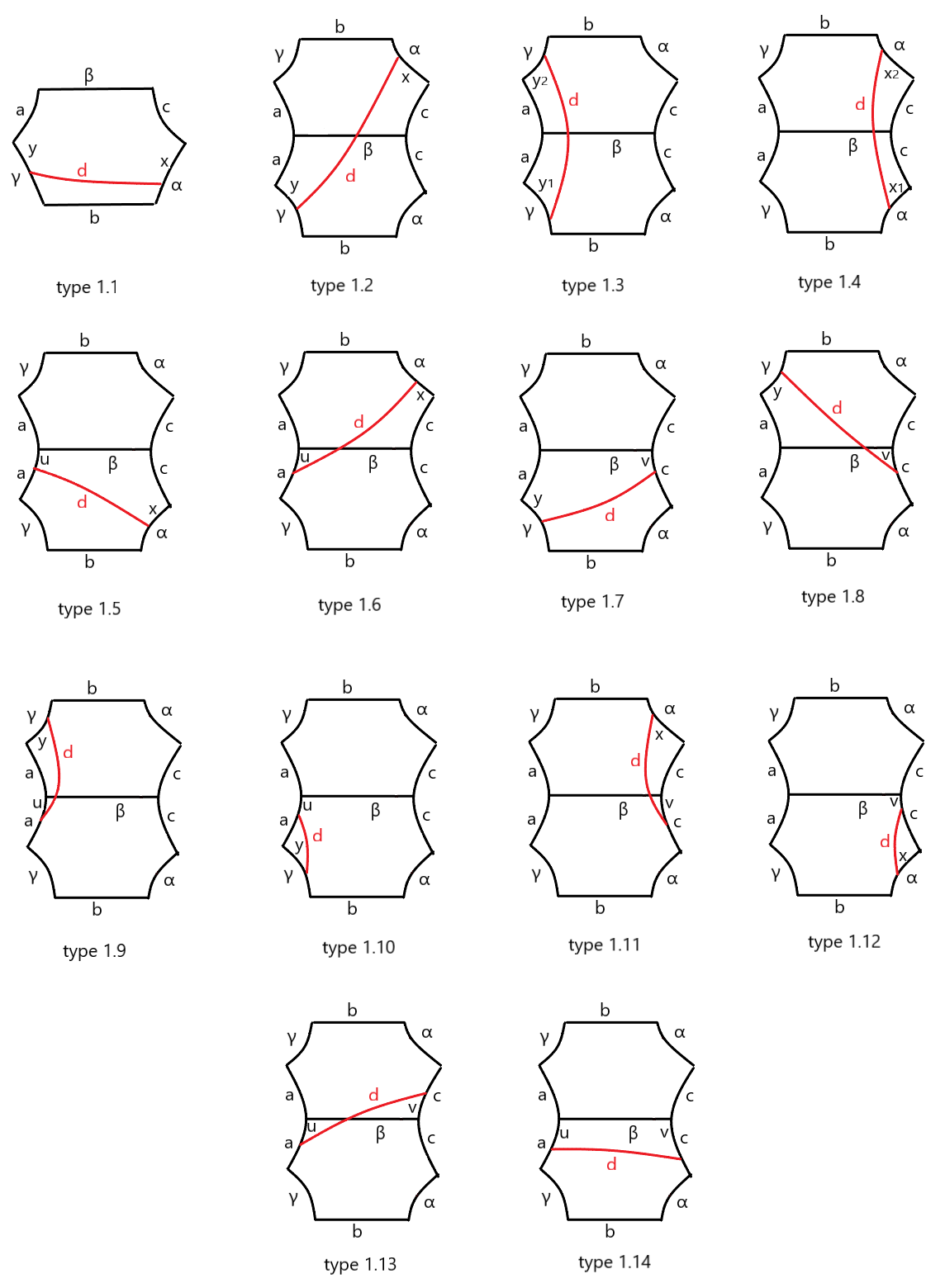}
	\caption{$14$ kinds of geodesic segments in Type-1}
	\label{fig 14 kinds of type 1}
\end{figure}

These intersection points separate $J_j$ into several segments. Fixing these points as $b$ is decreasing, then we get a piecewise geodesic $J''_j$ homotopic to $J_j$ in $\mathcal P_\delta$.

In types 1.3 and 1.4, the lengths $a,y_1,y_2,c,x_1$ and $x_2$ are unchanged during the process, so the corresponding $d$ is unchanged. In types 1.9, 1.10, 1.11 and 1.12, the endpoints of $J_j$ on boundary curves $2a$ and $2c$ are fixed, so the lengths $u$ and $v$, which are segments on $a$ and $c$ from the endpoint of $J_j$ to $\beta$ respectively (see Figure \ref{fig eta'}), in the figure are also unchanged. Also, since $a,y,c$ and $x$ are unchanged, the corresponding $d$ is also unchanged. 

For the remaining $8$ kinds, we will show that 
\be\label{dd-s}
|\dot d +1|=O_A\left(\frac{m^2}{b^2}\right)
\ene
and $J_j$ must contain at least one of those $8$ kinds.

\begin{lemma}\label{lem J contain one of 8 kinds type 1}
	The segment $J_j$ for Type-1 must contain at least one segment of types 1.1, 1.2, 1.5, 1.6, 1.7, 1.8, 1.13 and 1.14 as shown in Figure \ref{fig 14 kinds of type 1}.
\end{lemma}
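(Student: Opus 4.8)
The plan is to argue by contradiction, reducing the statement to one elementary fact about geodesic arcs in a pair of pants. First I would fix the set-up: since $X\neq\mathcal P\cong S_{0,3}$ here (otherwise no $J_j$ occurs), at least one of the boundary geodesics $2a,2c$ of $\mathcal P$ is glued to $X\setminus\mathcal P$, and by construction the two endpoints of $J_j$ lie on $2a\cup 2c$, never on $2b$. The segment $J_j$ is a geodesic, so it meets the seams $\alpha$ and $\gamma$ transversally in finitely many points; these points cut $J_j$ into sub-segments, each contained in $\mathcal P\setminus(\alpha\cup\gamma)$, and the interior of $J_j$ meets no boundary geodesic, so a sub-segment can touch $2a$ or $2c$ only at a genuine endpoint of $J_j$. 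Hence each sub-segment is one of the fourteen types of Lemma~\ref{14}, and among them the six non-changing ones $1.3,1.4,1.9,1.10,1.11,1.12$ join, respectively, $\gamma$ to $\gamma$, $\alpha$ to $\alpha$, $a$ to $\gamma$ (twice) and $c$ to $\alpha$ (twice). It therefore suffices to rule out the possibility that every sub-segment of $J_j$ is one of these six.

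The key observation is that the incidence pattern of the six non-changing types partitions $\{a,c,\alpha,\gamma\}$ into $\{a,\gamma\}$ and $\{c,\alpha\}$, with no non-changing type touching both parts. Consecutive sub-segments of $J_j$ are joined along a point of $\alpha$ or of $\gamma$, so if all sub-segments were non-changing the whole chain would stay in one part; reading off the free ends (which must lie on $2a$ or $2c$, not on the seams), this means: either $J_j$ never crosses the seam $\alpha$ and both its endpoints lie on $2a$, or $J_j$ never crosses the seam $\gamma$ and both its endpoints lie on $2c$.

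I would dispose of both alternatives with the following fact: a geodesic arc in a hyperbolic surface whose two endpoints lie on a boundary geodesic $C$ and whose interior is disjoint from $C$ is non-peripheral, i.e.\ it is not homotopic, rel its endpoints sliding on $C$, into $C$. This is the usual no-bigon argument (a disk with two geodesic sides violates Gauss--Bonnet, even when one side runs along $C$), which excludes the trivial class, together with the fact that a class winding $k\neq0$ times around $C$ is realized only by an arc running along $C$. Now cutting $\mathcal P$ along $\alpha$, which joins the other two boundary geodesics $2b$ and $2c$, produces an annulus that is a collar of $2a$, so its fundamental group is exactly $\langle 2a\rangle$ and every arc in it with endpoints on $2a$ is peripheral; hence a $J_j$ disjoint from $\alpha$ with both endpoints on $2a$ cannot exist. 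Symmetrically, cutting along $\gamma$ produces a collar of $2c$, eliminating the second alternative (this also covers the degenerate sub-case where $J_j$ is a single sub-segment disjoint from both seams). Therefore at least one sub-segment of $J_j$ is one of the eight changing types $1.1,1.2,1.5,1.6,1.7,1.8,1.13,1.14$, which is the assertion.

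The routine parts are the bookkeeping: reading off from Figure~\ref{fig 14 kinds of type 1} which curves each of the fourteen types joins, and noting that a sub-segment of $J_j$ touches $2a$ or $2c$ only at an endpoint of $J_j$. The only real content is the non-peripheral-geodesic-arc fact and the identification of $\mathcal P\setminus\alpha$ and $\mathcal P\setminus\gamma$ as boundary collars; this is the point I expect to phrase most carefully, and since a sub-arc of the filling geodesic $\eta$ need not be simple I would carry it out at the level of $\pi_1(\mathcal P)$ rather than by cut-and-paste. No serious obstacle is anticipated: this lemma is essentially a case-organizing step that makes the subsequent length estimate \eqref{dd-s} applicable.
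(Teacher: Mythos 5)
Your argument for the case $X\neq\mathcal P$ is essentially the paper's own proof, stated more carefully: the paper too observes that each of the six non-changing types connects only elements within one of the two sets $\{a,\gamma\}$ and $\{c,\alpha\}$, deduces that a $J_j$ consisting entirely of non-changing pieces would keep both endpoints on $2a$ and avoid $\alpha$ (or both on $2c$ and avoid $\gamma$), and rules this out on the grounds that such a $J_j$ ``would be homotopic to a piece of geodesic line $a$,'' contradicting the fact that $J_j$ is a geodesic sub-arc of the closed geodesic $\eta$. Your cutting-along-the-seam identification of a collar and the explicit no-bigon/Gauss--Bonnet fact are precisely what makes that last sentence rigorous, so this is a clarification of the same route rather than a different one.

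You do, however, discard the sub-case $X=\mathcal P\cong S_{0,3}$ with the remark ``otherwise no $J_j$ occurs,'' while the paper treats that sub-case inside the proof of this very lemma by taking the whole closed geodesic $\eta$ to be the single $J$ (see the sentence ``If $X=\mathcal P\cong S_{0,3}$, we just denote $\eta'$ $\ldots$'' preceding the lemma). There $J=\eta$ is a closed loop with no endpoints, so the peripheral-arc argument does not apply; one uses the filling hypothesis directly: a filling closed geodesic in $S_{0,3}$ must cross both seams $\alpha$ and $\gamma$, so some sub-segment runs from $\alpha$ to $\gamma$ and is of type 1.1 or 1.2. Your partition observation already implies this (a chain of only non-changing sub-segments would stay in $\{a,\gamma\}$ or in $\{c,\alpha\}$, hence avoid one seam), but the case must be stated, not excluded, since the length estimate built on this lemma is applied to that single $J$ as well.
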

\begin{proof}
	If $X=\mathcal{P}\cong S_{0,3}$, then $\eta$ is a single $J$ and does not intersect with $2a$ and $2c$. So it is a combination of types $1.1, 1.2, 1.3$, and $1.4$. Since $\eta$ is a filling closed curve, it must intersect with $\alpha$ and $\gamma$. So $\eta$ contains at least one segment from $\alpha$ to $\gamma$, that is of type 1.1 or 1.2.
	
	If $X$ is not $S_{0,3}$, then $\eta$ must intersect with at least one of $2a$ and $2c$, and thus $J_j$ must have an endpoint on $a$ or $c$. Suppose $J_j$ do not contain any of types $1.1, 1.2, 1.5, 1.6, 1.7, 1.8, 1.13$ and $1.14$. Then $J_j$ only consists of segments which may be from $a$ to $\gamma$, or from $\gamma$ to $\gamma$, or from $c$ to $\alpha$ and or from $\alpha$ to $\alpha$. So if starting at a point on $a$, the geodesic $J_j$ can only travel to $\gamma$, then go to next $\gamma$ several times, and finally return to $a$. But in this way $J_j$ would be homotopic to a piece of geodesic line $a$, which contradicts to the assumption that $J_j$ is part of a shortest closed geodesic representative for $\eta$ as shown in \eqref{J-j}. Similarly the geodesic $J_j$ can not start at a point in $c$. So $J_j$ must contain at least one segment of those 8 kinds.
\end{proof}

Next we prove \eqref{dd-s} for those $8$ types in Lemma \ref{lem J contain one of 8 kinds type 1} case by case.

\begin{lemma}[type 1.1]\label{lem type 1.1}
If $b\geq Am$ and \eqref{w-small-1} holds, then for type 1.1 in Figure \ref{fig 14 kinds of type 1} we have  
$$|\dot d +1|=O_A\left(\frac{m^2}{b^2}\right).$$
\end{lemma}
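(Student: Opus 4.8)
Recall that in type 1.1 the segment $d$ is a geodesic arc in $\mathcal P$ running from a point $p\in\alpha$ to a point $q\in\gamma$, where we keep fixed the length $x$ of the sub-arc of $\alpha$ from $p$ to $c$ and the length $y$ of the sub-arc of $\gamma$ from $q$ to $a$. The plan is to express $d$ through the rectangle formula of Lemma~\ref{lem hyp formula} and then apply the second part of Lemma~\ref{lem derivative in rectangle}. Concretely, I would split the pair of pants along the seam $\beta$ (the common perpendicular between $2a$ and $2c$) so that $d$, together with $x$ (a sub-arc of $\alpha$, which is perpendicular to $c$) and $y$ (a sub-arc of $\gamma$, perpendicular to $a$), bounds a quadrilateral one of whose sides is a sub-arc of the seam region; after possibly further subdividing, one lands in the ``rectangle with two right angles'' configuration of Figure~\ref{fig 3 polygon} with $a$ replaced by a segment $z$ lying along $\beta$, $x$ and $y$ the two fixed legs, and $\alpha$ (in the notation of Lemma~\ref{lem derivative in rectangle}) equal to $d$. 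So I would write $\cosh d = \cosh x\cosh y\cosh z - \sinh x\sinh y$, where $z$ is a segment of $\beta$ whose length varies with $t$.

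\textbf{Controlling the auxiliary quantities.} The inputs needed for Lemma~\ref{lem derivative in rectangle} are: (i) $\dot x=\dot y=0$ (these are fixed by construction); (ii) $z\geq C$ for a universal constant $C$, so that the ``in particular'' clause applies with $d\geq z$; and (iii) a bound on $\dot z$. For (ii) I would argue that $z$ is comparable to $\beta$ up to a bounded additive error (it is a sub-segment of $\beta$ cut off by the feet of the perpendiculars from $p$ and $q$), and Part~(2) of Lemma~\ref{pro-abg} gives $\cosh\beta\geq 1+\frac{1}{2\pi^2}\frac{b^2}{m^2}$, hence $\beta\geq$ const since $b\geq Am$; one must check the feet of those perpendiculars do not eat up all of $\beta$, which follows because $x,y$ are bounded (they are half-segments of $\alpha,\gamma$, and $\alpha,\gamma = O_A(1)$ by... actually one needs $\alpha,\gamma$ bounded, which follows from \eqref{sinh a sinh gamma =1 type 1} together with $\cosh w\asymp 1$ and $a,c$ bounded below — a short separate estimate). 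For (iii), $\dot z$ differs from $\dot\beta$ by the derivatives of the two ``offset'' pieces along $\beta$ cut off by the perpendiculars from the fixed points $p,q$; each such offset is itself governed by a rectangle relation with fixed legs $x$ (resp. $y$) and varying hypotenuse $\alpha$ (resp. $\gamma$), so applying Lemma~\ref{lem derivative in rectangle} once more and using $\dot\alpha,\dot\gamma = O_A(m^2/b^2)$ from Part~(1) of Lemma~\ref{pro-abg} shows each offset has derivative $O_A(m^2/b^2)$; combined with $|\dot\beta+1| = O_A(m^2/b^2)$ from Part~(2) of Lemma~\ref{pro-abg} this yields $|\dot z+1| = O_A(m^2/b^2)$.

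\textbf{Conclusion.} Feeding $\dot x=\dot y=0$, $z\geq C$, and $|\dot z + 1| = O_A(m^2/b^2)$ into the second inequality of Lemma~\ref{lem derivative in rectangle} (with the roles: its ``$a$'' $=z$, its ``$\alpha$'' $=d$, its ``$x,y$'' $=x,y$) gives
\[
|\dot d - \dot z| \leq C'_C\left(e^{-z}|\dot z| + |\dot x| + |\dot y|\right) = C'_C e^{-z}|\dot z| = O_A\!\left(\frac{m^2}{b^2}\right),
\]
since $e^{-z}$ is bounded and $|\dot z|$ is bounded. Therefore $|\dot d + 1| \leq |\dot d - \dot z| + |\dot z + 1| = O_A(m^2/b^2)$, which is exactly the claim.

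\textbf{Expected main obstacle.} The routine part is the final derivative estimate; the genuinely fiddly step is step (iii) above — pinning down precisely how the varying seam-segment $z$ relates to $\beta$ and the fixed points $p,q$, i.e. setting up the right chain of rectangle/pentagon relations so that the offsets are seen to have derivative $O_A(m^2/b^2)$. One must also be careful that the decomposition of the quadrilateral into right-angled pieces is geometrically valid (the relevant perpendiculars land where expected), which is where the hypothesis that $p$ stays on $\alpha$ and $q$ stays on $\gamma$ throughout the deformation — guaranteed by Lemma~\ref{pro-abg}(1), since $\alpha,\gamma$ only increase — gets used. The subsequent lemmas (types 1.2, 1.5, 1.6, 1.7, 1.8, 1.13, 1.14) should follow the same template with minor combinatorial variations in how $d$ decomposes.
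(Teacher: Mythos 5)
Your geometric decomposition for type 1.1 is not the one the paper uses, and unfortunately it does not exist as described. For type 1.1 the correct rectangle is $(\gamma-y,\,b,\,\alpha-x,\,d)$: both $\alpha$ and $\gamma$ are perpendicular to the seam $b$ (the half of the boundary geodesic $2b$), so the two corners on $b$ are genuine right angles, the two legs are the \emph{complementary} arcs $\alpha-x$ and $\gamma-y$ (not $x$ and $y$), and the opposite side is $d$. You instead propose a quadrilateral with legs $x,y$ and a base $z$ lying along $\beta$, and write $\cosh d=\cosh x\cosh y\cosh z-\sinh x\sinh y$. This identity requires $x\perp z$ and $y\perp z$; but $x$ (a sub-arc of $\alpha$) is perpendicular to $c$, and $y$ (a sub-arc of $\gamma$) is perpendicular to $a$, neither to $\beta$. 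The region bounded by $x$, a piece of $c$, $\beta$, a piece of $a$, $y$, and $d$ is a hexagon, and no subdivision produces the rectangle relation you wrote with those particular legs. The common perpendicular of the lines containing $\alpha$ and $\gamma$ is $b$, not anything along $\beta$, which is exactly why the paper takes $b$ as the base.

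There is also a second, independent gap: your step~(iii) relies on $\alpha,\gamma=O_A(1)$, which you try to get from ``$a,c$ bounded below.'' That premise is false in the Type-1 construction. From the pentagon relation $\cosh c=\sinh b_2\sinh w$, $c$ can be arbitrarily close to $0$ (take $\sinh b_2$ close to $1/\sinh w$), and then $\sinh\alpha=\cosh w/\sinh c$ (see \eqref{sinh a sinh gamma =1 type 1}) is arbitrarily large. The paper needs no such bound because the rectangle's base is $b\geq Am$, and the legs $\alpha-x,\gamma-y$ may be any nonnegative length.

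With the correct rectangle the proof is a one-liner: since $x,y$ are fixed, $\frac{d}{dt}(\gamma-y)=\dot\gamma$ and $\frac{d}{dt}(\alpha-x)=\dot\alpha$, both $O_A(m^2/b^2)$ by Lemma~\ref{pro-abg}(1); $\dot b=-1$; $d\geq b\geq Am\geq A$, so the ``in particular'' part of Lemma~\ref{lem derivative in rectangle} applies; and $e^{-b}\leq 2/b^2$. Feeding these in gives $|\dot d+1|=|\dot d-\dot b|\leq C'_A(e^{-b}+|\dot\alpha|+|\dot\gamma|)=O_A(m^2/b^2)$, with no offsets to control. (Incidentally, the rectangle through $\beta$ with legs on $a$ and $c$ is what the paper uses for types 1.13 and 1.14, which may be the source of the mix-up.)
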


\begin{proof}
Recall that $\dot x=\dot y=0$ and $\dot b =-1$. Since $b\geq Am$ and \eqref{w-small-1} holds, it follows by Lemma \ref{pro-abg} that $|\dot\alpha|= O_A\left(\frac{m^2}{b^2}\right)$ and $|\dot\gamma|= O_A\left(\frac{m^2}{b^2}\right)$. It is clear that $e^{-b}\leq \frac{2}{b^2}$. Next we apply Lemma \ref{lem derivative in rectangle} to the rectangle $(\gamma-y,b,\alpha-x, d)$ to get
\begin{equation*}
|\dot d +1|=|\dot d-\dot b| \leq A'(e^{-b}+|\dot\alpha|+|\dot\gamma|) =O_A\left(\frac{m^2}{b^2}\right)
\end{equation*}
for some constant $A'>0$ only depending on $A$.
\end{proof}

\begin{lemma}[type 1.2]\label{lem type 1.2}
If $b\geq Am$ and \eqref{w-small-1} holds, then for type 1.2 in Figure \ref{fig 14 kinds of type 1} we have  
$$|\dot d +1|=O_A\left(\frac{m^2}{b^2}\right).$$
\end{lemma}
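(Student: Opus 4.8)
The plan is to mimic the proof of Lemma \ref{lem type 1.1} for type 1.1, adapting it to the one extra feature of a type 1.2 segment. Recall that in type 1.1 the deformed arc $d$ is realized as the diagonal of a single right-angled quadrilateral whose base is the relevant half of $b$ and whose lateral sides are the frozen sub-arcs $\alpha-x$ and $\gamma-y$, after which $\dot d$ is read off from Lemma \ref{lem derivative in rectangle} together with $\dot x=\dot y=0$, $\dot b=-1$, $e^{-b}\leq 2/b^{2}$ and $|\dot\alpha|,|\dot\gamma|=O_A\!\left(\tfrac{m^{2}}{b^{2}}\right)$ (Lemma \ref{pro-abg}). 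A type 1.2 segment is the other $\alpha$-to-$\gamma$ arc in Figure \ref{fig 14 kinds of type 1}: reading off the figure, it runs along the far side of $\mathcal P$ and therefore crosses the third perpendicular $\beta$ exactly once (instead of the arc of $b$). Accordingly I would cut $d$ at its intersection point $r$ with $\beta$ into $d_{1}$ (from $\alpha$ to $\beta$) and $d_{2}$ (from $\beta$ to $\gamma$), realize each $d_{i}$ as the diagonal of a right-angled quadrilateral obtained from the cut halves of the two perpendiculars it meets and the intervening boundary arc (of $c$, resp. of $a$) via Lemma \ref{lem hyp formula}, choose the cut point in the deformed pants so that the half of $\beta$ in one quadrilateral is frozen, and estimate $\dot d=\dot d_{1}+\dot d_{2}$.

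For the bookkeeping I would first record, under $b\geq Am$ and \eqref{w-small-1}, the consequences of Lemma \ref{pro-abg}: $0<\dot\alpha,\dot\gamma=O_A\!\left(\tfrac{m^{2}}{b^{2}}\right)$, $\dot\beta<0$ with $|\dot\beta+1|=O_A\!\left(\tfrac{m^{2}}{b^{2}}\right)$, and $\cosh\beta\geq 1+\tfrac{1}{2\pi^{2}}\tfrac{b^{2}}{m^{2}}$, hence $e^{-\beta}=O_A\!\left(\tfrac{m^{2}}{b^{2}}\right)$; together with $\dot a=\dot c=0$, $\dot x=\dot y=0$ and a short computation showing that the pieces of $a$ and $c$ cut off by the moving feet of $\alpha,\beta,\gamma$ also have derivatives $O_A\!\left(\tfrac{m^{2}}{b^{2}}\right)$. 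Feeding these into Lemma \ref{lem derivative in rectangle} applied to the two quadrilaterals, every contribution is $O_A\!\left(\tfrac{m^{2}}{b^{2}}\right)$ except the one coming from the non-frozen half of $\beta$, whose derivative is $\dot\beta=-1+O_A\!\left(\tfrac{m^{2}}{b^{2}}\right)$; summing gives $\dot d=-1+O_A\!\left(\tfrac{m^{2}}{b^{2}}\right)$, which is the desired bound $|\dot d+1|=O_A\!\left(\tfrac{m^{2}}{b^{2}}\right)$. That segments of type 1.2 actually occur is already guaranteed by Lemma \ref{lem J contain one of 8 kinds type 1}, so this estimate is all that is needed for this case of \eqref{dd-s}.

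The step I expect to be the main obstacle is applying Lemma \ref{lem derivative in rectangle} uniformly: its clean second form needs the base of each quadrilateral bounded below, but here the bases are pieces of the boundary curves $a$ and $c$, which may be arbitrarily short. I would get around this by using instead the first, unconditional inequality of that lemma, which bounds $\left|\dot d_{i}\tanh d_{i}-\left(\text{sum of }\dot{(\cdot)}\tanh(\cdot)\text{ over the three sides}\right)\right|$ by $\tfrac{1}{\cosh(\text{base})-1}$ times terms of the form $|\dot{(\cdot)}|/\cosh^{2}(\cdot)$ in the two lateral sides; since a type 1.2 diagonal $d_{i}$ contains a long sub-arc of $\beta$, both $d_{i}$ and the lateral sides it meets are comparable to $\beta$, so the potentially singular factor $\tfrac{1}{\cosh(\text{base})-1}$ is always paired with an exponentially small $\cosh^{-2}$ of a quantity of order $\beta$, keeping the whole error $O_A\!\left(\tfrac{m^{2}}{b^{2}}\right)$, while dividing through by $\tanh d_{i}$ is harmless because $d_{i}$ is large. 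Carrying out this error analysis precisely --- rather than any new geometric input --- is where the real work lies; the rest is parallel to the type 1.1 case and to the forthcoming types 1.5--1.8 and 1.13--1.14 lemmas.
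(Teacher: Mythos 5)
Your decomposition --- cutting $d$ at its intersection with $\beta$ and estimating each half as the diagonal of a Lambert quadrilateral with base $c$ (respectively $a$) --- is not the paper's. The paper instead introduces a single auxiliary perpendicular $h$ between the geodesics through $\alpha$ and $\gamma$, computes $\cosh h = 2\cosh a\cosh c + \cosh b$ from Lemma~\ref{lem hyp formula}, and notes that this forces $h>b\geq Am$ unconditionally; $d$ is then the diagonal of the single quadrilateral $(\alpha-e-x,\,h,\,y-f,\,d)$ whose base $h$ is always long, so the clean second form of Lemma~\ref{lem derivative in rectangle} applies directly once $\dot h, e, f, \dot e, \dot f$ are estimated, and $\beta$ never appears. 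The whole point of $h$ is to avoid ever using $a$ or $c$ as a base, because in Type-1 neither has a uniform lower bound: the paper explicitly allows $2a,2c$ to be short interior geodesics, and from $\cosh a=\sinh b_1\sinh w$, $\cosh c=\sinh b_2\sinh w$ one can make $a$ and $c$ simultaneously tiny whenever $\sinh w$ is well below the ceiling $2\pi m/b$ of \eqref{w-small-1}.

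This is precisely where your argument has a gap. The first form of Lemma~\ref{lem derivative in rectangle} produces an error of the shape $\tfrac{|\dot\beta|}{(\cosh(\text{base})-1)\cosh^2 z}$, where $z$ is the moving sub-arc of $\beta$. Your claim that ``$d_i$ and the lateral sides it meets are comparable to $\beta$'' is not true: the two sub-arcs $z_1,z_2$ of $\beta$ satisfy $z_1+z_2=\beta$ but are otherwise dictated by where $d$ happens to cross $\beta$, and the $\alpha$- and $\gamma$-arcs bear no relation to $\beta$. Even choosing the end of $\beta$ to freeze so that $z\geq\beta/2$, the resulting bound is, via \eqref{beta},
$$\frac{2}{(\cosh c-1)(\cosh\beta+1)}=\frac{2\sinh a}{\tanh(c/2)\bigl(\cosh b+\cosh(a+c)\bigr)},$$
and inserting \eqref{restriction type 1} yields only $O_A\!\bigl(\tfrac{m^2}{b^2}\bigr)\cdot\tfrac{1}{\tanh(c/2)\cosh c}$, which blows up as $c\to 0$; the symmetric bound from the other quadrilateral blows up as $a\to 0$. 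When $a$ and $c$ are both small, neither choice gives $|\dot d+1|=O_A(m^2/b^2)$. (There are also smaller untreated points: extracting $\dot d_i$ from the lemma introduces a ratio $\tanh z_i/\tanh d_i$ that must be shown to be $1+O_A(m^2/b^2)$, and the identity $\dot d=\dot d_1+\dot d_2$ with a cut point on the moving $\beta$ needs a short first-variation argument.) The essential missing ingredient is a base that is uniformly long, which is exactly what the paper's $h$ supplies.
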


\begin{proof}
Let $h$ be the perpendicular between $\alpha$ and $\gamma$ in type 1.2. Let $e$ to be part of $\alpha$ between $h$ and $b$, and let $f$ to be part of $\gamma$ between $h$ and $a$ (see Figure \ref{fig type 1.2}). Then we have a rectangle $(\alpha-e-x,h,y-f,d)$ with $\angle(h,\alpha-e-x)=\angle(h,y-f)=\frac{\pi}{2}$. We remark here that the length $\alpha-e-x$ and $y-f$ may be negative. Now we compute the lengths of these edges and their derivatives.

\begin{figure}[ht]
	\centering
	\includegraphics[width=5.5cm]{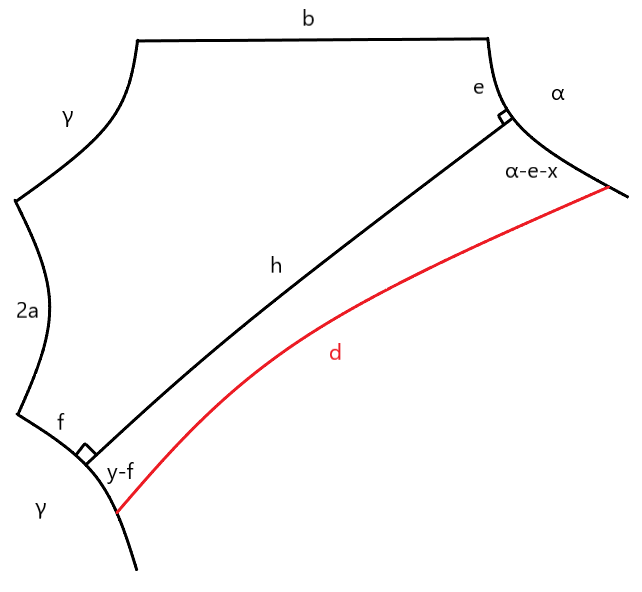}
	\caption{}
	\label{fig type 1.2}
\end{figure}

By Lemma \ref{lem hyp formula} and \eqref{gamma} we have 
\begin{eqnarray}\label{h-form}
\cosh h &=& \sinh 2a\sinh b\cosh \gamma - \cosh 2a\cosh b \\
&=& 2\sinh a\cosh a\sinh b\frac{\cosh c+\cosh a\cosh b}{\sinh a\sinh b} \nonumber\\
&& - (2\cosh^2 a -1)\cosh b \nonumber\\
&=& 2\cosh a\cosh c + \cosh b \nonumber
\end{eqnarray}
and 
\begin{eqnarray}\label{dot h type 1.2}
\dot h &=& \dot b \frac{\sinh b}{\sinh h} \\
&=& \dot b \frac{\cosh h}{\sinh h} \frac{\sinh b}{2\cosh a\cosh c + \cosh b}.\nonumber
\end{eqnarray}
So by \eqref{w-small-1}, \eqref{restriction type 1}, $h\geq b \geq A$ and $\dot b =-1$, we have
\begin{eqnarray*}
|\dot h+1| &=& \left|1- \frac{\cosh h}{\sinh h} \frac{\sinh b}{2\cosh a\cosh c + \cosh b}\right| \\
&\leq& \left|1- \frac{\sinh b}{2\cosh a\cosh c + \cosh b}\right| + \left|1- \frac{\cosh h}{\sinh h} \right|\frac{\sinh b}{2\cosh a\cosh c + \cosh b} \\
&\leq& \frac{2\frac{\cosh a\cosh c}{\cosh b} + \frac{e^{-b}}{\cosh b}}{2\frac{\cosh a\cosh c}{\cosh b} + 1} + \left(\frac{\cosh b}{\sinh b} -1\right) \\
&\leq & 4\pi^2 \frac{m^2}{b^2}+e^{-b}+\frac{e^{-b}}{\sinh A}\\
&=& O_A\left(\frac{m^2}{b^2}\right).
\end{eqnarray*}

By Lemma \ref{lem hyp formula}, \eqref{gamma}, \eqref{sinh a sinh gamma =1 type 1}, \eqref{derivative gamma type 1} and \eqref{dot h type 1.2} we have 
\begin{eqnarray}
\sinh e &=& \sinh 2a\frac{\sinh\gamma}{\sinh h}\\
&=& \frac{2\cosh w\cosh a}{\sinh h} \nonumber
\end{eqnarray}
and
\begin{eqnarray}
\quad \ \dot e &=& \frac{1}{\cosh e} \sinh 2a \bigg(\dot \gamma\frac{\cosh \gamma}{\sinh h} - \dot h \frac{\sinh \gamma\cosh h}{\sinh^2 h}\bigg) \\
&=& \frac{\sinh 2a}{\cosh e} \bigg[\frac{1}{\cosh w} \frac{(\cosh a+\cosh b\cosh c)}{\sinh^2 b} \frac{(\cosh c+\cosh a\cosh b)}{\sinh a\sinh b\sinh h} \nonumber\\
&& +\frac{\sinh b}{\sinh h} \frac{\cosh w}{\sinh a} \frac{\cosh h}{\sinh^2 h} \bigg] \nonumber\\
&=& \frac{1}{\cosh e} \bigg[\frac{2\cosh a}{\cosh w} \frac{(\cosh a+\cosh b\cosh c)}{\sinh^2 b} \frac{(\cosh c+\cosh a\cosh b)}{\sinh b\sinh h} \nonumber\\
&& +2\frac{\cosh a}{\sinh h} \frac{\sinh b}{\sinh h} \cosh w \frac{\cosh h}{\sinh h} \bigg]. \nonumber
\end{eqnarray}
Since $h>b\geq Am$, by \eqref{w-small-1} and \eqref{restriction type 1} it is not hard to see that $\frac{\cosh a}{\sinh h}= O_A\left(\frac{m^2}{b^2}\right)$ and $\frac{2\cosh a}{\cosh w} \frac{(\cosh a+\cosh b\cosh c)}{\sinh^2 b} \frac{(\cosh c+\cosh a\cosh b)}{\sinh b\sinh h}= O_A\left(\frac{m^4}{b^4}\right)$. Recall that by \eqref{w-small-1} $\cosh w=\sqrt{\sinh^2 w+1}\leq \sqrt{4\pi^2 \frac{m^2}{b^2}+1}\leq \sqrt{\frac{4\pi^2}{A^2}+1}$. So we have that $e\leq \sinh e= O_A\left(\frac{m^2}{b^2}\right)$ and $0\leq \dot e =O_A\left(\frac{m^2}{b^2}\right)$.

By Lemma \ref{lem hyp formula}, \eqref{h-form} and \eqref{dot h type 1.2} we have 
\begin{equation}
\sinh f = \sinh b\frac{\sinh\gamma}{\sinh h}
\end{equation}
and
\begin{eqnarray}
\quad \quad \dot f &=& \frac{1}{\cosh f} \bigg( \dot\gamma \cosh\gamma \frac{\sinh b}{\sinh h} + \dot b \sinh\gamma \frac{\cosh b}{\sinh h} - \dot h \sinh \gamma\frac{\sinh b\cosh h}{\sinh^2 h}\bigg) \\
&=& \dot\gamma \frac{\cosh\gamma}{\cosh f}\frac{\sinh b}{\sinh h} + \dot b \frac{\sinh \gamma}{\cosh f} \bigg( \frac{\cosh b}{\sinh h} - \frac{\sinh^2 b\cosh h}{\sinh^3 h} \bigg) \nonumber\\
&=& \dot\gamma \frac{\cosh\gamma}{\cosh f}\frac{\sinh b}{\sinh h} + \dot b \frac{\sinh \gamma}{\cosh f} \frac{(\cosh h -\cosh b)(1+\cosh b\cosh h)}{\sinh^3 h} \nonumber\\
&=& \dot\gamma \frac{\cosh\gamma}{\cosh f}\frac{\sinh b}{\sinh h} - \frac{\sinh \gamma}{\cosh f} \frac{2\cosh a\cosh c(1+\cosh b\cosh h)}{\sinh^3 h}.\nonumber
\end{eqnarray}
Since $\frac{\sinh \gamma}{\sinh f} = \frac{\sinh h}{\sinh b}\leq \frac{2\cosh a\cosh c+\cosh b}{\sinh b} = O_A(1)$ is bounded (by \eqref{restriction type 1} and \eqref{h-form}), we know that $\frac{\cosh \gamma}{\cosh f}=O_A(1)$ is also bounded. Apply \eqref{w-small-1} and \eqref{restriction type 1} and the fact that $h>b\geq Am$, we have that $\frac{2\cosh a\cosh c(1+\cosh b\cosh h)}{\sinh^3 h}= O_A\left(\frac{m^2}{b^2}\right)$. Thus together with Lemma \ref{pro-abg}, we have $|\dot f|= O_A\left(\frac{m^2}{b^2}\right)$. 

Now consider the rectangle $(\alpha-e-x,h,y-f,d)$. By Lemma \ref{pro-abg} and the discussion above we have $|\dot \alpha-\dot e|=O_A\left(\frac{m^2}{b^2}\right)$, $|\dot f|= O_A\left(\frac{m^2}{b^2}\right)$ and $|\dot h+1|=O_A\left(\frac{m^2}{b^2}\right)$. Recall that $\dot x=\dot y=0$ and $h>b\geq Am$. Then we apply Lemma \ref{lem derivative in rectangle} to the rectangle $(\alpha-e-x,h,y-f,d)$ to get $|\dot d+1|= O_A\left(\frac{m^2}{b^2}\right)$ as desired.
\end{proof}

\begin{lemma}[types 1.5--1.8]\label{lem type 1.5-1.8}
If $b\geq Am$ and \eqref{w-small-1} holds, then for types 1.5, 1.6, 1.7 and 1.8 in Figure \ref{fig 14 kinds of type 1} we have  
$$|\dot d +1|=O_A\left(\frac{m^2}{b^2}\right).$$
\end{lemma}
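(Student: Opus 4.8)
\textbf{Proof plan for Lemma \ref{lem type 1.5-1.8} (types 1.5--1.8).}

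The plan is to treat these four types by the same mechanism used in Lemmas \ref{lem type 1.1}--\ref{lem type 1.2}: identify an appropriate right-angled rectangle whose ``fourth side'' is the segment $d$ we care about, show that the three other sides and the relevant angle vary slowly (derivatives of size $O_A(m^2/b^2)$) while the ``long'' side decreases at speed essentially $-1$, and then invoke Lemma \ref{lem derivative in rectangle}. The only novelty compared to types 1.1 and 1.2 is that now one endpoint of $J_j$ sits on a boundary geodesic ($2a$ for types 1.5, 1.6, 1.9, 1.10; $2c$ for types 1.7, 1.8, 1.11, 1.12), so instead of a perpendicular half-segment of $\alpha$ or $\gamma$ we must carry along the segment $u$ (resp.\ $v$) of the boundary geodesic from the endpoint of $J_j$ to the foot of $\beta$, whose length is \emph{fixed} by construction ($\dot u = \dot v = 0$).

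First, for each of the types 1.5 and 1.6 (from $a$ to $\alpha$): I would drop the perpendicular from the $\alpha$-endpoint's side and, using the hexagon relations \eqref{alpha}, \eqref{gamma}, \eqref{beta} together with \eqref{sinh a sinh gamma =1 type 1}, express $d$ as the fourth side of a rectangle whose other three sides are built from $u$ (on $2a$), the long perpendicular $\gamma$ or $\beta$ (playing the role of the side that decreases essentially like $-1$, as in Lemma \ref{pro-abg}(2) for $\beta$ and \eqref{derivative gamma type 1} for $\gamma$), and the fixed half-segment $x$ of $\alpha$ from $p$ to $c$. Since $\dot u=\dot x=0$ and (by Lemma \ref{pro-abg}) $|\dot\gamma|=O_A(m^2/b^2)$ while $|\dot\beta+1|=O_A(m^2/b^2)$ and $e^{-b}\le 2/b^2$, Lemma \ref{lem derivative in rectangle} (applied with $C$ a fixed constant, valid because the relevant long side is $\ge b\ge Am\ge A$) gives $|\dot d+1|=O_A(m^2/b^2)$. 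Types 1.7 and 1.8 (from $c$ to $\alpha$) are handled identically after swapping the roles of $2a\leftrightarrow 2c$ and $\gamma\leftrightarrow\beta$ in the hexagon of Figure \ref{fig 2 type hexagon}; one uses $v$ in place of $u$ and notes that, since $2b$ is the boundary being shortened, the perpendiculars adjacent to $2b$ still satisfy the needed monotonicity and slow-variation estimates coming from Lemma \ref{pro-abg}. As with type 1.2, in some of these cases one first needs an auxiliary perpendicular (analogous to $h$ in Figure \ref{fig type 1.2}) so that the rectangle is literally right-angled; the length of this auxiliary perpendicular and its derivative are then estimated by exactly the same computation as in \eqref{h-form}--\eqref{dot h type 1.2}, yielding $|\dot{(\cdot)}+1|=O_A(m^2/b^2)$, after which Lemma \ref{lem derivative in rectangle} finishes the job.

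The main obstacle I anticipate is purely bookkeeping: making sure that in types 1.5--1.8 the ``long'' side of the rectangle — the one whose derivative is close to $-1$ rather than close to $0$ — is correctly identified, since depending on whether $J_j$ exits through $\alpha$ on the $2b$-side or the $2c$-side the relevant long perpendicular is $\gamma$, $\beta$, or an auxiliary segment like $h$, and one must confirm in each case that this side is $\ge b-o(b)\ge Am$ so that the hypothesis $\alpha\ge a\ge C$ of Lemma \ref{lem derivative in rectangle} applies with a uniform $C$. Once the geometry of each of the four pictures in Figure \ref{fig 14 kinds of type 1} is set up, all four estimates reduce to a single application of Lemma \ref{lem derivative in rectangle} with inputs $\dot x=\dot y=\dot u=\dot v=0$, $e^{-b}=O(m^2/b^2)$, and the Lemma \ref{pro-abg} bounds, so no genuinely new inequality is needed beyond those already proved.
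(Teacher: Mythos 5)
Your high-level strategy (set up a rectangle with $d$ as the fourth side and invoke Lemma \ref{lem derivative in rectangle}) matches the paper's, but there are concrete errors in the setup that would derail the argument as you have written it.

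First, the claim that $\gamma$ ``decreases essentially like $-1$'' is wrong and in fact contradicts Lemma \ref{pro-abg}(1), which gives $0 < \dot\gamma = O_A(m^2/b^2)$: $\gamma$ \emph{increases slowly}, exactly like $\alpha$. If you took $\gamma$ to be the side of the rectangle opposite $d$ and fed it into Lemma \ref{lem derivative in rectangle}, you would get $|\dot d| = O_A(m^2/b^2)$, not $|\dot d + 1| = O_A(m^2/b^2)$. Second, neither $\gamma$ nor $\beta$ is geometrically the correct opposite side for types 1.5--1.8: for a rectangle with one side running along $a$ (through the foot of $J_j$, tracked by $u$) and the opposite side running along $\alpha$ (through $p$, tracked by $x$), the connecting perpendicular must be the common perpendicular $h$ \emph{from $a$ to $\alpha$} (or from $c$ to $\gamma$ for types 1.7--1.8), which is a new auxiliary object, not one of $\alpha,\beta,\gamma$. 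It is precisely this $h$ whose derivative is close to $-1$, and showing that is the crux of the lemma.

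You do acknowledge, late in the proposal, that an auxiliary perpendicular ``analogous to $h$ in Figure \ref{fig type 1.2}'' may be needed, but the claim that ``the length of this auxiliary perpendicular and its derivative are then estimated by exactly the same computation as in \eqref{h-form}--\eqref{dot h type 1.2}'' is not accurate. The $h$ of type 1.2 is the common perpendicular of $\alpha$ and $\gamma$, and satisfies the hexagon identity $\cosh h = 2\cosh a\cosh c + \cosh b$. The $h$ of type 1.5 is the common perpendicular of $a$ and $\alpha$ and satisfies the pentagon identity $\cosh h = \sinh b \sinh\gamma$; its derivative picks up a contribution from $\dot\gamma$ as well as $\dot b$, and the way one shows $1/\cosh h = O_A(m^2/b^2)$ (so that Lemma \ref{lem derivative in rectangle} applies and the error term is small) is different. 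In addition, the two sides of the rectangle adjacent to $h$ are not simply $u$ and $x$: they are $f + u - a$ and $e + x - \alpha$, where $e$ is the piece of $\alpha$ between the foot of $h$ and $b$, and $f$ is the piece of $a$ between the foot of $h$ and $\gamma$. One must separately establish $|\dot e| = O_A(m^2/b^2)$ and $|\dot f| = O_A(m^2/b^2)$ (together with the already-known $|\dot\alpha| = O_A(m^2/b^2)$ to handle the $-\alpha$ in the second side) before Lemma \ref{lem derivative in rectangle} can be applied. Your proposal omits $e$ and $f$ entirely. Without the correct identification of $h$ and the estimates on $e,f$, the proof does not go through.
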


\begin{proof}
We only prove the lemma for type 1.5. For the other three types, the proofs are similar as the one for type 1.5.

Let $h$ be the perpendicular between $a$ and $\alpha$ in type 1.5. Let $e$ to be part of $\alpha$ between $h$ and $b$, and let $f$ to be part of $a$ between $h$ and $\gamma$ (see Figure \ref{fig type 1.5}). Then we have a rectangle $(f+u-a,h,e+x-\alpha,d)$ with $\angle(h,f+u-a)=\angle(h,e+x-\alpha)=\frac{\pi}{2}$. We remark here that the length $f+u-a$ and $e+x-\alpha$ may be negative. Now we compute the lengths of these edges and their derivatives.

\begin{figure}[ht]
	\centering
	\includegraphics[width=5cm]{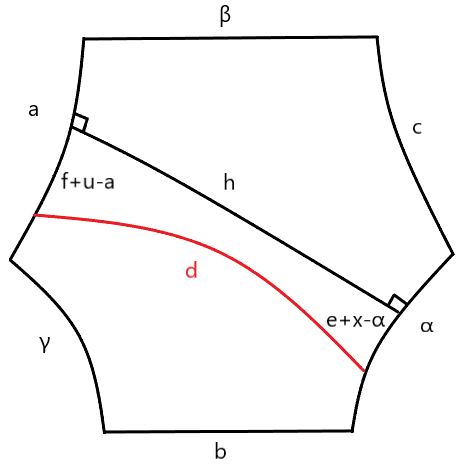}
	\caption{}
	\label{fig type 1.5}
\end{figure}

By Lemma \ref{lem hyp formula}, \eqref{gamma}, \eqref{sinh a sinh gamma =1 type 1} and \eqref{derivative gamma type 1}, we have 
\begin{eqnarray} \label{cosh h-1.5}
\cosh h &=& \sinh b \sinh \gamma \\
&=& \frac{\cosh w}{\sinh a} \sinh b \nonumber
\end{eqnarray}
and
\begin{eqnarray}
\quad \quad \dot h &=& \frac{1}{\sinh h} \bigg( \dot b \cosh b\sinh\gamma + \dot\gamma \sinh b\cosh\gamma \bigg)\\
&=& -\frac{\cosh h}{\sinh h}\frac{\cosh b}{\sinh b} \nonumber\\
&& + \frac{1}{\cosh w}\frac{(\cosh a +\cosh b\cosh c)}{\sinh^2 b}\frac{(\cosh c+\cosh a\cosh b)}{\sinh a\sinh b}\frac{\sinh b}{\sinh h} \nonumber\\
&=& -\frac{\cosh h}{\sinh h}\frac{\cosh b}{\sinh b} \nonumber\\
&&+ \frac{1}{\cosh^2 w} \frac{\cosh h}{\sinh h} \frac{(\cosh a +\cosh b\cosh c)(\cosh c+\cosh a\cosh b)}{\sinh^3 b}. \nonumber
\end{eqnarray}
Since $b\geq Am$, by \eqref{w-small-1}, \eqref{restriction type 1} and \eqref{cosh h-1.5} it is easy to see that $\frac{1}{\cosh h} = O_A\left(\frac{m^2}{b^2}\right)$. Which also implies that $|\frac{\cosh h}{\sinh h}\frac{\cosh b}{\sinh b}-1|=O_A\left(\frac{m^4}{b^4}\right)$. Then by \eqref{w-small-1} and \eqref{restriction type 1} it is not hard to see that
$$\frac{1}{\cosh^2 w} \frac{\cosh h}{\sinh h} \frac{(\cosh a +\cosh b\cosh c)(\cosh c+\cosh a\cosh b)}{\sinh^3 b}=O_A\left(\frac{m^2}{b^2}\right).$$ 
Thus, we have $$|\dot h +1|=O_A\left(\frac{m^2}{b^2}\right).$$

By Lemma \ref{lem hyp formula}, we have 
\begin{eqnarray}
\sinh (a-f) &=& \frac{\cosh c}{\sinh h}\\
&=& \frac{\cosh h}{\sinh h} \frac{1}{\cosh w} \frac{\sinh a\cosh c}{\sinh b}  \nonumber
\end{eqnarray}
and
\begin{eqnarray}
-\dot f &=& -\dot h \frac{1}{\cosh(a-f)} \frac{\cosh c\cosh h}{\sinh^2 h} \\
&=& -\dot h \frac{1}{\cosh(a-f)} \frac{\cosh h}{\sinh h} \sinh(a-f).\nonumber
\end{eqnarray}
Then again by \eqref{w-small-1}, \eqref{restriction type 1} and the bounds for $h$ and $\dot h$ above, it is easy to see that 
$$|a-f|=O_A\left(\frac{m^2}{b^2}\right)\quad \text{and} \quad |\dot f|=O_A\left(\frac{m^2}{b^2}\right).$$

By Lemma \ref{lem hyp formula}, we have 
\begin{equation}
\sinh e = \frac{\cosh f}{\sinh b} \leq \frac{\cosh a}{\sinh b}
\end{equation}
and
\begin{equation}
\dot e = \frac{1}{\cosh e}\bigg( \dot f \frac{\sinh f}{\sinh b} - \dot b \frac{\cosh f\cosh b}{\sinh^2 b} \bigg) \nonumber
\end{equation}
implying that
\begin{equation}
|\dot e| \leq |\dot f| \frac{\sinh a}{\sinh b} + \frac{\cosh a\cosh b}{\sinh^2 b}.
\end{equation}
Then by \eqref{w-small-1} and \eqref{restriction type 1} we have $e=O_A\left(\frac{m^2}{b^2}\right)$ and $|\dot e|=O_A\left(\frac{m^2}{b^2}\right).$

Recall that $\dot u=\dot a=\dot x=0$. Then we apply Lemma \ref{lem derivative in rectangle}, Lemma \ref{pro-abg} and all these bounds above to rectangle $(f+u-a,h,e+x-\alpha,d)$ to obtain  $|\dot d+1|=O_A\left(\frac{m^2}{b^2}\right)$ as desired.
\end{proof}

\begin{lemma}[types 1.13 and 1.14]\label{lem type 1.13-1.14}
If $b\geq Am$ and \eqref{w-small-1} holds, then for types 1.13 and 1.14 in Figure \ref{fig 14 kinds of type 1} we have  
$$|\dot d +1|=O_A\left(\frac{m^2}{b^2}\right).$$
\end{lemma}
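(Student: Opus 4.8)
The plan is to realize the segment $d$ of types 1.13 and 1.14 as the side opposite the seam $\beta$ in a right-angled quadrilateral, and then feed the bounds on $\beta$ from Lemma \ref{pro-abg} into the derivative estimate of Lemma \ref{lem derivative in rectangle}. This is the same mechanism as in the previous cases, but simpler: no auxiliary perpendicular $h$ is needed.

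First I would set up the quadrilateral. For types 1.13 and 1.14 the segment $d\subset J_j$ joins the boundary $2a$ to the boundary $2c$ and meets neither $\alpha$ nor $\gamma$, hence it lies in one of the two right-angled hexagons obtained by cutting $\mathcal P$ along its seams, with its two endpoints on the $2a$-side and the $2c$-side of that hexagon. Since $\beta$ is the common perpendicular of $2a$ and $2c$, these are exactly the two sides of the hexagon adjacent to the $\beta$-side. Let $u$ be the arc of $2a$ from the foot of $\beta$ on $2a$ to the endpoint of $d$ on $2a$, and let $v$ be the arc of $2c$ from the foot of $\beta$ on $2c$ to the endpoint of $d$ on $2c$, with lengths taken to be signed according to which side of $\beta$ the endpoints of $d$ lie on (the configuration treated in the remark after Lemma \ref{lem hyp formula}). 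Then $(u,\beta,v,d)$ is a rectangle with right angles at the two feet of $\beta$ and with $d$ opposite $\beta$. As in the paper, fixing the endpoints of $J_j$ means $\dot u=\dot v=0$, so the only varying quantity is $\beta$.

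Next I would collect the estimates. Since $b\ge Am$ and \eqref{w-small-1} holds, Lemma \ref{pro-abg}(2) gives $\cosh\beta\ge 1+\frac{1}{2\pi^2}\frac{b^2}{m^2}$ and $|\dot\beta+1|=O_A(\frac{m^2}{b^2})$; in particular $\beta\ge\arccosh\bigl(1+\frac{A^2}{2\pi^2}\bigr)=:C_A>0$, $|\dot\beta|=O_A(1)$, and $e^{-\beta}\le(\cosh\beta)^{-1}\le\frac{2\pi^2 m^2}{b^2}=O_A(\frac{m^2}{b^2})$. From the rectangle identity of Lemma \ref{lem hyp formula},
\[\cosh d=\cosh u\cosh v\cosh\beta-\sinh u\sinh v=\cosh u\cosh v(\cosh\beta-1)+\cosh(u-v)\ge\cosh\beta,\]
so $d\ge\beta\ge C_A$ (the inequality $\cosh d\ge\cosh\beta$ being even clearer when $uv<0$, where the last term becomes $\cosh u\cosh v(\cosh\beta-1)+1+\sinh|u|\sinh|v|$). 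Applying the second part of Lemma \ref{lem derivative in rectangle} to $(u,\beta,v,d)$, with the substitution $a\mapsto\beta$, $x\mapsto u$, $y\mapsto v$, $\alpha\mapsto d$ and $C=C_A$, together with $\dot u=\dot v=0$, yields
\[|\dot d-\dot\beta|\le C'_{C_A}\,e^{-\beta}|\dot\beta|=O_A\!\left(\frac{m^2}{b^2}\right),\]
and hence, by Lemma \ref{pro-abg}(2),
\[|\dot d+1|\le|\dot d-\dot\beta|+|\dot\beta+1|=O_A\!\left(\frac{m^2}{b^2}\right),\]
which is \eqref{dd-s} for types 1.13 and 1.14.

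I expect the only delicate point to be the sign bookkeeping: verifying that in every sub-configuration of types 1.13 and 1.14 the arc $d$ is genuinely the side opposite $\beta$ in a (possibly signed) rectangle, and that $\cosh d\ge\cosh\beta$ persists when $u$ and $v$ lie on opposite sides of $\beta$. Once the quadrilateral is correctly identified this follows directly from the remark after Lemma \ref{lem hyp formula}, and the remainder is a routine application of Lemmas \ref{pro-abg} and \ref{lem derivative in rectangle}, exactly parallel to (and lighter than) the earlier cases.
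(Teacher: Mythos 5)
Your proof is correct and follows exactly the approach the paper uses: for types 1.13 and 1.14 the paper likewise applies Lemma \ref{lem derivative in rectangle} to the rectangle $(u,\beta,v,d)$ with $\dot u=\dot v=0$ and feeds in the bounds on $\beta$ and $\dot\beta$ from Lemma \ref{pro-abg}. The paper's proof is a one-line citation of these two lemmas, while you have supplied the (correct) details, including the needed verification that $d\ge\beta\ge C_A$; no substantive difference.
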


\begin{proof}
Recall that $\dot u=\dot v=0$. For type 1.13, by Lemma \ref{pro-abg} and applying Lemma \ref{lem derivative in rectangle} to rectangle $(u,\beta,v,d)$, we obtain $|\dot d +1|=O_A\left(\frac{m^2}{b^2}\right)$ as desired. For type 1.14, the proof is similar.
\end{proof}

Now we are ready to prove Proposition \ref{prop filling curve decrease a little} for Type-1.

\begin{proposition}\label{key-pro-1}
Proposition \ref{prop filling curve decrease a little} holds for the case that $\mathcal P$ is of Type-1.
\end{proposition}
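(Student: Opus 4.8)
The strategy is to realize the passage from $\mathcal{P}$ to $\mathcal{P}_\delta$ as a one–parameter deformation with $\dot a=\dot c=0$ and $\dot b=-1$, run for parameter time $t\in[0,\delta/2]$ (so that $2b$ decreases by exactly $\delta$), and to estimate piece by piece how much the length of a filling closed geodesic $\eta$ drops along the way. From the hypothesis $\sum x_i\geq n(2Am+\delta)$ and the fact that $2b_0=x_1$ is the longest boundary, we get $b_0\geq \frac{1}{2n}\sum x_i\geq Am+\frac{\delta}{2}$, hence $b(t)=b_0-t\geq Am$ and the total boundary length stays $\geq 2Amn$ for every $t\in[0,\delta/2]$; in particular the pair of pants $\mathcal{P}_\delta$ and all intermediate ones exist, so the construction of $X_\delta$ makes sense. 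Along this family the combinatorial type of each segment of $\eta\cap\mathcal{P}$ is constant, since the intersection points with $\alpha,\gamma$ and the endpoints on $a,c$ are held fixed and, by Lemma \ref{pro-abg}, $\alpha$ and $\gamma$ only grow so those points remain on them.

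The first and only genuinely new point is to verify that \eqref{w-small-1} is preserved along the whole deformation once $A_0$ is chosen small (depending only on $A$). At $t=0$ it holds by \eqref{w-small}. Differentiating $\cosh w=\sinh c\sinh\alpha$ and using \eqref{derivative alpha type 1}, \eqref{alpha}, \eqref{sinh a sinh gamma =1 type 1} together with the constraint \eqref{restriction type 1}, one finds, as long as \eqref{w-small-1} still holds, an estimate of the shape $\frac{d}{dt}\cosh w\leq 2\coth^3 b\,\sinh^2 w\leq 4\coth^3 b\,\cosh w\,(\cosh w-1)$. Since $b\geq Am\geq A$ keeps $\coth^3 b$ and $\cosh w$ bounded in terms of $A$, this is a Gronwall inequality for $\cosh w-1$, so $\cosh w(t)-1\leq e^{O_A(\delta)}(\cosh w(0)-1)\leq 2(\cosh w(0)-1)$ for $\delta\leq A_0$ small; combined with $\cosh w+1\leq 2\cosh w$ this gives $\sinh^2 w(t)\leq 4\sinh^2 w(0)\leq 4\pi^2 m^2/b_0^2\leq 4\pi^2 m^2/b(t)^2$, i.e. \eqref{w-small-1}. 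A standard continuity (bootstrap) argument makes this conditional estimate unconditional on all of $[0,\delta/2]$.

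With $b\geq Am$ and \eqref{w-small-1} now valid at every time $t$, I invoke Lemmas \ref{lem type 1.1}, \ref{lem type 1.2}, \ref{lem type 1.5-1.8} and \ref{lem type 1.13-1.14}, which give $|\dot d+1|=O_A(m^2/b^2)$ for the eight "moving" types of segment $d$, as in \eqref{dd-s}, while for the six remaining types $d$ does not change at all, i.e. $\dot d=0$. Integrating $-\dot d$ over $[0,\delta/2]$ and using $b(t)\geq Am$, a moving piece shrinks by at least $\frac{\delta}{2}\bigl(1-O_A(m^2/b^2)\bigr)$ and a frozen one by $0$; since Lemma \ref{lem J contain one of 8 kinds type 1} guarantees that each $J_j$ in the decomposition \eqref{J-j} contains at least one moving piece, summing the pieces along $J_j$ and comparing with the piecewise geodesic $J_j''$ (so that $\ell(J_j')\leq\ell(J_j'')$) yields $\ell(J_j)-\ell(J_j')\geq \ell(J_j)-\ell(J_j'')\geq \frac{\delta}{2}\bigl(1-O_A(m^2/b^2)\bigr)$, which is \eqref{eqn J-J' geq 1/2}. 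Feeding this into \eqref{eqn eta-eta' = sum J-J'} (there is at least one $J_j$, as $\eta$ is filling hence meets $\partial\mathcal{P}$) and using $\ell_\eta(X_\delta)\leq\ell(\eta')$ gives $\ell_\eta(X)-\ell_\eta(X_\delta)\geq \frac{\delta}{2}\bigl(1-O_A(m^2/b^2)\bigr)$; the degenerate case $X=\mathcal{P}\cong S_{0,3}$ is identical with $\eta'$ the geodesic representative, and more generally for an arbitrary closed curve one gets the factor $\frac{k}{2}$ with $k=\#\{J_j\}$. Finally, after possibly shrinking $A_0$ one has $b(t)\geq\frac{1}{4n}\sum x_i$ and $mn/\sum x_i\leq \frac{1}{2A}\leq 1$, so $O_A(m^2/b^2)=O_A\bigl((mn/\sum x_i)^2\bigr)=O_A(mn/\sum x_i)$, which is exactly the bound claimed in Proposition \ref{prop filling curve decrease a little}.

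The main obstacle is the preservation of \eqref{w-small-1}: every one of the type-by-type derivative estimates depends on it, yet a crude differential bound (for instance plugging $\dot\gamma=O_A(m^2/b^2)$ from Lemma \ref{pro-abg} directly into $\frac{d}{dt}\cosh w=\sinh a\cosh\gamma\,\dot\gamma$) is far too lossy and would spuriously force $\cosh w$ to blow up. The point is to keep the factor $(\cosh w-1)$ explicit so that the estimate linearizes into a Gronwall inequality whose multiplicative error is absorbed by the smallness of $\delta\leq A_0$. Everything else — assembling the fourteen cases, the length comparison $\ell_\eta(X_\delta)\leq\ell(\eta')$ via the piecewise geodesic, and the arithmetic converting $m/b$ into $mn/\sum x_i$ — is routine bookkeeping built on the already-established Lemmas \ref{lem type 1.1}--\ref{lem type 1.13-1.14}.
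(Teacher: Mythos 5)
Your proposal is correct, and the overall skeleton — differentiate in $t$ with $\dot a=\dot c=0$, $\dot b=-1$, integrate over $[0,\delta/2]$, classify the fourteen segment types, invoke Lemmas \ref{lem type 1.1}--\ref{lem type 1.13-1.14} and \ref{lem J contain one of 8 kinds type 1}, and finish with the arithmetic $m^2/b^2 = O_A\bigl((mn/\sum x_i)^2\bigr) = O_A(mn/\sum x_i)$ — matches the paper. The genuine divergence is in how you establish that \eqref{w-small-1} persists along the deformation. You differentiate $\cosh w=\sinh c\,\sinh\alpha$, bound $\frac{d}{dt}\cosh w\leq 2\coth^3 b\,\sinh^2 w\leq 4\coth^3 b\,\cosh w\,(\cosh w-1)$ (which I checked: expanding $(\cosh a+\cosh b\cosh c)(\cosh c+\cosh a\cosh b)$ and using $\sinh^2 w\sinh^2 b=\cosh^2 a+\cosh^2 c+2\cosh a\cosh b\cosh c$ gives exactly your estimate), treat it as a Gronwall inequality for $\cosh w-1$, and close the argument with a continuity/bootstrap. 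This is valid, but the bootstrap is an extra moving part: the Gronwall coefficient $\coth^3 b\,\cosh w$ is bounded only \emph{because} \eqref{w-small-1} holds, so the conditional estimate has to be upgraded to an unconditional one. The paper sidesteps all of this by computing $\sinh^2 w$ in closed form,
$$\sinh^2 w=\frac{\cosh^2 a+\cosh^2 c+2\cosh a\cosh b\cosh c}{\sinh^2 b},$$
and observing that as $b\mapsto b'$ (with $a,c$ fixed, $b'\leq b$) the numerator decreases while only $\sinh^2 b$ in the denominator shrinks, whence $\frac{\sinh w'}{\sinh w}\leq\frac{\sinh b}{\sinh b'}\leq\cosh\frac{\delta}{2}+\frac{\sinh\frac{\delta}{2}}{\tanh A}<2$ for $\delta\leq A_0$; no ODE, no bootstrap, and the factor $2$ in \eqref{w-small-1} is read off instantly. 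Your route buys nothing here and costs the continuity argument, so the paper's exact-formula approach is cleaner — but your proof is sound as stated.
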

\bp
Let $2b$ to be the longest boundary geodesic of $X$, and $\mathcal P$ be the pants with three boundary geodesics $(2a,2b,2c)$. Since $\sum x_i \geq n(2Am+\delta)$, we know $2b\geq 2Am+\delta>\delta$. Hence both the pants $\mathcal{P}_\delta$ and the surface $X_\delta$ exist. 

As $b$ smoothly decreases to $b-\frac{1}{2}\delta$, we denote $b'$ and $w'$ to be the quantities corresponding to $b$ and $w$ respectively. The lengths $a$ and $c$ are unchanged. It is clear that
$$b-\frac{1}{2}\delta \leq b'\leq b.$$

So $b'\geq Am$ always holds. 

By Lemma \ref{lem hyp formula}, we have $\cosh w = \sinh c\sinh \alpha$ and then
\begin{eqnarray*}
\sinh^2 w &=& \cosh^2 w-1 \\
&=& \sinh^2 c \cosh^2 \alpha - \cosh^2 c \\
&=& \sinh^2 c \frac{(\cosh a +\cosh b\cosh c)^2}{\sinh^2 b\sinh^2 c} -\cosh^2 c \\
&=& \frac{\cosh^2 a +\cosh^2 c+2\cosh a\cosh b\cosh c}{\sinh^2 b}.
\end{eqnarray*}
Recall that $b-\frac{1}{2}\delta \leq b'\leq b.$ Then we have
\begin{eqnarray*}
\frac{\sinh w'}{\sinh w}
&=& \frac{\sinh b}{\sinh b'} \left( \frac{\cosh^2 a +\cosh^2 c+2\cosh a\cosh b'\cosh c}{\cosh^2 a +\cosh^2 c+2\cosh a\cosh b\cosh c} \right)^{\frac{1}{2}} \\
&\leq& \frac{\sinh b}{\sinh(b-\frac{1}{2}\delta)} \\
&=& \cosh\frac{1}{2}\delta + \frac{1}{\tanh(b-\frac{1}{2}\delta)} \sinh\frac{1}{2}\delta \\
&\leq& \cosh\frac{1}{2}\delta + \frac{1}{\tanh A} \sinh\frac{1}{2}\delta \\
&<& 2
\end{eqnarray*}
for $\delta \in (0, A_0]$ where $A_0$ is some constant only depending on $A$.
Recall that \eqref{w-small} says that $\sinh w\leq \frac{\pi m}{b}$. So we have that for $0<\delta\leq A_0$,  the inequality that $\sinh w'\leq \frac{2\pi m}{b}$ always holds, i.e, equation\eqref{w-small-1} always holds. Thus the assumptions in Lemma \ref{lem type 1.1}, \ref{lem type 1.2}, \ref{lem type 1.5-1.8} and \ref{lem type 1.13-1.14} are always satisfied.

As the boundary length $b$ reduces by $\frac{1}{2}\delta$, the length $d$ in types 1.3, 1,4, 1.9, 1.10, 1.11 and 1.12 are unchanged. For remaining types 1.1, 1.2, 1.5, 1.6, 1.7, 1.8, 1.13 and 1.14, it follows by Lemma \ref{lem type 1.1}, \ref{lem type 1.2}, \ref{lem type 1.5-1.8} and \ref{lem type 1.13-1.14} that the length $d$  decreases at least $\frac{1}{2} \left(1-O_A\left(\frac{m^2}{b^2}\right)\right) \delta$. By Lemma \ref{lem J contain one of 8 kinds type 1} $J_j$ must contain at least one of the decreasing types. So we have
$$\ell(J_j) - \ell(J'_j) \geq \frac{1}{2} \left(1-O_A\left(\frac{m^2}{b^2}\right)\right) \delta$$
which together with \eqref{eqn eta-eta' = sum J-J'} imply that
$$\ell_\eta (X) - \ell_\eta(X_\delta) \geq \frac{1}{2} \left(1-O_A\left(\frac{m^2}{b^2}\right)\right) \delta= \frac{1}{2} \left(1-O_A\left(\frac{m^2n^2}{(\sum x_i)^2}\right)\right) \delta$$ where in the last equation we apply $2b\geq\frac{\sum x_i}{n}$. 

The proof is complete.
\ep

\begin{rem*}

Moreover, if $g=n=1$, then the pair of pants $\mathcal P$ must be of Type-1, and the two boundary curves $2a=2c$ in $\mathcal P$. It is not hard to see that any closed filling curve $\eta$ contains at least two decreasing types. For this case, actually one can improve Proposition \ref{prop filling curve decrease a little} and Theorem \ref{sec-count} to be 
\begin{proposition}
With the same assumptions in Proposition \ref{prop filling curve decrease a little}, if $g=n=1$ and $X\in \T_{1,1}(x)$, we have
$$\ell_\eta (X) - \ell_\eta(X_\delta) \geq \left(1-O_A\left(\frac{1}{x^2}\right)\right) \delta.$$ 
\end{proposition}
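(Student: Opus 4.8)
The plan is to specialize the Type-1 analysis of the previous subsections to $g=n=1$, $m=2g-2+n=1$, and to show that the decrease of length gains a factor of $2$ over Proposition~\ref{key-pro-1}. Since $n=1$ and the unique boundary $2b$ of $X$ has length $x$, we have $\sum_{i=1}^n x_i = x = 2b$, so the error term $O_A\!\left(\frac{m^2}{b^2}\right)=O_A\!\left(\frac{m^2 n^2}{(\sum x_i)^2}\right)$ appearing in the proof of Proposition~\ref{key-pro-1} is now $O_A\!\left(\frac1{x^2}\right)$. Hence it suffices to prove $\ell_\eta(X)-\ell_\eta(X_\delta)\geq\left(1-O_A\!\left(\frac1{x^2}\right)\right)\delta$; the corresponding improvements of Theorem~\ref{thm filling curve decrease} and of Theorem~\ref{sec-count} then follow by the same chain of deductions, with $\frac12(1-\eps_1)\Delta$ replaced by $(1-\eps_1)\Delta$.

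First I would record the topological picture for $X\cong S_{1,1}$: the pair of pants $\mathcal P\subset X$ produced by the construction is automatically of Type-1, its boundary geodesic $2b$ is the unique boundary of $X$, and its other two boundary geodesics $2a$ and $2c$ are glued in $X$ to a single simple closed geodesic $\alpha_0$ (so $2a=2c$ as lengths and $X\setminus\mathcal P$ degenerates to $\alpha_0$). Consequently, for a closed geodesic $\eta$ the set $\eta\cap\mathcal P$ is $\eta$ with $k:=i(\eta,\alpha_0)$ points removed, so $\eta$ decomposes into $k$ geodesic arcs $J_1,\dots,J_k$, each joining the two copies of $\alpha_0$, and every self-intersection of $\eta$ lies inside $\mathcal P$.

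The heart of the argument is the combinatorial claim that, for a filling $\eta$, the arcs $J_1,\dots,J_k$ contain in total at least \emph{two} sub-segments of the decreasing types $1.1,1.2,1.5,1.6,1.7,1.8,1.13,1.14$ of Figure~\ref{fig 14 kinds of type 1}. I would split this into two cases. If $k=i(\eta,\alpha_0)\geq 2$, then there are at least two arcs $J_j$, and by Lemma~\ref{lem J contain one of 8 kinds type 1} each already contains at least one decreasing sub-segment. If $k=1$, then $\eta\cap\mathcal P$ is a single arc $J_1$ joining the two copies of $\alpha_0$; since $\eta$ fills $S_{1,1}$ it is non-peripheral and non-simple (a simple essential closed curve on $S_{1,1}$ is non-separating with complement $S_{0,3}$, whose Euler characteristic $-1$ forbids it from being a union of disks and one peripheral cylinder), so $J_1$ has at least one self-intersection, which must occur inside $\mathcal P$. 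I would then show, by examining how a self-intersecting arc from $\partial_a$ to $\partial_c$ in a pair of pants is cut by the three seams $\alpha,\beta,\gamma$ — equivalently by tracking its winding around the cuff $2b$, which is the source of the self-intersection — that such a $J_1$ meets the seam configuration often enough to contain at least two decreasing sub-segments. Granting the claim, and noting that for $0<\delta\leq A_0$ the hypotheses of Lemmas~\ref{lem type 1.1}, \ref{lem type 1.2}, \ref{lem type 1.5-1.8}, \ref{lem type 1.13-1.14} hold exactly as in the proof of Proposition~\ref{key-pro-1}, each of these (at least two) decreasing sub-segments shrinks by at least $\frac12\left(1-O_A\!\left(\frac1{b^2}\right)\right)\delta$ while every other sub-segment of the $J_j$'s has non-increasing length; summing via \eqref{eqn eta-eta' = sum J-J'} and using $\ell(\eta')\geq\ell_\eta(X_\delta)$ gives $\ell_\eta(X)-\ell_\eta(X_\delta)\geq\left(1-O_A\!\left(\frac1{b^2}\right)\right)\delta=\left(1-O_A\!\left(\frac1{x^2}\right)\right)\delta$, since $2b=x$.

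The main obstacle is the $k=1$ case of the combinatorial claim: verifying that a single non-simple geodesic arc from $\partial_a$ to $\partial_c$ inside $S_{0,3}$ genuinely splits into at least two decreasing-type sub-segments. This requires a careful enumeration of the arc's winding around $2b$ and of its successive crossings with the seams $\alpha$ and $\gamma$. The metric estimates and the error-term accounting are then immediate from the Type-1 lemmas already established, so no new analytic input is needed.
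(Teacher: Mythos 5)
Your overall strategy matches the paper's (which proves this Proposition via the short remark following Proposition~\ref{key-pro-1}): show that a filling closed geodesic on $S_{1,1}$ must contribute \emph{two} decreasing-type sub-segments, so that \eqref{eqn eta-eta' = sum J-J'} gives twice the gain of the general case. You correctly note that $\mathcal P$ must be of Type-1, that $2a=2c=\alpha_0$, and that the error term $O_A(m^2/b^2)$ becomes $O_A(1/x^2)$. The case $k=i(\eta,\alpha_0)\ge 2$ is handled cleanly: Lemma~\ref{lem J contain one of 8 kinds type 1} applied to each of the $\ge 2$ arcs $J_j$ gives $\ge 2$ decreasing pieces. (One small inaccuracy: for $k\ge 2$, individual arcs $J_j$ need not join $2a$ to $2c$ — they can return to the same side — but the lemma covers all three endpoint configurations, so this does not hurt you.)

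The genuine gap is exactly where you flag it: the case $k=1$. You correctly observe that here $J_1$ is a single geodesic arc from $2a$ to $2c$ and that it must self-intersect (a simple essential curve on $S_{1,1}$ has complement $S_{0,3}$, so cannot be filling), but the step ``self-intersection forces two decreasing sub-segments'' is not established, and it is not purely combinatorial. Looking at the labelled-path model implicit in Lemma~\ref{lem J contain one of 8 kinds type 1} (decreasing moves: $a\!\to\!\alpha$, $\alpha\!\to\!\gamma$, $\gamma\!\to\!c$, $a\!\to\!c$; non-decreasing: $a\!\to\!\gamma$, $c\!\to\!\alpha$, $\alpha\!\to\!\alpha$, $\gamma\!\to\!\gamma$), there do exist paths from $a$ to $c$ with only \emph{one} decreasing edge, e.g.\ $a\to\alpha\to\alpha\to\cdots\to\alpha\to c$ or $a\to\gamma\to\gamma\to\cdots\to\gamma\to c$. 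So one must argue \emph{geometrically} that a self-intersecting geodesic arc from $\partial_a$ to $\partial_c$ in a hyperbolic pair of pants cannot realize such a pattern — that its self-intersection forces winding around $2b$ and hence alternating $\alpha$/$\gamma$ crossings (at least one $\alpha\leftrightarrow\gamma$ transition of type 1.1/1.2), or, alternatively, that $k=1$ is outright impossible for a filling curve on $S_{1,1}$. Neither is verified in your write-up, and you explicitly concede this. For comparison, the paper itself disposes of the claim with the single sentence ``It is not hard to see that any closed filling curve $\eta$ contains at least two decreasing types,'' so your proposal is not less rigorous than the paper's own remark — but as a self-contained proof it has a real hole at $k=1$, and that is the only place where an idea beyond Lemma~\ref{lem J contain one of 8 kinds type 1} is actually needed.
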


\bt
For any $\eps_1>0$, there exists a constant $c(\eps_1)>0$ only depending on $\eps_1$ such that for all $T>0$ and any compact hyperbolic surface $X\cong S_{1,1}$ of geodesic boundary, we have
\begin{equation*}
	\#_f(X,T)\leq c(\eps_1) \cdot e^{T-(1-\eps_1)\ell(\partial X)}.
\end{equation*}
\et
In this paper the bound in Proposition \ref{prop filling curve decrease a little} is sufficient for us to prove Theorem \ref{main} and \ref{main-2}.
\end{rem*}
\subsection{Proof of Proposition \ref{prop filling curve decrease a little} for Type-2}
Now we consider a pair of pants in Figure \ref{fig 2 type pants} of Type-2 and the corresponding right-angled hexagon in Figure \ref{fig 2 type hexagon} of Type-2. Both $2b$ and $2c$ are two boundary geodesics of the surface.

In this subsection, we always use the notation $\alpha=w$ to denote the perpendicular between $b$ and $c$ as shown in Figure \ref{fig 2 type hexagon} of Type-2. For the pants $\mathcal P$ of Type-2 we construct, \eqref{w-small} always holds. But when $b$ and $c$ both decrease, \eqref{w-small} may not hold again. Instead, we always assume $\alpha=w$ satisfy
\begin{equation}\label{w-small-2}
\sinh w \leq 2\pi\frac{m}{b}.
\end{equation}
Later in the proof of Proposition \ref{key-pro-2} we will show \eqref{w-small-2} holds when $b$ and $c$ do not reduce too much. By Lemma \ref{lem hyp formula} we have
\begin{eqnarray}\label{restriction type 2}
\cosh a & = & \sinh b \sinh c \cosh w - \cosh b \cosh c \\
& = & 2\sinh^2\frac{w}{2} \sinh b\sinh c - (\cosh b \cosh c-\sinh b\sinh c)  \nonumber\\
& \leq & 2\sinh^2\frac{w}{2} \sinh b\sinh c \nonumber
\end{eqnarray}
Now we consider to simultaneously reduce the two boundary lengths $2b$ and $2c$ of $\mathcal P$. Let the length $\{a(t),b(t),c(t)\}$ be functions in terms of a common parameter $t$. In our process from $\mathcal P$ to $\mathcal P_\delta$, we assume $a$ is fixed and both $b$ and $c$ decrease with constant speed. More precisely, denote $\dot b = \frac{db}{dt}$ and $\dot c = \frac{dc}{dt}$, assume
\be
\dot a=0 \quad \emph{and} \quad \dot b =\dot c=-1.
\ene
Let $\alpha=w, \beta$ and $\gamma$ be as shown in type-2 of Figure \ref{fig 2 type pants}. Denote $h_a$ to be the shortest perpendicular between $a$ and $\alpha$, $h_b$ to be the shortest perpendicular between $b$ and $\beta$, $h_c$ to be the shortest perpendicular between $c$ and $\gamma$. Then by Lemma \ref{lem hyp formula} we have
\begin{equation}\label{cosh ha = in type 2}
\cosh h_a = \sinh b\sinh \gamma = \sinh c\sinh\beta,
\end{equation}
\begin{equation}\label{cosh hb = in type 2}
\cosh h_b = \sinh a\sinh \gamma = \sinh c\sinh\alpha,
\end{equation}
\begin{equation}\label{cosh hc = in type 2}
\cosh h_c = \sinh a\sinh \beta = \sinh b\sinh\alpha.
\end{equation}

\noindent Recall that by Lemma \ref{lem hyp formula}, we have 
$$\cosh \alpha = \frac{\cosh a+\cosh b\cosh c}{\sinh b\sinh c},$$
$$\cosh \beta = \frac{\cosh b+\cosh a\cosh c}{\sinh a\sinh c},$$
$$\cosh \gamma = \frac{\cosh c+\cosh a\cosh b}{\sinh a\sinh b}.$$
A direct computation shows that
\begin{eqnarray}\label{derivative alpha type 2}
\dot \alpha &=& \frac{1}{\sinh \alpha} \bigg( \frac{\cosh a(\cosh b\sinh c + \sinh b\cosh c)}{\sinh^2 b\sinh^2 c}\\
&& + \frac{1}{\sinh^2 b} \frac{\cosh c}{\sinh c} + \frac{1}{\sinh^2 c} \frac{\cosh b}{\sinh b} \bigg) \nonumber\\
&=& \frac{\cosh a}{\sinh w\sinh b\sinh c}\big( \frac{\cosh b}{\sinh b} + \frac{\cosh c}{\sinh c} \big) \nonumber\\
&& + \frac{1}{\cosh h_c \sinh b} \frac{\cosh c}{\sinh c} + \frac{1}{\cosh h_b\sinh c} \frac{\cosh b}{\sinh b}. \nonumber
\end{eqnarray}

\begin{eqnarray}\label{derivative beta type 2}
\dot \beta &=& \frac{-1}{\sinh \beta}\bigg( \frac{(\sinh b+\cosh a\sinh c)\sinh c}{\sinh a\sinh^2 c} \\
&& - \frac{\cosh c(\cosh b+\cosh a\cosh c)}{\sinh a\sinh^2 c}\bigg)  \nonumber \\
&=& \frac{\cosh a +\cosh (b-c)}{\cosh h_c \sinh^2 c} \nonumber\\
&=& \frac{\cosh a +\cosh (b-c)}{\sinh\alpha \sinh b \sinh^2 c} \nonumber\\
&=& \frac{\cosh a +\cosh (b-c)}{\cosh h_b \sinh b \sinh c},\nonumber
\end{eqnarray}
\begin{eqnarray}\label{derivative gamma type 2}
\dot \gamma &=& \frac{-1}{\sinh \gamma}\bigg(\frac{(\sinh c+\cosh a\sinh b)\sinh b}{\sinh a\sinh^2 b}\\
&&-\frac{\cosh b(\cosh c+\cosh a\cosh b)}{\sinh a\sinh^2 b}\bigg) \nonumber\\
&=& \frac{\cosh a +\cosh (b-c)}{\cosh h_b \sinh^2 b} \nonumber\\
&=& \frac{\cosh a +\cosh (b-c)}{\sinh\alpha \sinh c \sinh^2 b} \nonumber\\
&=& \frac{\cosh a +\cosh (b-c)}{\cosh h_c \sinh c \sinh b}. \nonumber
\end{eqnarray}
A direct consequence of all the equations above is
\bl \label{pro-abg-2}
If $b\geq Am$ and \eqref{w-small-2} holds, then we have
\begin{enumerate}
\item $\sinh c \geq \frac{1}{2\pi}\frac{b}{m}$.
\item $0<\dot\alpha= O_A\left(\frac{m}{b}\right), \ 0<\dot \beta= O_A\left(\frac{m^2}{b^2}\right) \ \emph{and} \ 0<\dot\gamma= O_A\left(\frac{m^2}{b^2}\right)$.
\end{enumerate}
\el
\bp
For Part (1), by \eqref{w-small-2} and \eqref{cosh hb = in type 2} we have $$\sinh c \geq \frac{1}{\sinh \alpha} = \frac{1}{\sinh w} \geq \frac{1}{2\pi}\frac{b}{m}.$$

For Part (2), by \eqref{derivative alpha type 2}, \eqref{derivative beta type 2} and \eqref{derivative gamma type 2} we clearly have $0<\dot\alpha$, $0<\dot\beta$ and $0<\dot\gamma$. By \eqref{restriction type 2} we have $\frac{\cosh a}{\sinh w\sinh b\sinh c}\leq \frac{2\sinh^2{\frac{w}{2}}}{\sinh w} \leq 2\sinh w \leq 4\pi\frac{m}{b}$ where we apply \eqref{w-small-2} in the last inequality. Since $b\geq A$ and $\sinh c\geq \frac{A}{2\pi}$, from \eqref{derivative alpha type 2} we have $\dot {\alpha}= O_A\left(\frac{m}{b}\right)$. For $\dot \beta$, by \eqref{w-small-2} and \eqref{restriction type 2} we have $\frac{\cosh a}{\cosh h_b\sinh b \sinh c}\leq 2\sinh^2{\frac{w}{2}}\leq 8\pi^2\frac{m^2}{b^2}$. Since $b\geq A$ and $\sinh c\geq \frac{A}{2\pi}$, we know $\frac{\cosh \left(b-c\right)}{\cosh h_b\sinh b \sinh c}= O_A\left(e^{-2b}+e^{-2c}\right)= O_A\left(\frac{m^2}{b^2}\right)$. So by \eqref{derivative beta type 2} we have $\dot \beta = O_A\left(\frac{m^2}{b^2}\right)$. By a similar argument one can show that $\dot \gamma= O_A\left(\frac{m^2}{b^2}\right)$.
\ep

Let $J_j \subset \mathcal{P}$ to be a geodesic segment in \eqref{J-j}. We classify $J_j$ in terms of its possible intersections with $\alpha,\beta, \gamma$ and $a$ (without orientation). Actually we have
\bl\label{type2-6}
There are 6 kinds of possible segments as shown in Figure \ref{fig 6 kinds of type 2}. More precisely, they are: from $\beta$ to $\gamma$ (type 2.1), from $\alpha$ to $\gamma$ (type 2.2), from $\alpha$ to $\beta$ (type 2.3), from $a$ to $\alpha$ (type 2.4), from $a$ to $\beta$ (type 2.5) and from $a$ to $\gamma$ (type 2.6).
\el

We denote $d$ as the geodesic segment of $J_j$ in each kind. 

\begin{figure}[ht]
	\centering
	\includegraphics[width=12cm]{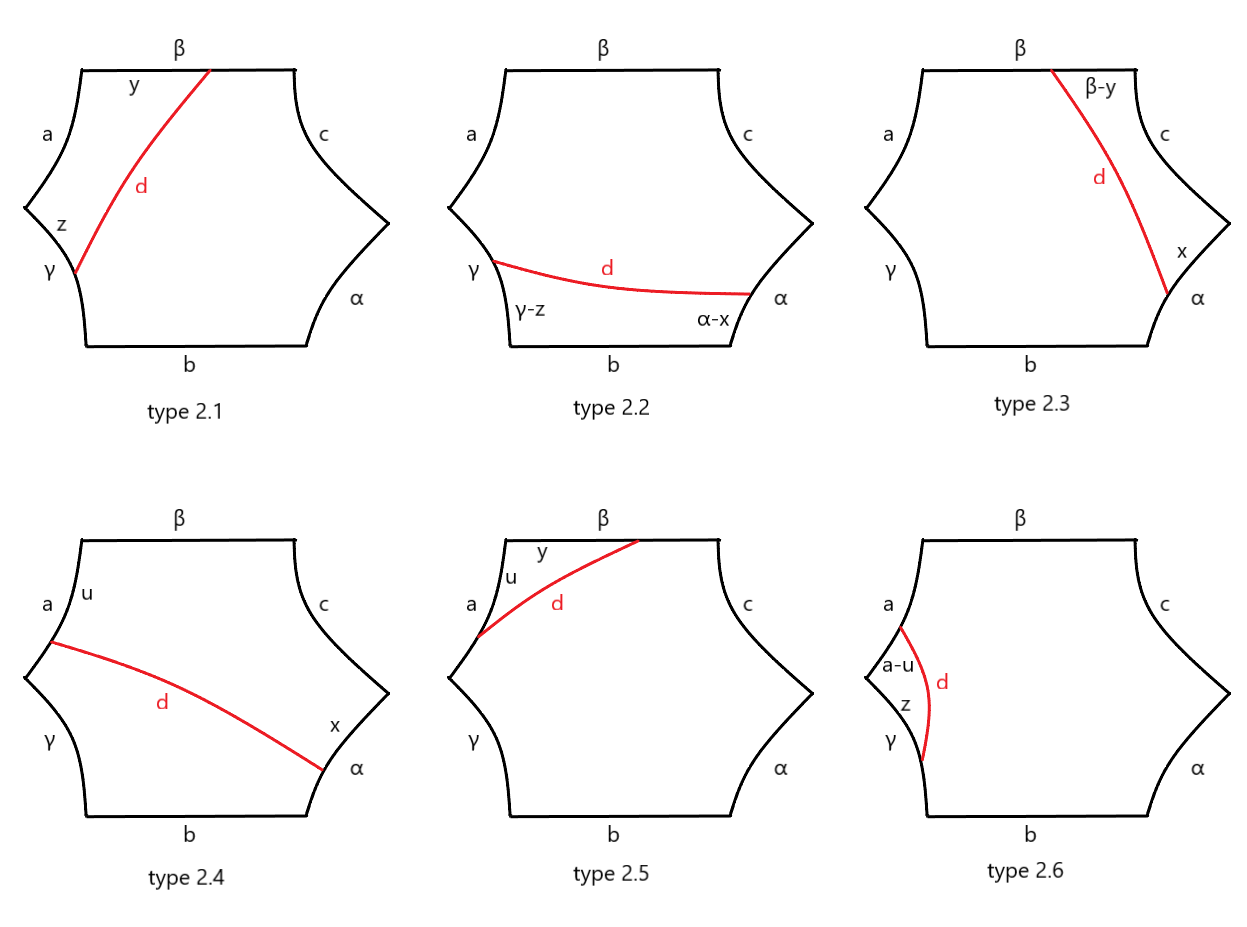}
	\caption{6 kinds of geodesic segments in Type-2}
	\label{fig 6 kinds of type 2}
\end{figure}

Similar to what we do for Type-1. Denote $p,q$ and $r$ to be intersection points of $J_j$ and $\alpha,\beta$ and $\gamma$ respectively if they exist. Denote $x$ to be the part of $\alpha$ that from $p$ to $c$ if it exists, and $y$ to be the part of $\beta$ that from $q$ to $a$ if it exists, and $z$ to be the part of $\gamma$ that from $r$ to $a$ if it exists. We fix these points $p,q$ and $r$ as $b$ and $c$ are simultaneously decreasing, more precisely, keep the length of each $x,y$ and $z$ to be unchanged. Since $\alpha,\beta$ and $\gamma$ are increasing as $b$ and $c$ are simultaneously decreasing by Lemma \ref{pro-abg-2},the points $p,q$ and $r$ will still lie in $\alpha,\beta$ and $\gamma$ respectively during the process. These intersection points separate $J_j$ into several segments. Fixing these points as $b$ and $c$ are simultaneously decreasing, we get a piecewise geodesic $J''_j$ homotopic to $J_j$ in $\mathcal P_\delta$. 

In type 2.1, the lengths $a,y$ and $z$ are all unchanged during the process, so the corresponding $d$ is also unchanged. In types 2.5 and 2.6, the endpoints of $J_j$ on boundary $2a$ are fixed, so the length $u$ (part of $a$ from the endpoint of $J_j$ to $\beta$, see Figure \ref{fig 6 kinds of type 2}) is also unchanged. Since $a,y$ and $z$ are all unchanged, the corresponding $d$ is also unchanged. 

For the remaining $3$ kinds, similar to Type-1 we will show that 
\be \label{dd-s-2}
|\dot d +1| = O_A\left(\frac{m}{b}\right)
\ene
and $J_j$ must contain at least two of those $3$ kinds.

\begin{lemma}\label{lem J contain two of 3 kinds type 2}
	The segment $J_j$ for Type-2 must contain at least two segments of types 2.2, 2.3 and 2.4 as shown in Figure \ref{fig 6 kinds of type 2}.
\end{lemma}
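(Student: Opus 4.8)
The plan is to follow the template of Lemma \ref{lem J contain one of 8 kinds type 1} for Type-1. The key observation is that among the six kinds of sub-segments in Lemma \ref{type2-6}, the three \emph{decreasing} kinds (types 2.2, 2.3, 2.4) are exactly those having an endpoint on the seam $\alpha$, whereas the three kinds whose length $d$ is unchanged (types 2.1, 2.5, 2.6) are exactly those disjoint from $\alpha$. So the lemma is equivalent to the statement that $J_j$ must cross $\alpha$: indeed, one transversal crossing of $\alpha$ already produces two distinct sub-segments of $J_j$ meeting $\alpha$. First I would record the setup: since $J_j$ only arises when $X\neq\mathcal P$, and $\mathcal P$ is of Type-2 so that both $2b$ and $2c$ are boundary geodesics of $X$, the curve $2a$ is the unique boundary component of $\mathcal P$ glued to $X\setminus\mathcal P$; hence both endpoints of every $J_j$ lie on $2a$.

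Now I would argue the dichotomy. If $J_j\cap\alpha\neq\emptyset$: distinct complete geodesics meet transversally, and every point of $J_j\cap\alpha$ is interior to $J_j$ (the endpoints of $J_j$ lie on $2a$, which is disjoint from $\alpha$). Taking the first such point $P$ along $J_j$, in the subdivision of $J_j$ at its crossings with $\alpha\cup\beta\cup\gamma$ the sub-segment terminating at $P$ and the sub-segment issuing from $P$ are two distinct pieces, each with an endpoint on $\alpha$; by Lemma \ref{type2-6} each is of type 2.2, 2.3 or 2.4, as required. It remains to rule out $J_j\cap\alpha=\emptyset$. In that case $J_j$ lies in $\mathcal P\setminus\alpha$, which, $\alpha$ being a simple geodesic arc joining the two ``leg'' boundaries $2b,2c$ of the pair of pants $\mathcal P$, is a connected orientable surface with nonempty boundary and $\chi=0$, that is, an annulus $\mathcal A$; moreover $2a$, being disjoint from $\alpha$, is one of its two boundary circles, still a smooth geodesic. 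Developing the universal cover of $\mathcal A$ in $\mathbb H$, the full preimage of $2a$ is a single complete geodesic line, and the lift of the geodesic arc $J_j$ has both endpoints on this line, hence coincides with a sub-arc of it. Thus $J_j$ runs along $2a$, so the closed geodesic $\eta\supset J_j$ shares a sub-arc with the simple closed geodesic $2a$ and therefore equals $2a$ --- contradicting that $\eta$ is filling, since $X\setminus 2a$ has the pair of pants $\mathcal P$ (which is neither a disc nor an annulus) as a component. Hence $J_j$ crosses $\alpha$, and the lemma follows.

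The one genuinely delicate step is the claim that a geodesic arc contained in the annulus $\mathcal A$ with both endpoints on its geodesic boundary $2a$ must lie on $2a$, since \emph{a priori} the rel-endpoint homotopy into $2a$ could wind around the annulus. The developing-map argument above disposes of this: the preimage of $2a$ in the universal cover is a single honest geodesic of $\mathbb H$, and any geodesic arc of $\mathbb H$ with both endpoints on a given geodesic is a sub-arc of it, so no winding is possible. The remaining points --- the topological identification of $\mathcal P\setminus\alpha$ as an annulus with $2a$ as a boundary circle, and the fact that distinct geodesics cannot share a sub-arc --- are routine.
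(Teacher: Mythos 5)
Your argument is essentially the paper's; indeed, you supply the justification --- via the annulus $\mathcal A=\mathcal P\setminus\alpha$ --- for the step the paper states without proof, namely that $J_j$ must cross $\alpha$. The only thing you drop is the case $X=\mathcal P\cong S_{0,3}$, which the paper's proof explicitly includes (there $J_j=\eta$ is itself a filling closed geodesic in $\mathcal P$ rather than an arc with endpoints on $2a$); your annulus reasoning adapts readily, since a closed geodesic in $\mathcal P$ disjoint from $\alpha$ would be freely homotopic to a power of the core $2a$ of $\mathcal A$ and hence coincide with $2a\subset\partial X$, contradicting that $\eta$ is filling.
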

\begin{proof}
Since $\eta$ is filling, the segment $J_j$ is a geodesic in $\mathcal P$ with endpoints on $2a$ (if $X$ is not $S_{0,3}$) or just a closed geodesic in $\mathcal P$ (if $X$ is $S_{0,3}$). Moreover it must have at least one intersection point with $\alpha$ which is denoted by $o$. On both sides of $J_j$ at point $o$, there is at least one segment $d$ of type 2.2 or 2.3 or 2.4. Then the conclusion follows.
\end{proof}

Next we prove \eqref{dd-s-2} for those $3$ types in Lemma \ref{lem J contain two of 3 kinds type 2} case by case.

\begin{lemma}[types 2.2 and 2.3]\label{lem type 2.2 2.3}
If $b\geq Am$ and \eqref{w-small-2} holds, then for types 2.2 and 2.3 in Figure  \ref{fig 6 kinds of type 2} we have  
$$|\dot d +1|= O_A\left(\frac{m}{b}\right).$$
\end{lemma}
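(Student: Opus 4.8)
Both types are handled by the same mechanism that drove the Type-1 estimates (Lemmas \ref{lem type 1.1}--\ref{lem type 1.13-1.14}), now applied in the right-angled hexagon obtained by cutting $\mathcal P$ along the three seams $\alpha,\beta,\gamma$ (Figure \ref{fig 2 type hexagon}), whose sides occur in the cyclic order $(\text{arc of }a,\ \gamma,\ \text{arc of }b,\ \alpha,\ \text{arc of }c,\ \beta)$. In that hexagon each seam is perpendicular to its two neighbouring boundary arcs. The plan is: for a geodesic segment $d$ joining two seams that flank a common boundary arc, realize $d$, together with the two relevant seam-pieces and the full intermediate arc, as the side opposite the arc in a Lambert-type right-angled quadrilateral of the kind in the last identity of Lemma \ref{lem hyp formula}; then differentiate that length identity in $t$ and feed the result into Lemma \ref{lem derivative in rectangle}. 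Throughout we use $\dot a=0$, $\dot b=\dot c=-1$, $\dot x=\dot y=\dot z=0$, and the hypotheses $b\ge Am$ and \eqref{w-small-2}.

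For type $2.2$, where $d$ runs from $\alpha$ to $\gamma$, the flanking arc is the arc of $b$ (and $\alpha\perp b$, $\gamma\perp b$). So $d$, the piece $\alpha-x$ of $\alpha$ at its $b$-endpoint, the arc of $b$, and the piece $\gamma-z$ of $\gamma$ at its $b$-endpoint bound a right-angled quadrilateral with both right angles on $b$; the identity $\cosh d=\cosh(\alpha-x)\cosh(\gamma-z)\cosh b-\sinh(\alpha-x)\sinh(\gamma-z)$ in particular forces $d\ge b\ge Am\ge A$. Since $x,z$ are fixed, Lemma \ref{lem derivative in rectangle} applied with $a=b$, $\alpha=d$ gives
\[
|\dot d+1|=|\dot d-\dot b|\le C'_A\bigl(e^{-b}+|\dot\alpha|+|\dot\gamma|\bigr),
\]
and Lemma \ref{pro-abg-2} supplies $|\dot\alpha|=O_A(m/b)$, $|\dot\gamma|=O_A(m^2/b^2)$, while $e^{-b}\le 2/b^2=O_A(m^2/b^2)$; hence $|\dot d+1|=O_A(m/b)$.

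For type $2.3$, where $d$ runs from $\alpha$ to $\beta$, the flanking arc is now the arc of $c$ (and $\alpha\perp c$, $\beta\perp c$), so $d$, the piece $x$ of $\alpha$ at its $c$-endpoint, the arc of $c$, and the piece $\beta-y$ of $\beta$ at its $c$-endpoint bound a right-angled quadrilateral with both right angles on $c$, whence $d\ge c$. By Part $(1)$ of Lemma \ref{pro-abg-2}, $\sinh c\ge\frac{1}{2\pi}\frac{b}{m}\ge\frac{A}{2\pi}$, so $c$ is bounded below by a positive constant $C_A$ depending only on $A$, and $e^{-c}\le 1/\sinh c\le\frac{2\pi m}{b}=O_A(m/b)$. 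Applying Lemma \ref{lem derivative in rectangle} with $a=c$, $\alpha=d$ (legitimate since $d\ge c\ge C_A$) yields
\[
|\dot d+1|=|\dot d-\dot c|\le C'_{C_A}\bigl(e^{-c}+|\dot x|+|\dot\beta|\bigr),
\]
and with $|\dot x|=0$, $|\dot\beta|=O_A(m^2/b^2)$ (Lemma \ref{pro-abg-2}) and $e^{-c}=O_A(m/b)$ this again gives $|\dot d+1|=O_A(m/b)$.

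The computations are routine hyperbolic trigonometry of exactly the flavour already carried out for Type-1; the only thing requiring care is the bookkeeping of matching, in each configuration of Figure \ref{fig 6 kinds of type 2}, the seam-pieces and the boundary arc to the sides of the quadrilateral, and — should the picture realize $2.2$ or $2.3$ in a ``twisted'' position — first dropping a single auxiliary perpendicular between the two seams exactly as in Lemma \ref{lem type 1.2}, which does not change the outcome. The one genuinely new feature compared with Type-1, and the point I expect to be the real constraint, is that the complementary boundary $2c$ need not be long: Lemma \ref{pro-abg-2} only guarantees $\sinh c\gtrsim b/m$, so $e^{-c}$ may be as large as $\sim m/b$, and this term is what governs type $2.3$ and forces the weaker bound $O_A(m/b)$ here in place of the $O_A(m^2/b^2)$ of Type-1. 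I do not anticipate any obstacle beyond this.
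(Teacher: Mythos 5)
Your proof is correct and is essentially the paper's: for type 2.2 the paper applies Lemma \ref{lem derivative in rectangle} to the rectangle $(\gamma-z,b,\alpha-x,d)$ together with the bounds $\dot\alpha=O_A(m/b)$, $\dot\gamma=O_A(m^2/b^2)$, $e^{-b}=O_A(m^2/b^2)$, and for type 2.3 it merely says ``the proof is similar,'' which is the $(x,c,\beta-y,d)$ Lambert quadrilateral with the $e^{-c}=O_A(m/b)$ bound you spell out. Your caution about a possible ``twisted'' configuration is unnecessary here, since the Type-2 classification cuts along all three seams $\alpha,\beta,\gamma$, so each of types 2.2 and 2.3 is already a direct chord in a single right-angled hexagon and no auxiliary perpendicular is needed.
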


\begin{proof}
Recall that $\dot x=\dot y=\dot z=0$ and $\dot b=\dot c=-1$. For type 2.2, by Lemma \ref{pro-abg-2} and we apply Lemma \ref{lem derivative in rectangle} to rectangle $(\gamma-z,b,\alpha-x,d)$ to obtain that $|\dot d +1|= O_A\left(\frac{m}{b}\right)$. For type 2.3, the proof is similar.
\end{proof}

\begin{lemma}[type 2.4]\label{lem type 2.4}
If $b\geq Am$ and \eqref{w-small-2} holds, then for type 2.4 in Figure  \ref{fig 6 kinds of type 2} we have  
$$|\dot d +1|= O_A\left(\frac{m}{b}\right).$$
\end{lemma}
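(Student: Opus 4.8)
The plan is to handle type 2.4 by the same rectangle-decomposition strategy used for types 2.2 and 2.3, but with one extra layer of decomposition because now one endpoint of $d$ sits on the boundary geodesic $a$ (through the fixed segment $u$) rather than on one of the perpendiculars $\alpha,\beta,\gamma$. First I would set up the relevant sub-configuration: let $h$ be the shortest perpendicular between $a$ and $\alpha$ inside the pair of pants (this is the $h_a$ of \eqref{cosh ha = in type 2}), let $e$ be the part of $\alpha$ between $h$ and $c$ (so that $\alpha = e + x + (\text{the remaining piece})$, arranging signs as in type 1.5), and let $f$ be the part of $a$ between $h$ and the foot of $\beta$ on $a$, so that $a = f + u + (\cdots)$ with possibly negative lengths. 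This produces a genuine hyperbolic rectangle with sides $(f+u-a,\ h,\ e+x-\alpha,\ d)$ carrying right angles at the two $h$-corners, exactly parallel to the situation treated in Lemma \ref{lem type 1.5-1.8}.

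Next I would compute, using Lemma \ref{lem hyp formula} together with the formulas \eqref{cosh ha = in type 2}, \eqref{cosh hb = in type 2}, \eqref{cosh hc = in type 2} and \eqref{derivative alpha type 2}--\eqref{derivative gamma type 2}, the lengths $h, e, f$ and their $t$-derivatives $\dot h,\dot e,\dot f$. Here $\dot a = 0$ and $\dot b = \dot c = -1$. The key estimates I expect are $|1/\cosh h| = O_A(m/b)$ and $|\dot h + 1| = O_A(m/b)$, coming from \eqref{restriction type 2} and Part (1) of Lemma \ref{pro-abg-2} (which gives $\sinh c \gtrsim b/m$); and then $|e| = O_A(m/b)$, $|\dot e| = O_A(m/b)$, $|a - f| = O_A(m/b)$, $|\dot f| = O_A(m/b)$ by the same kind of chain of hyperbolic trig identities. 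Note the error exponent here is $m/b$ rather than $m^2/b^2$; that is forced by Part (2) of Lemma \ref{pro-abg-2}, where $\dot\alpha$ is only $O_A(m/b)$, and it is consistent with the statement \eqref{dd-s-2} we are proving. With $\dot u = \dot a = \dot x = 0$ and all these bounds in hand, I would feed the rectangle $(f+u-a,\ h,\ e+x-\alpha,\ d)$ into the second part of Lemma \ref{lem derivative in rectangle} (legitimate since $h \ge b \ge Am \ge A$ and $h$ is the long side), which yields $|\dot d - \dot h| \le C'_C(e^{-h}|\dot h| + |\dot a - \dot e| + 0)$, and combining with $|\dot h + 1| = O_A(m/b)$ gives $|\dot d + 1| = O_A(m/b)$, as claimed.

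The main obstacle, as in the type 1.5--1.8 case, is bookkeeping rather than conceptual: one must be careful that $p$ (the intersection of $J_j$ with $\alpha$) stays on $\alpha$ throughout the deformation — this is guaranteed because $\alpha$ is increasing by Part (2) of Lemma \ref{pro-abg-2} — and that the signed lengths $f+u-a$ and $e+x-\alpha$ are handled correctly, invoking the remark after Lemma \ref{lem hyp formula} that the rectangle length formula persists when one or both of the feet lie on opposite sides. I would also double-check that the degenerate case $X = \mathcal P \cong S_{0,3}$ causes no trouble: there $J_j$ is a closed geodesic meeting $\alpha$, and the same rectangle decomposition at an intersection point $o$ applies verbatim. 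After Lemma \ref{lem type 2.4} is established, Proposition \ref{prop filling curve decrease a little} for Type-2 will follow from Lemma \ref{lem J contain two of 3 kinds type 2} (so that $J_j$ contains at least two decreasing segments, each contributing a decrease of $\frac12(1 - O_A(m/b))\delta$ under the simultaneous $\dot b = \dot c = -1$ deformation — where the total boundary reduction is $\delta$, split as $\frac12\delta$ each on $2b$ and $2c$) together with \eqref{eqn eta-eta' = sum J-J'}, in exact analogy with Proposition \ref{key-pro-1}; I would record this as the concluding Proposition \ref{key-pro-2} with the verification that \eqref{w-small-2} persists for $\delta \in (0,A_0]$ done as in the proof of Proposition \ref{key-pro-1}.
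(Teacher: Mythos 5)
Your strategy is precisely the paper's: introduce the common perpendicular $h_a$ between $a$ and $\alpha$, form the Lambert quadrilateral with $h_a$ as base and $d$ as the opposite side, compute $\dot h_a$, $\dot e$, $\dot f$, and feed everything into Lemma~\ref{lem derivative in rectangle}. The target rate $O_A(m/b)$ and the correct identification that this rate is forced by $\dot\alpha = O_A(m/b)$ from Lemma~\ref{pro-abg-2}(2) are both right.

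Two points in the write-up are wrong and need fixing, though neither destroys the conclusion. First, the parenthetical justification ``$h \ge b \ge Am \ge A$'' for invoking Lemma~\ref{lem derivative in rectangle}(2) is false in Type~2. What one actually has, from $\cosh\beta = \sinh e\sinh h_a$ together with $\sinh e \le \sinh\alpha = \sinh w \le 2\pi m/b$ (\eqref{w-small-2}), is only $\sinh h_a \ge \frac{1}{2\pi}\frac{b}{m}$; so $h_a$ grows like $\ln(b/m)$, not like $b$, and when $b = Am$ this is merely the constant lower bound $h_a \ge \arcsinh(A/(2\pi))$. That constant lower bound is still enough to apply the lemma, and the estimate that actually matters is $e^{-h_a} \prec m/b$ (which you do list as an expected estimate via $1/\cosh h = O_A(m/b)$), not $e^{-b}$; but as written the justification is incorrect. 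Second, the rectangle sides. With $e$ measured from the $h_a$-foot to $c$ along $\alpha$ and $f$ measured from the $h_a$-foot to the $\beta$-foot along $a$ (which is the definition you state), the short sides are $u - f$ and $x - e$ as in the paper, not $f+u-a$ and $e+x-\alpha$; the latter is the type 1.5 formula, where $e,f$ are measured to $b$ and $\gamma$ respectively, and it cannot be transplanted verbatim. This error then propagates into your final inequality ``$|\dot d - \dot h| \le C'_C(e^{-h}|\dot h| + |\dot a - \dot e| + 0)$,'' which should read $|\dot d - \dot h_a| \le C'(e^{-h_a}|\dot h_a| + |\dot f| + |\dot e|)$: the ``$+\,0$'' is wrong (neither short side has zero derivative), and ``$|\dot a - \dot e|$'' is the wrong derivative. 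Since $|\dot f|$ and $|\dot e|$ are both $O_A(m/b)$ the order of magnitude is unchanged, so the proof repairs to give the stated bound.
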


\begin{proof}
Let $h_a$ be the perpendicular between $a$ and $\alpha$ in type 2.4. Let $e$ to be part of $\alpha$ between $h_a$ and $c$, and let $f$ to be part of $a$ between $h_a$ and $\beta$ (see Figure \ref{fig type 2.4}). Then we have a rectangle $(u-f,h_a,x-e,d)$ with $\angle(h_a,u-f)=\angle(h_a,x-e)=\frac{\pi}{2}$. We remark here that the lengths $u-f$ and $x-e$ may be negative. Now we compute the lengths of these edges and their derivatives.

\begin{figure}[ht]
	\centering
	\includegraphics[width=5cm]{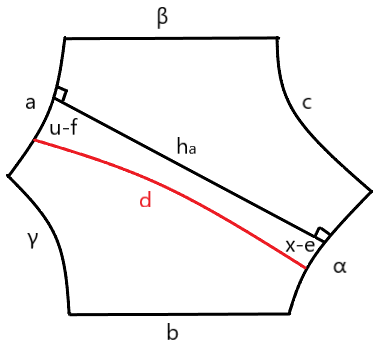}
	\caption{}
	\label{fig type 2.4}
\end{figure}

By Lemma \ref{lem hyp formula} we have
\begin{equation} \label{ha-be}
\cosh h_a = \sinh c\sinh\beta \ \rm{and} \ \cosh \beta=\sinh e\sinh h_a.
\end{equation}
By \eqref{derivative beta type 2} we have $\dot \beta=\frac{\cosh a +\cosh (b-c)}{\sinh\alpha \sinh b \sinh^2 c}$. A direct computation shows that
\begin{eqnarray}
\dot h_a &=& \frac{1}{\sinh h_a}\bigg( \dot c \cosh c\sinh\beta + \dot\beta \sinh c\cosh\beta \bigg) \\
&=& -\frac{\cosh h_a}{\sinh h_a}\frac{\cosh c}{\sinh c} + \frac{\cosh\beta}{\sinh h_a} \frac{(\cosh a+\cosh(b-c))}{\sinh\alpha \sinh b\sinh c} \nonumber\\
&=& -\frac{\cosh h_a}{\sinh h_a}\frac{\cosh c}{\sinh c} + \frac{\sinh e}{\sinh \alpha} \frac{(\cosh a+\cosh(b-c))}{\sinh b\sinh c}.\nonumber
\end{eqnarray}
Since $\sinh e\leq\sinh\alpha=\sinh w\leq 2\pi\frac{m}{b}$, we have 
$$\sinh h_a=\frac{\cosh \beta}{\sinh e}\geq \frac{1}{2\pi}\frac{b}{m}.$$ 
Since $e\leq\alpha=w$, by \eqref{w-small-2} and \eqref{restriction type 2} we have $ \frac{\sinh e}{\sinh \alpha} \frac{\cosh a}{\sinh b\sinh c}\leq 2\sinh^2{\frac{w}{2}}\leq 8\pi^2\frac{m^2}{b^2}$. By Lemma \ref{pro-abg-2} 
$\frac{\sinh e}{\sinh \alpha}\frac{\cosh \left(b-c\right)}{\sinh b \sinh c}= O_A\left(e^{-2b}+e^{-2c}\right) = O_A\left(\frac{m^2}{b^2}\right)$. Then we have 
\begin{eqnarray*}
|\dot h_a +1| 
&\leq& \left|\frac{\cosh h_a}{\sinh h_a}\frac{\cosh c}{\sinh c}-1\right| + O_A\left(\frac{m^2}{b^2}\right) \\
&=& O_A\left(e^{-2h_a} + e^{-2c} + \frac{m^2}{b^2}\right) \\
&=& O_A\left(\frac{m^2}{b^2}\right).
\end{eqnarray*}

By Lemma \ref{lem hyp formula} and \eqref{ha-be} we have
\begin{equation}
\sinh e = \frac{\cosh \beta}{\sinh h_a}
\end{equation}
and
\begin{eqnarray}
\dot e &=& \frac{1}{\cosh e} \bigg( \dot\beta \frac{\sinh\beta}{\sinh h_a} - \dot h_a \frac{\cosh\beta\cosh h_a}{\sinh^2 h_a}\bigg) \nonumber\\
&=& \frac{1}{\cosh e} \bigg( \dot\beta \frac{\cosh h_a}{\sinh h_a}\frac{1}{\sinh c} - \dot h_a \frac{\cosh h_a}{\sinh h_a} \sinh e \bigg).
\end{eqnarray}
By Lemma \ref{pro-abg-2}, the fact that $\sinh e\leq \sinh\alpha=\sinh w\leq 2\pi\frac{m}{b}$ and the above bounds, we have $|\dot e|= O_A\left(\frac{m}{b}\right)$.

By Lemma \ref{lem hyp formula} and \eqref{derivative beta type 2} we have
\begin{equation}
\sinh f = \frac{\cosh e}{\sinh \beta}
\end{equation}
and
\begin{eqnarray}
\quad \quad \quad \dot f &=& \frac{1}{\cosh f} \bigg( \dot e\frac{\sinh e}{\sinh\beta} - \dot\beta \frac{\cosh\beta \cosh e}{\sinh^2\beta} \bigg) \\
&=& \dot e \frac{\sinh f}{\cosh f} \frac{\sinh e}{\cosh e} - \frac{(\cosh a +\cosh(b-c))}{\sinh\alpha \sinh b\sinh^2 c} \frac{\cosh\beta\cosh e}{\sinh e\sinh c\sinh^2\beta} \nonumber\\
&=& \dot e \frac{\sinh f}{\cosh f} \frac{\sinh e}{\cosh e} - \frac{(\cosh a +\cosh(b-c))\sinh h_a \cosh e}{\sinh\alpha\sinh b\sinh c \cosh^2 h_a} \nonumber\\
&=& \dot e \frac{\sinh f}{\cosh f} \frac{\sinh e}{\cosh e} - \frac{\sinh e}{\sinh\alpha} \frac{\sinh^2 h_a}{\cosh^2 h_a} \frac{\cosh e}{\cosh\beta} \frac{(\cosh a +\cosh(b-c))}{\sinh b\sinh c}.\nonumber
\end{eqnarray}
Since $\sinh e\leq 2\pi\frac{m}{b}$ and $|\dot e|= O_A\left(\frac{m}{b}\right)$, we have $\left| \dot e \frac{\sinh f}{\cosh f} \frac{\sinh e}{\cosh e}\right|= O_A\left(\frac{m^2}{b^2}\right)$. By \eqref{w-small-2} and \eqref{restriction type 2} it is not hard to see that 
\begin{eqnarray*}
\left| \frac{\sinh e}{\sinh\alpha} \frac{\sinh^2 h_a}{\cosh^2 h_a} \frac{\cosh e}{\cosh\beta} \frac{(\cosh a +\cosh(b-c))}{\sinh b\sinh c}  \right|
&\leq& \cosh e \frac{\cosh a +\cosh(b-c)}{\sinh b\sinh c} \\
&=& O_A\left(\frac{m^2}{b^2}\right).
\end{eqnarray*} 
Thus, we have $|\dot f|= O_A\left(\frac{m^2}{b^2}\right)$.

Recall that $\dot x=\dot u=0$. By Lemma \ref{pro-abg-2} and all the bounds above, then we apply Lemma \ref{lem derivative in rectangle} to rectangle $(u-f,h_a,x-e,d)$ to obtain that $|\dot d+1|= O_A\left(\frac{m}{b}\right)$.
\end{proof}

Now we are ready to prove Proposition \ref{prop filling curve decrease a little} for Type-2.

\begin{proposition} \label{key-pro-2}
Proposition \ref{prop filling curve decrease a little} holds for the case that $\mathcal P$ is of Type-2.
\end{proposition}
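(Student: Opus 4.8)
The plan is to follow the template of the proof of Proposition~\ref{key-pro-1} for Type-1; the only genuinely new ingredient is controlling how the perpendicular $w=\alpha$ grows when the \emph{two} boundary geodesics $2b$ and $2c$ are shortened simultaneously, together with the combinatorial upgrade from ``one shrinking segment'' to ``two shrinking segments'' recorded in Lemma~\ref{lem J contain two of 3 kinds type 2}. First I would set up the deformation exactly as in the Type-2 construction: let $2b$ be the longest boundary geodesic of $X$, so that its maximal embedded half-collar touches another boundary $2c$, and let $\mathcal P\subset X$ be the associated pair of pants with boundary lengths $(2a,2b,2c)$ where $2b,2c\subset\partial X$. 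Since $\sum x_i\geq n(2Am+\delta)$ and $2b\geq\frac{1}{n}\sum x_i$, we have $b\geq Am+\frac{\delta}{2}$, so both $\mathcal P_\delta$ (with boundary $(2a,2b-\frac12\delta,2c-\frac12\delta)$) and $X_\delta$ exist. Parametrize the passage from $\mathcal P$ to $\mathcal P_\delta$ by $t\in[0,\frac{\delta}{4}]$ with $\dot a=0$, $\dot b=\dot c=-1$, and write $b',c',w'$ for the values at time $t$, so $b-\frac{\delta}{4}\leq b'\leq b$ and $c-\frac{\delta}{4}\leq c'\leq c$; in particular $b'\geq Am$ throughout.

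The first step is to verify that \eqref{w-small-2} holds at every time $t$, so that Lemmas~\ref{pro-abg-2}, \ref{lem type 2.2 2.3} and \ref{lem type 2.4} apply. From $\cosh\alpha=\frac{\cosh a+\cosh b\cosh c}{\sinh b\sinh c}$ (Lemma~\ref{lem hyp formula}) a short computation gives
\[
\sinh^2 w=\frac{\cosh^2 a+\cosh^2 b+\cosh^2 c+2\cosh a\cosh b\cosh c-1}{\sinh^2 b\,\sinh^2 c}.
\]
The numerator is nondecreasing in $b$ and in $c$ while the denominator is increasing in $b$ and in $c$, so $\frac{\sinh w'}{\sinh w}\leq\frac{\sinh b\sinh c}{\sinh(b-\delta/4)\sinh(c-\delta/4)}$. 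Now $b-\frac{\delta}{4}\geq Am\geq A$, and applying \eqref{cosh hb = in type 2} and \eqref{w-small} at $t=0$ gives $\sinh c\geq\frac{1}{\sinh w}\geq\frac{b}{\pi m}\geq\frac{A}{\pi}$, hence $c-\frac{\delta}{4}\geq\frac12\arcsinh(A/\pi)$ once $\delta\leq A_0$ for a constant $A_0=A_0(A)$. Using $\frac{\sinh x}{\sinh(x-y)}=\cosh y+\coth(x-y)\sinh y$ for each of $b,c$ then gives $\frac{\sinh w'}{\sinh w}<2$, and combined with \eqref{w-small} this yields $\sinh w'<\frac{2\pi m}{b}\leq\frac{2\pi m}{b'}$, i.e.\ \eqref{w-small-2}. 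This is the point where the present case genuinely differs from Type-1: since both $b$ and $c$ shrink, $w$ really does grow, and one must know a priori that $c$ is bounded below, which is exactly what \eqref{w-small} supplies through \eqref{cosh hb = in type 2}.

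The second step assembles the segment estimates. Fix a filling closed curve $\eta$ and decompose $\eta=(\cup I_i)\cup(\cup J_j)$ as in \eqref{J-j}; for each $J_j$ decompose it into geodesic pieces of types 2.1--2.6 as in Lemma~\ref{type2-6}, each of length $d$. For types 2.1, 2.5 and 2.6 the length $d$ stays fixed during the deformation (the relevant sub-lengths $a,y,z,u$ are held unchanged), while for types 2.2, 2.3 and 2.4 Lemmas~\ref{lem type 2.2 2.3} and \ref{lem type 2.4} give $|\dot d+1|=O_A(m/b)$, so such a $d$ decreases by at least $\frac14\big(1-O_A(m/b)\big)\delta$ over $[0,\frac{\delta}{4}]$. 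By Lemma~\ref{lem J contain two of 3 kinds type 2}, $J_j$ contains at least \emph{two} pieces of these shrinking types, so the piecewise-geodesic replacement $J'_j$ in $\mathcal P_\delta$ satisfies $\ell(J_j)-\ell(J'_j)\geq\frac12\big(1-O_A(m/b)\big)\delta$. Summing over $j$ via \eqref{eqn eta-eta' = sum J-J'} and using $\ell(\eta')\geq\ell_\eta(X_\delta)$ (with the obvious variant when $X=\mathcal P\cong S_{0,3}$, where $\eta'$ is taken to be the closed geodesic homotopic to $\eta$), we obtain
\[
\ell_\eta(X)-\ell_\eta(X_\delta)\geq\tfrac12\Big(1-O_A\big(\tfrac{m}{b}\big)\Big)\delta=\tfrac12\Big(1-O_A\big(\tfrac{mn}{\sum x_i}\big)\Big)\delta,
\]
where the last equality uses $2b\geq\frac1n\sum x_i$. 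This is precisely the assertion of Proposition~\ref{prop filling curve decrease a little} for Type-2.

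I expect the main obstacle to be Step~1, i.e.\ propagating the half-collar bound \eqref{w-small-2} along the whole deformation: once $c$ is allowed to be small the ratio $\sinh w'/\sinh w$ is no longer easy to control, and the argument has to chain the explicit formula for $\sinh^2 w$ with the a-priori lower bound on $c$ at $t=0$. Everything else is bookkeeping with the already-established Lemmas~\ref{pro-abg-2}--\ref{lem type 2.4}, the one structural fact being that in Type-2 a filling subsegment must cross $\alpha$ and hence splits into \emph{two} shrinking pieces at the crossing point, which is what recovers the same $\frac12\delta$ gain per $J_j$ as in Type-1 even though each individual piece only shrinks by about $\frac14\delta$.
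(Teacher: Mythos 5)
Your proof is correct and follows essentially the same route as the paper: the same parametrization with $\dot a=0$, $\dot b=\dot c=-1$ over $[0,\delta/4]$, the same explicit formula for $\sinh^2 w$ and monotonicity argument to propagate \eqref{w-small-2} along the deformation, the same a-priori lower bound on $c$ via \eqref{cosh hb = in type 2}, and the same use of Lemmas~\ref{lem type 2.2 2.3}, \ref{lem type 2.4} and \ref{lem J contain two of 3 kinds type 2} to produce two shrinking pieces per $J_j$ and hence the $\tfrac12\delta$ gain. The only superficial difference is that you invoke \eqref{w-small} at $t=0$ directly to bound $\sinh c$ from below, whereas the paper cites Lemma~\ref{pro-abg-2}(1); both yield the same conclusion.
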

\bp
Let $2b$ to be the longest boundary geodesic of $X$, and $\mathcal P$ be the pants with boundary geodesics $(2a,2b,2c)$ as in the construction. Since $\sum x_i \geq n(2Am+\delta)$, we know $2b\geq 2Am+\delta>\frac{1}{2}\delta$. By Lemma \ref{pro-abg-2}, we know $\sinh c\geq \frac{b}{2\pi m}\geq \frac{A}{2\pi}$. So if $0<\delta<4\arcsinh(\frac{A}{2\pi})$, we have $2c>\frac{1}{2}\delta$. Hence both the pants $\mathcal{P}_\delta$ and the surface $X_\delta$ exist. 

During the process that $b$ and $c$ reduce to $b-\frac{1}{4}\delta$ and $c-\frac{1}{4}\delta$ respectively, we denote $b',c',w'$ to be the quantities
corresponding to $b,c,w$ respectively. The length $a$ is unchanged. We have
$$b-\frac{1}{4}\delta\leq b'\leq b \ \ \rm{and} \ \  c-\frac{1}{4}\delta \leq c'\leq c.$$

So $b'\geq Am$ always holds. 

By Lemma \ref{lem hyp formula} $\cosh w = \frac{\cosh a + \cosh b\cosh c}{\sinh b\sinh c}$ and then 
\begin{eqnarray*}
\sinh^2 w &=& \cosh^2 w -1 \\
&=& \frac{\cosh^2 a + \sinh^2 b + \sinh^2 c + 1 + 2\cosh a\cosh b\cosh c}{\sinh^2 b\sinh^2 c}.
\end{eqnarray*}
So
\begin{eqnarray*}
\frac{\sinh w'}{\sinh w} 
&=& \frac{\sinh b\sinh c}{\sinh b'\sinh c'}  \\
& & \left(\frac{\cosh^2 a + \sinh^2 b' + \sinh^2 c' + 1 + 2\cosh a\cosh b'\cosh c'}{\cosh^2 a + \sinh^2 b + \sinh^2 c + 1 + 2\cosh a\cosh b\cosh c}\right)^{\frac{1}{2}} \\
&\leq& \frac{\sinh b}{\sinh(b-\frac{1}{4}\delta)}  \frac{\sinh c}{\sinh(c-\frac{1}{4}\delta)} \\
&=& \left(\cosh\frac{\delta}{4} + \frac{1}{\tanh(b-\frac{\delta}{4})} \sinh\frac{\delta}{4}\right)  \left(\cosh\frac{\delta}{4} + \frac{1}{\tanh(c-\frac{\delta}{4})} \sinh\frac{\delta}{4}\right) \\
&\leq& \left(\cosh\frac{\delta}{4} + \frac{\sinh\frac{\delta}{4}}{\tanh A} \right) \left(\cosh\frac{\delta}{4} + \frac{\sinh\frac{\delta}{4}}{\tanh (\arcsinh(\frac{A}{2\pi})-\frac{\delta}{4})} \right)\\
&<& 2
\end{eqnarray*}
for $\delta \in (0, A_0]$ where $A_0$ is some constant only depending on $A$.
Recall that \eqref{w-small} says that $\sinh w\leq \frac{\pi m}{b}$. So we have that for $0<\delta\leq A_0$,  the inequality that $\sinh w'\leq \frac{2\pi m}{b}$ always holds, i.e, equation\eqref{w-small-2} always holds. Thus the assumptions in Lemma \ref{lem type 2.2 2.3} and \ref{lem type 2.4} are always satisfied.

As the two boundary lengths $2b$ and $2c$ simultaneously reduce by $\frac{1}{2}\delta$,  the corresponding length $d$ in types 2.1, 2.5 and 2.6 are unchanged. For remaining types 2.2, 2.3 and 2.4, it follows by Lemma \ref{lem type 2.2 2.3} and \ref{lem type 2.4} that the length $d$ decreases at least $\left(1-O_A\left(\frac{m}{b}\right)\right) \frac{\delta}{4}$. By Lemma \ref{lem J contain two of 3 kinds type 2} $J_j$ must contain at least two of the decreasing types. So we have
$$\ell(J_j) - \ell(J'_j) \geq \frac{1}{2} \left(1-O_A\left(\frac{m}{b}\right)\right) \delta$$
which together with \eqref{eqn eta-eta' = sum J-J'} imply that
$$\ell_\eta (X) - \ell_\eta(X_\delta) \geq \frac{1}{2} \left(1-O_A\left(\frac{m}{b}\right)\right) \delta = \frac{1}{2} \left(1-O_A\left(\frac{mn}{\sum x_i}\right)\right) \delta$$
where in the last equation we apply $2b\geq\frac{\sum x_i}{n}$. 
 
The proof is complete.
\ep

\bp[Proof of Proposition \ref{prop filling curve decrease a little}]
By assumption, $\sum x_i \geq n(2Am+\delta)\geq 2Anm$. So we have 
\[\left(\frac{mn}{\sum x_i}\right)^2\leq \frac{1}{2A}\cdot \left(\frac{mn}{\sum x_i}\right).\]

Then the conclusion clearly follows by Proposition \ref{key-pro-1} and \ref{key-pro-2}.
\ep

\bibliographystyle{amsalpha}
\bibliography{ref}

\end{document}